\documentclass[reqno,11pt]{amsart}

\usepackage{amsmath,amssymb,amsthm,graphicx,url,mathrsfs}
\usepackage{wrapfig}
\usepackage{enumitem}
\usepackage{mathtools}
\usepackage{xparse}
\usepackage[usenames,dvipsnames]{xcolor}
\usepackage[colorlinks=true,linkcolor=Red,citecolor=Green]{hyperref}
\usepackage[super]{nth}
\usepackage[open, openlevel=2, depth=3, atend]{bookmark}
\hypersetup{pdfstartview=XYZ}
\usepackage[font=footnotesize]{caption}
\usepackage{a4wide}
\usepackage{color}

\usepackage[utf8]{inputenc}


\numberwithin{equation}{section}

\newcommand{\eps}{\epsilon}

\newcommand{\mc}{\mathcal}

\newcommand{\pl}{\partial}
\newcommand{\x}{\times}

\newcommand{\bbar}{\overline}

\newcommand{\cjd}{\rangle}
\newcommand{\cjg}{\langle}

\newcommand{\R}{\mathbb{R}}

\newcommand{\supp}{\operatorname{supp}}

\newcommand{\Z}{\mathbb{Z}}

\renewcommand{\Re}{\operatorname{Re}}
\renewcommand{\Im}{\operatorname{Im}}

\newcommand{\jap}[1]{\left\langle #1 \right\rangle}
\newcommand{\p}[1]{\left(#1\right)}
\newcommand{\va}[1]{\left|#1\right|}
\newcommand{\n}[1]{\left\| #1 \right\|}
\newcommand{\set}[1]{\left\{#1\right\}}
\DeclareMathOperator{\WF}{WF}

\newtheorem{proposition}{Proposition}[section]
\newtheorem{lemma}[proposition]{Lemma}

\newtheorem{definition}[proposition]{Definition}
\newtheorem{corol}[proposition]{Corollary}
\newtheorem{theorem}{Theorem}
\theoremstyle{remark}
\newtheorem{remark}{Remark}

\newtheorem*{conjecture}{Conjecture}

\author{Yannick Guedes Bonthonneau}

\address{Université Paris-Nord, CNRS, LAGA, Villetaneuse, France.}

\email{bonthonneau@math.univ-paris13.fr}

\author{Colin Guillarmou}

\address{Université Paris-Saclay, CNRS, Laboratoire de mathématiques d’Orsay, 91405, Orsay, France.}

\email{colin.guillarmou@universite-paris-saclay.fr}

\author{Malo Jezequel}

\address{Department of Mathematics, Massachusetts Institute of Technology, Cambridge, MA 02139.}

\email{mpjez@mit.edu}

\title[Scattering rigidity]{Scattering rigidity for analytic metrics}
\begin{document}

\begin{abstract}
For analytic negatively curved Riemannian manifolds with analytic strictly convex boundary, 
we show that the scattering map for the geodesic flow determines the manifold up to isometry. In particular, one recovers both the topology and the metric. 
More generally 
our result holds in the analytic category under the no conjugate point and hyperbolic trapped set assumptions.
\end{abstract}

\maketitle

\section{Introduction}

On a compact connected Riemannian manifold $(M,g)$ with boundary $\pl M$, denote by 
\[\mc{G}(M,g):= \{\gamma: [0,1]\to M \,| \, \gamma \, \textrm{ is  a geodesic for }g, (\gamma(0),\gamma(1))\in (\pl M)^2\}\]
the set of geodesics with endpoints in the boundary.
The \emph{scattering data} of $(M,g)$ is the set 
\begin{align*}
 {\rm Sc}(M,g):=\Big\{ \Big(\gamma(0),\frac{\dot{\gamma}(0)}{|\dot{\gamma}(0)|_g},\gamma(1),\frac{\dot{\gamma}(1)}{|\dot{\gamma}(1)|_g}\Big) \in (T_{\pl M}M)^2  \,\Big| \, \gamma\in \mc{G}(M,g)\Big\}, 
 \end{align*} 
of endpoints, together with the normalized tangent vector at the endpoints, of all geodesics in $\mc{G}(M,g)$. This is the graph of a map $S_g$, called scattering map, defined on a subset of the set of incoming tangent vectors at $\pl M$ and mapping to a subset of the set of outgoing tangent vectors at $\pl M$; see Figure \ref{scatteringmap}. \begin{center}
\begin{figure}
\includegraphics[scale=0.75]{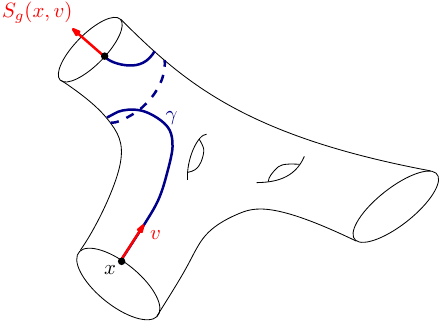}
\caption{The point $(x,v;S_g(x,v))\in \mc{Sc}(M,g)$ corresponds to the geodesic $\gamma\in \mc{G}(M,g)$. The map $(x,v)\mapsto S_g(x,v)$ is called the scattering map.}
\label{scatteringmap}
\end{figure}
\end{center}
A natural geometric inverse problem consists in determining the manifold $M$ and the metric $g$ from the scattering data ${\rm Sc}(M,g)$. 
This problem is called the \emph{scattering rigidity} problem. A manifold $(M,g)$ is called scattering rigid if it is the only Riemannian 
manifold, up to isometry being the identity on the boundary, with boundary equal (or isometric) to $(\pl M,g|_{\pl M})$ and with scattering data given by ${\rm Sc}(M,g)$. 
Here, we notice that the tangent space $T_{\pl M}M$ of $M$ at $\pl M$ can be identified with $T\pl M\oplus \R\nu$ where $\nu$ is the inward pointing unit normal 
vector field to $\pl M$ for $g$, so that the scattering data ${\rm Sc}(M,g)$ and ${\rm Sc}(M',g')$ can be compared if the boundary are isometric as Riemannian manifold $(\pl M,g|_{\pl M})\simeq (\pl M',g'|_{\pl M'})$ (by identifying the inward pointing normal $\nu$ and $\nu'$ for $g$ and $g'$). 
To simplify notations, we always write $(\pl M,g|_{T\pl M})=(\pl M',g'|_{T\pl M'})$ to mean that the boundary are isometric.

As far as we know, there are only very few cases of manifolds that are known to be scattering rigid:
\begin{itemize} 
\item the product $B_{\R^n}(0,1)\times \mathbb{S}^1$ equipped with the flat product metric --- Croke \cite{Croke14},
\item Simple Riemannian surfaces --- Wen \cite{Wen15}.
\end{itemize}
We recall that a Riemannian manifold $(M,g)$ is called \emph{simple} if $M$ is topologically a ball, the boundary $\pl M$ is strictly convex (i.e. its second fundamental form is positive) and 
$g$ has no conjugate points (see e.g. \cite{PSUbook}).  

There are however more results of scattering rigidity within certain classes of Riemannian manifolds. To be precise, we say that a manifold $(M,g)$ is scattering rigid within the class of Riemannian manifold with a given property $\mc{P}$ if $(M,g)$ has the property $\mc{P}$ and $(M,g)$ is the only (up to isometry equal to the identity on the boundary) Riemannian manifold with the property $\mc{P}$ that has the scattering data ${\rm Sc}(M,g)$.
The scattering rigidity within the class of simple Riemannian manifolds 
is equivalent to the so-called \emph{boundary rigidity problem} posed by Michel \cite{Michel-81}:

\emph{For simple Riemannian manifolds, does the Riemannian distance between all pairs of boundary points determine the metric up to isometry equal to the identity on the boundary?}

In particular, scattering rigidity within the class of simple Riemannian surfaces with negative curvature (resp. non-positive curvature) is a consequence of the proof of boundary rigidity by Otal \cite{Otal:1990ko} (resp. Croke \cite{Croke-90}), while the resolution of the problem within the class of general simple Riemannian surfaces is a result of Pestov--Uhlmann \cite{Pestov-Uhlmann}. In dimension $n\geq 3$, the scattering rigidity among simple non-positively curved Riemannian metrics follows from the recent works of Stefanov--Uhlmann--Vasy \cite{Stefanov:2017lt}. We refer in general to the recent book by Paternain--Salo--Uhlmann \cite{PSUbook} for an overview of the subject on this problem.\\

For what concerns Riemannian manifolds with boundary that are not simple, in particular those which are not simply connected and have geodesics of infinite length, the only results in the direction of scattering rigidity are the special example of \cite{Croke14} mentionned above, 
and the result of the second author \cite{Guillarmou17} stating that two negatively curved surfaces with strictly convex boundary and the same scattering data are conformally equivalent.\\
 
In this work, we prove that scattering rigidity holds within the class of real analytic negatively curved manifolds:

\begin{theorem}\label{Th1}
Let $(M_1,g_1)$ and $(M_2,g_2)$ be two real analytic negatively curved compact connected Riemannian manifolds (of any dimension) with non-empty analytic strictly convex boundary. Assume that $(\pl M_1,h_1)=(\pl M_2,h_2)$ where $h_i:=g_i|_{T\pl M_i}$ is the metric on the boundary, and that their scattering data
 ${\rm Sc}(M_1,g_1)={\rm Sc}(M_2,g_2)$ agree. Then there exists an analytic diffeomorphism $\psi:M_1\to M_2$ such that $\psi|_{\pl M_1}={\rm Id}$ and 
$\psi^*g_2=g_1$.
\end{theorem}
In particular for analytic negatively curved surfaces, the scattering data determines both the topology and the geometry, and not only the conformal class as in \cite{Guillarmou17}.

We note that the class of manifolds we consider have in general trapped geodesics, that is infinite length geodesics 
that do not intersect the boundary. In fact, we prove a result for a more general class of manifolds, as we now explain. 
Let $(M,g)$ be a compact Riemannian manifold with strictly convex boundary. Let $SM:=\{(x,v)\in TM\, | \, |v|_{g_x}=1\}$ be the unit tangent bundle 
and $\varphi_t^g:SM\to SM$ be the geodesic flow at time $t$, with $X_g$ its generating vector field. The trapped set is the closed set defined by
\[ 
\mc{K}^g:=\{ (x,v)\in SM\,|\, \forall t\in \R,\, \varphi^g_t(x,v)\in \mathring{M}\}.
\]
This is an invariant set under $\varphi_t^g$.  We will assume that the trapped set $\mc{K}^g$ is \emph{hyperbolic} for the geodesic flow, i.e there exists a continuous flow-invariant splitting 
of $T(SM)$ over $\mc{K}^g$:
\[ 
\forall z \in \mc{K}^g, \quad T_{z}(SM)=\R X_g(z)\oplus E_s(z)\oplus E_u(z)
\]
and constants $C>0,\nu>0$ such that $\|\mathrm{d}\varphi^g_t|_{E_s}\|\leq Ce^{-\nu t}$ and $\|\mathrm{d}\varphi^g_{-t}|_{E_u}\|\leq Ce^{-\nu t}$
for all $t>0$; the distributions $E_s$ and $E_u$ are called the stable and unstable bundles. Another assumption we make is the absence of conjugate points: recall that 
$x,x'$ are said to be conjugate points if there is $v\in S_xM$, and $t> 0$ such that $\pi_0(\varphi_t^g(x,v))=x'$ and 
\[ 
\mathrm{d}\varphi_{t}^g(x,v)\mc{V}\cap \mc{V}\not=0,
\]
where $\mc{V}:=\ker \mathrm{d}\pi_0\subset T(SM)$ is the vertical bundle, $\pi_0:SM\to M$ being the natural projection on the base.
\begin{definition}
Let $(M,g)$ be a smooth compact, connected, Riemannian manifold with non-empty boundary.  We say that $(M,g)$ is of Anosov type if
\begin{enumerate}
\item the boundary $\pl M$ is strictly convex,
\item the trapped set $\mc{K}^g$ is a hyperbolic set for the geodesic flow $\varphi_t^g$ of $g$,
\item $(M,g)$ does not have pairs of conjugate points.
\end{enumerate}
\end{definition}
Properties $(2)$ and $(3)$ hold when $g$ has negative curvature. However this needs not be always the case. Indeed, one can either perturb a negatively curved manifold to create a small patch of positive curvature away from the trapped set, or consider a strictly convex set inside a closed manifold with Anosov geodesic flow. See \cite{Eberlein} for a discussion of such manifolds.

We prove the following result, that contains Theorem \ref{Th1}:
\begin{theorem}\label{Th2}
Let $(M_1,g_1)$ and $(M_2,g_2)$ be two real analytic Riemannian manifolds of Anosov type with analytic boundary. 
Assume that $(\pl M_1,h_1)=(\pl M_2,h_2)$ where $h_i:=g_i|_{T\pl M_i}$ is the metric on the boundary, and that their scattering data
 ${\rm Sc}(M_1,g_1)={\rm Sc}(M_2,g_2)$ agree. Then there exists an analytic diffeomorphism $\psi:M_1\to M_2$ such that $\psi|_{\pl M_1}={\rm Id}$ and 
$\psi^*g_2=g_1$.
\end{theorem}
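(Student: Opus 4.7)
The plan is in four steps: (i) prove that $\mathrm{Sc}(M_i,g_i)$ determines the infinite jet of $g_i$ along $\pl M_i$; (ii) use real-analyticity to promote jet equality to metric equality in a collar; (iii) propagate the resulting collar isometry inward along the geodesic flow, using the scattering data as a closing condition; (iv) extend the result analytically across the trapped set. Introduce boundary normal coordinates $\Phi_i:(y,t)\mapsto\exp_y^{g_i}(t\nu_i)$ with $\nu_i$ the inward unit normal; strict convexity and analyticity of $\pl M_i$ make each $\Phi_i$ an analytic diffeomorphism from $\pl M_i\times[0,\epsi_0)$ onto a collar $U_i\subset M_i$, in which $g_i=dt^2+h_i(t)$ for some analytic family $h_i(t)$ of metrics on $\pl M_i$ with $h_i(0)=h_i$. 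Using the identification $\pl M_1=\pl M_2$, set $\psi_0:=\Phi_2\circ\Phi_1^{-1}$.

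For steps (i)--(ii), the scattering map determines $\pl_t^k h_i(0)$ for all $k\ge 0$ by the classical skimming-geodesic argument: an inward unit vector $\xi=(y,v)\in S_{\pl M_i}M_i$ with $\langle v,\nu_i\rangle=\sin\theta$ small produces, by strict convexity, a geodesic staying in the collar and re-exiting after time $O(\theta)$; expanding the geodesic equation in powers of $\theta$ and matching the outgoing endpoint $S_{g_i}(\xi)$ recovers the Taylor coefficients of $h_i(t)$ at $t=0$ inductively in $k$. Since the $h_i(t)$ are real-analytic in $t$, equality of jets forces $h_1(t)=h_2(t)$ on a full neighborhood of $\{t=0\}$, so $\psi_0$ is an analytic isometry from a genuine collar of $\pl M_1$ onto one of $\pl M_2$, equal to the identity on the boundary.

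For step (iii), extend $\psi_0$ inward via the geodesic flow. For $\xi\in SM_1\setminus\mc{K}^{g_1}$ the forward and backward exit times $t_\pm(\xi)$ are finite and (away from the glancing set) analytic in $\xi$. Define
\[
\Psi(\xi):=\varphi^{g_2}_{-t_-(\xi)}\bigl(d\psi_0\,\varphi^{g_1}_{t_-(\xi)}(\xi)\bigr).
\]
The identity $\mathrm{Sc}(M_1,g_1)=\mathrm{Sc}(M_2,g_2)$ makes this equivalently definable using $t_+$: the $g_2$-geodesic starting at $d\psi_0(\varphi^{g_1}_{t_-(\xi)}(\xi))$ exits the boundary at $d\psi_0(\varphi^{g_1}_{t_+(\xi)}(\xi))$, so flowing backwards recovers the same value of $\Psi$. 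The absence of conjugate points ensures $\Psi$ descends via $\pi_0$ to an analytic local isometry $\psi:M_1\setminus\pi_0(\mc{K}^{g_1})\to M_2$ restricting to $\psi_0$ on each collar, and single-valuedness at interior points reached by multiple geodesics follows from uniqueness of analytic continuation of the isometry germ $\psi_0$.

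The main obstacle is step (iv): extending $\psi$ analytically across $\pi_0(\mc{K}^{g_1})$. By hyperbolicity, $\mc{K}^{g_i}\subset SM_i$ is a closed invariant set with empty interior, densely approached by non-trapped orbits via its stable and unstable manifolds; $\Psi$ is bounded (taking values in the compact $SM_2$) and analytic off $\mc{K}^{g_1}$. A removable-singularity argument for bounded analytic maps off such hyperbolic invariant sets --- the natural arena for the authors' analytic microlocal toolkit on Anosov flows --- then extends $\Psi$ analytically to all of $SM_1$, hence $\psi$ analytically to $M_1\to M_2$. Running the whole construction with the roles of $(M_1,g_1)$ and $(M_2,g_2)$ swapped yields an analytic inverse; hence $\psi$ is an analytic diffeomorphism with $\psi|_{\pl M_1}=\mathrm{Id}$ and $\psi^*g_2=g_1$.
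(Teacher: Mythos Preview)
Your steps (i)--(ii) are essentially correct and match what the paper does (its Lemma~2.2, using Lassas--Sharafutdinov--Uhlmann for the jets and then analyticity). The rest of the argument, however, has genuine gaps, and the paper proceeds by a completely different route.

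\textbf{Step (iii) conflates scattering data with lens data.} Your map $\Psi(\xi)=\varphi^{g_2}_{-t_-(\xi)}\bigl(d\psi_0\,\varphi^{g_1}_{t_-(\xi)}(\xi)\bigr)$ flows forward in $M_2$ by the \emph{same time} $t_-(\xi)$ used to reach the boundary in $M_1$. Your consistency check --- ``the $g_2$-geodesic \dots\ exits the boundary at $d\psi_0(\varphi^{g_1}_{t_+(\xi)}(\xi))$, so flowing backwards recovers the same value'' --- is precisely where this fails: the scattering data tells you the exit \emph{point and direction} agree, not the exit \emph{time}. So there is no reason that $\varphi^{g_2}_{t_-+t_+}\bigl(d\psi_0(\xi_-)\bigr)=d\psi_0(\xi_+)$; equivalently no reason $\tau_{g_2}(d\psi_0(\xi_-))=\tau_{g_1}(\xi_-)$. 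This is exactly the distinction between scattering and lens rigidity that the paper emphasizes. Even if you tried to recover travel times by analytically continuing $\tau_{g_i}$ from the collar (where they do agree), you would still need that $\partial_-SM\setminus\Gamma_-$ is connected, and more seriously, the paper explicitly notes that equal lens data does \emph{not} a priori give a conjugacy of flows in the trapping case. Consequently there is no reason $\Psi$ descends to the base: for two directions $v_1,v_2\in S_xM_1$ one must show $\pi_0(\Psi(x,v_1))=\pi_0(\Psi(x,v_2))$, and this is the entire content of the rigidity statement, not a consequence of ``absence of conjugate points''. Your appeal to ``uniqueness of analytic continuation of the isometry germ $\psi_0$'' is circular --- analytic continuation of an isometry germ is unique \emph{if it exists}, but existence along a given path is exactly what is at stake.

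\textbf{Step (iv) is unjustified.} There is no removable-singularity theorem for bounded analytic maps off a hyperbolic invariant set. Such sets can have Hausdorff dimension arbitrarily close to full, and their base projections $\pi_0(\mathcal{K})$ can be large (for a negatively curved surface with funnels, $\pi_0(\mathcal{K})$ is the entire convex core). ``Bounded with values in a compact manifold and analytic off a closed nowhere-dense set'' is far too weak a hypothesis.

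\textbf{What the paper actually does.} Rather than attempting to build the isometry by pushing along the flow, the paper constructs the \emph{normal operator} $\Pi_0^{g}={\pi_0}_*R_g\pi_0^*$, proves via analytic radial estimates (Proposition~4.1) that its Schwartz kernel is real-analytic off the diagonal, and then uses the map $x\mapsto\Pi_0^{g_e^i}(x,\psi_i(\cdot))$ to embed each $M_i$ analytically into the fixed Hilbert space $L^2(N_{\eps/4})$. The scattering assumption forces the two embeddings to agree on the collar; an open--closed argument in this common target then shows the images coincide and $\Phi_2^{-1}\circ\Phi_1$ is the sought isometry. The point of this construction is precisely to sidestep both obstacles above: averaging over all directions in $\Pi_0^g$ removes the dependence on travel times along individual geodesics, and embedding into a fixed space determined by the boundary data eliminates any monodromy or extension-across-$\mathcal{K}$ issue.
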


We emphasize that we do not require to know the travel time, i.e. the lengths of geodesics between boundary points, to obtain the rigidity. 
The corresponding problem of determining the metric from both the scattering data and the travel time is called the \emph{lens rigidity problem}.  For simple manifolds, the lens ridigity, scattering rigidity and boundary rigidity problems are all equivalent, but this is not the case for non-simple manifolds;
we refer to the article of Croke--Wen \cite{Croke-Wen} for a study of the difference between scattering rigidity and lens rigidity. 

The lens rigidity problem has been considered in the non simple case. 
There are more results on lens rigidity than on scattering rigidity, although not many in the case with non-empty trapped set. Here are a few:
\begin{itemize} 
\item Stefanov--Uhlmann \cite{Stefanov:2009lp} obtained a local\footnote{Local in the sense that two metrics in that particular class that are close enough in some $C^k$ norm and that have same lens data must be isometric.} lens rigidity result for a certain class of Riemannian manifolds in dimension $n\geq 3$ that can have some mild trapping.
\item Vargo \cite{Vargo2009} proved lens rigidity in the class of  non-trapping analytic metrics, without assuming 
convexity of the boundary and in a class of metrics that can have a certain amount of conjugate points. 
\item Croke--Herreros \cite{Croke-Herreros} proved that a $2$-dimensional cylinder in negative curvature is lens rigid. 
\item Guillarmou--Mazzucchelli--Tzou \cite{Guillarmou-Mazzucchelli-Tzou} prove lens rigidity in the class of non-trapping surfaces with no conjugate points (but the boundary is not assumed convex).
\item Stefanov--Uhlmann--Vasy \cite{Stefanov:2017lt} prove lens rigidity in the class of Riemannian manifolds admitting strictly convex foliations in dimension $n\geq 3$. This class covers some cases  with conjugate points and some mild trapping.
\item Cekic--Guillarmou--Lefeuvre \cite{Cekic-Guillarmou-Lefeuvre22} prove a local lens rigidity result for  negatively curved compact manifolds with strictly convex boundary.
\end{itemize}
A real difficulty for the lens rigidity problem in the case of trapped geodesics is that having same lens data for two metrics does not a priori imply that their geodesic flows are conjugate. To avoid that problem, one can also consider the \emph{marked lens rigidity problem}, where we consider rather the lens data on the universal cover. For two negatively curved metrics on surface $M$ with strictly convex boundary, having the same marked lens data is equivalent to having conjugate geodesic flows with a conjugacy homotopic to identity and equal to the identity on the boundary $\pl SM$ of the unit tangent bundle.
In this direction, we mention the following results: 
\begin{itemize}
\item Guillarmou--Mazzucchelli \cite{Guillarmou-Mazzucchelli-18} prove that negatively curved Riemannian surfaces with strictly convex boundary having the same marked length spectrum are isometric. This has been extended recently by Erchenko-Lefeuvre \cite{Erchenko-Lefeuvre} for surfaces of Anosov type. 
\item Lefeuvre \cite{Lefeuvre-19-2} prove that negatively curved Riemannian manifolds with strictly convex boundary having the same marked length spectrum and that are close enough in $C^k$ norms are isometric.
\end{itemize}
 This rigidity problem is somehow closer to the \emph{marked length spectrum rigidity} on closed Riemannian manifolds with Anosov geodesic flow, where one asks whether two such metrics with conjugate geodesic flows (or equivalently same marked length spectrum) are actually isometric, see Otal, Croke or Guillarmou--Lefeuvre--Paternain  \cite{Otal-90,Croke-90,GLP} for a solution on surfaces and Guillarmou--Lefeuvre \cite{Guillarmou-Lefeuvre-18} for results in higher dimension. A close link between these problems on manifolds with boundary and closed manifold 
 is shown in the recent work of Chen--Erchenko--Gogolev \cite{Chen-Erchenko-Gogolev-20}. 
 It turns out that for closed manifolds, the length spectrum rigidity and the marked length spectrum rigidity problem are not equivalent, as there exist, by Vignéras \cite{Vigneras80}, Riemannian metrics with constant negative curvature that have the same length spectrum but are not isometric, while surfaces with negative curvatures and same marked length spectrum are isometric \cite{Otal-90}. The result in Theorem \ref{Th1} thus suggests that these different phenomena in the closed case
 do not seem to happen for manifolds with boundary in negative curvature, since we do not require to know the marking to obtain rigidity, while in the closed case this is necessary even in the analytic category.

In terms of difficulty of these rigidity problems, one has the chain of implications: 
\[ \textrm{scattering rigidity} \Longrightarrow \textrm{lens rigidity} \Longrightarrow \textrm{marked lens rigidity}\]
and, as mentionned above, they are not always equivalent. 

In the case of obstacles in $\R^n$, there are related lens rigidity results by Noakes--Stoyanov \cite{Noakes-Stoyanov2015} who show that the scattering sojourn times allow to recover finitely many disjoint strictly convex obstacles. In the case of analytic strictly convex obstacles with the non-eclipse condition, De Simoi--Kaloshin--Leguil \cite{Desimoi-Kaloshin-Leguil} prove that the length of the marked periodic orbits generically determines the obstacles 
under a $\Z^2\times \Z^2$ symmetry assumption (the marking is by the ordered sequence of obstacles hit by the orbit). 
This last result suggests that analytic negatively curved manifold with strictly convex boundary could also possibly be determined by their marked length spectrum (length of periodic orbits marked by their free homotopy classes).\\

To conclude, let us finally state a conjecture motivated by Theorems \ref{Th1} and \ref{Th2}:
\begin{conjecture}\label{conj2}
Two compact Riemannian manifolds of Anosov type with the same scattering data must be isometric by an isometry fixing the boundary. 
\end{conjecture}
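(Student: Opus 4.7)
The plan is to reduce the conjecture to a marked length/lens spectrum rigidity problem on manifolds with boundary of Anosov type, and to attack that via hyperbolic dynamics and microlocal analysis. Given two such metrics $g_1, g_2$ sharing a boundary and scattering data, I would first construct a candidate diffeomorphism $\psi\colon M_1\to M_2$ with $\psi|_{\pl M_1}=\mathrm{Id}$ by transporting interior points along pairs of geodesics matched by the scattering map. Strict convexity of $\pl M$ and hyperbolicity of $\mc{K}^{g_i}$ imply that the escaping set is open and dense in each $SM_i$; moreover, a geodesic of $g_1$ through an interior point $x\in \mathring{M}_1$ that exits $M_1$ yields, by the scattering matching, a geodesic of $g_2$ with prescribed entry and exit boundary data. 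One then has to show that two different geodesics of $g_1$ through $x$ give rise to $g_2$-geodesics meeting at a common point of $M_2$, which involves the topology of $M_i$ in a nontrivial way, as discussed in \cite{Croke-Wen}. A delicate point is that the scattering data alone does not record travel times, so one must use the hyperbolic structure of $\mc{K}^{g_i}$ to pin down the correspondence between boundary data and interior endpoints unambiguously.

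Once $\psi$ is constructed on the non-trapped part, I would extend it continuously across the trapping region by using that stable and unstable manifolds of trapped orbits are limits of escaping trajectories. In this way $\psi$ would lift to a conjugacy $\Phi\colon SM_1\to SM_2$ between the two geodesic flows that restricts to the identity on $\pl SM_1\simeq \pl SM_2$. This reduces the problem to a smooth marked length/lens rigidity statement on manifolds with boundary of Anosov type: show that two smooth metrics whose flows are conjugate by such a $\Phi$ are isometric. To conclude, I would use a Livsic-type cohomological argument on $\mc{K}^{g_1}$, combined with a Pestov energy identity adapted to the presence of a trapped set and solenoidal injectivity of the X-ray transform, following the framework of Guillarmou--Mazzucchelli \cite{Guillarmou-Mazzucchelli-18}, Lefeuvre \cite{Lefeuvre-19-2}, and Cekic--Guillarmou--Lefeuvre \cite{Cekic-Guillarmou-Lefeuvre22}.

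The hard part is this last step: the global marked length/lens spectrum rigidity in the smooth Anosov-boundary setting is itself a major open question, currently resolved in dimension two by Guillarmou--Mazzucchelli and only locally (for $C^k$-close metrics) in higher dimension by Lefeuvre and Cekic--Guillarmou--Lefeuvre. Without analytic unique continuation, there is no obvious way to propagate a match of boundary metric jets into the trapping region $\mc{K}^{g_i}$, so the smooth version of Theorem~\ref{Th2} would demand a genuinely nonperturbative argument, perhaps coupling a Ruelle-type transfer operator analysis on $\mc{K}^{g_i}$ to the boundary scattering data. The construction of $\psi$ and $\Phi$ from scattering data alone, without knowing travel times, is a secondary obstacle that seems amenable to a careful dynamical analysis using strict convexity of $\pl M_i$ and hyperbolicity of $\mc{K}^{g_i}$, but the genuine barrier is the global smooth rigidity described above.
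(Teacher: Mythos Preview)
The statement you are addressing is stated in the paper as a \emph{conjecture}, not a theorem; the paper does not prove it and offers no proof to compare against. Theorems~\ref{Th1} and~\ref{Th2} establish only the real-analytic case, precisely because analytic unique continuation for the kernel $\Pi_0^{g_e}(x,x')$ replaces the need for a global smooth rigidity argument. Your proposal is therefore not a proof but a roadmap, and you correctly identify that the scheme collapses at the final step: global (non-perturbative) marked lens rigidity for smooth Anosov-type manifolds with boundary is open in dimension $n\geq 3$, so reducing to it does not close the problem.

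There is also a genuine gap earlier in your outline. You propose to build $\psi$ on the non-trapped part by matching geodesics via the scattering map and then checking that two $g_1$-geodesics through a given $x$ produce $g_2$-geodesics meeting at a common point. But without travel times, the scattering relation only tells you the outgoing boundary data, not where along the $g_2$-geodesic the image of $x$ should sit; hyperbolicity of $\mc{K}^{g_i}$ gives no mechanism to recover this parameter for an individual non-trapped orbit far from the trapped set. This is exactly the distinction between scattering and lens data discussed in \cite{Croke-Wen}, and it is not resolved by the density of escaping orbits. The paper's analytic argument sidesteps this entirely by working with the operator $\Pi_0^{g_e}$ rather than with individual geodesics, and by using the embedding $\Phi_i:M_i\to L^2(N_{\eps/4})$ whose image is determined by the scattering data; no analogue of this embedding is available in the smooth category because the off-diagonal analyticity of $\Pi_0^{g_e}(x,x')$ (Proposition~\ref{analyticityofPi0}) is what makes the unique continuation work.
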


\textbf{Method of proof}: Our proof shares some similarities with the solution of the Calder\'on problem for analytic Riemannian metrics due
to Lassas--Uhlmann \cite{Lassas-Uhlmann2001,Lassas-Taylor-Uhlman2003}. In the Calder\'on problem case, we recall that the argument is first to show that the Cauchy data for the equation $\Delta_gu=0$ on $M$ determines the Green's function of the Laplacian $\Delta_g$ near the boundary $\pl M$ when the metrics are assumed to be analytic. Then one proves that two analytic metrics with the same Green's function near $\pl M$ are isometric in dimension $n>2$ (conformal in dimension $n=2$). 

In our case, we first show that the scattering data determines the metric near the boundary (see Lemma \ref{determinationhr}), and then that it determines
the resolvent $R_g$ of the geodesic vector field $X_g$ near the boundary of $SM$ (see the proof of Lemma \ref{S_g_equal_implies_Phii_equal}). 
For this to hold, we use the analyticity, the strict convexity of $\pl M$ and the fact that the trapped set has Lebesgue measure $0$ (in order to have a proper notion of resolvent of $X_g$). The scattering data are essentially the Cauchy data at $\pl(SM)$  for the equation $X_gu=0$ on $SM$.

Compared to the Calder\'on problem where the Green's function is a pseudo-differential operator (as inverse of an elliptic differential operator), 
the resolvent $R_g$ is a much more singular operator. It was analyzed for smooth metrics with hyperbolic trapped set in \cite{DG16} using microlocal methods. Its integral kernel has quite wild singularities at the trapped set, but the $C^\infty$ wavefront set can still be described (even though it typically looks like a fractal conical set in $T(SM)$).  Composing by the pull-back operator $\pi_0^*$ and the push-forward operator ${\pi_0}_*$ (where $\pi_0:SM\to M$), 
using the transversality between the vertical and stable/unstable bundles, and the absence of conjugate points, we obtain an operator $\Pi^g_0:={\pi_0}_*R_g\pi_0^*$ that is much better behaved. Indeed, most of the singularities disappear and $\Pi^g_0$  is an elliptic pseudo-differential operator of order $-1$ with principal symbol  $c_n|\xi|^{-1}_g$ for some $c_n>0$ depending only on $n=\dim(M)$, as was proved in \cite{Guillarmou17}.
 It thus looks quite similar to the Green's function of the Laplacian (except for its order that is half of that of the Green's function). 
 
We can then use the techniques from analytic microlocal analysis developped in \cite{BJ20} (building from fundamental works of 
Helffer-Sj\"ostrand and Sj\"ostrand \cite{helffer_sjostrand,sjostrand_96}, see the work of Galkowski and Zworski \cite{GaZw_viscosity,GaZw} for another recent example of application of these methods) to analyze the Schwartz kernel $\Pi^g_0(x,x')$ of $\Pi^g_0$:
we are able to show in Section \ref{sec:FBIresults} that, if $g$ is in addition analytic, the Schwartz kernel $\Pi^g_0(x,x')$ of $\Pi^g_0$ is analytic outside the diagonal. This requires in particular to prove radial estimates for the analytic wave front set (Proposition \ref{prop:radial_estimate}); a related result was proved by Galkowski-Zworski \cite{GaZw} but rather for smooth radial manifolds.
The fact that the operator $\Pi_0^g$ is an analytic pseudo-differential operator 
 was first proved for simple analytic metrics by Stefanov and Uhlmann \cite{Stefanov-Uhlmann-JAMS2005}. In our case, the analysis involved is significantly more consequent due to the trapped set and the result can not be deduced from  \cite{Stefanov-Uhlmann-JAMS2005}; 
 we rely on the recent monograph \cite{BJ20}.
We can then use a method developped (for the Green's function) in \cite{Lassas-Taylor-Uhlman2003} to embed analytically $(M,g)$ into $L^2(N_\eps)$ using $\Pi_0$, where $N_\eps$ is an $\eps$-collar neighborhood of $N:=\pl M$, and show that the image of the embedding in $L^2(N_\eps)$ is entirely determined by the scattering data. These embeddings for two metrics with the same scattering data allow to construct an isometry $\psi:  M_1\to M_2$. 

Another corollary of this work is the following (see Proposition \ref{injectiviteI2} for a precise statement) 
\begin{proposition}
On a real analytic Riemannian manifold of Anosov type, the X-ray transform on symmetric $2$-tensors is solenoidal injective. 
\end{proposition}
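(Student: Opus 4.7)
Let $f$ be a real analytic symmetric $2$-tensor on $(M,g)$ with $I_2 f = 0$. I would first perform the standard solenoidal decomposition $f = f^s + \nabla^s p$, with $\delta^s f^s = 0$ and $p|_{\pl M} = 0$ (both analytic, obtained by solving an elliptic boundary value problem with analytic data). Since
\[
I_2(\nabla^s p)(\gamma) = p(\dot\gamma(1)) - p(\dot\gamma(0)) = 0 \quad \text{for every } \gamma \in \mc{G}(M,g),
\]
one has $I_2 f^s = 0$, so it suffices to prove $f^s \equiv 0$. I would then lift the problem to $SM$ by setting
\[
u(x,v) := -\int_0^{\tau^+(x,v)} \bigl(\pi_2^* f^s\bigr)\bigl(\varphi_t^g(x,v)\bigr) \, \mathrm{d}t, \quad (x,v) \in SM \setminus \mc{K}^g,
\]
so that $X_g u = \pi_2^* f^s$, with $u = 0$ on $\pl_- SM$ by construction and $u = 0$ on $\pl_+ SM$ from the hypothesis $I_2 f^s = 0$.

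Next, recognizing $u = -R_g(\pi_2^* f^s)$, I would transplant the FBI-transform analytic microlocal machinery of Section \ref{sec:FBIresults}---notably the analytic radial estimate of Proposition \ref{prop:radial_estimate}---from the scalar setting to the symmetric-$2$-tensor setting. The outcome is that the Schwartz kernel of $R_g$, viewed as acting on sections of the bundle relevant for $\pi_2^*$, is real analytic off the diagonal; combined with the vanishing of the Liouville measure of $\mc{K}^g$, this gives that $u$ extends to a real-analytic function on all of $SM$.

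Now decompose $u = u_{\mathrm{even}} + u_{\mathrm{odd}}$ by parity in $v$. Since $X_g$ flips parity and $\pi_2^* f^s$ is even in $v$, the transport equation forces $X_g u_{\mathrm{even}} = 0$ with $u_{\mathrm{even}}|_{\pl SM} = 0$; the analyticity then gives $u_{\mathrm{even}} \equiv 0$. Expanding $u_{\mathrm{odd}} = u_1 + u_3 + \dots$ in vertical spherical harmonics on $SM$, the equation couples $u_{2k-1}$ to $u_{2k+1}$ through raising/lowering operators. Under the no-conjugate-points hypothesis, a Pestov-type identity (in the tradition of Paternain--Salo--Uhlmann and of Guillarmou for trapped Anosov-type geometries), combined with the analyticity of $u$, should force $u_{2k+1} = 0$ for all $k \ge 1$. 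Hence $u(x,v) = p(x)(v)$ for an analytic $1$-form $p$ on $M$ with $p|_{\pl M} = 0$, the transport equation reduces to $\nabla^s p = f^s$, and the solenoidality of $f^s$ together with $p|_{\pl M} = 0$ force $p \equiv 0$ after pairing with $p$ and integrating by parts.

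The main obstacle I anticipate is the analytic extension of $u$ across $\mc{K}^g$: reproving the FBI-based analytic radial estimates and the associated propagation results of Section \ref{sec:FBIresults} in the vector-bundle setting needed to handle $\pi_2^*$ is the technically heaviest step, paralleling the work done in the paper for $\Pi_0^g$ but with the extra tensor structure. The subsequent finite-vertical-degree step, while classical in structure, genuinely relies on the analyticity of $u$ to bypass the pointwise curvature hypotheses one would otherwise need in higher dimensions.
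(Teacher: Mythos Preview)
Your approach has two genuine gaps. First, you assume $f$ is real analytic, but the statement (Proposition~\ref{injectiviteI2}) is for $f\in L^2$. The paper \emph{derives} analyticity of the solenoidal part $f^s$ by first proving that $\Pi_2^{g_e}=(I_2^{g_e})^*I_2^{g_e}$ is an elliptic analytic pseudo-differential operator (via the same FBI machinery as for $\Pi_0^{g_e}$, together with the local Stefanov--Uhlmann computation near the diagonal); analytic ellipticity on solenoidal tensors then yields $f^s\in\mc{A}(M)$. Second, and more seriously, your claim that a Pestov-type identity ``combined with the analyticity of $u$'' forces $u_{2k+1}=0$ for $k\ge 1$ is not justified and, as far as is known, cannot be made to work: Pestov identities are $L^2$ energy identities and do not interact with analyticity in any way that would let you bypass curvature hypotheses. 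In dimension $n\ge 3$ without sectional curvature bounds there is no Pestov-based argument showing the transport solution has vertical degree $\le 1$; this is precisely why the analytic-continuation route of Stefanov--Uhlmann is invoked here.

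The paper's proof is accordingly quite different from yours: once $f^s$ is known to be analytic up to the boundary, a boundary-layer computation (\cite[Lemma~4]{Stefanov-Uhlmann08}) produces an analytic one-form $w_0$ on a collar with $f^s=D_g w_0$ there, and $w_0$ is extended to all of $M$ by solving an ODE along geodesics and verifying path-independence via analytic continuation together with the vanishing of $I_2^g f^s$ on geodesics joining boundary points. This global continuation argument is what replaces your missing Pestov step. A smaller correction: your assertion that the Schwartz kernel of $R_g$ is real analytic off the diagonal is false---$R_g$ carries the flow-out singularities $\Omega_+$ and the $E_+^*\times E_-^*$ contribution well away from the diagonal (see~\eqref{WFR}); it is only the pushed-forward operator ${\pi_m}_*R_{g_e}\pi_m^*$ whose kernel is analytic off the diagonal. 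One could plausibly argue that your $u$ is analytic on $SM$ (granting $f^s$ analytic and $I_2 f^s=0$) by combining \emph{both} radial estimates, Propositions~\ref{prop:radial_estimate} and~\ref{prop:radial_estimate_bis}, with propagation of analytic singularities, but that is not the argument you sketched, and it would still not rescue the Pestov step.
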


{\bf Notations}:  $\mc{A}(M)$ denotes the set of analytic functions on an analytic manifold $M$ with or without boundary,  
$\mc{D}'(M)$ denotes the set of distributions on a manifold $M$ without boundary, defined as the topological dual to the space $C_c^\infty(\mathring{M})$ of smooth compactly supported functions in the interior $\mathring{M}$, $H^s(M)$ denotes the $L^2$-based Sobolev space of order $s$ on a compact manifold with or without boundary. We shall denote by $d_{M}(\cdot,\cdot)$ a fixed distance on $M$, if $M$ is a compact manifold.\\

{\bf Acknowledgements}: This project has received funding from the European Research Council (ERC) under the European Union’s Horizon 2020 research and innovation programme (grant agreement No. 725967). During the beginning of this project, M.J. was supported by the European Research Council (ERC) under the European Union’s Horizon 2020 research and innovation programme (grant agreement No 787304) and was working at LPSM\footnote{Laboratoire de Probabilités, Statistique et Modélisation (LPSM), CNRS, Sorbonne Université, Université de Paris, 4, Place Jussieu, 75005, Paris, France}. Y.G.B is supported by the Agence Nationale de la Recherche through the PRC grant ADYCT (ANR-20-CE40-0017).
We thank Y. Canzani for Figure \ref{scatteringmap}.

\section{Scattering map and the normal operator}

\subsection{Dynamical preliminaries}

In this section, we introduce several important dynamical preliminaries, we refer to  \cite{Guillarmou17} for a more detailed exposition.\\
 
Let $(M,g)$ be a real-analytic Riemannian manifold with analytic strictly convex boundary $\pl M$. We denote by $SM$ its unit tangent bundle, which is also an analytic manifold with analytic boundary, and $\pi_0:SM\to M$ the projection on the base. 
We shall use  $z=(x,v)$ for the variable on $SM$, where $x=\pi_0(z)$ and $v\in S_xM$ is the tangent vector, and we denote by $\alpha_L$ the Liouville $1$-form on $SM$ defined by $\alpha_{L}(x,v)(V)=g_x(v,\mathrm{d}\pi_0(V))$ for $V\in T_{(x,v)}(SM)$. 
Finally, we denote by $-z:=(x,-v)$ the reflected vector. 

The geodesic vector field $X_g$ has analytic coefficients and its flow is denoted 
by $\varphi^g_t:SM\to SM$. The boundary $\pl SM$ splits into 
\[\pl SM= \pl_-SM\sqcup \pl_+SM\sqcup \pl_0 SM\]
where $\pl_\pm SM:=\{(x,v)\in  \pl SM\,|\, \mp g_x(v,\nu)>0\}$  and $\pl_0SM=\{(x,v)\in \pl SM\,|\ g_x(v,\nu)=0 \}$, where $\nu$ denotes the inward unit normal vector to $\pl M$. Define the measure on $\pl_\pm SM$
\[ 
\mathrm{d}\mu_\nu(x,v)=|\iota_{\pl_\pm SM}^*i_{X_g}\omega_L|
\]
where $\omega_L=\alpha_{L}\wedge \mathrm{d}\alpha_{L}^{n-1}$ is the Liouville volume form, and $\mathrm{d}\mu_L:=|\omega_L|$ the Liouville measure.
 
We define the escape time by 
\[ \tau_g: SM\to \R_+\cup \{\infty\} , \quad \tau_g(z)=\sup \{t\geq 0 \, | \, \forall s\in ]0, t], \varphi^g_s(z)\in SM \}.\]
One has $\tau_g=0$ on $\pl_+SM\cup \pl_0SM$ by strict convexity of the boundary.
The forward (resp. backward) trapped set $\Gamma^g_-$ (resp. $\Gamma_+^g$) are defined by 
\begin{equation}\label{def:Gammapm} 
\Gamma^g_\pm :=\{ z\in SM\, |\, \tau_g(\mp z)=\infty\}.
\end{equation}
By the analytic implicit function theorem and the fact that $\pl M$ is strictly convex, the map $\tau_g$ belongs to $\mc{A}(SM\setminus (\Gamma_-^g\cup \pl_0SM))$.

The trapped set is defined by 
\begin{equation}\label{def:trappedset}
\mc{K}^g:= \Gamma^g_+\cap \Gamma^g_-.
\end{equation}
It is a closed subset of $S\mathring{M}$ invariant by the flow $\varphi_t$. We assume that $\mc{K}^g$ is hyperbolic. In that case, ${\rm Vol}(\Gamma^g_\pm)=0$, see \cite[Section 2]{Guillarmou17}. We define the dual bundles $E_0^*,E_s^*,E_u^*\subset T^*(SM)$  over $\mc{K}^g$ by 
 \[ E_0^*(E_u\oplus E_s)=0, \quad E_u^*(E_u\oplus \R X_g)=0, \quad E_s^*(E_s\oplus \R X_g)=0.\]
By \cite[Lemma 1.11]{DG16}, the bundle $E_s^*$ over $\mc{K}^g$ admits a continuous extension $E_-^*$ to the set $\Gamma_-^g$, and $E_u^*$ admits a continuous extension $E_+^*$ to the set $\Gamma_+^g$, and they satisfy 
\[ 
E_\pm^*(\R X_g)=0. 
\]

As explained in \cite[Section 2]{Guillarmou17}, since $(M,g)$ has hyperbolic trapped set and no conjugate points, there is a small extension $(M_e,g_e)$ of $(M,g)$ which has the same properties,  with the same trapped set $\mc{K}^{g_e}=\mc{K}^{g}$. The sets $\Gamma_\pm^{g_e}$ associated with $g_e$ are extensions of $\Gamma_\pm^g$, and we still denote by $E_\pm^*$ the bundles associated with $g_e$, which are extensions of those associated with $g$.
Finally, since $(M,g)$ is analytic with analytic boundary, one can choose $(M_e,g_e)$ to be also analytic with analytic boundary.
 
 Next we define the scattering map.
 \begin{definition}
The scattering map $S_g: \pl_-SM\setminus \Gamma^g_-\to \pl_+SM\setminus \Gamma^g_+$ is defined by 
\[
S_g(z):=\varphi^g_{\tau_g(z)}(z).
\] 
The scattering operator $\mc{S}_g: C_c^\infty(\pl_+SM\setminus \Gamma^g_+)\to C_c^\infty(\pl_-SM\setminus \Gamma^g_-)$ is defined by 
\[
\mc{S}_gf(z)= f(S_g(z)).
\]
By \cite[Lemma 3.4]{Guillarmou17}, it extends as a unitary map from $L^2(\pl_+SM,\mathrm{d}\mu_\nu)$ to $L^2(\pl_-SM,\mathrm{d}\mu_\nu)$.
 \end{definition}
 
We also define the backward resolvent for the flow 
\begin{equation}\label{defRg} 
R_g: C_c^\infty(S\mathring{M}\setminus \Gamma_-^g)\to C^\infty(SM), \quad R_gf(z)=\int_{-\tau_g(-z)}^0f(\varphi^g_{t}(z))\mathrm{d}t.
\end{equation}
This operator yields a solution to the boundary value problem 
\[
X_gR_gf=f,  \quad R_gf|_{\pl_-SM}=0,\quad \forall f\in C_c^\infty(S\mathring{M} \setminus \Gamma^g_-).
\]
According to \cite[Proposition 4.2]{Guillarmou17}, $R_g$ extends as a bounded map
\begin{equation}\label{extensionRg}
R_g :L^\infty(SM)\to L^p(SM), \quad \forall p\in [1,\infty),
\end{equation}
and $X_gR_g={\rm Id}$ in $\mc{D}'(S\mathring{M})$ with $(R_gf)|_{\pl_-SM}=0$ if $f\in C_c^\infty(S\mathring{M})$. One can also define $R_{g_e}$ the resolvent for the extension $(M_e,g_e)$, and it is direct to see, by convexity of $\pl M$, that $(R_{g_e}f)|_{SM}=R_gf$ if $\supp(f)\subset SM$.
A closely related operator is the \emph{X-ray transform}, which is defined for $f\in C_c^\infty(SM\setminus (\Gamma_+^g \cup \Gamma_-^g))$ by the formula
\begin{equation}\label{eq:def_Xray}
I_gf(y):=R_gf(y)=\int_{-\tau_g(-y)}^{0}f(\varphi^g_{t}(y))\mathrm{d}t \in C_c^\infty(\pl_+SM\setminus\Gamma^g_+ ).
\end{equation}
It satisfies $I_gX_gf=0$. Its adjoint using the $\mathrm{d}\mu_\nu$ and $\mathrm{d}\mu$ measures is denoted $I_g^*$, satisfies $X_gI_g^*=0$ on $\mc{D}'(\pl_+SM\setminus\Gamma_+^g)$. For $u\in C_c^\infty(\pl_+SM\setminus\Gamma_+^g )$, it is given for $z\in SM\setminus \Gamma_-^g$ by 
\begin{equation}\label{Ig*formula}
I_g^*u(z)=u(\varphi^g_{\tau_g(z)}(z)).
\end{equation}

 By \cite[Lemma 5.1]{Guillarmou17} the operator $I_g$ extends as a continuous map $I_g:L^p(SM)\to L^2(\pl_-SM)$ for all $p>2$.

By abuse of notation we also denote by $(z,z')\mapsto R_g(z,z')$ (resp.$(z,z')\mapsto R_{g_e}(z,z')$) the Schwartz kernel 
of $R_g$ (resp. $R_{g_e}$) which is viewed as an element in $\mathcal{D}'(S\mathring{M} \times S\mathring{M})$ (resp. $\mathcal{D}'(S\mathring{M}_e \times S\mathring{M}_e)$). We have $R_g(\cdot,\cdot)=R_{g_e}|_{ S\mathring{M}\times S\mathring{M}}$. The Schwartz kernel of $I_g$ is $R_{g_e}|_{\pl_+SM \times S\mathring{M}}$.
 
We finally remark that the scattering map has a Schwartz kernel given by the restriction of the resolvent kernel of $X_g$ to $\pl_\pm SM$: 
\[ 
\mc{S}_g = R_{g_e}|_{\partial_+ SM \times \partial_- SM}.
\] 
(See \cite[Proposition 3.2]{Chaubet-21} in dimension 2 and \cite{Cekic-Guillarmou-Lefeuvre22} in general). 
In comparison with the Calder\'on problem, this is similar to the relation between the Dirichlet-to-Neumann map and the Green kernel of the Laplacian with Dirichlet condition.

We close this section by a description of the $C^\infty$ wave front set of the Schwartz kernel of $R_{g_e}$, taken from \cite{DG16}. This will be important in section \ref{sec:FBIresults}. We have
\begin{equation}\label{WFR} 
\begin{gathered}
{\rm WF}(R_{g_e})\subset N^*\Delta(SM_e) \cup \Omega_+(SM_e)\cup (E_+^*\x E_-^*),
\end{gathered}
\end{equation} 
where $N^*\Delta(SM_e)$ is the conormal bundle to the diagonal 
$\Delta(SM_e)$ of $SM_e\x SM_e$,  
\[
\begin{split}
\Omega_\pm(SM_e):= \{(\varphi^g_{t}(z),(d\varphi^g_{t}(z)^{-1})^T\xi ;\ z,-\xi)\in \, & T^*(SM_e\x SM_e)\, | \,  \\ &  \,\, \pm t\geq 0,\,\, \xi(X_g(z))=0\}.
\end{split}
\] 
The dual resolvents $R_g^*$ and $R_{g_e}^*$  are also well-defined (for example as maps $L^2\to L^1$ by \eqref{extensionRg}) and have Schwartz kernel 
\begin{equation}\label{integralkerneldual}
 R_g^*(z,z')=R_g(z',z), \quad R_{g_e}^*(z,z')=R_{g_e}(z',z).
 \end{equation}
They are given by the expressions 
\[ 
R_g^*f(z)=\int_{0}^{\tau_g(z)}f(\varphi_t^g(z))\mathrm{d}t, \quad  R_{g_e}^*f(z)=\int_{0}^{\tau_{g_e}(z)}f(\varphi_t^{g_e}(z))\mathrm{d}t,
\]
they satisfy $-X_gR_g^*f=f$ in $\mc{D}'(S\mathring{M})$ with $R_g^*f|_{\pl_+SM}=0$ for $f\in C^\infty(SM)$ 
(resp. $-X_{g_e}R_{g_e}^*f=f$ in $S\mathring{M}_e$ if $f\in C^\infty(SM_e)$) and the wave-front set of the Schwartz kernel of $R_{g_e}^*$ satisfies 
\begin{equation}\label{WFR*}
{\rm WF}(R_{g_e}^*)\subset N^*\Delta(SM_e) \cup \Omega_-(SM_e)\cup (E_-^*\x E_+^*).
\end{equation}

\subsection{Determination of the metric near the boundary}

We start with a normal form for the metric near the boundary:
\begin{lemma}\label{normalform}
Let $(M,g)$ be an analytic Riemannian manifold with analytic strictly convex boundary. There exists an analytic diffeomorphism 
 $\psi:[0,\epsilon)_r \times \pl M\to U\subset M$, called normal form for $g$, such that
\[ \psi^*g=\mathrm{d}r^2+h(r)\]
where $r\mapsto h(r)\in \mc{A}(\partial M;S^2T^*M)$ is a one parameter analytic family of analytic metrics on $\pl M$.
\end{lemma}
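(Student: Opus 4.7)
The plan is to construct $\psi$ as the normal exponential map from the boundary. Let $\nu \in \mc{A}(\pl M; TM|_{\pl M})$ be the inward unit normal vector field to $\pl M$, which is well-defined and analytic because $g$ and $\pl M$ are analytic. Define
\[
\psi: [0,\epsilon) \times \pl M \to M, \qquad \psi(r,y) := \exp^g_y(r\,\nu(y)).
\]
Analyticity of $g$ implies that the geodesic flow has analytic coefficients, hence $\exp^g$ is analytic on its domain, and so is $\psi$. At $r=0$, the map $y\mapsto \psi(0,y)$ is the identity on $\pl M$, while $\pl_r\psi(0,y)=\nu(y)$ is transverse to $T_y\pl M$. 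Hence $\mathrm{d}\psi$ is an isomorphism along $\{0\}\x \pl M$, and by the analytic inverse function theorem combined with compactness of $\pl M$, there exists $\epsilon>0$ for which $\psi$ is an analytic diffeomorphism onto an open neighborhood $U$ of $\pl M$ in $M$. Since $\nu$ points inward, $\psi$ takes values in $M$ for $r\geq 0$ small.

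I then have to verify that $\psi^*g$ has the required product form. The curve $r\mapsto \psi(r,y)$ is by construction a unit-speed geodesic, so $g(\pl_r,\pl_r)=|\nu(y)|_g^2=1$. For the cross terms, the key ingredient is the Gauss lemma. Choosing local analytic coordinates $(y^i)$ on $\pl M$, the fields $\pl_r$ and $\pl_{y^i}$ commute, and since $\pl_r$ is a geodesic field $\nabla_{\pl_r}\pl_r=0$, so
\[
\pl_r\, g(\pl_r,\pl_{y^i}) = g(\pl_r,\nabla_{\pl_r}\pl_{y^i}) = g(\pl_r,\nabla_{\pl_{y^i}}\pl_r) = \tfrac{1}{2}\pl_{y^i} g(\pl_r,\pl_r) = 0.
\]
At $r=0$, $g(\pl_r,\pl_{y^i})|_{r=0}=g(\nu,\pl_{y^i})=0$ by definition of $\nu$, so $g(\pl_r,\pl_{y^i})\equiv 0$ on $U$. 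Therefore $\psi^*g = \mathrm{d}r^2+h_{ij}(r,y)\,\mathrm{d}y^i\mathrm{d}y^j$ where, for each $r\in[0,\epsilon)$, $h(r)$ is a Riemannian metric on $\pl M$.

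Finally, analyticity of $r\mapsto h(r)$ as a curve in $\mc{A}(\pl M; S^2T^*\pl M)$ is inherited from the analyticity of $\psi$: the coefficients $h_{ij}(r,y)=g(\mathrm{d}\psi\cdot \pl_{y^i},\mathrm{d}\psi\cdot \pl_{y^j})$ are compositions and contractions of analytic maps, hence jointly analytic in $(r,y)$, which gives the claimed analytic one-parameter family. Strict convexity of $\pl M$ is not needed for the local statement itself, only to guarantee (via the standing assumptions) that $\nu$ is unambiguously inward-pointing, so nothing beyond the standard Gauss-lemma argument is required. The only place one must be slightly careful is the analytic inverse function theorem, but since $\pl M$ is compact and $\mathrm{d}\psi$ is invertible along $\{0\}\x\pl M$, a uniform $\epsilon$ exists, and this is the whole content of the proof.
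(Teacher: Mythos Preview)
Your proof is correct and is essentially the same construction as the paper's: the paper defines $\psi(r,y)=e^{r\nabla^g q_g}(y)$ with $q_g(x)=d_g(x,\pl M)$, and since the integral curves of $\nabla^g q_g$ are exactly the unit-speed normal geodesics, this coincides with your normal exponential map $\exp^g_y(r\nu(y))$. Your write-up is in fact more detailed, making the Gauss-lemma verification of the product form explicit rather than leaving it implicit.
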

\begin{proof}
It suffices to use the map 
$\psi(r,y)=\exp_{y}(r\nu(y))$ for $\nu$ the inward unit normal vector to $\pl M$. This map is analytic map since $g$ is analytic. 
\end{proof}

Using a standard argument, we show that $S_g$ determines the Taylor expansion of the map $r\mapsto h(r)$ at $r=0$ and so it determines $h(r)$ for $r$ near $0$ by unique continuation:
\begin{lemma}\label{determinationhr}
Assume that $(M_1,g_1)$ and $(M_2,g_2)$ are two analytic Riemannian manifolds with strictly convex analytic boundary and no conjugate points, and let $\psi_1$ and $\psi_2$
their normal form diffeomorphisms and denote by $\psi_i^*g_i=\mathrm{d}r^2+h_i(r)$. Assume that $(\pl M_1,h_1(0))=(\pl M_2,h_2(0))$ 
and that ${S}_{g_1}={S}_{g_2}$ (i.e. $M_1$ and $M_2$ have the same scattering data). Then $h_1(r)=h_2(r)$ near $r=0$.
\end{lemma}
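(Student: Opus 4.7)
The plan is to reduce the claim to equality of Taylor jets of $h_1$ and $h_2$ at $r = 0$, and then to extract each jet from the scattering map via short nearly-tangential geodesics. Combining the normal-form diffeomorphisms $\psi_i$ of Lemma~\ref{normalform} with the boundary isometry $(\pl M_1, h_1(0)) = (\pl M_2, h_2(0))$, I identify $h_1$ and $h_2$ with two real-analytic one-parameter families of metrics on a common analytic manifold $N$, with $h_1(0) = h_2(0)$. Since $r \mapsto h_i(r)$ is real-analytic on $[0,\epsilon)$, the identity theorem reduces the lemma to showing $\pl_r^k h_1(0) = \pl_r^k h_2(0)$ for every $k \geq 1$.

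To recover these jets, fix $y_0 \in N$ and a unit vector $v_0 \in T_{y_0} N$ for $h(0)$, and for small $s > 0$ set $w(s) := (\sin s)\, \pl_r + (\cos s)\, v_0 \in \pl_- SM_i$; this is the same inward-pointing unit vector for both $i = 1, 2$ under the identifications above. Strict convexity of $\pl M_i$ implies that the $g_i$-geodesic $\gamma_i^s$ issued from $(y_0, w(s))$ re-exits $M_i$ at a short positive time $\tau_i(s)$ depending analytically on $s$ (since the initial data avoids $\Gamma_-^{g_i}$) and vanishing at $s = 0^+$. Taylor-expanding in $t$ the normal-form geodesic equation
\[
\ddot r = \tfrac{1}{2}\, \pl_r h_i(r)(\dot y, \dot y),
\]
together with the companion equations for $\dot y^\alpha$, and solving $r(\tau_i(s)) = 0$ by the implicit function theorem, yields an analytic expansion
\[
S_{g_i}(y_0, w(s)) = \sum_{k \geq 1} s^k\, F_k^{(i)}(y_0, v_0),
\]
whose coefficients $F_k^{(i)}$ are polynomial expressions in the jets $\pl_r^\ell h_i(0)$, $\ell \leq k$, with the top-order derivative $\pl_r^k h_i(0)$ entering linearly. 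The hypothesis $S_{g_1} = S_{g_2}$ forces $F_k^{(1)} = F_k^{(2)}$ for every $k \geq 1$ and every $(y_0, v_0)$.

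An induction on $k$ then yields $\pl_r^k h_1(0) = \pl_r^k h_2(0)$. At $k = 1$, strict convexity makes $F_1^{(i)}$ depend non-degenerately on $\pl_r h_i(0)(v_0, v_0)$, which is (up to a sign) twice the second fundamental form of $\pl M_i$; polarising in $v_0$ identifies the two forms. Inductively, once $\pl_r^\ell h_1(0) = \pl_r^\ell h_2(0)$ for $\ell < k$, the identity $F_k^{(1)} = F_k^{(2)}$ reduces to a linear equation in $\pl_r^k h_1(0) - \pl_r^k h_2(0)$, and polarising in $v_0$ closes the step. Analyticity of $r \mapsto h_i(r)$ then promotes the vanishing of every Taylor coefficient of $h_1 - h_2$ at $r = 0$ to the desired identity on a neighbourhood of $r = 0$. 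The main obstacle is precisely the verification of the non-degenerate appearance of $\pl_r^k h_i(0)$ in $F_k^{(i)}$: this derivative first enters the $k$-th Taylor coefficient of $\ddot r$ and, after double integration in $t$ and evaluation at $t = \tau_i(s) \sim c_1 s$ with $c_1 \neq 0$ by strict convexity, contributes a non-zero multiple of $\pl_r^k h_i(0)(v_0, v_0)\, s^k$ to the normal component of $S_{g_i}(y_0, w(s))$. This is careful but standard bookkeeping in boundary-determination arguments under strict convexity.
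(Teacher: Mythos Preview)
Your strategy---Taylor-expand the scattering map along a one-parameter family of nearly tangent geodesics and recover the normal jets $\partial_r^k h(0)$ inductively---is sound and is essentially the content of the Lassas--Sharafutdinov--Uhlmann boundary determination that the paper invokes. The paper, however, takes a different route: it first shows that $S_{g_1}=S_{g_2}$ determines the boundary distance function near the diagonal (by identifying the short geodesic from $x$ to a nearby point $\gamma(t)$ as the one whose initial vector has \emph{smallest} normal component, and then integrating the Gauss-lemma identity $\nabla^{g_i}d_{g_i}(x,\cdot)|_{\gamma(t)}=S_{g_i}(x,v_i(t))$), and only then cites \cite{Lassas:2003} as a black box for the jet determination. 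Your argument bypasses the boundary distance entirely and attacks the jets directly; this is more self-contained but requires you to carry out the expansion yourself rather than quote a theorem.

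There is, however, a concrete error in your power counting. You claim that $\partial_r^k h_i(0)$ enters the coefficient $F_k^{(i)}$ of $s^k$. But along your family one has $\tau_i(s)\sim c_1 s$ while the maximal height is $r_{\max}\sim s^2$, so in the geodesic equation $\ddot r=\tfrac12\partial_r h(r)(\dot y,\dot y)$ the term carrying $\partial_r^k h(0)$ is $\tfrac{r^{k-1}}{2(k-1)!}\partial_r^k h(0)(\dot y,\dot y)\sim s^{2(k-1)}$; after one time-integration and evaluation at $t=\tau_i(s)\sim s$ this contributes to the normal component $\dot r(\tau_i(s))$ at order $s^{2k-1}$, not $s^k$. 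The inductive scheme still closes---only jets of order $\leq k$ appear by order $s^{2k-1}$, the top one linearly with coefficient proportional to $\int_0^{\tau}r(t)^{k-1}\,\mathrm{d}t>0$---but your stated indexing is wrong for $k\geq 2$. Since this non-degeneracy is exactly the ``main obstacle'' you flag, and you dismiss it as ``careful but standard bookkeeping'' while getting the bookkeeping wrong, the proof as written has a gap. Either correct the orders and actually verify the non-vanishing of the relevant coefficient, or (as the paper does) reduce to the boundary distance and cite \cite{Lassas:2003}.
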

\begin{proof} 
Let us denote by $(N,h):=(\pl M_1,h_1(0))=(\pl M_2,h_2(0))$. Let $b_{i}\in C^0(N\times N)$ be the boundary distance function defined by 
$b_{i}(y,y'):=d_{g_i}(y,y')$ where $d_{g_i}\in C^0(M_i\times M_i)$ is the Riemannian distance of $(M_i,g_i)$. 
First, we will show that the identity ${S}_{g_1}={S}_{g_2}$ implies that the boundary distances $b_{1}$ and $b_{2}$ for $g_1$ and $g_2$ agree near the diagonal of $N\times N$. Then, we can apply Theorem 2.1 of Lassas--Sharafutdinov--Uhlmann \cite{Lassas:2003}, which says that  $h_1(r)-h_2(r)=\mc{O}(r^\infty)$ at $r=0$ (since their argument is purely local near the boundary) and deduce that $h_1(r)=h_2(r)$ by analyticity.
 
To show that $b_{1}=b_{2}$ near the diagonal if ${S}_{g_1}={S}_{g_2}$, first choose $\eps>0$ small and denote the injectivity radius of $g_i$ by $r_i$, $i=1,2$. If $x,x'\in N$ are such that their  distance in $(N,h)$ satisfies $d_{h}(x,x')<\eps$ with $\eps$ small enough, then $d_{g_i}(x,x')<r_i$. Pick such $x,x'\in N$. 
Take the minimal geodesic curve $\gamma:[0,1]\to N$ in $(N,h)$ so that $\gamma(0)=x$ and $\gamma(1)=x'$, in particular $d_{g_i}(\gamma(t),x)<\eps$. We define $v_i(t)$ to be the unit vector in the direction of $\gamma(t)$, that is $(\exp_{x}^{g_i})^{-1}(\gamma(t))/|(\exp_{x}^{g_i})^{-1}(\gamma(t))|_{g_{i,x}}\in S_xM$, so that $b_i(x,\gamma(t))=\tau_{g_i}(x,v_i(t))$ (we extend $v_i(t)$ by continuity at $t = 0$). 

We claim that $v_i(t)$ is the unique vector that has the smallest orthogonal projection to $(TN)^\perp\subset TM$ among all vectors $(x,v)\in \pl_-SM_i$ satisfying $\pi_0(S_{g_i}(x,v))=\gamma(t)$, i.e
\[
 |g_i(v_i(t),\nu_i)|=\min \big\{ |g_i(v,\nu_i)|\, \big| \, v\in \pl_-S_xM_i,  \pi_0(S_{g_i}(x,v))=\gamma(t) \big\},
\]
 with $\nu_i$ the inward unit normal at $N$ in $M_i$. In particular this claim implies that $v_i(t)$ is entirely determined by $h_i(0)$ and ${S}_{g_i}$, and since 
\[
b_i(x,x')=\int_0^1 \pl_t( d_{g_i}(x,\gamma(t)))dt=\int_0^1 g_i(\nabla^{g_i}d_{g_i}(x,\cdot)|_{\gamma(t)},\dot{\gamma}(t))dt,
\]
wih $\nabla^{g_i}d_{g_i}(x,\cdot)|_{\gamma(t)}=S_{g_i}(x,v_i(t))$ (by Gauss Lemma), we conclude that $b_1=b_2$ if ${S}_{g_1}={S}_{g_2}$.

Let us prove the claim above. We start by observing that the projection $|g_i(v_i(t),\nu_i)|$ is bounded by $C \epsilon$ for some $C > 0$ that does not depend on $\eps$. 
Let us now consider other geodesic curves with endpoints $x$ and $\gamma(t)$. Due to the absence of conjugate points, there is exactly one such curve in each homotopy class of curves with endpoints $(x,\gamma(t))$, so if $\tilde{v}_i(t)\neq v_i(t)$ is such that $\pi_0(S_{g_i}(x,\tilde{v}_i(t))) = \gamma(t)$, the corresponding curve cannot be homotopic to $\gamma$ and therefore its length $\tau_{g_i}(x,\tilde{v}_i(t))$ must be larger than $r_i$.  
On the other hand,  by strict convexity of the boundary, there is $C>0$ such that for all $\delta>0$ small and $(x,v)\in \pl_-SM_i$ such that $|g_i(v,\nu_i)|<\delta$,  
 one has $ \tau_{g_i}(x,v)\leq C|g_i(v,\nu_i)|$ (\cite[Lemma 4.1.2.]{Sharfutdinov-Book}). We deduce that $|g_i(\tilde{v}_i(t),\nu_i)|> \delta$ since otherwise one would have 
 \[ 
 r_i\leq \tau_{g_i}(x,\tilde{v}_i(t))\leq C|g_i(\tilde{v}_i(t),\nu_i)| \leq C \delta
 \]
 which is a contradiction if $\delta>0$ is small enough. Taking $\epsilon$ small enough so that $C \epsilon < \delta$, the claim is then proved and the proof is complete.
\end{proof}

\subsection{The normal operator}

Let us now define the normal operator $\Pi^g_0$ that was introduced in \cite{Guillarmou17}. Let $\pi_0^*:C^\infty(M)\to C^\infty(SM)$ be the pullback by $\pi_0:SM\to M$ (the projection on the base). It is continuous as a map $\pi_0^*:\mc{A}(M)\to \mc{A}(SM)$. Consider also the dual map, called pushforward, ${\pi_0}_*:\mc{D}'(S\mathring{M})\to \mc{D}'(\mathring{M})$ and we use the same notations for the corresponding operators on the extended manifolds $SM_e$  and $M_e$. Then we define 
\[ \Pi^g_0 : C^{\infty}_c(\mathring{M})\to \mc{D}'(\mathring{M}), \quad \Pi^g_0:={\pi_0}_*R_g\pi_0^*.\]
The same definition holds on the extension $M_e$ for the corresponding operator $\Pi_0^{g_e}$, and since $R_g$ is the restriction of $R_{g_e}$ on functions supported in $SM$, we directly deduce that $(\Pi_0^{g_e}f)|_M=\Pi_0^gf$ if $\supp(f)\subset \mathring{M}$. Finally, using that $\pi_0(x,-v)=\pi_0(x,v)$, 
one also has $ \Pi^{g_e}_0={\pi_0}_*R_{g_e}^*\pi_0^*$ and thus (this fact also follows from the identity $\Pi_0^{g_e}={\pi_0}_*{I^{g_e}}^*I^{g_e}\pi_0^*$, see \cite[Section 5.1]{Guillarmou17})
\begin{equation}\label{eq:symetrique}
(\Pi_0^{g_e})^*=\Pi_0^{g_e}.
\end{equation}
The operator $I_0^{g_e}:=I^{g_e}\pi_0^*$ is called the X-ray transform on $M$, it is injective by \cite[Theorem 5]{Guillarmou17}.  

We recall from  \cite[Proposition 5.7]{Guillarmou17}:
\begin{lemma}\label{lem:asymptPi0}
If $(M,g)$ is a smooth manifold of Anosov type, 
then $\Pi_0^g\in \Psi^{-1}(\mathring{M})$
 is an elliptic pseudo-differential operator of order $-1$ on $\mathring{M}$ with principal symbol 
$\sigma(\Pi_0^g)=c_n|\xi|_g^{-1}$ for some $c_n>0$ depending only on $n=\dim M$. It is moreover the restriction to $M$ of the elliptic pseudo-differential operator $\Pi_0^{g_e}\in \Psi^{-1}(\mathring{M}_e)$.
For each $x_0\in \mathring{M}_e$, its Schwartz kernel has asymptotic behaviour as $x\to x_0$
\begin{equation}\label{asymptPi0}
 \Pi_0^{g_e}(x,x_0)\sim c_n'd_{g_e}(x,x_0)^{-(n-1)}, \quad c_n'\not=0
 \end{equation}
where $d_{g_e}$ denotes the Riemannian distance on $(M_e,g_e)$. Similarly for any $v\in T_{x_0}M$, we have as $x\to x_0$
\begin{equation}\label{asymptDPi0}
 D_{x_0}\Pi_0^{g_e}(x_0,x).v\sim -(n-1)c_n' \Big(D_{x_0}(d_{g_e}(x,x_0)).v\Big) d_{g_e}(x,x_0)^{-n}.
 \end{equation}
\end{lemma}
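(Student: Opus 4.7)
The plan is to establish this result (from \cite[Proposition 5.7]{Guillarmou17}) by decomposing the Schwartz kernel of $\Pi_0^{g_e}$ into an explicit short-time piece, which carries the pseudodifferential structure and yields the diagonal asymptotics, and a long-time remainder that is smooth off the diagonal. Since $\Pi_0^g = \Pi_0^{g_e}|_{\mathring{M}}$ on functions supported in $\mathring{M}$, it suffices to work on the open extension $M_e$. Combining $\Pi_0^{g_e} = {\pi_0}_* R_{g_e} \pi_0^*$ with the explicit formula \eqref{defRg} and performing the substitution $s = -t$, $v_0 = -v$ yields
\begin{equation*}
\Pi_0^{g_e} f(x) = \int_{S_x M_e} \int_0^{\tau_{g_e}(x, v_0)} f(\exp_x^{g_e}(s v_0)) \, \mathrm{d} s \, \mathrm{d} v_0.
\end{equation*}
Given $x_0 \in \mathring{M}_e$, fix $\epsilon > 0$ smaller than the injectivity radius in a neighborhood of $x_0$ and a cutoff $\chi \in C_c^\infty((-2\epsilon, 2\epsilon))$ equal to $1$ near $0$, and split $\Pi_0^{g_e} = \Pi_0^{\mathrm{short}} + \Pi_0^{\mathrm{long}}$ by multiplying the integrand by $\chi(s)$ or $1 - \chi(s)$.

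For the short-time piece, the map $(s, v_0) \mapsto y = \exp_x^{g_e}(s v_0)$ is a diffeomorphism onto a neighborhood of $x$, with Riemannian Jacobian $s^{n-1} J(s, v_0; x)$, where $J(s, v_0; x) = 1 + O(s^2)$ is a Jacobi field determinant. Hence
\begin{equation*}
\Pi_0^{\mathrm{short}}(x, y) = \frac{\chi(d_{g_e}(x, y))}{d_{g_e}(x, y)^{n-1}\, J(d_{g_e}(x, y), v_0(x, y); x)},
\end{equation*}
which gives \eqref{asymptPi0} with a nonzero constant $c_n'$; differentiating in $x_0$ and invoking Gauss's lemma (so that $D_{x_0} d_{g_e}(x, x_0)$ is minus the unit tangent at $x_0$ to the geodesic from $x$ to $x_0$) yields \eqref{asymptDPi0}. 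The conormal singularity of this kernel at the diagonal shows that $\Pi_0^{\mathrm{short}}$ is a classical pseudodifferential operator of order $-1$, and Fourier-transforming $|y|^{-(n-1)}$ in normal coordinates at $x$ produces its principal symbol $c_n |\xi|_{g_e}^{-1}$ with $c_n > 0$ the Riesz-kernel constant.

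The main obstacle is to show that $\Pi_0^{\mathrm{long}}$ has smooth kernel off the diagonal. I would argue via the wave front bound \eqref{WFR}: the compositions by $\pi_0^*$ and ${\pi_0}_*$ retain only those wave front covectors on $T^*(SM_e \times SM_e)$ that lie in $\mc{V}^\perp$ on each factor, where $\mc{V} = \ker d\pi_0$ is the vertical bundle. The conormal piece $N^* \Delta(SM_e)$ lives in $\{t = 0\}$ and is already absorbed by $\Pi_0^{\mathrm{short}}$. For $\Omega_+$, a nonzero covector $(\varphi_t^{g_e}(z), (d\varphi_t^{-1})^T \xi; z, -\xi)$ with $\xi(X_g) = 0$, $t > 0$, and $\xi \in \mc{V}^\perp$ would also need to satisfy $d\varphi_{-t}(\mc{V}_{\varphi_t^{g_e}(z)}) \subset \ker \xi$; a Jacobi field calculation shows that the horizontal projection of $d\varphi_{-t}(\mc{V}_{\varphi_t^{g_e}(z)})$ is the full orthogonal $v^\perp \subset T_{\pi_0(z)} M_e$ exactly when $\pi_0(z)$ and $\pi_0(\varphi_t^{g_e}(z))$ are not conjugate, which by assumption forces $\xi = 0$. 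For the trapped piece $E_+^* \times E_-^*$, one uses that $E_\pm^*$ annihilate $\R X_g$ by construction, while hyperbolicity of $\mc K^g$ combined with absence of conjugate points forces $E_u$ and $E_s$ to be transverse to $\mc{V} \oplus \R X_g$ (stable and unstable Jacobi fields project nontrivially to directions perpendicular to the flow). Dually, $E_\pm^* \cap \mc{V}^\perp = 0$, so the trapped-set wave front is eliminated by the push-forward / pull-back pairing. Combining smoothness of $\Pi_0^{\mathrm{long}}$ off the diagonal with the explicit form of $\Pi_0^{\mathrm{short}}$ completes the proof.
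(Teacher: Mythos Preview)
The paper does not give its own proof of this lemma; it is stated as a recollection of \cite[Proposition 5.7]{Guillarmou17}. Your argument is correct and follows precisely the strategy of that reference: split $\Pi_0^{g_e}$ into a short-time piece (an explicit Riesz-type kernel $d_{g_e}(x,y)^{-(n-1)}$ times a smooth Jacobian factor, giving the $\Psi^{-1}$ structure, the principal symbol, and both diagonal asymptotics) and a long-time piece whose smoothness follows from the wave front bound \eqref{WFR} together with the transversality statements $H^*\cap E_\pm^*=0$ and $H^*\cap (d\varphi_t^{-1})^\top H^*\cap \ker p=0$ (no conjugate points). These are exactly the facts recorded later in the paper as Lemma \ref{lemma:geometric}, and the same decomposition reappears in the paper's proof of Proposition \ref{proposition:analytic_pseudo}, where it is written in the cleaner operator form $R_{g_e}^*=e^{3\eps X_{g_e}}R_{g_e}^*+\int_0^{3\eps}e^{tX_{g_e}}\,dt$.

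Two small remarks on presentation. First, you want $\Pi_0^{\mathrm{long}}$ to be smooth \emph{everywhere}, not merely off the diagonal: near the diagonal there can still be long geodesics returning close to $x_0$. Your wave front argument already handles this, since once the $N^*\Delta$ contribution is removed by the cutoff at $s=0$ the remaining pieces $\Omega_+$ and $E_+^*\times E_-^*$ are killed by $\pi_0^*,{\pi_0}_*$ globally, not just away from the diagonal. Second, to justify rigorously that inserting the smooth cutoff $1-\chi(s)$ removes $N^*\Delta$ from $\WF$ of the resolvent kernel, it is cleanest to use the operator identity above rather than multiplying the distributional kernel by a function of the (implicit) flow-time parameter; the two formulations are equivalent but the operator version avoids any ambiguity.
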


If in addition the metric is analytic, we show the following stronger result, the proof of which is deferred to Section \ref{sec:FBIresults}. It is the main technical result of the paper. 
\begin{proposition}\label{analyticityofPi0}
If $(M_e,g_e)$ is an analytic manifold of Anosov type, then $(x,x')\mapsto \Pi^{g_e}_0(x,x')$ is analytic in $\mathring{M}_e \times \mathring{M}_e \setminus \{\rm diag\}$.
\end{proposition}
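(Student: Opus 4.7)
The plan is to prove the stronger statement that the analytic wavefront set $\WF_a(\Pi_0^{g_e})$ of the Schwartz kernel, viewed as a distribution on $\mathring{M}_e \times \mathring{M}_e$, is contained in the conormal bundle $N^*\Delta(M_e)$ of the diagonal, which is equivalent to the claimed analyticity off-diagonal. The argument mirrors the smooth microlocal analysis of $\Pi_0^{g_e}$ implicit in Lemma \ref{lem:asymptPi0} and carried out in \cite[Section 5]{Guillarmou17}, but lifted to the analytic category.

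The first and main step is to establish the analytic analog of the smooth wavefront bound \eqref{WFR}, namely
\[
\WF_a(R_{g_e}) \subset N^*\Delta(SM_e) \cup \Omega_+(SM_e) \cup (E_+^* \times E_-^*).
\]
This is precisely the content of Section \ref{sec:FBIresults} and relies on the FBI-based analytic microlocal calculus of \cite{BJ20}. The three ingredients are: analytic elliptic regularity for the transport operator $X_{g_e}$ away from its characteristic set; propagation of analytic singularities along the lifted geodesic flow on $T^*(SM_e)$, which pushes $N^*\Delta(SM_e)$ into $\Omega_+(SM_e)$ via the equation $X_{g_e} R_{g_e} = \Id$; and an analytic radial sink estimate at $E_+^*$, provided by Proposition \ref{prop:radial_estimate}. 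Combining these yields the bound exactly as in the $C^\infty$ proof of \cite{DG16}.

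The next step is to combine this with the composition rules for analytic wavefront sets under the analytic submersion $\pi_0 : SM_e \to M_e$. The Schwartz kernels of $\pi_0^*$ and $\pi_{0*}$ are distributions supported on the graph of $\pi_0$, so their wavefront sets force the $SM_e$-side covectors to lie in the horizontal annihilator $\mc{V}^\perp$ of the vertical bundle $\mc{V} = \ker d\pi_0$. Hence a point $(x, \xi; x', \xi') \in \WF_a(\Pi_0^{g_e})$ requires the existence of $v \in S_xM_e$, $v' \in S_{x'}M_e$ such that $((x,v), \tilde\xi;(x',v'), \tilde\xi') \in \WF_a(R_{g_e})$ for the horizontal lifts $\tilde\xi, \tilde\xi'$ of $\xi, \xi'$. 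For $x \ne x'$, each of the three possible contributions is ruled out: the conormal part $N^*\Delta(SM_e)$ lies over the diagonal of $SM_e$ and thus projects to $\Delta(M_e)$; the Lagrangian $\Omega_+(SM_e)$ would require $(x,v) = \varphi^{g_e}_t(x',v')$ at some $t > 0$ and, combined with horizontality of both covectors and $\xi(X_{g_e}) = 0$, would force $d\varphi^{g_e}_t(\mc{V}(x',v')) \cap \mc{V}(x,v) \ne 0$, which is precisely the definition of $x, x'$ being conjugate and is excluded by hypothesis; finally, the $E_+^* \times E_-^*$ component over $\mc{K}^{g_e}$ is killed by the transversality $E_\pm^* \cap \mc{V}^\perp = 0$, a consequence of the hyperbolicity of $\mc{K}^{g_e}$ combined with the absence of conjugate points as established in \cite{Guillarmou17}.

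The main obstacle is clearly the first step, and specifically the analytic radial estimate at $E_+^*$. Analytic radial estimates are substantially more delicate than their smooth counterparts: one must control the FBI transform against sharp exponential weights and track analytic symbol bounds as one propagates toward or away from the radial source/sink on $T^*(SM_e)$, which is exactly what the analytic calculus of \cite{BJ20} is designed to handle. Once the wavefront bound on $R_{g_e}$ is available, the rest of the argument is a formal exercise in analytic wavefront set composition, entirely parallel to its smooth counterpart.
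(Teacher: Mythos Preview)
Your overall strategy is sound and uses the same geometric ingredients as the paper (the transversality $H^*\cap E_\pm^*=\{0\}$ and the no-conjugate-points condition of Lemma \ref{lemma:geometric}), but the execution differs from the paper's in a way that matters, and your justification of the first step has a gap.

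You claim that Section \ref{sec:FBIresults} establishes the analytic kernel bound
\[
\WF_a(R_{g_e})\subset N^*\Delta(SM_e)\cup\Omega_+(SM_e)\cup(E_+^*\times E_-^*).
\]
It does not. Proposition \ref{prop:radial_estimate} is a \emph{one-variable} radial estimate: it concerns a distribution $u$ on $SM_e$ with $Xu$ compactly supported and $\WF(u)\cap E_+^*=\emptyset$, and concludes $\WF_a(u)\cap E_+^*=\emptyset$. To promote this to a statement about the two-variable kernel $R_{g_e}$ on $SM_e\times SM_e$ you would need either a genuine product-space radial estimate for the operator $X_{g_e}\otimes 1$ near the set $E_+^*\times T^*(SM_e)$, or a careful parameter-uniform application in each variable separately. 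Neither is carried out in Section \ref{sec:FBIresults}, and neither is entirely routine: in particular, the ``source'' for the second variable is $E_-^*$, and one has to explain how the two one-sided estimates combine to give exactly $E_+^*\times E_-^*$ and nothing larger.

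The paper avoids this issue entirely. It never computes $\WF_a(R_{g_e})$. Instead, for each compactly supported $v\in\mathcal{D}'(\mathring{M}_e)$ it sets $u=R_{g_e}^*\pi_0^*v$ and applies the one-variable radial estimate directly to $u$ (Lemma \ref{lemma:regularizing}), using that $\WF_a(\pi_0^*v)\subset H^*$ never meets $E_+^*$. This yields analyticity of $\Pi_0^{g_e}v$ away from $\operatorname{supp}v$. Uniformity in $v$ is then recovered a posteriori by a Baire category argument (Corollary \ref{corollary:analytic}), and joint analyticity in $(x,x')$ follows from the symmetry $(\Pi_0^{g_e})^*=\Pi_0^{g_e}$ together with the elementary Lemma \ref{lemma:deux_variables}. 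Your route would give a stronger intermediate statement (the full $\WF_a(R_{g_e})$), at the cost of a harder radial estimate; the paper's route is less symmetric but needs only the one-variable estimate actually proved.
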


Even though it is not required for the proof of Theorem \ref{Th2}, we can use the local analysis from \cite{Stefanov-Uhlmann-JAMS2005} to get a more precise statement.

\begin{proposition}\label{proposition:analytic_pseudo}
The operator $\Pi_0^{g_e}$ is an elliptic analytic pseudo-differential operator of order $-1$ on $\mathring{M}_e$ (in the sense of \cite[Chapter V]{treves-80}).
\end{proposition}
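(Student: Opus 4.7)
\textbf{Proof proposal for Proposition \ref{proposition:analytic_pseudo}.}
The plan is to combine the \emph{local} analytic structure theorem of Stefanov--Uhlmann \cite{Stefanov-Uhlmann-JAMS2005} with the \emph{global} off-diagonal analyticity already obtained in Proposition \ref{analyticityofPi0}. The strategy mirrors what is done for $C^{\infty}$ pseudo-differential operators in \cite{Guillarmou17}: decompose $\Pi_0^{g_e}$ into a short-time piece (giving the pseudo-differential part) and a long-time piece (giving a smoothing, ultimately analytic, remainder).

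First, I would localize. Fix $x_0\in \mathring{M}_e$; since $g_e$ is analytic, one can choose a small open neighborhood $U\ni x_0$ contained in a simple analytic Riemannian manifold $(V,g_V)$ with $g_V=g_e$ on $U$ (for instance, a small totally normal ball for $g_e$ equipped with the restricted metric, or a small analytic extension thereof). Let $\Pi_0^{V}$ denote the normal operator of the X-ray transform of $(V,g_V)$. By \cite[Section 4]{Stefanov-Uhlmann-JAMS2005}, $\Pi_0^{V}$ is an elliptic analytic pseudo-differential operator of order $-1$ on $\mathring{V}$: their argument writes the Schwartz kernel as an explicit oscillatory integral in terms of the Riemannian exponential map of $g_V$, whose amplitude is an analytic symbol by analyticity of $g_V$.

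Second, I would show that $R:=\Pi_0^{g_e}-\Pi_0^{V}$, viewed as an operator from $C_c^\infty(U)$ to $\mc{D}'(U)$, has an analytic Schwartz kernel on $U\times U$. Introduce a cutoff $\chi\in C_c^\infty(\R)$ equal to $1$ near $0$, supported in $(-T,T)$ with $T$ so small that any $g_e$-geodesic segment starting in $U$ of length $\leq T$ stays in $V$. Decomposing $R_{g_e}=R_{g_e}^{\chi}+R_{g_e}^{1-\chi}$ in the defining integral \eqref{defRg} and likewise for $R_{g_V}$, one gets $\Pi_0^{g_e}-\Pi_0^{V}=\Pi_{0,\mathrm{long}}^{g_e}-\Pi_{0,\mathrm{long}}^{V}$ on $U\times U$, since the short-time parts coincide by the choice of $T$. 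The kernel $\Pi_{0,\mathrm{long}}^{V}$ is analytic on $U\times U$ because in the simple manifold $V$ no geodesic of length $>T/2$ returns to a neighborhood of its starting point. For $\Pi_{0,\mathrm{long}}^{g_e}$ one uses the wave front information \eqref{WFR}: after the $\chi$-cutoff the $N^*\Delta(SM_e)$ contribution to $\WF(R_{g_e})$ is removed, and the analytic microlocal arguments of Section \ref{sec:FBIresults} used to prove Proposition \ref{analyticityofPi0} (transversality of the vertical bundle with $E_\pm^*$ and absence of conjugate points) apply uniformly down to the diagonal and yield analyticity of the Schwartz kernel of $\Pi_{0,\mathrm{long}}^{g_e}$ on all of $U\times U$.

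Combining these two inputs, $\Pi_0^{g_e}=\Pi_0^{V}+R$ on $U$ with $\Pi_0^{V}$ an elliptic analytic pseudo-differential operator of order $-1$ and $R$ with analytic kernel (hence an analytic pseudo-differential operator of order $-\infty$ in the sense of \cite[Chapter V]{treves-80}). Since $x_0$ was arbitrary and the class of analytic pseudo-differential operators is local, this gives the global statement. The main obstacle is the on-diagonal analyticity of $\Pi_{0,\mathrm{long}}^{g_e}$: off the diagonal it is immediate from Proposition \ref{analyticityofPi0}, but at diagonal points one must rule out analytic singularities coming from closed geodesics and near-trapped orbits. This is precisely where the FBI-based propagation estimates and radial estimates of Section \ref{sec:FBIresults}, together with the transversality of $\mc{V}$ with $E_\pm^*$, must be invoked in a form slightly stronger than what is stated in Proposition \ref{analyticityofPi0}.
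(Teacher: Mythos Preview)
Your overall strategy is correct and coincides with the paper's: split $\Pi_0^{g_e}$ near a point into a short-time piece (the normal operator of a simple ball, which is an elliptic analytic $\Psi$DO by Stefanov--Uhlmann) and a long-time piece whose kernel is analytic all the way to the diagonal via the machinery of Section~\ref{sec:FBIresults}. The difference is in how the split is implemented, and your implementation has a genuine gap.

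The problem is the smooth cutoff $\chi\in C_c^\infty(\R)$. A non-analytic time cutoff removes the $N^*\Delta(SM_e)$ contribution to the \emph{$C^\infty$} wave front set of $R_{g_e}$, but not to its \emph{analytic} wave front set. Concretely, if $u_\chi(z)=\int (1-\chi(t))\,f(\varphi_t^{g_e}(z))\,\mathrm{d}t$ with $f=\pi_0^*v$, then $X_{g_e}u_\chi(z)=\int \chi'(t)\,f(\varphi_t^{g_e}(z))\,\mathrm{d}t$, which is smooth but in general \emph{not} analytic, since $\chi'$ is not. Hence the hypothesis $\WF_a(X u)\cap E_+^*=\emptyset$ of Proposition~\ref{prop:radial_estimate} is not verified for $u_\chi$, and the FBI arguments you invoke do not apply to $\Pi_{0,\mathrm{long}}^{g_e}$ as written. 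Your closing acknowledgment that something ``slightly stronger'' is needed is exactly this point, but it is not a cosmetic strengthening---the cutoff has to be changed.

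The paper avoids this by using the exact identity
\[
R_{g_e}^*=e^{3\eps X_{g_e}}R_{g_e}^*+\int_0^{3\eps}e^{tX_{g_e}}\,\mathrm{d}t
\]
on a small ball $B_\eps$. The short-time piece $\int_0^{3\eps}e^{tX_{g_e}}\,\mathrm{d}t$ gives exactly $(I_0^{B_\eps})^*I_0^{B_\eps}$, and the long-time piece is $R_{g_e}^*$ precomposed with the \emph{analytic} diffeomorphism $\varphi_{3\eps}^{g_e}$. For this piece one sets $u=e^{3\eps X_{g_e}}R_{g_e}^*\pi_0^*v$ and observes that $X_{g_e}u=-(\pi_0^*v)\circ\varphi_{3\eps}^{g_e}$, which does have controlled analytic wave front set; then the proof of Lemma~\ref{lemma:regularizing} runs verbatim, and since $10\eps$ is below the injectivity radius the singular support of $\pi_{0*}u$ sits at distance $\geq\eps$ from $B_\eps$. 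Replacing your smooth $\chi$ by this sharp time-shift closes the gap, and the rest of your outline goes through.
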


We give another consequence of the analysis performed in Section \ref{sec:FBIresults}, namely that the X-ray transform on real analytic 
manifolds  of Anosov type  is injective on the space of divergence-free symmetric $2$-tensors. First, define 
the operator mapping a symmetric  tensor of order $m\geq 1$ on $M_e$ to a function on $SM_e$ 
\[ \pi_m^*: C_c^\infty(\mathring{M}_e;S^mT^*\mathring{M}_e)\to C_c^\infty(S\mathring{M}_e), \quad (\pi_m^* f)(x,v):=f_x(\otimes^mv).\]
This operator maps real analytic tensors to real analytic functions on $SM_e$. We define ${\pi_m}_*:\mc{D}'(S\mathring{M}_e)\to \mc{D}'(\mathring{M}_e;S^mT^*\mathring{M}_e)$ to be its dual defined by $\cjg {\pi_m}_*u,\chi\cjd:=\cjg u,\pi_m^*\chi\cjd$ for $\chi\in C_c^\infty(\mathring{M}_e;S^mT^*\mathring{M}_e)$. 
We define the X-ray transform on symmetric $m$-tensors on $(M_e,g_e)$ by 
\[I_m^{g_e} := I^{g_e}\pi_m^*.\]
We obtain the same result as \cite[Theorem 1]{Stefanov-Uhlmann-JAMS2005}  and \cite[Theorem 1]{Stefanov-Uhlmann08}, but 
now in the setting of manifolds of Anosov type.
\begin{proposition}\label{injectiviteI2}
Assume that $(M,g)$ is  a real analytic Riemannian manifold of Anosov type. Let $f\in L^2(M;S^2T^*M)$ and assume that $I_2^{g}f=0$. Then  $f=D_gh$ for some one-form $h\in H_0^1(M;T^*M)$ vanishing at $\pl M$, where $D_g$ is the symmetrized Levi-Civita covariant derivative on tensors.
\end{proposition}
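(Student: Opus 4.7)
The plan is to follow the analytic microlocal strategy of Stefanov--Uhlmann \cite{Stefanov-Uhlmann-JAMS2005, Stefanov-Uhlmann08}, using the FBI analysis of Section~\ref{sec:FBIresults} as a substitute for the simple-manifold hypothesis. First I would perform the standard solenoidal decomposition $f = f^s + D_g h$ with $\delta_g f^s = 0$ and $h \in H_0^1(M; T^*M)$ vanishing at $\pl M$, obtained by solving the elliptic boundary value problem $\delta_g D_g h = \delta_g f$ with Dirichlet condition. Since integration by parts along any geodesic with endpoints on $\pl M$ gives $I_2^g D_g h = 0$, the hypothesis reduces to $I_2^g f^s = 0$ and the goal becomes $f^s = 0$.

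I would then extend $f^s$ by zero to $\tilde{f}$ on the analytic extension $(M_e, g_e)$. By strict convexity of $\pl M$, any geodesic of $M_e$ joining two points of $\pl M_e$ that meets $M$ does so in a geodesic segment with both endpoints on $\pl M$, so $I_2^{g_e} \tilde{f} = 0$, and the tensorial normal operator $\Pi_2^{g_e} := {\pi_2}_* R_{g_e} \pi_2^* = {\pi_2}_*(I^{g_e})^* I^{g_e} \pi_2^*$ (the latter identity proved as in the scalar case) annihilates $\tilde{f}$. Moreover $\Pi_2^{g_e}$ annihilates Dirichlet-potential tensors: for $\tilde{h}$ vanishing on $\pl M_e$ one has $\pi_2^*(D_{g_e} \tilde{h}) = X_{g_e}(\pi_1^* \tilde{h})$, and the computation $R_{g_e} X_{g_e} \pi_1^* \tilde{h} = \pi_1^* \tilde{h}$ together with the parity cancellation $\pi_{2*}(\pi_1^* \tilde{h}) = 0$ yields $\Pi_2^{g_e} D_{g_e} \tilde{h} = 0$. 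In particular the $g_e$-solenoidal part $\tilde{f}^s_e$ of $\tilde{f}$ satisfies $\Pi_2^{g_e} \tilde{f}^s_e = 0$.

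The crucial step is to upgrade Propositions~\ref{analyticityofPi0} and~\ref{proposition:analytic_pseudo} to the bundle-valued setting: the FBI, radial, and propagation estimates of Section~\ref{sec:FBIresults} apply essentially unchanged to sections of $S^2 T^*M_e$, and the principal-symbol computation of \cite{Stefanov-Uhlmann-JAMS2005} shows that $\Pi_2^{g_e}$ is an analytic pseudo-differential operator of order $-1$ elliptic on the $\delta_{g_e}$-solenoidal subspace. A left analytic parametrix $Q$ then produces $\tilde{f}^s_e = -K \tilde{f}^s_e$ with $K$ having a real-analytic Schwartz kernel, so $\tilde{f}^s_e$ is real analytic on $\mathring{M}_e$. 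A boundary-determination argument for solenoidal 2-tensors with vanishing X-ray transform (see \cite{Stefanov-Uhlmann-JAMS2005}) then shows that $\tilde{f}^s_e$ vanishes to infinite order at $\pl M_e$, and analyticity forces $\tilde{f}^s_e \equiv 0$ on the connected manifold $\mathring{M}_e$. Since $\tilde{f} - \tilde{f}^s_e$ is Dirichlet-potential, this gives $f^s = 0$.

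The main obstacle is the third paragraph: lifting the scalar analytic pseudo-differential calculus of Section~\ref{sec:FBIresults} to symmetric 2-tensor bundles and verifying solenoidal ellipticity. The microlocal estimates of Section~\ref{sec:FBIresults} adapt to vector-valued operators without essential change, but the principal-symbol computation on the solenoidal subspace and the construction of an analytic parametrix modulo potentials require the detailed machinery of \cite{Stefanov-Uhlmann-JAMS2005, Stefanov-Uhlmann08} to be transposed to the Anosov-type framework.
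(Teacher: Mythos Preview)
Your overall architecture is sound and largely parallels the paper: the solenoidal decomposition, the upgrade of the FBI/radial analysis of Section~\ref{sec:FBIresults} to $\pi_2^*,\pi_{2*}$, the identification of $\Pi_2^{g_e}$ as an analytic pseudodifferential operator elliptic on solenoidal tensors, and the resulting analyticity of the solenoidal part.  The paper works directly with $f^s$ on $M$ (using $\Pi_2^{g_e}$ only to get analyticity up to $\partial M$) rather than extending by zero to $M_e$ and redecomposing there, but this is a secondary difference.

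The genuine gap is in your last paragraph.  The boundary--determination input you invoke (\cite[Lemma~4]{Stefanov-Uhlmann08}, \cite[Proposition~3.4]{Stefanov-Uhlmann-JAMS2005}) does \emph{not} say that a solenoidal tensor in the kernel of $I_2$ vanishes to infinite order at the boundary.  What it yields is an analytic one--form $w_0$, defined in a collar and vanishing on the boundary, such that $\tilde f^s_e - D_{g_e}w_0$ vanishes there to infinite order; combined with analyticity this gives only $\tilde f^s_e = D_{g_e}w_0$ \emph{in a collar neighbourhood}.  A potential tensor $D_{g_e}w_0$ with $w_0|_{\partial M_e}=0$ has no reason to vanish to infinite order at $\partial M_e$, so unique continuation for $\tilde f^s_e$ from $\partial M_e$ is not available.  (Equivalently, in $M_e\setminus M$ you already know $\tilde f^s_e=-D_{g_e}\tilde h_e$ with $\tilde h_e|_{\partial M_e}=0$; this is the same information and does not force $\tilde f^s_e=0$.)

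What is missing is precisely step~(4) of the paper's argument: one must extend the locally defined $w_0$ to a \emph{global} analytic one--form on $M$ with $f^s=D_g w_0$, by solving the first--order system $(f^s-D_g w)(\partial_t,\cdot)=0$ along geodesics issued from the boundary and then proving that the resulting extension is single--valued.  The single--valuedness is exactly where the non--trivial topology of Anosov--type manifolds enters (in the simple case it is automatic).  It requires the no--conjugate--points hypothesis to control the geodesic homotopy classes, together with the condition $I_2^g f^s=0$ evaluated along each geodesic joining boundary points, and is carried out in the paper via the energy--flow argument on spaces of broken geodesics.  Without this global continuation step the proof is incomplete.
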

The proof is defered to Section  \ref{sec:FBIresults}. We notice that it can be extended with minor modifications to $m$-tensors for all $m\geq 1$. We focus on the case $m=2$ since we use some results from \cite{Stefanov-Uhlmann-JAMS2005,Stefanov-Uhlmann08} done for $m\leq 2$ and since this is the case of interest for lens and boundary rigidity problems.

\section{Proof of Theorem \ref{Th2}}
We consider two analytic extensions $(M_e^1,g_e^1)$ and $(M_e^2,g_e^2)$ of $M_1$ and $M_2$ respectively. Let us denote by $N:=\pl M_1=\pl M_2$ the common boundary of $M_1,M_2$. Let $\psi_1:[0,\eps)\times N \to U_1\subset M_e^1$ and $\psi_2:[0,\eps)\times N\to U_2\subset M_e^2$ 
be the two normal forms near the boundary for $g_e^1,g_e^2$ (for $\epsilon>0$ small enough), i.e. 
\[
\psi_i^*g_e^i=\mathrm{d}r^2+h_i(r)
\] 
with $h_i$ an analytic family of analytic metrics on $N:=\pl M_1=\pl M_2$. By choosing the extension small enough, we can assume 
that $\psi_i(\{\eps/2\}\times N)=\pl M_i$ and that $\psi_i([0,\eps) \times N)$ does not intersect $\pi_0(\mathcal{K}^{g_i})$.
Recall that $\psi_i$ is analytic, and by Lemma \ref{determinationhr}, $h_1=h_2$ in $r\in [\eps/2,\eps)$, thus everywhere in $[0,\eps)\times N$ by analytic continuation. For $0\leq\delta\leq\epsilon$, we denote $N_\delta:=(0,\delta)\times N$.

Let us now consider the map 
\begin{equation}\label{Phi}
\Phi_i: M_i\to L^2( N_{\epsilon/4} ), \quad \Phi_i(x)=\Pi_0^{g_e^i}(x,\psi_i(\cdot))|_{N_{\eps/4}}.
\end{equation}
where $\Pi_0^{g_i}(x,x')$ is the Schwartz kernel of the operator $\Pi_0^{g_e^i}$ associated with $g_e^i$. Here, since $x\in M_i$ and $\psi_i(N_{\epsilon/4}) \cap M_i = \emptyset$, the map $\Phi_i$ is valued into analytic functions on the small collar $N_{\eps/4}$. The choice of the space $L^2(N_{\eps/4})$ is irrelevant; it only needs to be a reasonable functional Banach space. The map $\Phi_i$ is analytic on $M_i$ as a composition of analytic maps, using that $\Pi_0^{g_e^i}(x,x')$ is analytic outside the diagonal.\\

 We first show the following:
\begin{lemma}\label{S_g_equal_implies_Phii_equal} 
If ${S}_{g_1}={S}_{g_2}$, then $\Phi_1(\psi_1(x))=\Phi_2(\psi_2(x))$ for each $x\in (\eps/2,\epsilon)\times N$.
\end{lemma}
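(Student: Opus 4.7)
The plan is to reduce to showing that the Schwartz kernels of $R_{g_e^1}$ and $R_{g_e^2}$ coincide on $SU \times SU$, where $U := [0,\epsilon)\times N$ is identified with the collars $U_i := \psi_i([0,\epsilon)\times N)$ via $\psi_i$, and then to apply ${\pi_0}_*$ in the first variable and $\pi_0^*$ in the second to pass to $\Pi_0^{g_e^i}$. Two ingredients are available: by Lemma~\ref{determinationhr}, $h_1 = h_2$, so the pullback metrics $\psi_i^* g_e^i = \mathrm{d}r^2 + h(r)$ coincide on $U$ and give a shared Riemannian collar; and by the identity $\mathcal{S}_g = R_{g_e}|_{\partial_+ SM \times \partial_- SM}$ recalled in Section~2, the hypothesis $S_{g_1} = S_{g_2}$ implies that the Schwartz kernels of $R_{g_e^i}$ already agree on $\partial_+ SM_i \times \partial_- SM_i$.

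For $z \in SU$ and $f \in C_c^\infty(SU)$, one has $R_{g_e^i}f(z) = \int_{-\tau_{g_e^i}(-z)}^0 f(\varphi_t^{g_e^i}(z))\,\mathrm{d}t$, and because $\mathrm{supp}(f) \subset SU$ only the portions of the backward orbit of $z$ lying in $SU$ contribute. I would decompose this backward orbit into arcs in $SU$ separated by excursions into $M_i \setminus U_i$. Each in-$SU$ arc is a piece of a geodesic for the common collar metric, determined by its entry state alone. Each deep excursion maps its exit state on $\{r = \epsilon\}$ to its re-entry state on $\{r = \epsilon\}$ by a map that factors as follows: flow the exit state backward in the collar until $\partial M_i$, recording the resulting entry $(x,v) \in \partial_- SM_i$; apply the scattering map $S_{g_i}$ to obtain $(x',v') \in \partial_+ SM_i$; then flow $(x', v')$ backward in the collar until $\{r = \epsilon\}$ is reached again. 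Since both the collar metric and $S_{g_i}$ are shared by $g_e^1$ and $g_e^2$, the reconstructed orbits agree and $R_{g_e^1}(z,z') = R_{g_e^2}(z,z')$ on $SU \times SU$. Applying ${\pi_0}_*$ and $\pi_0^*$ yields $\Pi_0^{g_e^1}|_{U \times U} = \Pi_0^{g_e^2}|_{U \times U}$; specializing to $y = \psi_i(x)$ with $x \in (\epsilon/2, \epsilon) \times N$ and $y' = \psi_i(x')$ with $x' \in N_{\epsilon/4}$ (both points lying in $U$) gives the claimed identity $\Phi_1(\psi_1(x)) = \Phi_2(\psi_2(x))$.

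The main technical point is to rigorously track the Jacobian weights attached to each arc when passing from the level of orbits to the level of Schwartz kernels: one must verify that these weights depend only on the collar metric and $S_{g_i}$ (through its derivative at each entry point). Strict convexity of $\partial M_i$ rules out tangential accumulation of boundary crossings, and orbits approaching the trapped set $\Gamma^{g_i}_\pm$ form a Lebesgue-null set by the discussion in Section~2, so they do not affect the distributional identification.
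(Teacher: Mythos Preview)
Your approach is essentially the paper's: show that the pulled-back resolvent kernels of $R_{g_e^1}$ and $R_{g_e^2}$ agree on $SN_\epsilon\times SN_\epsilon$, then apply $\pi_{0*}\otimes\pi_{0*}$. Two points deserve cleaning up. First, there are no ``Jacobian weights'' to track: $R_{g_e^i}f(z)=\int_{-\tau(-z)}^{0}f(\varphi_t(z))\,dt$ is a pure time integral along the orbit, so once the two backward orbits visit the same points of the collar with the same arc-length parameter the integrals coincide, and no derivative of $S_{g_i}$ enters. Second, strict convexity of $\partial M_e^i$ makes $r\circ\gamma$ strictly concave while $\gamma$ is in the collar, so each geodesic meets the collar in at most two arcs and there is at most \emph{one} deep excursion, not several. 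The paper exploits this as a clean dichotomy: either the backward orbit of $z$ to $\partial_-SM_e^i$ stays entirely in the collar (equality immediate), or the forward orbit to $\partial_+SM_e^i$ does, in which case one writes $u_i(z)$ as the X-ray value at the exit point minus a forward collar integral, and the X-ray value splits into two collar contributions, the one near $\partial_-SM_e^i$ being determined by $S_{g_i}$ together with the collar flow. Your description of the excursion factorization has the forward/backward directions and the roles of $\partial_\pm SM_i$ switched; once straightened out it is exactly this argument.
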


\begin{proof} 
For $i=1,2$, let $\Psi_i:SN_\epsilon \to SM_i$ be the lift of $\psi_i$ to the unit tangent bundles, i.e. $\Psi_i(x,v)=(\psi_i(x),\mathrm{d}\psi_i(x)v)$.  
We will show that $(\Psi_1\otimes \Psi_1)^*R_{g_e^1}=(\Psi_2\otimes \Psi_2)^*R_{g_e^2}$ on $SN_\eps^2$ as distributions. Since (on the level of kernels)
\[
\Pi_0^{g_e^i} = (\pi_{0\ast}\otimes \pi_{0\ast}) R_{g_e^i},
\]
the announced claim follows. 

We will write $\partial_{\sigma} SN_\epsilon = \Psi_1^{-1} \partial_\sigma SM_e^1 = \Psi_2^{-1} \partial_\sigma SM_e^2$ for $\sigma \in \{+,-,0\}$. For $\chi\in C_c^\infty(S\mathring{N}_\epsilon)$, call $\chi_i:={\Psi_i}_*\chi$ and  consider the distribution 
\[ 
u_i(z)=  \cjg R_{g_e^i}(\Psi_i(z),\Psi_i(\cdot)), \chi \cjd=(R_{g_e^i}\chi_i)(\Psi_i(z)).
\]
It suffices now to prove that $u_1 = u_2$ for any such $\chi$. A priori, by Proposition 4.2 of \cite{Guillarmou17}, $u_i \in L^p(SN_\epsilon)$ for all $p<\infty$. Additionnally, according to the support of $\chi_i$, we have $u_i \in C^\infty(SN_\epsilon\setminus \Psi_i(\Gamma^{g_e^i}_+))$, and it satisfies 
\[ 
X_iu_i=\chi , \quad u_i|_{\pl_-SN_\epsilon}=0
\]
where $X_i$ is the geodesic vector field of $\mathrm{d}r^2+h_i(r)$.  We have $X_1=X_2$ in $SN_\epsilon$ and let $\varphi_t$ be the flow of $X_1=X_2$ there. From the strict convexity of the boundary, if $z\in SN_\epsilon$, we deduce that $\pi_0(\varphi_t(z))$ remains in $N_\epsilon$ in at least one of the intervals $[-\tau_{g_e^1}(\Psi_1(-z)), 0]$, $[0,\tau_{g_e^1}(\Psi_1(z))]$. In the first case, from the expression \eqref{defRg} we deduce directly that $u_1(z)=u_2(z)$. 

Since $\Gamma_+^{g_e^i}$ has measure $0$, and $u_i$ is in $L^p$, we concentrate thus on the $z$'s whose trajectory remains in $N_\epsilon$ for $t\in[0,\tau_{g_e^1}(\Psi_1(z))]$, but for which $\tau_{g_e^1}(\Psi_1(-z))< \infty$. According to the expression
\[
u_i(z)=I_{g_e^i}\chi_i(\varphi_{\tau_{g_e^i}(\Psi_i(z))}^{g_e^i} ( \Psi_i(z)))-\int_{0}^{\tau_{g_e^i}(\Psi_i(z))}\chi(\varphi_t(z))\mathrm{d}t,
\]
it suffices thus to prove that the quantity $I_{g_e^i}\chi_i(\varphi_{\tau_{g_e^i}(\Psi_i(z))}^{g_e^i} ( \Psi_i(z)))$, or equivalently $u_i|_{\partial_+ SN_\epsilon}$, does not depend on $i$ (here we use the X-ray transform $I_{g_e^i}$ defined in \eqref{eq:def_Xray}). However, the quantity $I_{g_e^i}\chi_i(\varphi_{\tau_{g_e^i}(\Psi_i(z))}^{g_e^i} ( \Psi_i(z)))$ is the sum of two terms, the first corresponds to the trajectory of $\Psi_i(z)$ close to $\Psi_i(z)$, and the second to the trajectory near $\partial_- SM_e^i$. This is because $\chi$ is supported in $N_\epsilon$. We already know that the first contribution does not depend on $i$. For the second contribution, we can use the main assumption that ${S}_{g_1}={S}_{g_2}$ to deduce that they are also equal. Finally, $u_1(z)= u_2(z)$ on $SN_\epsilon$ as expected
\end{proof}

We next show the 
\begin{lemma}\label{embedding}
The map $\Phi_i$ is an analytic embedding of $M_i$ into $L^2(N_{\eps/4})$ for $i=1,2$.
\end{lemma}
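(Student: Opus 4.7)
The plan is to establish that $\Phi_i$ is an injective analytic immersion; since $M_i$ is compact and $L^2(N_{\epsilon/4})$ is Hausdorff, this automatically yields an analytic embedding. Analyticity of $\Phi_i$ is essentially granted for free: by construction of the collar, $\psi_i(N_{\epsilon/4})\subset M_e^i\setminus M_i$ is disjoint from $M_i$, so Proposition \ref{analyticityofPi0} gives joint real analyticity of $(x,y)\mapsto \Pi_0^{g_e^i}(x,\psi_i(y))$ on $M_i\times N_{\epsilon/4}$, and this directly yields the analyticity of $x\mapsto \Pi_0^{g_e^i}(x,\psi_i(\cdot))|_{N_{\epsilon/4}}$ valued first in $\mc{A}(N_{\epsilon/4})$ and hence in $L^2(N_{\epsilon/4})$.

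For injectivity, I would argue by analytic continuation. If $\Phi_i(x_1)=\Phi_i(x_2)$ for $x_1,x_2\in M_i$, the function
\[ F(x'):=\Pi_0^{g_e^i}(x_1,x')-\Pi_0^{g_e^i}(x_2,x') \]
is real analytic on $M_e^i\setminus\{x_1,x_2\}$, which is connected since $\dim M_e^i\geq 2$. The hypothesis together with the joint analyticity forces $F\equiv 0$ on the nonempty open set $\psi_i(N_{\epsilon/4})$, and hence $F\equiv 0$ globally on $M_e^i\setminus\{x_1,x_2\}$ by analytic continuation. Supposing $x_1\neq x_2$ for contradiction, the asymptotic \eqref{asymptPi0} forces $\Pi_0^{g_e^i}(x_1,x')\sim c_n' d_{g_e^i}(x_1,x')^{-(n-1)}\to\infty$ as $x'\to x_1$, while $\Pi_0^{g_e^i}(x_2,x')$ stays analytic near $x_1$, a contradiction.

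The immersion property is proved in the same spirit. Suppose $d\Phi_i(x)\cdot v=0$ in $L^2(N_{\epsilon/4})$ for some $x\in M_i$ and $0\neq v\in T_xM_i$. Then the real analytic function $G(x'):=D_x\Pi_0^{g_e^i}(x,x')\cdot v$ vanishes identically on $\psi_i(N_{\epsilon/4})$, so by analytic continuation $G\equiv 0$ on $M_e^i\setminus\{x\}$. On the other hand, along $x'=\exp_x^{g_e^i}(sw)$ with $w\in S_xM_e^i$, the first variation formula for the distance gives $D_{x_0}d_{g_e^i}(x',x_0)\cdot v|_{x_0=x}=-g_e^i(w,v)$, so \eqref{asymptDPi0} yields
\[ G(\exp_x^{g_e^i}(sw))\sim (n-1)c_n'\, g_e^i(w,v)\, s^{-n} \quad \text{as } s\to 0^+, \]
which blows up for any $w$ with $g_e^i(w,v)\neq 0$. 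Such $w$ exists since $v\neq 0$, a contradiction.

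The substantive work is concentrated upstream in Proposition \ref{analyticityofPi0}: once the analyticity of $\Pi_0^{g_e^i}$ off the diagonal is in hand, the embedding lemma reduces to a clean analytic continuation argument in which the singular asymptotics of Lemma \ref{lem:asymptPi0} supply the required obstruction. I do not expect any genuine technical difficulty at the level of this lemma itself; the only routine verifications are the first variation formula for the Riemannian distance and the elementary fact that removing two points from a connected manifold of dimension $\geq 2$ leaves it connected.
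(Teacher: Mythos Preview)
Your proof is correct and follows essentially the same approach as the paper: analytic continuation of $\Pi_0^{g_e^i}(x_j,\cdot)$ (resp.\ $D_x\Pi_0^{g_e^i}(x,\cdot)\cdot v$) from the collar to the punctured extended manifold, followed by the contradiction with the diagonal asymptotics \eqref{asymptPi0} and \eqref{asymptDPi0}. You are slightly more explicit than the paper in noting the connectedness of $M_e^i\setminus\{x_1,x_2\}$, in invoking the first variation formula for the distance to identify $D_{x_0}d_{g_e^i}(x',x_0)\cdot v$, and in citing compactness of $M_i$ to pass from injective immersion to embedding; the paper leaves these routine.
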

\begin{proof}
First, we show that $\Phi_i$ is an injective map. If $x_0,x_1\in M_i$ are such that $\Phi_i(x_0)=\Phi_i(x_1)$, then $\Pi_0^{g_e^i}(x_0,\psi_i(x'))=\Pi_0^{g_e^i}(x_1,\psi_i(x'))$ for all $x'\in N_{\eps/4}$. In particular, this implies that $\Pi_0^{g_e^i}(x_0,x)=\Pi_0^{g_e^i}(x_1,x)$
for all $x\in \psi_i(N_{\eps/4})$ and by analytic continuation we get $\Pi_0^{g_i}(x_0,x)=\Pi_0^{g_i}(x_1,x)$ for all $x\in \mathring{M}_e^i \setminus \{x_0,x_1\}$.
Using the asymptotic expansion \eqref{asymptPi0} and letting respectively $x$ tend to $x_0$ and to $x_1$, we deduce that $x_0=x_1$.

Next, we show its derivative $D\Phi_i(x)$ is injective for $x\in M_i$. The map $\Phi_i$ is $C^1$ on $M_i$ since $\Pi^{g_e^i}_0$ has a smooth kernel outside the diagonal (it is in fact analytic). Let $v\in T_xM_i$, and assume that $D\Phi_i(x).v=0$. We then get $D_x\Pi^{g_e^i}_0(x,x').v=0$ for all 
$x'\in M_e^i\setminus M_i$. Using the analytic extension of $\Pi^{g_e^i}_0(x,x')$ outside the diagonal we obtain  $D_x\Pi^{g_e^i}_0(x,x').v=0$ for all $x'\in M_e^i\setminus \{x\}$. By letting $x'\to x$ and using the asymptotic expansion \eqref{asymptDPi0}, we deduce that $v=0$, showing that $\Phi_i$ is an immersion, and thus an embedding.
\end{proof}

We can now conclude the proof of Theorem \ref{Th2}.

\begin{proposition}\label{sameimage}
The image of $\Phi_1$ and $\Phi_2$ agree, i.e. $\Phi_1(M_1)=\Phi_2(M_2)$, and the map $\Phi=\Phi_2^{-1}\circ \Phi_1:M_1\to M_2$ is an analytic map such that $\Phi^*g_2=g_1$.
\end{proposition}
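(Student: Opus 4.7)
The plan is to construct an analytic diffeomorphism $\Phi: M_1 \to M_2$ extending the boundary collar germ $\Phi_0 := \psi_2 \circ \psi_1^{-1}$ and satisfying $\Phi_2 \circ \Phi = \Phi_1$. Lemma \ref{S_g_equal_implies_Phii_equal} already gives this identity for $\Phi_0$ on $\psi_1((\epsilon/2,\epsilon)\times N)$, and $\Phi_0$ is the identity on $\partial M_1 = \partial M_2 = N$ by construction of the normal forms. Once a global such $\Phi$ is obtained we have $\Phi_1(M_1) \subset \Phi_2(M_2)$, the reverse inclusion follows by running the same argument with the roles of the two manifolds exchanged, so $\Phi_1(M_1)=\Phi_2(M_2)$ and $\Phi = \Phi_2^{-1}\circ\Phi_1$.

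I extend $\Phi$ by analytic continuation along paths. Given a continuous path $\gamma:[0,1]\to M_1$ with $\gamma(0)$ in the collar, let
\[
s_0 := \sup\{s\in[0,1] : \Phi \text{ extends analytically to a neighborhood of } \gamma([0,s]) \text{ with } \Phi_2\circ\Phi=\Phi_1\}.
\]
Assume for contradiction that $s_0 < 1$ and set $x_0 := \gamma(s_0)$. By compactness of $M_2$ and injectivity of $\Phi_2$ the limit $y_0 := \lim_{s \to s_0^-} \Phi(\gamma(s))$ exists in $M_2$ and satisfies $\Phi_2(y_0) = \Phi_1(x_0)$. Since $\Phi_2$ is an analytic embedding (Lemma \ref{embedding}) and $\Phi_2(y)(z) = \Pi_0^{g_e^2}(y, \psi_2(z))$ is real analytic in $(y,z)$ near $\{y_0\}\times N_{\epsilon/4}$ by Proposition \ref{analyticityofPi0}, one can select $n=\dim M$ evaluation points $z_1,\dots,z_n \in N_{\epsilon/4}$ so that $y\mapsto (\Phi_2(y)(z_j))_{j=1}^n$ is an analytic diffeomorphism from a neighborhood of $y_0$ in $M_2$ onto its image in $\mathbb{R}^n$. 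Composing its inverse with the analytic map $x\mapsto (\Phi_1(x)(z_j))_{j=1}^n$ yields an analytic extension of $\Phi$ to a neighborhood $V$ of $x_0$ verifying $\Phi_2(\Phi(x))(z_j)=\Phi_1(x)(z_j)$ for $j=1,\dots,n$.

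The main obstacle is to upgrade this equality at finitely many points $z_j$ to the full identity $\Phi_2(\Phi(x))(z) = \Phi_1(x)(z)$ for every $z\in N_{\epsilon/4}$. Set $R(x,z) := \Phi_2(\Phi(x))(z)-\Phi_1(x)(z)$, which is real analytic on $V\times N_{\epsilon/4}$ thanks to Proposition \ref{analyticityofPi0}. The original branch of $\Phi$ is already defined on an open set $W\subset M_1$ whose closure contains $x_0$, namely a neighborhood of $\gamma([0,s_0))$, and satisfies $R \equiv 0$ on $W$. The local diffeomorphism property at $y_0$ forces the two branches to coincide on the nonempty open intersection $W\cap V$, so $R\equiv 0$ on $(W\cap V)\times N_{\epsilon/4}$ and hence on the connected set $V\times N_{\epsilon/4}$ by analytic continuation. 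This contradicts the maximality of $s_0$; therefore $\Phi$ extends analytically to all of $M_1$, and running the construction with the roles swapped produces an analytic inverse, making $\Phi$ a boundary-fixing analytic diffeomorphism $M_1\to M_2$.

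To extract the metric identity $\Phi^* g_2 = g_1$, I apply the same continuation scheme to the first argument of the kernels; using the symmetry \eqref{eq:symetrique}, one starts from $\Pi_0^{g_e^1}(\psi_1(z), x) = \Pi_0^{g_e^2}(\psi_2(z), \Phi(x))$ for $z\in N_{\epsilon/4}$ and obtains
\[
\Pi_0^{g_e^1}(x', x) = \Pi_0^{g_e^2}(\Phi(x'), \Phi(x)) \quad \text{for all } x\neq x' \text{ in } M_1.
\]
The diagonal asymptotic \eqref{asymptPi0} then forces $d_{g_e^1}(x', x) = d_{g_e^2}(\Phi(x'), \Phi(x))(1+o(1))$ as $x'\to x$, and comparing the quadratic terms in the Riemannian distance expansions in normal coordinates at $x$ yields $\Phi^* g_2 = g_1$.
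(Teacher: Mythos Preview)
Your argument is correct and reaches the same conclusion as the paper, but through two slightly different technical choices that are worth noting.

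For the extension of $\Phi$, the paper runs an open--closed argument directly on the set $U'\subset M_1$ where $\Phi:=\Phi_2^{-1}\circ\Phi_1$ is locally $C^1$, whereas you continue along paths. At the local step near the obstruction point, the paper projects $L^2(N_{\epsilon/4})$ orthogonally onto the finite-dimensional tangent space $T_{\Phi_1(x_0)}\Phi_1(M_1)$ and writes each image $\Phi_i(M_i)$ as a graph over that subspace; you instead evaluate at $n$ points $z_1,\dots,z_n$ to manufacture $\mathbb{R}^n$-valued charts. Both are finite-dimensional reductions via the inverse function theorem and are essentially equivalent. One point you leave implicit: continuation along paths is single-valued here precisely because the constraint $\Phi_2\circ\Phi=\Phi_1$ together with injectivity of $\Phi_2$ forces $\Phi=\Phi_2^{-1}\circ\Phi_1$ wherever it is defined, so there is no monodromy to worry about. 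This is also why the paper's formulation (defining $\Phi$ globally as $\Phi_2^{-1}\circ\Phi_1$ on $U$ from the start) is slightly cleaner.

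For the isometry, the paper just invokes Lemma~\ref{determinationhr} on the collar and propagates $\Phi^*g_2=g_1$ by analytic continuation. Your route---extending the kernel identity $\Pi_0^{g_e^1}(x',x)=\Pi_0^{g_e^2}(\Phi(x'),\Phi(x))$ to all $x\neq x'$ and then reading off the metric from the diagonal asymptotic~\eqref{asymptPi0}---is a legitimate alternative. Note that what you actually match is the \emph{leading} term of the distance (not a quadratic correction): $d_{g_1}(x',x)/d_{g_2}(\Phi(x'),\Phi(x))\to 1$ along every direction gives $|v|_{g_1(x)}=|D\Phi(x)v|_{g_2(\Phi(x))}$ for all $v$, hence $\Phi^*g_2=g_1$. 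Your approach has the minor virtue of not re-invoking Lemma~\ref{determinationhr} at this stage, though that lemma is already needed upstream to set up the collar identification.
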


\begin{proof} 
Let us define 
\[
U:=\{x\in M_1\,|\, \exists x'\in M_2, \Phi_2(x')=\Phi_1(x)\}.
\]
Since, according to Lemma \ref{embedding}, $\Phi_2$ is injective, we can define the map $\Phi = \Phi_2^{-1} \circ \Phi_1$ on $U$. Let $U' \subset U$ be the set of points $x \in U$ such that $\Phi$ is well-defined and $C^1$ on a neighbourhood of $x$. Notice, by Lemma \ref{S_g_equal_implies_Phii_equal}, that $\Psi_1((\eps/2,\eps)\times N)\cap M_1\subset U'$, so that $U'$ is non empty and open. We will prove that it is closed in $M_1$. 

Let $x$ be an accumulation point of $U'$. Since we already know that the boundary of $M_1$ belongs to $U'$, we may assume that $x$ belongs to the interior of $M_1$. Then, choose $x_n\in U'$ converging to $x$ and let $x_n':=\Phi(x_n)$. Up to extracting a subsequence, $x_n' \to x'$ for some $x' \in M_2$. By continuity of $\Phi_1$ and $\Phi_2$, we get $\Phi_1(x) = \Phi_2(x')$, in particular $x \in U$. Since $\Phi_2\circ \Phi=\Phi_1$ in $U'$ with $\Phi$ smooth there, we have $D\Phi_2(x_n')D\Phi(x_n)=D\Phi_1(x_n)$. In particular $D\Phi_2(x_n')(T_{x_n'}M_2)=D\Phi_1(x_n)(T_{x_n}M_1)$, and since $D\Phi_i$ is continuous with constant rank, we obtain that (since $x'=\Phi(x)$)
\[ T_{\Phi_2(x')}\Phi_2(M_2)=D\Phi_2(x')(T_{x'}M_2)=D\Phi_1(x)(T_xM_1)=T_{\Phi_1(x)}\Phi_1(M_1).\] 

Next, we show that $x$ is actually in $U'$. For this, let $u=\Phi_1(x)$ and $\mc{V}:=T_u\Phi_1(M_1)\subset L^2(N_{\eps/4})$ and $\mc{V}^\perp$ the orthogonal to $\mc{V}$ for the $L^2(N_{\eps/4})$ scalar product. Denote also by 
$P:L^2(N_{\eps/4})\to \mc{V}$ the orthogonal projection, and let 
\[ 
P\Phi_i : M_i \to \mc{V}.
\]
This is an analytic map, and the differential $DP\Phi_1(x)$ and $DP\Phi_2(x')$ are surjective by construction, as $DP\Phi_i =PD\Phi_i$, and are also injective by the fact that $\Phi_i$ is an embedding. By the local inverse theorem in the analytic category, there is a connected open neighbourhood $W$ of $u$ in $\mc{V}$ and an analytic map $H_i:W\to M_i$, diffeomorphic on its image, such that 
\[
P\Phi_i\circ H_i(w)=w, \quad \forall w\in W.
\]
Thus $\Phi_i\circ H_i(w)=w+ (1-P)\Phi_i\circ H_i(w)$ and if we let $Q_i(w):=(1-P)\Phi_i\circ H_i(w)\in \mc{V}^\perp$, we see that $\Phi_i(M_i)$ is locally the graph of $Q_i|_{W}$. Moreover $Q_i$ is analytic and the graph of $Q_1$ coincides with the graph of $Q_2$ in a non empty open subset $W'\subset W$ since $x$ is an accumulation point of $U'$, thus $Q_1=Q_2$ on $W'$. This implies by analytic continuation that $Q_1=Q_2$ in $W$. In particular there is an open 
neighbourhood $V:=H_1(W)$ of $x$ in $U$, and $\Phi$ is thus well-defined in $V$. Since $\Phi_1\circ H_1=\Phi_2\circ H_2$ on $W$, we also 
get $\Phi=\Phi_2^{-1}\circ \Phi_1=H_2\circ H_1^{-1}$ on $V$, in particular $\Phi$ is analytic on $V$. Hence, $x \in U'$, and thus $U'$ is closed.

Hence, $U'$ is open, closed and non-empty, so that $U' = M_1$. By definition, $\Phi$ is $C^1$ on $U'$, the proof above show that it is actually analytic. The relation $\Phi_2 \circ \Phi = \Phi_1$ proves that $\Phi$ is injective (since $\Phi_1$ is). Notice that the image of $\Phi$ is closed (since $M_1$ is compact), open (since $\Phi$ is a local diffeomorphism) and non-empty (since $M_1$ is non-empty). Hence, $\Phi(M_1) = M_2$, and $\Phi$ is a real-analytic diffeomorphism from $M_1$ to $M_2$.

It remains to prove that $\Phi$ is an isometry. For this, it suffices to observe that $\Phi$ is an isometry when restricted to $U_1\cap M_1$, according to Lemma \ref{determinationhr}. By analytic continuation, $\Phi$ is thus an isometry everywhere. 
\end{proof}

\section{Analyticity of the kernel of the Normal Operator away from the diagonal.}
\label{sec:FBIresults}

This section is dedicated to the proof of Proposition \ref{analyticityofPi0}. The proof relies on an analytic wave front set computation, based on classical results (ellipticity, propagation of singularities,\dots) and a new radial estimates in the analytic category, Proposition \ref{prop:radial_estimate}. The $C^\infty$ analogue of this estimate is now a standard tool in scattering theory (see for instance \cite[Theorem E.42]{DyZw}).

In \S \ref{ssec:FBItranform}, we recall the results from \cite{BJ20} that we will use. In \S \ref{ssec:analytic_wave_front_set}, we discuss the notion of analytic wave front sets, and how one can investigate it using the FBI transform. In \S \ref{ssec:estimates}, we prove the microlocal radial estimates needed for the proof of Proposition \ref{analyticityofPi0}, proof that is given in \S \ref{ssec:analytic_pseudor}.

\subsection{The FBI transform}\label{ssec:FBItranform}

\subsubsection{Basic properties}

Let $\mathcal{M}$ be a closed real-analytic manifold (in the proof of Proposition \ref{analyticityofPi0}, $\mathcal{M}$ will be an extension of the unit tangent bundle $SM_e$ of $M_e$, see \S \ref{ssec:estimates}). For convenience, we will endow $\mathcal{M}$ with a real-analytic Riemannian metric. We then have a notion of Grauert tube of radius $\epsilon>0$ of $\mathcal{M}$, which we denote $(\mathcal{M})_\epsilon$. We endow $T^\ast \mathcal{M}$ with the corresponding real-analytic Kohn--Nirenberg metric $g_{{\rm KN}}=\mathrm{d}x^2+\frac{\mathrm{d}\xi^2}{\cjg \xi\cjd^2}$, and consider the associated Grauert tube $(T^* \mathcal{M})_\epsilon$. This is a conic, complex, pseudo-convex neighbourhood of $T^* \mathcal{M}$. We define on it a Japanese bracket $(T^\ast\mathcal{M})_\epsilon\owns \alpha \mapsto \langle|\alpha|\rangle\in [1,+\infty[$ and the distance $d_{KN}$ associated with the Kohn--Nirenberg metric (see pages 23-24 in \cite{BJ20} for further discussion of these definitions). For a subset $A$ of $\mathcal{M}$, we will write $T^*_{A} \mathcal{M}$ for the restriction of $T^\ast\mathcal{M}$ to $A$.

In order to study the microlocal analytic regularity of distributions on the closed real-analytic manifold $\mathcal{M}$, we introduce an \emph{analytic FBI transform}. We follow the approach exposed in \cite[Chapter 2]{BJ20}, whose main features we recall here. An analytic FBI transform is an operator $T : \mathcal{D}'(\mathcal{M}) \to C^\infty(T^*\mathcal{M})$ with real-analytic kernel $K_T$:
\begin{equation*}
Tu (\alpha) = \int_{\mathcal{M}} K_T(\alpha,y) u(y) \mathrm{d}y, \quad u \in \mathcal{D}'(\mathcal{M}), \quad \alpha = (\alpha_x,\alpha_\xi) \in T^*\mathcal{M}.
\end{equation*}
The kernel $K_T$ depends on an (implicit) small semi-classical parameter $h>0$ and has the following properties: 
\begin{itemize}
\item real-analyticity: $K_T$ has a holomorphic extension to $(T^* \mathcal{M})_\epsilon \times (\mathcal{M})_{\epsilon}$ for some small $\epsilon$ that does not depend on $h$.
\item $K_T$ is negligible when $y$ and $\alpha_x$ are away from each other: for every $\delta > 0$, there are $\epsilon' \in ]0,\epsilon[$ and $ C >0$ such that if $h>0$ is small enough, $(\alpha,y) \in (T^* \mathcal{M})_{\epsilon'} \times (\mathcal{M})_{\epsilon'}$ and the distance between $\alpha_x$ and $y$ is larger than $\delta$ then
\begin{equation*}
\left|K_T(\alpha,y)\right| \leq C \exp\left(- \frac{\left\langle|\alpha|\right\rangle}{Ch}\right).
\end{equation*}
\item local behaviour: for $(\alpha,y) \in (T^* \mathcal{M})_\epsilon \times (\mathcal{M})_\epsilon$, if $\alpha_x$ and $y$ are close to each other, $K_T(\alpha,y)$ is given by
\begin{equation*}
K_T(\alpha,y) \simeq e^{i \frac{\Phi_T(\alpha,y)}{h}} a(\alpha,y),
\end{equation*}
up to an error exponentially decaying in $\jap{|\alpha|}/h$. Here, $\Phi_T$ and $h^{3n/4} a$ are analytic symbols of orders respectively $1$ and $n/4$ on a neighbourhood of uniform size for the Kohn--Nirenberg metric of $\{\alpha_x = y\}$ in $(T^* \mathcal{M})_\epsilon \times (\mathcal{M})_\epsilon$. See \cite[Definition 1.1 and Lemma 1.6]{BJ20} for the definition of an analytic symbols (the class of analytic symbols of order $m$ is called $S^{1,m}$ there).
\end{itemize}
Moreover, the phase $\Phi_T$ satisfies the following properties:
\begin{itemize}
\item if $(\alpha,y) \in T^* \mathcal{M} \times \mathcal{M}$ then the imaginary part of $\Phi_T(\alpha,y)$ is non-negative;
\item $\Phi_T(\alpha,\alpha_x) = 0$ for $\alpha = (\alpha_x,\alpha_\xi) \in T^* \mathcal{M}$; 
\item for $\alpha \in T^* \mathcal{M}$, we have $\mathrm{d}_y \Phi_T(\alpha,\alpha_x) = - \alpha_\xi$;
\item there is $C > 0$, such that for $(\alpha,y) \in T^* \mathcal{M} \times \mathcal{M}$ near $\{\alpha_x = y\}$ we have $\Im \Phi_T(\alpha,y) \geq C^{-1} \jap{\va{\alpha}} d(\alpha_x,y)^2$.
\end{itemize}
In the terminology of \cite[Definition 1.7]{BJ20}, $\Phi_T$ is an admissible phase.

Due to the real-analyticity condition on the kernel $K_T$ of $T$, it is not clear \emph{a priori} that such a transform $T$ exists. However, from \cite[Theorem 6]{BJ20}, there is such a $T$, assuming $h$ is small enough. Moreover, if we endow $T^* \mathcal{M}$ with the volume associated with the canonical symplectic form on $T^* \mathcal{M}$, then we may assume that $T$ is an isometry from $L^2(\mathcal{M})$ into $L^2(T^* \mathcal{M})$. Hence, denoting by $S$ the adjoint of $T$, the composition $ST$ is the identity operator. Notice that $S$ is an operator with real-analytic kernel given for $\alpha$ and $y$ real by
\begin{equation*}
K_S(y,\alpha) = \overline{K_T(\alpha,y)}.
\end{equation*}

Due to the real-analyticity of $K_T$, we see that if $u$ is a distribution (or even a hyperfunction), then $Tu$ is in fact real-analytic, with a holomorphic extension to $(T^* \mathcal{M})_\epsilon$. Hence, if $\Lambda$ is a perturbation of $T^* \mathcal{M}$ in $(T^* \mathcal{M})_\epsilon$, we can define a FBI transform associated with $\Lambda$ by restriction: $T_\Lambda u \coloneqq (Tu)_{|_\Lambda}$. Of course, in order to have interesting properties for $T_\Lambda$, we need to make assumptions on $\Lambda$. We will only consider $\Lambda$'s of the form
\begin{equation}\label{eq:def_Lambda}
\Lambda = \Lambda_G = e^{H_G^{\omega_I}} T^* \mathcal{M}.
\end{equation}
Here, $G$ is a real valued symbol of order at most $1$ in the Kohn--Nirenberg class on $(T^* \mathcal{M})_\epsilon$, and $H_G^{\omega_I}$ is the Hamiltonian vector field of $G$ for the real symplectic form $\omega_I$, which is the imaginary part of the canonical complex symplectic form on the cotangent bundle of the complexification of $\mathcal{M}$. For practical purposes, we will always assume that $G$ is a small symbol of order $1$, so that $\Lambda$ is $C^\infty$ close to $T^* \mathcal{M}$. In particular, the real part $\omega_R$ of the canonical complex symplectic form defines a symplectic form on $\Lambda$.

From the geometric properties of $\Lambda$, there is a symbol $H$ on $\Lambda$, of the same order as $G$ such that $\mathrm{d} H = - \Im \theta_{|\Lambda}$, where $\theta$ denotes the canonical complex Liouville $1$-form on the cotangent bundle of the complexification of $\mathcal{M}$. Letting $\mathrm{d}\alpha$ be the volume form associated with the symplectic form $\omega_R$ on $\Lambda$, we will work with the weighted $L^2$ spaces
\begin{equation*}
L_k^2(\Lambda) \coloneqq L^2\p{\Lambda, \jap{|\alpha|}^{2 k} e^{- \frac{2H}{h}} \mathrm{d}\alpha}, \quad k \in \R,
\end{equation*} 
and their equivalents on $\mathcal{M}$
\begin{equation*}
\mathcal{H}_\Lambda^k \coloneqq \{ u \in \mathcal{O}((\mathcal{M})_\delta)'\ : T_\Lambda u \in L_k^2(\Lambda) \}.
\end{equation*}
Here, $\delta>0$ is fixed, depending on the geometry of $\mathcal{M}$ and $\mathcal{O}((\mathcal{M})_\delta)$ is the closure of bounded holomorphic functions on $(M)_{2\delta}$, in the space of holomorphic functions on $(\mathcal{M})_\delta$ endowed with the $\sup$ norm, and $\mathcal{O}((\mathcal{M})_\delta)'$ its topological dual. According to \cite[Corollary 2.2]{BJ20}, these are complex Hilbert spaces, and according to \cite[Corollary 2.3]{BJ20}, real analytic functions are dense in them. We will also use $\mathcal{H}_\Lambda^\infty = \cap_{k \in \R} \mathcal{H}_\Lambda^k$. 

Later on, we will work with $G$'s of order $\log\langle\xi\rangle$ instead of $\langle\xi\rangle$. In that case, the spaces $\mathcal{H}_\Lambda^k$ only contain distributions, and we can replace $\mathcal{O}((\mathcal{M})_\delta)'$ by $\mathcal{D}'(\mathcal{M})$ in the definition. We will thus avoid completely the discussion of hyperfunctions. However, the reader should keep in mind that as $h\to 0$, the distributions in the space $\mathcal{H}_\Lambda^k$ will potentially be of order $C/h$. The norm on $\mathcal{H}_\Lambda^k$ is given by
\begin{equation}\label{eq:norme_Lambda}
\n{u}_{\mathcal{H}_\Lambda^k}^2 = \int_{\Lambda} \va{T_\Lambda u}^2 \jap{|\alpha|}^{2k} e^{- \frac{2H}{h}} \mathrm{d}\alpha
\end{equation}
for $u \in \mathcal{D}'(\mathcal{M})$.

By restricting the kernel of $S$ to $\mathcal{M} \times \Lambda$, we get an operator $S_\Lambda$ that turns functions on $\Lambda$ growing at a subexponential rate into elements of $\mathcal{O}((\mathcal{M})_\delta)'$. Moreover, the composition $S_\Lambda T_\Lambda$ is well-defined, and is the identity operator \cite[Lemma 2.7]{BJ20}. The structure of the operator $\Pi_\Lambda \coloneqq T_\Lambda S_\Lambda$ is given in \cite[Lemmata 2.9 and 2.10]{BJ20}. In particular \cite[Proposition 2.4]{BJ20}, $\Pi_\Lambda$ is bounded on $L_k^2(\Lambda)$ for every $k \in \R$ . 

The motivation for working with such a $\Lambda$ is the following central result (this is a particular case of \cite[Theorem 7]{BJ20}).

\begin{proposition}[Multiplication formula]\label{theorem:multiplication_formula}
Let $P$ be an analytic semi-classical differential operator of order $m \in \mathbb{N}$ on $\mathcal{M}$. Assume that $G$ in the definition of $\Lambda$ is small enough. Then there is a symbol $q$ of order $m$ on $\Lambda$ such that for every $u \in \mathcal{H}_\Lambda^{m/2}$ we have
\begin{equation*}
\int_{\Lambda} T_\Lambda (P u) \overline{T_\Lambda u} e^{- \frac{2H}{h}} \mathrm{d}\alpha = \int_\Lambda q \va{T_\Lambda u}^2 e^{- \frac{2H}{h}} \mathrm{d}\alpha + \mathcal{O}(h^\infty).
\end{equation*}
More precisely the remainder is smaller than $C_N h^N \|u\|_{\mathcal{H}^{(m-N)/2}_\Lambda}^2$ for $N\geq 0$. 

Additionally, up to a $\mathcal{O}(h)$ in the space of Kohn--Nirenberg symbols of order $m-1$ on $\Lambda$, the symbol $q$ is the restriction to $\Lambda$ of the holomorphic continuation of the principal symbol of $P$.
\end{proposition}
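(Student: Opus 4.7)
The plan is to reduce the identity to a symbolic analysis of the operator $T_\Lambda P S_\Lambda$ on $L^2_k(\Lambda)$, and then extract the multiplication symbol $q$ by complex stationary phase. Using $S_\Lambda T_\Lambda = \mathrm{Id}$ on $\mc{D}'(\mc{M})$ (\cite[Lemma 2.7]{BJ20}), set $v := T_\Lambda u \in L^2_{m/2}(\Lambda)$ for $u \in \mc{H}_\Lambda^{m/2}$, so that
\begin{equation*}
\int_{\Lambda} T_\Lambda(Pu)\, \overline{T_\Lambda u}\, e^{-2H/h}\, \mathrm{d}\alpha = \cjg (T_\Lambda P S_\Lambda) v,\, v \cjd_{L^2_0(\Lambda)}.
\end{equation*}
The task becomes constructing a classical symbol $q$ on $\Lambda$ of order $m$ such that $T_\Lambda P S_\Lambda = M_q \Pi_\Lambda + R_N$, where $M_q$ is multiplication by $q$, $\Pi_\Lambda = T_\Lambda S_\Lambda$, and $R_N$ maps $L^2_{k}(\Lambda) \to L^2_{k-m+N}(\Lambda)$ with norm $O(h^N)$. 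Since $\Pi_\Lambda v = v$, this yields the stated expansion with the announced quantitative remainder.

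To compute the kernel of $T_\Lambda P S_\Lambda$, I write it as $\int_{\mc{M}} K_{T_\Lambda}(\alpha, y)\, P_y K_{S_\Lambda}(y, \alpha')\, \mathrm{d}y$ with $K_{S_\Lambda}(y, \alpha') = \overline{K_T(\alpha', y)}$. In the local WKB model $K_T(\alpha, y) \simeq e^{i \Phi_T(\alpha, y)/h} a(\alpha, y)$, so $K_{S_\Lambda}(y, \alpha') \simeq e^{-i\overline{\Phi_T(\alpha', y)}/h}\, \overline{a(\alpha', y)}$. Since $P$ is a semi-classical differential operator of order $m$ with analytic total symbol $p$, the finite Leibniz expansion gives
\begin{equation*}
P\bigl(e^{-i\overline{\Phi_T(\alpha', \cdot)}/h}\, b\bigr)(y) = e^{-i\overline{\Phi_T(\alpha', y)}/h}\Bigl[ p\bigl(y, -\overline{d_y \Phi_T(\alpha', y)}\bigr) b(y) + \sum_{j=1}^{m} h^j\, r_j(y)\Bigr],
\end{equation*}
where each $r_j$ is built from $b$, $\overline{\Phi_T}$, and their derivatives. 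Substituting yields an oscillatory integral with phase $\Phi_T(\alpha, y) - \overline{\Phi_T(\alpha', y)}$, whose imaginary part is bounded below by $c \jap{|\alpha|}(d(\alpha_x, y)^2 + d(\alpha'_x, y)^2)$ on $T^*\mc{M} \times \mc{M}$ by the positivity properties of $\Phi_T$, and extends this lower bound holomorphically to $\Lambda$ when $G$ is small.

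The core step is a complex stationary phase: the critical point equation $d_y \Phi_T(\alpha, y) = \overline{d_y \Phi_T(\alpha', y)}$ admits, near the diagonal of $\Lambda$, a unique holomorphic solution $y_c(\alpha, \alpha') \in (\mc{M})_\epsilon$ close to $\alpha_x$. Deforming the $y$-contour through $y_c$ (all data are analytic, so genuine contour deformation rather than almost-analytic extensions suffices) yields the leading contribution
\begin{equation*}
q(\alpha) := p_m\bigl(y_c(\alpha, \alpha), -\overline{d_y \Phi_T(\alpha, y_c(\alpha, \alpha))}\bigr) + O(h),
\end{equation*}
in the analytic symbol class of order $m$ on $\Lambda$. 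For real $\alpha \in T^*\mc{M}$ with $G = 0$, one has $y_c = \alpha_x$ and $d_y \Phi_T = -\alpha_\xi$, hence $q(\alpha) = p_m(\alpha_x, \alpha_\xi)$; for the deformation \eqref{eq:def_Lambda} with small $G$, the holomorphic dependence of $y_c$ on $\alpha$ identifies $q$ with the restriction to $\Lambda$ of the holomorphic continuation of $p_m$, modulo $O(h)$ in Kohn--Nirenberg symbols of order $m-1$, as stated.

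The main obstacle is upgrading this leading-order computation to the quantitative remainder $C_N h^N \|u\|^2_{\mc{H}^{(m-N)/2}_\Lambda}$ with $C_N$ depending only on $P$. The successive lower-order terms produced by the stationary phase must be organized in the FBI calculus of \cite[\S 2]{BJ20}: each remainder kernel has to be admissible so that it factors as multiplication by a symbol of strictly lower order composed with $\Pi_\Lambda$, and the $L^2_k(\Lambda)$-boundedness of $\Pi_\Lambda$ from \cite[Proposition 2.4]{BJ20} closes the estimate in the weighted norms \eqref{eq:norme_Lambda}. Critically, analytic (rather than merely smooth) stationary phase in the spirit of Sjöstrand must be used so that the constants in the expansion grow only factorially in $N$; truncating the iteration at the optimal index then converts the formal series into the genuine $O(h^\infty)$ gain uniform in $u$.
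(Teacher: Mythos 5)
Your strategy is correct and coincides with the one behind the result the paper actually invokes: the paper does not prove this proposition but quotes it as a particular case of \cite[Theorem 7]{BJ20}, whose proof is precisely your reduction to the kernel of $T_\Lambda P S_\Lambda$, exponential off-diagonal decay, and complex stationary phase at the critical value of $y\mapsto \Phi_T(\alpha,y)+\Phi_S(y,\beta)$ producing the symbol $q$ with leading part the holomorphic continuation of the principal symbol restricted to $\Lambda$. The same scheme is reproduced in the paper's proof of the local variant, Proposition \ref{theorem:local_multiplication_formula} (via Lemmas \ref{lemma:to_localize} and \ref{lemma:toeplitz}), where the orthogonal projector $B_\Lambda$ plays the role you assign to $\Pi_\Lambda$; these are interchangeable on the range of $T_\Lambda$, so your argument is essentially the same proof.
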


In order to prove Proposition \ref{analyticityofPi0}, we would like to apply this formula to the generator of the geodesic flow on $SM_e$. However, once $SM_e$ is embedded into a closed real-analytic manifold, the generator of the geodesic flow is only analytic near $SM_e$, so that we need a local version of the multiplication formula. We explain now how this difficulty can be overcome.

\subsubsection{Pseudo-locality properties}\label{sssec:local}

The pseudo-locality property of the FBI transform is the first thing to notice.

\begin{lemma}\label{lemma:pseudo_locality}
Let $K_0$ and $K_1$ be compact subsets of $\mathcal{M}$ that do not intersect. There exist $C_0> 0$ and a conic neighbourhood $V$ of $T^*_{K_1} \mathcal{M}$ in $(T^* \mathcal{M})_\epsilon$ such that, for every $N \in \R$, there is a constant $C$ such that for every $u \in H^{-N}(\mathcal{M})$ supported in $K_0$ and $\alpha \in V$, we have
\begin{equation*}
\va{Tu(\alpha)} \leq C \n{u}_{H^{-N}} \exp\p{- \frac{\jap{\va{\alpha}}}{C_0h}}.
\end{equation*}
\end{lemma}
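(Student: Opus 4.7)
The strategy is classical for analytic FBI transforms: exploit the off-diagonal exponential decay built into the kernel $K_T$, upgrade the pointwise bound to a Sobolev bound via Cauchy's formula, and then pair against $u$ by $H^{-N}$--$H^N$ duality.

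Since $K_0$ and $K_1$ are disjoint compact subsets of $\mathcal{M}$, set $\delta_0 := d(K_0,K_1) > 0$ and fix a cutoff $\chi \in C^\infty_c(\mathcal{M})$ equal to $1$ on a neighbourhood of $K_0$ with support $\tilde K_0$ satisfying $d(\tilde K_0, K_1) \geq \delta_0/2$. The off-diagonal decay property of $K_T$ recalled in the definition, applied with $\delta = \delta_0/4$, provides $\epsilon' \in (0,\epsilon)$ and $C_0 > 0$ such that, for $h$ small enough, any $(\alpha,y) \in (T^*\mathcal{M})_{\epsilon'} \times (\mathcal{M})_{\epsilon'}$ with $d(\alpha_x, y) > \delta_0/4$ satisfies $|K_T(\alpha,y)| \leq C_0 \exp(-\jap{|\alpha|}/(C_0 h))$. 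Shrinking $\epsilon'$ further if needed, choose $V$ to be any conic open neighbourhood of $T^*_{K_1}\mathcal{M}$ inside $(T^*\mathcal{M})_{\epsilon'}$ such that every $\alpha \in V$ has $d(\alpha_x, K_1) < \delta_0/8$; for such $\alpha$ and every $y$ in an $\epsilon'$-complex-neighbourhood of $\tilde K_0$ one has $d(\alpha_x, y) > \delta_0/4$, so the pointwise bound applies.

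Since $K_T(\alpha,\cdot)$ is holomorphic on this complex neighbourhood of $\tilde K_0$, Cauchy's integral formula on polydiscs of radius comparable to $\epsilon'$ yields, for every multi-index $\beta$, a constant $C_\beta$ such that
$$
|\partial_y^\beta K_T(\alpha,y)| \leq C_\beta \exp\left(- \frac{\jap{|\alpha|}}{C_0 h}\right)
$$
uniformly for $\alpha \in V$ and $y \in \tilde K_0$. Summing over $|\beta| \leq N$ (with minor interpolation adjustments if $N \notin \mathbb{N}$) yields $\|\chi K_T(\alpha,\cdot)\|_{H^N(\mathcal{M})} \leq C_N' \exp(-\jap{|\alpha|}/(C_0 h))$, with $C_N'$ independent of $\alpha$ and $h$.

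Finally, since $u = \chi u$, one has $Tu(\alpha) = \langle u, \chi K_T(\alpha,\cdot)\rangle_{H^{-N}, H^N}$, and the duality estimate
$$
|Tu(\alpha)| \leq \|u\|_{H^{-N}} \|\chi K_T(\alpha,\cdot)\|_{H^N} \leq C \|u\|_{H^{-N}} \exp\left(- \frac{\jap{|\alpha|}}{C_0 h}\right)
$$
closes the argument. The proof is essentially soft and I do not anticipate any serious obstacle; the only technical point is passing from a pointwise off-diagonal bound on $K_T$ to a bound on its Sobolev norm in $y$, which is handled cleanly by the holomorphic extension of $K_T$ via Cauchy estimates.
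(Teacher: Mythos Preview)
Your proof is correct and is precisely the detailed version of what the paper records as a one-line consequence of the second assumption on $K_T$ (off-diagonal exponential decay). The only content the paper leaves implicit is the passage from the pointwise kernel bound to control of the $H^{-N}$--$H^N$ pairing, which you handle cleanly via Cauchy estimates on the holomorphic extension of $K_T(\alpha,\cdot)$.
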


\begin{proof}
This is an immediate consequence of the assumption that the kernel $K_T$ of $T$ is small away from the diagonal (the second assumption on $K_T$).
\end{proof}

This consequence of Lemma \ref{lemma:pseudo_locality} will be used several times.

\begin{lemma}\label{lemma:pseudo_local_space}
Let $K_0$ be a compact subset of $\mathcal{M}$. Let $K_1$ be a compact subset of $\mathcal{M}$ that does not intersect $K_0$. Assume that $G$ is a small enough symbol of order $1$ on $(T^* \mathcal{M})_\epsilon$ and let $\Lambda = \Lambda_G$. Let $N,k \in \R$. There is a constant $C > 0$ such that for every $u \in H^{-N}(\mathcal{M})$ supported in $K_0$ we have
\begin{equation*}
\int_{\Lambda \cap (T^*_{K_1} \mathcal{M})_\epsilon} \jap{\va{\alpha}}^{2k} \va{Tu(\alpha)}^2 e^{- \frac{2H(\alpha)}{h}} \mathrm{d}\alpha \leq C \n{u}_{H^{-N}}^2 \exp\p{- \frac{1}{Ch}}.
\end{equation*}
\end{lemma}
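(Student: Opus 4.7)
The plan is to deduce the weighted $L^2$ bound from the pointwise pseudo-locality bound of Lemma \ref{lemma:pseudo_locality} applied with a slightly enlarged compact set playing the role of $K_1$. The first step is to pick a compact neighbourhood $K_1'$ of $K_1$ in $\mathcal{M}$ that is still disjoint from $K_0$, and to apply Lemma \ref{lemma:pseudo_locality} with $K_1$ replaced by $K_1'$. This yields constants $C_0 > 0$ and a conic neighbourhood $V$ of $T^*_{K_1'}\mathcal{M}$ in $(T^*\mathcal{M})_\epsilon$ such that, for every $u \in H^{-N}(\mathcal{M})$ supported in $K_0$ and every $\alpha \in V$,
\begin{equation*}
|Tu(\alpha)| \leq C \n{u}_{H^{-N}} \exp\left(-\frac{\langle|\alpha|\rangle}{C_0 h}\right).
\end{equation*}

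The second step is to choose the symbol $G$ small enough (in the Kohn--Nirenberg sense of order $1$) so that the corresponding Lagrangian deformation $\Lambda = \Lambda_G = e^{H_G^{\omega_I}} T^*\mathcal{M}$ satisfies $\Lambda \cap (T^*_{K_1}\mathcal{M})_\epsilon \subset V$. This is achieved because the Hamiltonian flow of $G$ with respect to $\omega_I$ moves the base point by an amount controlled by the $C^0$ norm of $G$ relative to the Kohn--Nirenberg metric, so small $G$ keeps the base projection inside $K_1'$. At the same time, since the weight $H$ is determined by $G$ through $\mathrm{d}H = -\Im\theta_{|\Lambda}$ (with $H$ vanishing on $T^*\mathcal{M}$ when $G=0$), one has an estimate $|H(\alpha)| \leq \delta \langle|\alpha|\rangle$ on $\Lambda$, where $\delta$ can be made arbitrarily small by choosing $G$ small.

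The third step is to insert these two bounds into the definition \eqref{eq:norme_Lambda} of the norm. On $\Lambda \cap (T^*_{K_1}\mathcal{M})_\epsilon$, one has
\begin{equation*}
\langle|\alpha|\rangle^{2k}\, |Tu(\alpha)|^2\, e^{-\frac{2H(\alpha)}{h}} \leq C^2 \n{u}_{H^{-N}}^2 \langle|\alpha|\rangle^{2k} \exp\left(-\frac{2\langle|\alpha|\rangle}{C_0 h} + \frac{2\delta\langle|\alpha|\rangle}{h}\right).
\end{equation*}
Choosing $\delta < 1/(2C_0)$, the exponent is bounded above by $-\langle|\alpha|\rangle/(C_0 h)$. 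The base of $\alpha$ remains in the compact set $K_1$, so the remaining integral in the fibre variable is of the form $\int_{\R^n} \langle\xi\rangle^{2k} \exp(-\langle\xi\rangle/(C_0 h))\,\mathrm{d}\xi$, which one bounds by factoring out $\exp(-1/(2C_0 h))$ and controlling the remainder by a Laplace-type argument. This yields the claimed inequality.

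I do not expect any genuine obstacle in executing this plan; the only delicate point is the bookkeeping on the smallness of $G$ needed to guarantee simultaneously the inclusion $\Lambda \cap (T^*_{K_1}\mathcal{M})_\epsilon \subset V$ and the bound $\delta < 1/(2C_0)$ on the weight. Both conditions depend only on $G$ and $K_0, K_1$, so they are compatible and the statement is uniform in $u$, as required.
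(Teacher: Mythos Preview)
Your proposal is correct and follows exactly the route the paper intends: the paper does not give an explicit proof of this lemma, stating only that it is a ``consequence of Lemma \ref{lemma:pseudo_locality}'', and your argument is the natural way to spell out that consequence. The enlargement from $K_1$ to $K_1'$ and the bookkeeping on the smallness of $G$ (to ensure both $\Lambda \cap (T^*_{K_1}\mathcal{M})_\epsilon \subset V$ and $|H(\alpha)| \leq \delta\langle|\alpha|\rangle$ with $\delta < 1/(2C_0)$) are precisely the details one needs, and they present no difficulty.
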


We end this section with the adaptation of Proposition \ref{theorem:multiplication_formula} to differential operators that are analytic only on a part of $\mathcal{M}$. We use here the norm introduced in \eqref{eq:norme_Lambda}.

\begin{proposition}\label{theorem:local_multiplication_formula}
Let $P$ be a semi-classical differential operator of order $m$ on $\mathcal{M}$. Let $K_0$ be a compact subset of $\mathcal{M}$. Assume that $P$ has real-analytic coefficients on a neighbourhood of $K_0$. Assume that $G$ is a small enough symbol of order $1$ on $(T^* \mathcal{M})_\epsilon$ and let $\Lambda = \Lambda_G$. There exists a Kohn--Nirenberg symbol $q$ of order $m$ on $\Lambda$ such that for every $N,m_1,m_2 \in \R$ such that $m = m_1 + m_2 -1$, there exists a constant $C > 0$ such that for every $u \in H^{-N}(\mc{M}) \cap \mathcal{H}_\Lambda^\infty$ supported in $K_0$ we have
\begin{equation*}
\va{\int_\Lambda T_\Lambda ( P u) \overline{T_\Lambda u} e^{- \frac{2H}{h}} \mathrm{d}\alpha - \int_{\Lambda} q \va{Tu}^2 e^{- \frac{2H}{h}}\mathrm{d}\alpha} \leq C \exp\p{- \frac{1}{Ch}} \n{u}_{H^{-N}}^2 + Ch \n{u}_{\mathcal{H}_\Lambda^{m_1}} \n{u}_{\mathcal{H}_\Lambda^{m_2}}.
\end{equation*}
Moreover, near $T^*_{K_0} \mathcal{M}$, the symbol $q$ is the restriction of the principal symbol of $P$ to $\Lambda$.
\end{proposition}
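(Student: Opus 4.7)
The plan is to reduce to the global multiplication formula (Proposition~\ref{theorem:multiplication_formula}) by exploiting two complementary features of $T_\Lambda$: away from $K_0$ the pseudo-locality property (Lemma~\ref{lemma:pseudo_local_space}) makes contributions exponentially small in $h$, while near $K_0$ the analyticity of $P$ allows the stationary-phase argument underlying the proof of the global formula to go through. I fix nested open neighbourhoods $U_0' \Subset U_0$ of $K_0$ whose closures are compactly contained in the open set on which $P$ has real-analytic coefficients, and a smooth cutoff $\psi\colon \mathcal{M} \to [0,1]$ with $\psi \equiv 1$ on $U_0'$ and $\supp \psi \subset U_0$. Since $P$ is a differential operator and $\supp u \subset K_0$, also $\supp(P u) \subset K_0$, so I split
\begin{equation*}
\int_\Lambda T_\Lambda(Pu) \overline{T_\Lambda u}\, e^{-\frac{2H}{h}} \mathrm{d}\alpha = I_{\mathrm{near}} + I_{\mathrm{far}},
\end{equation*}
where $I_{\mathrm{near}}$ is the integral over $\{\alpha \in \Lambda : \alpha_x \in U_0\}$ and $I_{\mathrm{far}}$ is the complementary piece.

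Since $\mathcal{M}\setminus U_0$ is compact and disjoint from $K_0$, and since $\n{Pu}_{H^{-N-m}} \leq C \n{u}_{H^{-N}}$ with $\supp(Pu) \subset K_0$, Lemma~\ref{lemma:pseudo_local_space} controls each of the weighted $L^2_k(\Lambda)$-norms of $T_\Lambda u$ and $T_\Lambda(P u)$ restricted to $\{\alpha_x \notin U_0\}$ by $C \n{u}_{H^{-N}} \exp(-1/(Ch))$. Cauchy--Schwarz then gives $\va{I_{\mathrm{far}}} \leq C \exp(-1/(Ch)) \n{u}_{H^{-N}}^2$, which furnishes the first term of the claimed estimate.

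For $I_{\mathrm{near}}$ I would rerun the analytic stationary-phase argument underlying the proof of Proposition~\ref{theorem:multiplication_formula} in \cite{BJ20} in a localised form. For $\alpha$ with $\alpha_x \in U_0$, one has $T_\Lambda(Pu)(\alpha) = \int_{K_0} (P_y^t K_T)(\alpha,y)\, u(y)\, \mathrm{d}y$, and the Gaussian concentration of $K_T$ near $\{\alpha_x = y\}$ forces the dominant contribution to come from $y$ close to $\alpha_x \in U_0$, a region on which $P$ is real-analytic. The analytic stationary-phase machinery of \cite{BJ20} then produces a Kohn--Nirenberg symbol $q$ of order $m$ on $\Lambda$, equal to the restriction of the principal symbol of $P$ to $\Lambda$ on a conic neighbourhood of $T^\ast_{K_0}\mathcal{M}$ (and extended arbitrarily to a smooth Kohn--Nirenberg symbol on the rest of $\Lambda$), together with an asymptotic expansion whose first subprincipal error is bounded by $C h \n{u}_{\mathcal{H}_\Lambda^{m_1}} \n{u}_{\mathcal{H}_\Lambda^{m_2}}$ whenever $m_1 + m_2 = m+1$; the factor of $h$ arises, as in the global case, from the leading subprincipal term of the stationary-phase expansion. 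The discrepancy between $\int_\Lambda q \va{T_\Lambda u}^2 e^{-\frac{2H}{h}} \mathrm{d}\alpha$ and its restriction to $\{\alpha_x \in U_0\}$ is again exponentially small by Lemma~\ref{lemma:pseudo_local_space} and is absorbed into the $\exp(-1/(Ch)) \n{u}_{H^{-N}}^2$ remainder.

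The main technical obstacle is the bookkeeping in the localised stationary-phase analysis: the argument in \cite{BJ20} that proves Proposition~\ref{theorem:multiplication_formula} is written globally, and one must verify that inserting the cutoff $\psi$ does not disrupt the analytic estimates in the region where $P$ is analytic, and that the boundary contributions generated by derivatives of $\psi$ fall squarely within the exponentially small remainder controlled by Lemma~\ref{lemma:pseudo_local_space}. Once this is done, extending the locally defined symbol $q$ to a smooth symbol on all of $\Lambda$ is harmless, since its values away from $T^\ast_{K_0}\mathcal{M}$ only interact with $\va{T_\Lambda u}^2$ in that same region, which is exponentially small by pseudo-locality.
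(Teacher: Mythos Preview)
Your outline is essentially correct and follows the same high-level logic as the paper: kill the far region by pseudo-locality, and near $K_0$ exploit the local real-analyticity of $P$ to run the stationary-phase machinery from \cite{BJ20}. The difference lies in how the ``near'' part is organised.

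You propose to split the $\alpha$-integral directly by the location of $\alpha_x$ and then to ``rerun the analytic stationary-phase argument'' on $T_\Lambda(Pu)(\alpha)$ for $\alpha_x\in U_0$. The paper instead inserts the identity $S_\Lambda T_\Lambda$ and a cutoff $\chi$ (supported where $P$ is analytic, $\chi\equiv 1$ near $K_0$) to write
\[
T_\Lambda Pu \;=\; \chi\, T_\Lambda P S_\Lambda\, \chi\, T_\Lambda u \;+\; \mathcal{O}\!\left(e^{-\jap{|\alpha|}/Ch}\n{u}_{H^{-N}}\right)
\]
(Lemma~\ref{lemma:to_localize}), and then analyses the operator $\chi T_\Lambda P S_\Lambda \chi$ on $\Lambda$ through the orthogonal projector $B_\Lambda$ onto $T_\Lambda\mathcal{H}_\Lambda^0$, proving a Toeplitz identity
\[
B_\Lambda\,\chi T_\Lambda P S_\Lambda \chi\,B_\Lambda \;=\; B_\Lambda\,\tilde q\,B_\Lambda \;+\; \mathcal{O}(h^\infty)
\]
(Lemma~\ref{lemma:toeplitz}). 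This reduces the problem to showing that the kernel of $\chi T_\Lambda P S_\Lambda \chi$ obeys the analogues of \cite[Lemmas~2.9, 2.10]{BJ20}, which is a direct repetition of those proofs once one observes that only $y$'s in the support of $\chi$ contribute. What this buys is that the passage from kernel structure to the multiplication formula is then literally \cite[Proposition~2.10]{BJ20} with no modification, so the ``bookkeeping'' you flag as the main obstacle disappears: the cutoff lives on $\Lambda$ rather than on $\mathcal{M}$, and the $B_\Lambda$'s absorb it cleanly. Your more direct route should also work, but the step you leave as ``rerun the stationary-phase argument'' is exactly where the paper invests its effort, and the Toeplitz reformulation is the device that makes that step mechanical.
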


In order to prove Proposition \ref{theorem:local_multiplication_formula}, let $\chi \in C^\infty((\mathcal{M})_\epsilon)$ be such that $\chi \equiv 1$ on a neighbourhood of $K_0$, and $P$ has a holomorphic extension on a neighbourhood of the support of $\chi$. We identify $\chi$ with the function $\alpha \mapsto \chi(\alpha_x)$ on $(T^* \mathcal{M})_\epsilon$, and with the corresponding multiplication operator. Proposition \ref{theorem:local_multiplication_formula} will be deduced from the two following lemmas.

\begin{lemma}\label{lemma:to_localize}
Under the assumption of Proposition \ref{theorem:local_multiplication_formula}, we have
\begin{equation*}
T_\Lambda Pu(\alpha)=\chi T_\Lambda PS_\Lambda \chi T_\Lambda u(\alpha) + \mathcal{O}\p{\exp\p{- \frac{\jap{\va{\alpha}}}{Ch}} \n{u}_{H^{-N}}},
\end{equation*}
for $\alpha \in \Lambda$.
\end{lemma}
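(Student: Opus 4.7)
I would rely on the reproducing identity $S_\Lambda T_\Lambda = \Id$ together with pseudo-locality of the FBI transform. Substituting $u = S_\Lambda \chi T_\Lambda u + S_\Lambda (1-\chi) T_\Lambda u$ on the right of $T_\Lambda Pu = T_\Lambda P S_\Lambda T_\Lambda u$, and adding/subtracting $\chi(\alpha_x) T_\Lambda P S_\Lambda \chi T_\Lambda u(\alpha)$, one gets
\[
T_\Lambda Pu(\alpha) - \chi(\alpha_x)\, T_\Lambda P S_\Lambda \chi T_\Lambda u(\alpha) = \bigl(1-\chi(\alpha_x)\bigr) T_\Lambda Pu(\alpha) + \chi(\alpha_x)\, T_\Lambda P S_\Lambda (1-\chi) T_\Lambda u(\alpha).
\]
It suffices to show that each of the two summands on the right is $\mathcal{O}(\exp(-\jap{\va{\alpha}}/(Ch))\n{u}_{H^{-N}})$.

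The first summand is immediate. Since $P$ is differential and $\supp u \subset K_0$, one has $\supp Pu \subset K_0$, while $(1-\chi)$ vanishes on a neighborhood of $K_0$. Applying Lemma \ref{lemma:pseudo_locality} to $Pu$ (with $K_1 = \supp(1-\chi)$, which lies in the conic region where the lemma applies once $\Lambda$ is close enough to $T^*\mathcal{M}$), and using the trivial continuity $\n{Pu}_{H^{-N-m}} \lesssim \n{u}_{H^{-N}}$, yields the required pointwise exponential decay.

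The second summand constitutes the heart of the argument. Its Schwartz kernel is
\[
K(\alpha,\beta) = \int_{\mathcal{M}} K_T(\alpha,y)\, P_y K_S(y,\beta)\, \mathrm{d}y.
\]
Fix $\delta_0 > 0$ with $4\delta_0 < d(K_0, \supp(1-\chi))$ and split the integral over $\beta$ according to whether $d(\alpha_x,\beta_x) > 2\delta_0$. When $d(\alpha_x,\beta_x) > 2\delta_0$, for every $y$ at least one of $d(y,\alpha_x) > \delta_0$ or $d(y,\beta_x) > \delta_0$ holds, so either $K_T(\alpha,y)$ is exponentially small in $\jap{\va{\alpha}}/h$ or $P_y K_S(y,\beta)$ is exponentially small in $\jap{\va{\beta}}/h$---the latter despite the $m$ derivatives brought by $P$, because the Gaussian factor $e^{-\Im \Phi_T/h}$ in the phase of $K_S$ dominates any polynomial loss. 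Thus $K(\alpha,\beta) = \mathcal{O}(\exp(-\jap{\va{\alpha}}/(Ch)))$ on that region, and one pairs this against the $L^\infty$ bound on $(1-\chi) T_\Lambda u$. When $d(\alpha_x,\beta_x) \leq 2\delta_0$, the point $\alpha_x$ lies at distance $\geq 2\delta_0$ from $K_0$; since also $\beta_x \in \supp(1-\chi)$, Lemma \ref{lemma:pseudo_locality} applied to $u$ gives $\va{T_\Lambda u(\beta)} \lesssim \n{u}_{H^{-N}} \exp(-\jap{\va{\beta}}/(C_0h))$. Combined with the Kohn--Nirenberg concentration of the kernel $K$ near the diagonal---inherited from the pseudo-differential structure of $\Pi_\Lambda = T_\Lambda S_\Lambda$ described in Lemmata 2.9--2.10 of \cite{BJ20}---one has $\jap{\va{\beta}} \asymp \jap{\va{\alpha}}$ on the effective support of $K$ in $\beta$, yielding the required exponential decay in $\jap{\va{\alpha}}/h$.

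The principal technical obstacle is the near-diagonal case $d(\alpha_x,\beta_x) \leq 2\delta_0$: converting the $\jap{\va{\beta}}/h$-decay of the input into $\jap{\va{\alpha}}/h$-decay of the output requires using the fine FBI-pseudo-differential structure of $T_\Lambda P S_\Lambda$ to control the $\xi$-component in the Kohn--Nirenberg metric, whereas the off-diagonal case $d(\alpha_x,\beta_x) > 2\delta_0$ follows by an elementary decomposition of the $y$-integral.
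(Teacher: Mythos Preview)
Your decomposition into the two summands is exactly the paper's, and your treatment of the first summand $(1-\chi)T_\Lambda Pu$ via Lemma~\ref{lemma:pseudo_locality} is correct. The handling of the near-diagonal region in the second summand (your Case~2) is also sound: there the Kohn--Nirenberg localisation of the kernel of $T_\Lambda P S_\Lambda$ forces $\jap{\va{\beta}}\asymp\jap{\va{\alpha}}$, and the exponential decay of $(1-\chi)T_\Lambda u(\beta)$ in $\jap{\va{\beta}}/h$ transfers to $\alpha$.

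The gap is in Case~1. From ``for every $y$ either $K_T(\alpha,y)$ is small in $\jap{\va{\alpha}}/h$ or $P_yK_S(y,\beta)$ is small in $\jap{\va{\beta}}/h$'' you cannot conclude ``$K(\alpha,\beta)=\mathcal{O}(\exp(-\jap{\va{\alpha}}/Ch))$''. Concretely, on the set $\{d(y,\alpha_x)<\delta_0\}$ the factor $K_T(\alpha,y)$ is \emph{not} small---on $\Lambda$ it is only $\mathcal{O}(\exp(\delta\jap{\va{\alpha}}/h))$ with $\delta$ small---while $P_yK_S(y,\beta)$ is $\mathcal{O}(\exp(-\jap{\va{\beta}}/Ch))$. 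Pairing this against the $L^\infty$ bound $\exp(-\jap{\va{\beta}}/C_0h)$ on $(1-\chi)T_\Lambda u$ and integrating over $\beta$ leaves $\mathcal{O}(\exp(\delta\jap{\va{\alpha}}/h))$, which is not the required decay in $\alpha$.

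What is missing in that region is precisely a non-stationary phase argument in $y$: since $\mathrm{d}_y\Phi_T(\alpha,\alpha_x)=-\alpha_\xi$ is of size $\jap{\va{\alpha}}$, and $y\mapsto P_yK_S(y,\beta)$ is holomorphic on a fixed neighbourhood with bound $\mathcal{O}(\exp(\delta\jap{\va{\beta}}/h))$, one obtains $\int_{d(y,\alpha_x)<\delta_0}e^{i\Phi_T(\alpha,y)/h}a(\alpha,y)P_yK_S(y,\beta)\,\mathrm{d}y=\mathcal{O}(\exp(-\jap{\va{\alpha}}/Ch+\delta\jap{\va{\beta}}/h))$. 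This is how the paper proceeds (it does not split over $\beta$ at all; it proves the kernel bound uniformly and then uses the decay of $(1-\chi)T_\Lambda u$ to absorb the mild $\beta$-growth). Ironically, the very Kohn--Nirenberg off-diagonal decay you invoke in Case~2 (which \emph{is} ultimately a non-stationary phase statement, cf.\ \cite[Lemma~2.9]{BJ20}) would also dispatch Case~1, since $P$ is analytic near $\alpha_x\in\supp\chi$; so your assessment in the final paragraph is inverted---Case~1 is where the non-elementary input is needed.
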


\begin{lemma}\label{lemma:toeplitz}
Under the assumption of Proposition \ref{theorem:local_multiplication_formula}, let $B_\Lambda$ denote the orthogonal projector from $L_0^2(\Lambda)$ to its closed subspace $T_\Lambda \mathcal{H}_\Lambda^0$. Then there exists a Kohn--Nirenberg symbol $\tilde{q}$ of order $m$ on $\Lambda$ such that
\begin{equation*}
B_\Lambda \chi T_\Lambda P S_\Lambda \chi B_\Lambda = B_\Lambda \tilde{q} B_\Lambda + R,
\end{equation*}
where $R$ is $\mathcal{O}(h^\infty)$ as an operator from $L^2_{k}(\Lambda)$ to $L^2_{-k}(\Lambda)$ for every $k \in \R$. Moreover, $\tilde{q} = \chi^2 q \textup{ mod } h S_{KN}^{m-1}(\Lambda)$ where $q$ is the restriction of the principal symbol of $P$ to $\Lambda$ near $T^*_{K_0} \mathcal{M}$.
\end{lemma}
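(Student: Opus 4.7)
The plan is to compute the kernel of $\chi T_\Lambda P S_\Lambda \chi$ by analytic stationary phase in the region where $P$ is analytic, express it as a classical analytic symbol times the reproducing kernel of $\Pi_\Lambda = T_\Lambda S_\Lambda$, and then convert this into the Toeplitz form by sandwiching with the orthogonal projector $B_\Lambda$.

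First, I would write the kernel as $K(\alpha,\beta) = \chi(\alpha_x)\int_{\mathcal M} K_T(\alpha,y) (P^t [\chi K_S(\cdot,\beta)])(y)\,\mathrm{d}y$, where $P^t$ denotes the formal transpose. Since $P$ has analytic coefficients on a neighbourhood of $\supp \chi$, one may move $P^t$ onto the WKB expression $K_S(y,\beta) \simeq e^{-i\overline{\Phi_T(\beta,y)}/h}\overline{a_T(\beta,y)}$ and treat the result as an analytic WKB symbol locally. Lemmas \ref{lemma:pseudo_locality} and \ref{lemma:pseudo_local_space} ensure that contributions from $y$ far from both $\alpha_x$ and $\beta_x$, as well as the effect of $\chi$ truncating regions where $P$ may fail to be analytic, contribute only $\mathcal O(\exp(-(\langle|\alpha|\rangle+\langle|\beta|\rangle)/Ch))$ to the kernel. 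Near the diagonal, analytic stationary phase is applied in $y$ to the phase $\Phi_T(\alpha,y) - \overline{\Phi_T(\beta,y)}$; the basic properties $\mathrm{d}_y \Phi_T(\alpha,\alpha_x) = -\alpha_\xi$ together with the positivity $\Im \Phi_T \geq C^{-1}\langle|\alpha|\rangle d(\alpha_x,y)^2$ pin the critical set to $\{\alpha_x = \beta_x\}$ (extended holomorphically in $\Lambda$), and the leading amplitude evaluates to $\chi^2(\alpha_x) p(\alpha) = \chi^2 q(\alpha)$, where $p$ is the principal symbol of $P$.

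Second, the analytic stationary phase expansion developed in \cite[Chapter 1]{BJ20}, building on \cite{sjostrand_96}, produces a classical analytic symbol $\tilde q \sim \chi^2 q + h\tilde q_1 + \dots$ of Kohn--Nirenberg order $m$, Borel-summed so that
\begin{equation*}
K(\alpha,\beta) = K_{\Pi_\Lambda}(\alpha,\beta)\,\tilde q(\alpha) + \mathcal O\!\left(\exp\left(-\frac{\langle|\alpha|\rangle + \langle|\beta|\rangle}{Ch}\right)\right).
\end{equation*}
Equivalently, $\chi T_\Lambda P S_\Lambda \chi = M_{\tilde q}\Pi_\Lambda + R_1$, with $M_{\tilde q}$ the multiplication operator by $\tilde q$ and $R_1 = \mathcal O(h^\infty)\colon L^2_k(\Lambda)\to L^2_{-k}(\Lambda)$ for every $k$. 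Since $\Pi_\Lambda^2 = \Pi_\Lambda$ and $\Pi_\Lambda$ has the same range $T_\Lambda\mathcal H^0_\Lambda$ as $B_\Lambda$, one has $\Pi_\Lambda B_\Lambda = B_\Lambda$. Hence $B_\Lambda \chi T_\Lambda P S_\Lambda \chi B_\Lambda = B_\Lambda \tilde q B_\Lambda + R$ with $R = B_\Lambda R_1 B_\Lambda = \mathcal O(h^\infty)$, and the assertion $\tilde q = \chi^2 q \pmod{h S^{m-1}_{KN}(\Lambda)}$ near $T^*_{K_0}\mathcal M$ follows from the leading order of the stationary phase expansion together with $\chi \equiv 1$ on a neighbourhood of $K_0$.

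The main obstacle is to justify the Borel resummation of $\tilde q$ into an analytic symbol with an \emph{exponentially} small error in $h^{-1}$, rather than the polynomial error produced by the $C^\infty$ calculus. This requires the analytic stationary phase and analytic symbolic calculus of \cite{BJ20,sjostrand_96}, where constants in each error term are tracked uniformly both in $h$ and in the Kohn--Nirenberg cone variable $\langle|\alpha|\rangle$ on the Grauert tube $(T^*\mathcal M)_\epsilon$. The pseudo-locality Lemmas \ref{lemma:pseudo_locality}--\ref{lemma:pseudo_local_space} handle the boundary region where $\chi$ decays and $P$ loses its analyticity, ensuring that the operator-norm remainder is genuinely $\mathcal O(h^\infty)$ between any pair of weighted spaces $L^2_k(\Lambda)$ and $L^2_{-k}(\Lambda)$.
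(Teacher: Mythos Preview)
Your overall strategy---compute the kernel of $\chi T_\Lambda P S_\Lambda \chi$ by analytic stationary phase and then pass to Toeplitz form---is the same as the paper's, and your treatment of the off-diagonal contributions is correct. The gap is in your second step. Stationary phase in $y$ gives the kernel near the diagonal in the form
\[
K(\alpha,\beta)\;\simeq\;e^{i\Phi_{TS}(\alpha,\beta)/h}\,e(\alpha,\beta),
\]
with an amplitude $e$ that genuinely depends on \emph{both} $\alpha$ and $\beta$; you have not justified the stronger factorization $K(\alpha,\beta)=K_{\Pi_\Lambda}(\alpha,\beta)\,\tilde q(\alpha)$ up to exponentially small errors. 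The phase $\Phi_{TS}$ is indeed the same as for $\Pi_\Lambda$, so the ratio $e/e_0$ (with $e_0$ the amplitude of $\Pi_\Lambda$) is well defined near the diagonal, but there is no reason for it to be a function of $\alpha$ alone. Consequently the identity $\chi T_\Lambda P S_\Lambda \chi = M_{\tilde q}\Pi_\Lambda + R_1$ with $R_1=\mathcal O(h^\infty)$ does not follow, and your slick algebraic step $\Pi_\Lambda B_\Lambda=B_\Lambda$ cannot be invoked.

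The paper proceeds differently at exactly this point: it only records the FIO-type structure $e^{i\Phi_{TS}/h}e(\alpha,\beta)$ (this is the analogue of \cite[Lemma~2.10]{BJ20}) together with the off-diagonal decay (analogue of \cite[Lemma~2.9]{BJ20}), and then appeals to \cite[Proposition~2.10]{BJ20}, whose proof converts any operator with this kernel structure into the form $B_\Lambda \tilde q B_\Lambda+\mathcal O(h^\infty)$. The reduction of the two-variable amplitude $e(\alpha,\beta)$ to a one-variable symbol $\tilde q(\alpha)$ happens \emph{after} sandwiching with $B_\Lambda$ and uses the structure of the projector; it is not a consequence of stationary phase by itself. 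If you want to keep your presentation self-contained, you must either reproduce that reduction argument or cite it; as written, the passage from $e(\alpha,\beta)$ to $\tilde q(\alpha)$ is the missing idea.
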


Before proving Lemmas \ref{lemma:to_localize} and \ref{lemma:toeplitz}, let us explain how they can be used to prove Proposition \ref{theorem:local_multiplication_formula}.

\begin{proof}[Proof of Proposition \ref{theorem:local_multiplication_formula}]
Applying Lemma \ref{lemma:to_localize} and then Lemma \ref{lemma:toeplitz}, we find that:
\begin{equation*}
\begin{split}
& \int_\Lambda T_\Lambda P u. \overline{T_\Lambda u} e^{- \frac{2H}{h}} \mathrm{d}\alpha = \int_\Lambda \chi T_\Lambda P S_\Lambda \chi T_\Lambda u. \overline{T_\Lambda u} e^{- \frac{2H}{h}} \mathrm{d}\alpha + \mathcal{O}\p{\exp\p{- \frac{1}{Ch}}\n{u}_{H^{-N}}^2} \\
       & \quad \quad \qquad = \int_\Lambda \chi T_\Lambda P S_\Lambda \chi B_\Lambda T_\Lambda u. \overline{B_\Lambda T_\Lambda u} e^{- \frac{2H}{h}} \mathrm{d}\alpha + \mathcal{O}\p{\exp\p{- \frac{1}{Ch}}\n{u}_{H^{-N}}^2} \\
       & \quad \quad \qquad = \int_\Lambda B_\Lambda \chi T_\Lambda P S_\Lambda \chi B_\Lambda T_\Lambda u. \overline{T_\Lambda u} e^{- \frac{2H}{h}} \mathrm{d}\alpha + \mathcal{O}\p{\exp\p{- \frac{1}{Ch}}\n{u}_{H^{-N}}^2} \\
       & \quad \quad \qquad = \int_\Lambda B_\Lambda \tilde{q} B_\Lambda T_\Lambda u. \overline{T_\Lambda u} e^{- \frac{2H}{h}} \mathrm{d}\alpha + \mathcal{O}\p{\exp\p{- \frac{1}{Ch}}\n{u}_{H^{-N}}^2 + h^\infty \n{u}_{\mathcal{H}_\Lambda^{m_1}} \n{u}_{\mathcal{H}_\Lambda^{m_2}}} \\
       & \quad \quad \qquad = \int_\Lambda \tilde{q}  \va{T_\Lambda u}^2 e^{- \frac{2H}{h}} \mathrm{d}\alpha + \mathcal{O}\p{\exp\p{- \frac{1}{Ch}}\n{u}_{H^{-N}}^2 + h^\infty \n{u}_{\mathcal{H}_\Lambda^{m_1}} \n{u}_{\mathcal{H}_\Lambda^{m_2}}} \\
      & \quad \quad \qquad = \int_\Lambda q  \va{T_\Lambda u}^2 e^{- \frac{2H}{h}} \mathrm{d}\alpha + \mathcal{O}\p{\exp\p{- \frac{1}{Ch}}\n{u}_{H^{-N}}^2 + h \n{u}_{\mathcal{H}_\Lambda^{m_1}} \n{u}_{\mathcal{H}_\Lambda^{m_2}}}. \\ 
\end{split}
\end{equation*}
On the first line, we have not used $u\in\mathcal{H}^\infty_\Lambda$, but rather that $T_\Lambda u(\alpha)$ is at most of the size $\mathcal{O}(e^{\delta\jap{\va{\alpha}}/h}\n{u}_{H^{-N}})$ on $\Lambda$, with $\delta$ that can be made arbitrarily small by taking $G$ small enough (as follows from a direct inspection of $K_T$).
\end{proof}

\begin{proof}[Proof of Lemma \ref{lemma:to_localize}]
We start by applying Lemma \ref{lemma:pseudo_locality} to find that, for $u \in H^{-N}(\mc{M}) \cap \mathcal{H}_\Lambda^\infty$ supported in $K_0$,
\begin{equation*}
\begin{split}
T_\Lambda Pu(\alpha) & = \chi T_\Lambda Pu(\alpha)+ \mathcal{O}\p{\exp\p{- \frac{\jap{\va{\alpha}}}{Ch}} \n{u}_{H^{-N}}} \\
            & = \chi T_\Lambda PS_\Lambda T_\Lambda u(\alpha) + \mathcal{O}\p{\exp\p{- \frac{\jap{\va{\alpha}}}{Ch}} \n{u}_{H^{-N}}} \\
            & = \chi T_\Lambda PS_\Lambda \chi T_\Lambda u(\alpha) + \chi T_\Lambda PS_\Lambda (1 - \chi) T_\Lambda u(\alpha) + \mathcal{O}\p{\exp\p{- \frac{\jap{\va{\alpha}}}{Ch}} \n{u}_{H^{-N}}}. \\
\end{split}
\end{equation*}
Now, we claim that $\chi T_\Lambda PS_\Lambda (1 - \chi) T_\Lambda u(\alpha)$ can in fact be put in the error term. The Schwartz kernel of the operator $\chi T_\Lambda PS_\Lambda (1 - \chi)$ is given by
\begin{equation*}
\chi(\alpha_x)(1 - \chi(\beta_x)) \int_{\mathcal{M}} K_T(\alpha,y) P_y(K_S(y,\beta)) \mathrm{d}y.
\end{equation*}
We can split the integral here into the $y$'s that are close to $\alpha_x$ and those that are not. If $y$ is away from $\alpha_x$ then by assumption the kernel $K_T(\alpha,y)$ is $\mathcal{O}\p{\exp(- \jap{\va{\alpha}}/Ch)}$. On the other hand $K_S(y,\beta)$ is always $\mathcal{O}(\exp( \delta \jap{\beta}/h))$ where $\delta >0$ can be chosen arbitrarily small by imposing that $\beta$ remains close to real (which will be verified if $G$ is small enough). Hence, the $y$'s away from $\alpha_x$ contribute to the kernel of $\chi T_\Lambda PS_\Lambda (1 - \chi)$ by an
\begin{equation}\label{eq:size_kernel}
\mathcal{O}\p{\exp\p{- \frac{\jap{\va{\alpha}}}{C h} + \frac{\delta \jap{\va{\beta}}}{h}}}.
\end{equation}
When $y$ is close to $\alpha_x$, then $P_y(K_S(y,\beta))$ is analytic in $y$, with a holomorphic extension bounded by $\mathcal{O}(\exp( \frac{\delta \jap{\beta}}{h}))$ on a neighbourhood of $\alpha_x$ of fixed size. Hence, we can apply the non-stationary phase method \cite[Proposition 1.1]{BJ20}, as in the proof of \cite[Lemma 2.4]{BJ20} for instance (see the estimate on $T^D u$ defined by (2.25) there), to find that the $y$'s close to $\alpha$ contribute to the kernel of $\chi T_\Lambda PS_\Lambda (1 - \chi)$ by a term of the same size as \eqref{eq:size_kernel}. The kernel itself is then of size at most \eqref{eq:size_kernel}. Using Lemma \ref{lemma:pseudo_locality} to estimate the size of $Tu(\beta)$ when $\chi(\beta) \neq 1$, and choosing $\delta > 0$ small enough, we find indeed that $\chi T_\Lambda PS_\Lambda (1 - \chi) T_\Lambda u(\alpha)$ is an $\mathcal{O}\p{\exp\p{- \frac{\jap{\va{\alpha}}}{Ch}} \n{u}_{H^{-N}}}$, so that we have
\begin{equation}\label{eq:kernel_near_K0}
T_\Lambda Pu(\alpha) = \chi T_\Lambda PS_\Lambda \chi T_\Lambda u(\alpha) + \mathcal{O}\p{\exp\p{- \frac{\jap{\va{\alpha}}}{Ch}} \n{u}_{H^{-N}}}.\qedhere
\end{equation}
\end{proof}

\begin{proof}[Proof of Lemma \ref{lemma:toeplitz}]
Lemma \ref{lemma:toeplitz} is the equivalent of \cite[Proposition 2.10]{BJ20}. The proof of the latter can be rewritten here without changing a word of it, once we prove that the operator $\chi T_\Lambda P S_\Lambda \chi$ satisfies \cite[Lemma 2.9 and 2.10]{BJ20} (see also \cite[Remark 2.15]{BJ20}). Thus, let us explain how to prove that the operator $\chi T_\Lambda P S_\Lambda \chi$ satisfies \cite[Lemma 2.9 and 2.10]{BJ20}.

The kernel of the operator $\chi T_\Lambda PS_\Lambda \chi$ is given for $\alpha,\beta \in (T^* \mathcal{M})_\epsilon$ by
\begin{equation*}
\chi T_\Lambda PS_\Lambda \chi(\alpha,\beta) = \chi(\alpha_x)\chi(\beta_x) \int_{\mathcal{M}} K_T(\alpha,y) P_y(K_T(y,\beta)) \mathrm{d}y.
\end{equation*}
Since the kernel $K_T(\alpha,y)$ and $K_S(y,\beta)$ are negligible when $y$ is away from $\alpha_x$ and $\beta_x$, the $y$'s in this integral that are away from the support of $\chi$ only contributes by a negligible term: if $U$ is a neighbourhood of $\textup{supp } \chi \cap \mathcal{M}$, then there is $\epsilon'> 0$ such that
\begin{equation}\label{eq:avec_U}
\chi T_\Lambda PS_\Lambda \chi(\alpha,\beta) = \chi(\alpha_x)\chi(\beta_x) \int_{U} K_T(\alpha,y) P_y(K_T(y,\beta)) \mathrm{d}y + \mathcal{O}\p{\exp\p{- \frac{\jap{\va{\alpha}} + \jap{\va{\beta}}}{Ch}}},
\end{equation}
for $\alpha,\beta \in (T^* \mathcal{M})_{\epsilon'}$. The remaining integral over $U$ only involves real-analytic functions that are exponentially small on the boundary of $U$, and can consequently be dealt with following exactly the same steps as in the proof of Lemmas 2.9 and 2.10 in \cite{BJ20} in the analytic case pp.110--111 (here we do not need the much more complicated proof required for the Gevrey case). For the convenience of the reader, we recall here the main lines of the argument.

The equivalent of \cite[Lemma 2.9]{BJ20} is that the kernel of $\chi T_\Lambda PS_\Lambda \chi$ is negligible away from the diagonal: for every $\eta > 0$, there are $C,\epsilon' > 0$ such that if $\alpha,\beta \in (T^* \mathcal{M})_{\epsilon'}$ are such that $d_{KN}(\alpha,\beta) \geq \eta/2$ then
\begin{equation*}
\chi T_\Lambda PS_\Lambda \chi(\alpha,\beta) = \mathcal{O}\p{\exp\p{- \frac{\jap{\va{\alpha}} + \jap{\va{\beta}}}{Ch}}}.
\end{equation*}
The proof of this fact is very similar to the proof of Proposition \ref{prop:equivalence_wave_front_set} given in Appendix \ref{appendix:wave_front_set}. Notice that we only need to consider the case of $\alpha_x$ and $\beta_x$ in the support of $\chi$. We will take $\epsilon' \ll \eta \ll 1$ and let $\tilde{\alpha}_x$ and $\tilde{\beta}_x$ denote elements of $\mathcal{M}$ that are at distance at most $\epsilon'$ of $\alpha_x$ and $\beta_x$ respectively. If the distance between $\alpha_x$ and $\beta_x$ is larger than $\eta/4$, we write
\begin{equation*}
\begin{split}
& \int_{U} K_T(\alpha,y) P_y(K_T(y,\beta)) \mathrm{d}y  \\ & \qquad \quad= \p{\int_{D(\tilde{\alpha}_x,\eta/100)} + \int_{D(\tilde{\beta}_x,\eta/100)} + \int_{U \setminus (D(\tilde{\alpha}_x,\eta/100) \cup D(\tilde{\beta}_x,\eta/100))}} K_T(\alpha,y) P_y(K_T(y,\beta)) \mathrm{d}y.
\end{split}
\end{equation*}
The third integral is negligible because $K_T(\alpha,y)$ and $K_S(y,\beta)$ are negligible when $y$ is away from $\alpha_x$ and $\beta_x$ (this is the second assumption we made on $K_T$). The other two integral are dealt with similarly. For the first one for instance, we use the local behaviour of $K_T$ to rewrite it, up to negligible terms, as
\begin{equation}\label{eq:truc_complexe}
\int_{D(\tilde{\alpha}_x,\eta/100)} e^{i \frac{\Phi_T(\alpha,y)}{h}} a(\alpha,y) P_y(K_T(y,\beta)) \mathrm{d}y.
\end{equation}
Here, since $\alpha_x$ is away from $\beta_x$, the function $y \mapsto P_y(K_T(y,\beta))$ is analytic with a holomorphic extension bounded by $\mathcal{O}(\exp(- \jap{\va{\beta}}/Ch))$ on a neighbourhood of fixed size of the domain of integration. Moreover, for $\eta$ and $\epsilon'$ small enough the phase $\Phi_T$ is non-stationary (as $\mathrm{d}_y \Phi_T(\alpha,\alpha_x) = - \alpha_\xi$) and has positive imaginary part on the boundary of the domain of integration. We can consequently use the non-stationary phase method \cite[Proposition 1.5]{BJ20} to see that the integral \eqref{eq:truc_complexe} is indeed negligible.

If $\alpha_x$ and $\beta_x$ are close, but $\alpha_\xi$ and $\beta_\xi$ are away from each other, we can use local coordinates and rewrite up to negligible (exponentially decaying) terms
\begin{equation}\label{eq:le_vrai_truc}
\int_{U} K_T(\alpha,y) P_y(K_T(y,\beta)) \mathrm{d}y \simeq \int_{D(\tilde{\alpha}_x,\eta/100)} e^{i \frac{\Phi_T(\alpha,y) + \Phi_S(y,\beta)}{h}} a(\alpha,y) \tilde{b}(y,\beta) \mathrm{d}y,
\end{equation}
where $\tilde{b}$ is an analytic symbol. Since $\mathrm{d}_y \Phi_T(\alpha,\alpha_x) = - \alpha_\xi$ and $\mathrm{d}_y \Phi_S(\beta_x,\beta) = \beta_\xi$, we see that when $\alpha_\xi$ and $\beta_\xi$ are away from each other, the phase in \eqref{eq:le_vrai_truc} is non-stationary again, and the non-stationary phase method gives that \eqref{eq:le_vrai_truc} is negligible (see the proof of Proposition \ref{prop:equivalence_wave_front_set} in Appendix \ref{appendix:wave_front_set} for a similar argument).

The equivalent of \cite[Lemma 2.10]{BJ20} in our context is a description of the structure of the kernel of $\chi T_\Lambda P S_\Lambda \chi$ near the diagonal: there are small $\eta > 0$ and $\epsilon' > 0$, such that for $\alpha,\beta \in (T^* \mathcal{M})_{\epsilon'}$ with $d_{KN}(\alpha,\beta) < \eta$ we have, up to exponentially decaying terms
\begin{equation}\label{eq:local_structure}
\chi T_\Lambda PS_\Lambda \chi(\alpha,\beta) \simeq e^{i \frac{\Phi_{TS}(\alpha,\beta)}{h}} e(\alpha,\beta),
\end{equation}
where $\Phi_{TS}(\alpha,\beta)$ is the critical value of $y \mapsto \Phi_T(\alpha,y) + \Phi_S(y,\beta)$ and $e$ is a Kohn--Nirenberg symbol supported near the diagonal. Moreover, $e$ is given at first order on the diagonal by
\begin{equation*}
e(\alpha,\alpha) = \frac{1}{(2 \pi h)^n} \chi(\alpha)^2 p(\alpha) + \mathcal{O}\p{h^{-n+1} \jap{\va{\alpha}}^{m-1}},
\end{equation*}
where $p$ is the (holomorphic extension of the) principal symbol of $P$. In order to prove \eqref{eq:local_structure}, we just notice that the only relevant term in \eqref{eq:avec_U} is the integral over $U$, and since $\alpha$ and $\beta$ are close to each other, this integral is given up to negligible terms by \eqref{eq:le_vrai_truc}. Then, we observe that when $\alpha = \beta$ the phase $y \mapsto \Phi_T(\alpha,y) + \Phi_S(y,\beta)$ has a non-degenerate critical point at $y = \alpha_x = \beta_x$. Moreover, provided $\epsilon' \ll \eta \ll 1$, the phase is positive on the boundary of the domain of integration. We can consequently apply the holomorphic stationary phase method \cite[Poposition 1.6]{BJ20} (see also \cite[Théorème 2.8 and Remarque 2.10]{SjAsterisque}) to get \eqref{eq:local_structure}.

\end{proof}

\subsection{Some properties of the analytic wavefront set}\label{ssec:analytic_wave_front_set}

\subsubsection{General facts about analytic wave front set}

We will use the notion of analytic wave front set $\WF_a(u)$ of a distribution $u$ on a real-analytic manifold. See for instance \cite[\S 8.4 \& 8.5]{HoeI}, \cite[\S 6]{SjAsterisque} or \cite{HitrikSjostrand} for the definition and basic properties of this notion, the definition from \cite{SjAsterisque} is recalled in Appendix \ref{appendix:wave_front_set}. These references define differently the analytic wave front set, but all the classical definitions of the analytic wave front set coincide according to a result of Bony \cite{Bony}. Notice also that these references deal mainly with the case of distributions on open subsets of $\mathbb{R}^n$, but the case of distributions on a real-analytic manifold follows immediately since the notion is invariant by real-analytic change of coordinates.

Let us start by recalling that if $u$ is a distribution on a real-analytic manifold $\mathcal{M}$ then $\WF_a(u)$ is a closed conic subset of $T^* \mathcal{M} \setminus \set{0}$ whose projection to $\mathcal{M}$ is the analytic singular support of $u$, that is the set of points $x \in \mathcal{M}$ such that $u$ is not analytic on a neighbourhood of $x$ (see for instance  \cite[Theorem 8.4.5]{HoeI} or \cite[Theorem 6.3]{SjAsterisque}).

In this paper, we will use the following characterization of the analytic wave front set in term of the FBI transform. There is no surprise in the proof of this theorem, that can be found in Appendix \ref{appendix:wave_front_set}. We use here the same notation as in \S \ref{ssec:FBItranform}, in particular $\mathcal{M}$ is a closed real-analytic manifold, and $T$ an analytic FBI transform on $\mathcal{M}$.

\begin{proposition}\label{prop:equivalence_wave_front_set}
Let $u \in \mathcal{D}'(\mathcal{M})$ be independent of $h$. Let $\alpha_0 \in T^* \mathcal{M} \setminus \{0\}$. Then the following assertions are equivalent:
\begin{enumerate}[label=(\roman*)]
\item $\alpha_0$ does not belong to $\WF_a(u)$;
\item there is neighbourhood $\Omega \subseteq T^* \mathcal{M}$ of $\alpha_0$ and a constant $C > 0$ such that for $h$ small enough and $\alpha \in \Omega$ we have
\begin{equation}\label{eq:weak_wave_front_set}
\va{Tu(\alpha)} \leq C \exp\p{- \frac{1}{Ch}}.
\end{equation}
\item there is conic neighbourhood $\Omega \subseteq T^* \mathcal{M}$ of $\alpha_0$ and a constant $C > 0$ such that for $h$ small enough and $\alpha \in \Omega$ large enough we have
\begin{equation}\label{eq:strong_wave_front_set}
\va{Tu(\alpha)} \leq C \exp\p{- \frac{\jap{\alpha}}{Ch}}.
\end{equation}
\end{enumerate}
\end{proposition}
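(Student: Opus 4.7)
The plan is to prove the chain of implications (iii) $\Rightarrow$ (ii) $\Rightarrow$ (i) $\Rightarrow$ (iii). The first of these is essentially trivial: since $\alpha_0 \in T^*\mathcal{M} \setminus \{0\}$, one selects a bounded open neighbourhood $\Omega'$ of $\alpha_0$ contained in $\Omega \cap \{\va{\alpha} \geq R\}$, where $\Omega$ and $R$ are furnished by (iii); on $\Omega'$ the quantity $\jap{\va{\alpha}}$ is bounded below by some positive constant, so \eqref{eq:strong_wave_front_set} specialises immediately to \eqref{eq:weak_wave_front_set} with a possibly enlarged constant.

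For (i) $\Rightarrow$ (iii), I localize $u$ by a smooth cutoff $\chi$ supported in a small neighbourhood of $x_0$, equal to $1$ on a smaller one, and split $Tu(\alpha) = T(\chi u)(\alpha) + T((1-\chi) u)(\alpha)$. The second piece is bounded by $C \jap{\va{\alpha}}^{-\infty} e^{-\jap{\va{\alpha}}/(Ch)} \n{u}_{H^{-N}}$ on a suitable conic neighbourhood of $\alpha_0$, directly by the pseudo-locality estimate of Lemma \ref{lemma:pseudo_locality}. For the first piece, I invoke Bony's theorem \cite{Bony} to translate the hypothesis $\alpha_0 \notin \WF_a(u)$ into the Sj\"ostrand-type statement that, after possibly shrinking the support of $\chi$, $\va{\widehat{\chi u}(\xi)} \leq C e^{-c\va{\xi}}$ for $\xi$ in some conic neighbourhood $V$ of $\xi_0$ (working in a local chart). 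Substituting Fourier inversion of $\chi u$ into the integral for $T(\chi u)(\alpha)$ produces a double integral in $(y,\xi)$ whose $y$-phase has differential $\xi - \alpha_\xi$ at $y = \alpha_x$, using $\mathrm{d}_y \Phi_T(\alpha,\alpha_x) = -\alpha_\xi$. After shrinking the conic neighbourhood of $\alpha_0$ so that $\alpha_\xi$ remains in $V$, the $\xi$-integral splits into $\xi \in V$ and $\xi \notin V$: the Fourier decay of $\widehat{\chi u}$ handles the first region and gives the required bound $C e^{-\jap{\va{\alpha}}/(Ch)}$, while the non-stationary phase method \cite[Proposition 1.1]{BJ20} handles the second.

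For (ii) $\Rightarrow$ (i), I use the inversion formula $u = STu$ to decompose $u = S(\psi Tu) + S((1-\psi) Tu)$, where $\psi \in C_c^\infty(T^*\mathcal{M})$ equals $1$ near $\alpha_0$ and is supported in the neighbourhood from (ii). Since $\psi Tu$ is $O(e^{-1/(Ch)})$ pointwise and the kernel $K_S$ is a conjugate FBI kernel with the same positivity properties as $K_T$, a contour shift in the $\alpha$ variable near $\alpha_0$ extends $y \mapsto S(\psi Tu)(y)$ holomorphically to a complex neighbourhood of $x_0$, implying that no point above $x_0$ belongs to $\WF_a(S(\psi Tu))$. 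For the second piece, since $(1-\psi) Tu$ vanishes on a neighbourhood of $\alpha_0$, a non-stationary phase analysis of the $\alpha$-integral defining $S((1-\psi) Tu)$ close to $x_0$ in the direction $\xi_0$ (in the spirit of the kernel analysis of $\chi T_\Lambda P S_\Lambda \chi$ carried out in the proof of Lemma \ref{lemma:toeplitz}, exploiting that $\mathrm{d}_\alpha (\Phi_S(y,\alpha) + \Phi_T(\alpha,y'))$ is non-zero off $\alpha_\xi = $ common direction) yields $\alpha_0 \notin \WF_a(S((1-\psi) Tu))$.

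The main obstacle is the implication (ii) $\Rightarrow$ (i), and in particular the contour deformation extracting analyticity of $S(\psi Tu)$ at $x_0$ from the pointwise exponential smallness of $\psi Tu$; it requires carefully exploiting the positivity of $\Im \Phi_T$ and the complex geometry of $K_S$ in the $\alpha$-variable to push the contour into the complex domain while preserving the exponential smallness under this deformation. By contrast, the (i) $\Rightarrow$ (iii) direction is fairly routine once Bony's equivalence theorem is invoked and the non-stationary phase method from \cite{BJ20} is applied.
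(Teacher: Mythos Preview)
Your overall chain differs from the paper's, and the step (i) $\Rightarrow$ (iii) contains a genuine gap. You claim that Bony's theorem yields, for a single smooth cutoff $\chi$, the bound $\va{\widehat{\chi u}(\xi)} \leq C e^{-c\va{\xi}}$ in a cone around $\xi_0$. This is false: take $u \equiv 1$, so that $\widehat{\chi u} = \hat\chi$, which for a generic $C_c^\infty$ function decays only rapidly, not exponentially, in any direction. The analytic wave front set cannot be detected with a single smooth cutoff; one needs either H\"ormander's sequence of cutoffs with analytic-type derivative bounds, or a complex phase with positive imaginary part as in Sj\"ostrand's Definition~\ref{definition:wave_front_set}. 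But the latter is precisely what the FBI kernel $K_T \simeq e^{i\Phi_T/h}a$ already provides, so invoking Fourier analysis here is a detour that does not close.

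The paper's route is both shorter and avoids the contour deformation you flag as the main obstacle. It observes that (i) $\Leftrightarrow$ (ii) is an immediate consequence of \cite[Proposition~6.2]{SjAsterisque}: the local form of $K_T$ is exactly a test of the type in Definition~\ref{definition:wave_front_set} with $\lambda = 1/h$, and Sj\"ostrand proved the definition is independent of such choices. After noting (iii) $\Rightarrow$ (ii) is clear, the whole content is (ii) $\Rightarrow$ (iii), proved by a rescaling trick: for $\alpha = t\alpha'$ with $\alpha'$ in the bounded neighbourhood $\Omega_0$ where (ii) holds, set $\tilde h = h/t$ and write $T_h u = T_h S_{\tilde h} T_{\tilde h} u$. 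On $\Omega_0$, hypothesis (ii) at parameter $\tilde h$ gives $\va{T_{\tilde h}u} \leq Ce^{-1/(C\tilde h)} = Ce^{-t/(Ch)} \sim Ce^{-\jap{\alpha}/(Ch)}$; off $\Omega_0$, one shows the kernel $K_{h,\tilde h}(\alpha,\beta)$ of $T_h S_{\tilde h}$ is exponentially small because $\alpha/h$ and $\beta/\tilde h$ are separated in the Kohn--Nirenberg metric, by the same non-stationary phase analysis as in Lemma~\ref{lemma:toeplitz}. This rescaling is the key idea you are missing; it converts the bounded-neighbourhood semiclassical estimate into the conic one directly, with no need to pass through (i) or to deform contours.
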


\begin{remark}
One could add a fourth equivalent property in Proposition \ref{prop:equivalence_wave_front_set} by letting $\alpha$ tends to $+ \infty$ while $h$ is fixed. Using this characterization of the analytic wave front set, one could prove without much trouble the basic properties of the analytic wave front set (behaviour under pull-back and push-forward for instance). Some other standard properties of the analytic wave front set, such as the elliptic estimates and propagation of singularities may also be proved using the FBI transform, in the spirit of the proof of Propositions \ref{prop:radial_estimate} and \ref{prop:radial_estimate_bis} below.
\end{remark}

We will also need the following result, which follows easily by using Sjöstrand's definition \cite[Definition 6.1]{SjAsterisque}. See Appendix \ref{appendix:wave_front_set} for a proof.

\begin{lemma}\label{lemma:deux_variables}
Let $u \in \mathcal{D}'(\mathcal{M} \times \mathcal{M})$. Let $(x,y) \in \mathcal{M} \times \mathcal{M}$. Assume that $u$ is $C^\infty$ near $(x,y)$ and that $u$ is analytic in the first variable, uniformly in the second variable, on a neighbourhood of $(x,y)$. Then $\WF_a(u) \cap T_x^* \mathcal{M} \times T_y^* \mathcal{M} \subseteq \{0\} \times T_y^* \mathcal{M}$.
\end{lemma}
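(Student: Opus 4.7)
The claim is local at $(x,y)$, so I work in local coordinates. By hypothesis there exist neighbourhoods $x\in U$, $y\in V$ and $\varepsilon>0$ such that $u\in C^\infty(U\times V)$ and, for each $y'\in V$, the function $x'\mapsto u(x',y')$ extends to a holomorphic function $\tilde u(\cdot,y')$ on a complex $\varepsilon$-tube around $U$, with bound uniform in $y'$. Fix $(\xi_0,\eta_0)\in T_x^*\mathcal{M}\times T_y^*\mathcal{M}$ with $\xi_0\neq 0$; the goal is to show $(x,y;\xi_0,\eta_0)\notin\WF_a(u)$.

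I verify Sjöstrand's definition \cite[Definition 6.1]{SjAsterisque}: construct, for each $N\geq 1$, a cutoff $\chi_N\in C_c^\infty(U\times V)$ equal to $1$ near $(x,y)$ with $|\partial^\alpha\chi_N|\leq (CN)^{|\alpha|}$ for $|\alpha|\leq N$, such that in some conic neighbourhood $\Gamma$ of $(\xi_0,\eta_0)$
\begin{equation*}
\va{\widehat{\chi_N u}(\xi,\eta)}\leq C^{N+1}N!\,(1+|\xi|+|\eta|)^{-N}.
\end{equation*}
I take the product $\chi_N(x',y') = \chi_N^{(1)}(x')\chi^{(2)}(y')$, with $\chi^{(2)}\in C_c^\infty(V)$ a fixed cutoff equal to $1$ near $y$ and $\chi_N^{(1)}\in C_c^\infty(U)$ the standard Sjöstrand family in the single variable $x'$ near $x$; the product satisfies the required Gevrey bounds, and Fubini gives
\begin{equation*}
\widehat{\chi_N u}(\xi,\eta)=\int\chi^{(2)}(y')e^{-iy'\cdot\eta}I_N(\xi,y')\,dy',\quad I_N(\xi,y') := \int\chi_N^{(1)}(x')u(x',y')e^{-ix'\cdot\xi}\,dx'.
\end{equation*}

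The core step is the uniform bound $|I_N(\xi,y')|\leq C^{N+1}N!\,(1+|\xi|)^{-N}$ for $\xi$ in a conic neighbourhood $\Gamma_1$ of $\xi_0$ and all $y'\in\supp\chi^{(2)}$. This is the standard one-variable Sjöstrand estimate applied to $u(\cdot,y')$: contour deformation in $x'$ in the direction $-\xi/|\xi|$, using the holomorphic extension $\tilde u(\cdot,y')$ combined with the Gevrey bounds on $\chi_N^{(1)}$. The only substantive use of the hypothesis is the uniformity in $y'$, which follows from the uniform boundedness of $\tilde u(\cdot,y')$ on the $\varepsilon$-tube. Integrating in $y'$ then yields the bound on $\widehat{\chi_N u}(\xi,\eta)$ for all $\eta$; restricting to a conic neighbourhood of $(\xi_0,\eta_0)$ narrow enough to stay in $\Gamma_1\times\mathbb{R}^n$ and to satisfy $|\xi|\geq c(|\xi|+|\eta|)$ — possible precisely because $\xi_0\neq 0$ — produces the required $(1+|\xi|+|\eta|)^{-N}$ bound. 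The main obstacle is the uniform-in-$y'$ parametric version of the one-variable deformation, but this is a routine adaptation of the classical argument.
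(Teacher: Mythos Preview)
Your argument is correct and proceeds by the same basic idea as the paper --- write the test integral as an iterated one, exploit analyticity in $x'$ to get decay in the $x'$-dual variable uniformly in $y'$, then bound the $y'$-integral trivially. The difference is only in which characterization of $\WF_a$ is used: the paper works with Sj\"ostrand's FBI-style definition (complex phase $\varphi(x',\alpha')+\psi(y',\beta')$, parameter $\lambda$) and handles the inner integral by analytic non-stationary phase, while you use H\"ormander's Fourier-transform characterization with the $\chi_N$ family and then convert $(1+|\xi|)^{-N}$ decay into $(1+|\xi|+|\eta|)^{-N}$ decay via the cone condition $|\xi|\geq c(|\xi|+|\eta|)$. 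Both routes are short and essentially equivalent; yours has the small advantage that the $y'$-integral is a bare $L^1$ bound rather than an appeal to $\Im\psi\geq 0$.

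Two minor points of terminology. First, what you call ``Sj\"ostrand's definition'' is really H\"ormander's (the paper recalls Sj\"ostrand's Definition~6.1 in its appendix, and it is the FBI-type one); this is harmless since the definitions agree by Bony. Second, ``contour deformation in $x'$'' is not literally what one does, since $\chi_N^{(1)}$ is not analytic; the standard proof of the one-variable estimate is $N$-fold integration by parts using $|\partial^\alpha\chi_N^{(1)}|\leq (CN)^{|\alpha|}$ together with the Cauchy bounds $|\partial^\beta u(\cdot,y')|\leq C^{|\beta|}|\beta|!$ coming from the uniform holomorphic extension. You clearly have this argument in mind, and the uniformity in $y'$ indeed follows from the uniform bound on $\tilde u(\cdot,y')$. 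Also note that since $u(\cdot,y')$ is analytic, the bound on $I_N$ actually holds for \emph{all} $\xi$, so the restriction to a cone $\Gamma_1$ is unnecessary at that stage.
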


\subsubsection{Lagrangian deformation and analytic regularity}\label{sssec:space_regularity}

We explain now how the spaces $\mathcal{H}_\Lambda^k$ from \S \ref{ssec:FBItranform} associated with a complex Lagrangian $\Lambda$ are related to regularity issues. In order to do so, we will use a Lagrangian $\Lambda$ defined by \eqref{eq:def_Lambda} using a function $G$ which is a symbol of logarithmic order, meaning that in local coordinates $\tilde{x} = x + iy$ and $\tilde{\xi} = \xi + i \eta$, for every $\gamma,\delta,\rho,\sigma \in \mathbb{N}^{n}$ there is a constant $C_{\gamma,\delta,\rho,\sigma}$ such that
\begin{equation*}
\va{\partial_{x}^\gamma \partial_{y}^\delta \partial_{\xi}^{\rho} \partial_{\eta}^{\sigma} G(\tilde{x},\tilde{\xi})} \leq C_{\gamma,\delta,\rho,\sigma} (1+ \va{\xi})^{- \va{\rho} - \va{\sigma}} \log \p{ 2 + \va{\xi}}
\end{equation*}
for $\tilde{x}$ in a coordinate patch for $(\mathcal{M})_\epsilon$ and $\va{\eta} \leq \epsilon (1+ \va{\xi})$.

In particular, $G$ will be of order $1$, so that the theory from \cite{BJ20} will apply, but since $\Lambda$ is now much closer to $T^* \mathcal{M}$, the spaces $\mathcal{H}_\Lambda^k$ will contain $C^\infty(\mathcal{M})$ (see Lemma \ref{lemma:space_cinfini}). However, $G$ will not depend on $h$, so that a distribution that belongs to $\mathcal{H}_\Lambda^0$ with some uniform estimates in $h$ must have its analytic wave front set localized outside of the set of negative ellipticity of $G$ (see Lemma \ref{lemma:hlambda_to_wfs}). To put it shortly, we are doing $C^\infty$ microlocal analysis from the classical point of view, but real-analytic microlocal analysis in the semiclassical perspective.

Notice that an advantage of working with a symbol of logarithmic order $G$ is that the assumption that $G$ is small enough as a symbol of order $1$ (needed to apply the results from \cite{BJ20}) can easily be satisfied by multiplying $G$ by a function that vanishes on a large neighbourhood of $0$ and takes value $1$ near infinity. As we do not care about the value of $G$ near $0$ in the applications below, this is particularly practical for us.

We start with an estimate which will be useful to discard the part of the phase space that is not of interest for our purpose.

\begin{lemma}\label{lemma:distribution_space}
Let $G$ be a symbol of logarithmic order on $(T^* \mathcal{M})_\epsilon$. Assume that $G$ is small enough as a symbol of order $1$. Let $\Lambda = \Lambda_G$ be the associated Lagrangian defined by \eqref{eq:def_Lambda}. Let $\Omega \subseteq T^* \mathcal{M}$ be such that there is $\delta > 0$ such that for $\alpha \in \Omega$ such that $\jap{\alpha} \geq \delta^{-1}$, we have:
\begin{equation}\label{assumption_lemma:distribution_space}
G(\alpha) \geq \delta \log \jap{\alpha}.
\end{equation}
Write $\Omega' = e^{H_G^{\omega_I}} \Omega \subseteq \Lambda$. There is a constant $C_0 > 0$ such that, for every $N,k \in \R$, there is a constant $C > 0$ such that for all $h$ small enough and every $u \in H^{-N}(\mathcal{M})$ we have 
\begin{equation}\label{eq:controle_distribution}
\int_{\Omega'} \va{T_\Lambda u}^2 e^{- \frac{2H}{h}} \jap{\va{\alpha}}^{2k} \mathrm{d}\alpha \leq C e^{\frac{C_0}{h}} \n{u}_{H^{-N}}^2.
\end{equation}
\end{lemma}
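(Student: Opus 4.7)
The proof combines a pointwise bound on $|T_\Lambda u(\alpha)|^2 e^{-2H(\alpha)/h}$ with the polynomial decay that the assumption on $G$ forces on $e^{-2H/h}$ over $\Omega'$; for $h$ small enough, this decay dominates any polynomial growth in $\alpha$ and renders the integral finite, with the remaining $h^{-A}$ factors absorbed into $e^{C_0/h}$.

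For the pointwise bound, one starts from duality, writing $|T_\Lambda u(\alpha)| \leq \|u\|_{H^{-N}} \|K_T(\alpha,\cdot)\|_{H^N_y}$. The kernel satisfies $K_T(\alpha,y) \simeq e^{i\Phi_T(\alpha,y)/h} a(\alpha,y)$ near the diagonal and is exponentially negligible otherwise. On $\Lambda$, the admissible phase satisfies (modulo lower order terms)
\[
\Im \Phi_T(\alpha,y) \geq H(\alpha) + c\langle|\alpha|\rangle |y - \alpha_x^{\mathbb{R}}|^2,
\]
which gives a Gaussian decay in $y$ together with the weight $e^{-H(\alpha)/h}$. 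Combining this with the symbol bounds on $a$ and the factors $(h^{-1}\langle|\alpha|\rangle)^N$ arising from $N$ derivatives in $y$ of $e^{i\Phi_T/h}$, one obtains
\[
\|K_T(\alpha,\cdot)\|_{H^N_y}^2 \leq Ch^{n/2-2N}\langle|\alpha|\rangle^{2N} e^{-2H(\alpha)/h},
\]
and therefore
\[
|T_\Lambda u(\alpha)|^2 e^{-2H(\alpha)/h} \leq C h^{n/2-2N}\langle|\alpha|\rangle^{2N} e^{-4H(\alpha)/h}\|u\|_{H^{-N}}^2.
\]

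For $\alpha \in \Omega'$ with $\langle|\alpha|\rangle$ large, the relation $H \sim G$ on $\Lambda$ (valid to leading order under the deformation $e^{H_G^{\omega_I}}$) together with the hypothesis $G(\alpha_0) \geq \delta\log\langle|\alpha_0|\rangle$ on $\Omega$ and the comparability of $\langle|\alpha|\rangle$ with $\langle|\alpha_0|\rangle$ under this deformation yield $H(\alpha) \geq \delta'\log\langle|\alpha|\rangle - C_1$ for some $\delta', C_1 > 0$. Thus $e^{-4H/h} \leq Ce^{C/h}\langle|\alpha|\rangle^{-4\delta'/h}$, and the integrand is bounded by $Ce^{C/h}\langle|\alpha|\rangle^{2N+2k-4\delta'/h}\|u\|_{H^{-N}}^2$. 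For $h$ small enough that $4\delta'/h > 2N+2k+\dim\Lambda+1$, this is integrable over the large-$\alpha$ part of $\Omega'$ with an integral bounded uniformly in $h$. The bounded part of $\Omega'$, where $\langle|\alpha|\rangle$ stays in a fixed compact set, is handled by the pointwise bound alone, contributing at worst an $e^{C_0/h}$ factor. Absorbing the $h^{n/2-2N}$ prefactor into $e^{C_0/h}$ (using $h^{-A}\leq e^{A/h}$ for $h\leq 1$) yields the announced bound.

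The main technical point is the Sobolev estimate on $K_T(\alpha,\cdot)$ and the relation $H\sim G$ on $\Lambda$, both of which rest on the precise analysis of the analytic FBI kernel and its admissible phase under Lagrangian deformation from \cite[\S 2]{BJ20}. Verifying carefully the sign convention that makes $H$ positive in the direction where $G$ is positive (so that the weight $e^{-2H/h}$ indeed produces decay on $\Omega'$) is where one has to be attentive.
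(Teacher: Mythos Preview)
Your overall strategy is reasonable, but the key inequality you invoke,
\[
\Im \Phi_T(\alpha,y) \geq H(\alpha) + c\langle|\alpha|\rangle\,|y-\alpha_x^{\mathbb R}|^2 \qquad (\alpha\in\Lambda,\ y\ \text{real}),
\]
is not correct, and this is precisely the point you yourself flag as delicate. A model computation with the standard quadratic phase $\Phi_T(\alpha,y)=(\alpha_x-y)\cdot\alpha_\xi+\tfrac{i}{2}\langle\alpha_\xi\rangle(\alpha_x-y)^2$ on $\mathbb R^n$ and $G=\delta\log\langle\xi\rangle$ shows that on $\Lambda=e^{H_G^{\omega_I}}T^*\mathbb R^n$ one has $\min_y\Im\Phi_T(\alpha,y)\approx\delta$ (bounded), whereas $H(\alpha)\approx\delta\log\langle\xi\rangle-\delta$ (growing). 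So $\Im\Phi_T\geq H$ fails for large $\alpha$, and with it your bound $\|K_T(\alpha,\cdot)\|_{H^N}^2\lesssim e^{-2H(\alpha)/h}$ and the resulting $e^{-4H/h}$ decay. What actually holds is the weaker relation $\Im\Phi_T(\alpha,y)+H(\alpha)\gtrsim G(\gamma)$ (with $\gamma$ the real preimage of $\alpha$), which still suffices but gives only $e^{-2G/h}\approx\langle\alpha\rangle^{-2\delta/h}$ in the integrand rather than $e^{-4H/h}$.

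The paper sidesteps this by not estimating $K_T$ directly. Instead it writes $T_\Lambda u(\alpha)=\int_{T^*\mathcal M}K_{TS}(\alpha,\beta)\,Tu(\beta)\,\mathrm d\beta$, uses the known structure of $K_{TS}$ near the diagonal from \cite{BJ20}, and Taylor-expands $\Im\Phi_{TS}(e^{H_G^{\omega_I}}\gamma,\beta)$ along the deformation. The identity $\mathrm d_\alpha\Phi_{TS}(\alpha,\alpha)=\theta$ together with the explicit formula for $H$ yields exactly $\Im\Phi_{TS}(\alpha,\beta)+H(\alpha)\geq\tfrac{\delta}{2}\log\langle|\alpha|\rangle$, and the bound follows. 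Your direct duality approach could be repaired by carrying out the analogous Taylor expansion for $\Im\Phi_T$ itself, but that is essentially the same computation, and the relation ``$H\sim G$'' you appeal to is not by itself enough to get the sign and size of the phase correction right.
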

Here the constant $C_0$ can be made arbitrarily small taking $G$ arbitrarily small, but we will not need this. Actually, this growth only comes from small frequencies.

\begin{proof}
We can assume that $u$ is real-analytic, as the general result then follows by an approximation argument. Indeed, using for instance the eigenfunctions of the Lapalce operator associated to a real-analytic metric, one can produce a sequence $(u_n)_{n \geq 0}$ of real-analytic functions converging to $u$ in $H^{-N}(\mathcal{M})$. Then if each $u_n$ satisfies \eqref{eq:controle_distribution} (with constants $C$ and $C_0$ that does not depend on $u_n$), it follows from Fatou's lemma that $u$ also satisfies \eqref{eq:controle_distribution} since $(T_\Lambda u_n)_{n \geq 0}$ converges pointwise to $T_\Lambda u$. Without loss of generality, we may also assume that $N$ is a positive integer. 

Denote by $K_{TS}$ the kernel of $TS$ and its holomorphic extension. Recall that $T_{\Lambda}u=(Tu)_{|\Lambda}$ where $Tu$ is considered as a function on $(T^*\mc{M})_\eps$ by holomorphic extension. Then, since $u$ is real-analytic, $Tu$ decays exponentially fast, and we can use the formula
\begin{equation}\label{eq:formule_avec_noyau}
T_\Lambda u(\alpha) = \int_{T^* \mathcal{M}} K_{TS}(\alpha,\beta) Tu(\beta) \mathrm{d}\beta \quad \textup{ for } \alpha \in \Lambda
\end{equation}
to bound $T_\Lambda u$. The formula \eqref{eq:formule_avec_noyau} follows from the fact that $ST$ is the identity operator.

According to Lemma 2.9 in \cite{BJ20}, given $\eta>0$, if $G$ is small enough, and $d_{KN}(\alpha,\beta) > \eta$, with $\alpha\in\Lambda$ and $\beta\in T^\ast \mathcal{M}$, the kernel $K_{TS}(\alpha,\beta)$ is exponentially small in $(\jap{\va{\alpha}} + \jap{\va{\beta}})/h$. In particular, 
\[
T_\Lambda u(\alpha) = \int_{\beta\in T^* \mathcal{M},\ d_{KN}(\alpha,\beta)\leq \eta} K_{TS}(\alpha,\beta) Tu(\beta) \mathrm{d}\beta + \mathcal{O}(e^{-\langle|\alpha|\rangle/Ch} \| u\|_{H^{-N} } ). 
\]
Here we used the fact that $\mathcal{H}^k_{T^\ast M}$ is the semi-classical Sobolev space of order $k$ (see Corollary 2.4 in \cite{BJ20}). 

Let us turn now to the points close to $\alpha$. Here we can use Lemma 2.10 of \cite{BJ20}. For $\eta>0$ small enough and $d_{KN}(\alpha,\beta)<\eta$, 
\begin{equation}\label{eq:noyau_TS}
K_{TS}(\alpha,\beta) = e^{i \frac{\Phi_{TS}(\alpha,\beta)}{h}} e(\alpha,\beta) + \mathcal{O}\p{\exp \p{ - \frac{ \langle|\alpha|\rangle }{h} }},
\end{equation}
where $h^n e$ is a symbol of order $0$ and the phase $\Phi_{TS}(\alpha,\beta)$ is the critical value of $y \mapsto \Phi_T(\alpha,y)+\Phi_S(\beta,y)$. This formula is only stated for $\alpha,\beta \in \Lambda$ in \cite[Lemma 2.10]{BJ20}, but since we can rely here on the proof given in the analytic case, the formula actually holds as soon as $\alpha$ and $\beta$ are close enough to the diagonal of $T^* \mathcal{M}$ (for the same reason, $e$ is actually analytic near the diagonal). This is explained at the start of the proof (p.111) and also in the remark before figure $1$ therein.

Now using the same argument as before to deal with the remainder, we can concentrate on
\[
I(\alpha) :=\int_{d_{KN}(\alpha,\beta)<\eta} e^{i\frac{\Phi_{TS}(\alpha,\beta)}{h}} e(\alpha,\beta) T u(\beta) \mathrm{d}\beta
\]
in order to bound $T_\Lambda u(\alpha)$. Choose $R>\delta^{-1}$ large. For $|\alpha|<R$, the integral $I(\alpha)$ is controlled by
\begin{equation}\label{eq:low_frequencies}
C e^{C_0/h} \| u \|_{H^{-N}},
\end{equation}
where the constant $C_0$ only depends on $\Lambda$ and $R$. 

Let us assume now that $|\alpha|>R$. Then we estimate the imaginary part of the phase  $\Phi_{TS}(\alpha,\beta)$, assuming that $\alpha \in \Omega'$ and $\beta \in T^* \mathcal{M}$ are at distance at most $\eta$. Write $\alpha = e^{H_G^{\omega_I}} \gamma$ for some $\gamma \in \Omega$. Using that the imaginary part of $\Phi_{TS}(\gamma,\beta)$ is non-negative (see \cite[Lemma 2.13]{BJ20}), we find
\begin{equation*}
\begin{split}
\Im \Phi_{TS}(\alpha,\beta) & \geq \Im\p{\Phi_{TS}(e^{H_G^{\omega_I}}\gamma,\beta) - \Phi_{TS}(\gamma,\beta)} \\
       & \geq \mathrm{d}_\alpha \Im \Phi_{TS}(\gamma,\beta) \cdot H_G^{\omega_I}(\gamma) + \mathcal{O}\p{\frac{(\log \jap{\va{\alpha}})^2}{\jap{\va{\alpha}}}} \\
       & \geq \mathrm{d}_\alpha \Im \Phi_{TS}(\alpha,\alpha) \cdot H_G^{\omega_I}(\alpha) + \mathcal{O}\p{\frac{(\log \jap{\va{\alpha}})^2}{\jap{\va{\alpha}}} + \eta \log\jap{\va{\alpha}}} \\
       & \geq \Im ( \theta(H_G^{\omega_I})(\alpha)) + \mathcal{O}\p{\frac{(\log \jap{\va{\alpha}})^2}{\jap{\va{\alpha}}} + \eta \log\jap{\va{\alpha}}}.
\end{split}
\end{equation*}
Here, we used the fact that $G$ has logarithmic order (which implies that both $H_G^{\omega_I}$ and the distance between $\alpha$ and $\gamma$ are $\mathcal{O}(\log\langle|\alpha|\rangle/\langle|\alpha|\rangle)$) and that $\mathrm{d}_\alpha \Phi_{TS}(\alpha,\alpha) = \theta$, where $\theta$ is the canonical $1$-form on the cotangent bundle of the complexification $\widetilde{\mathcal{M}}$ of $\mathcal{M}$ (see \cite[(2.73)]{BJ20}). We recall the explicit formula \cite[(2.9)]{BJ20} for the weight $H$:
\begin{equation*}
H = \int_0^1 \p{e^{(\tau - 1)H_G^{\omega_I}}}^* (G - \Im \theta(H_G^{\omega_I})) \mathrm{d}\tau,
\end{equation*}
in order to see that 
\begin{equation*}
\begin{split}
H(\alpha) & = G(\alpha) - \Im ( \theta(H_G^{\omega_I})(\alpha))  + \mathcal{O}\p{\frac{(\log \jap{\va{\alpha}})^2}{\jap{\va{\alpha}}}} \\
          & = G(\gamma) - \Im ( \theta(H_G^{\omega_I})(\alpha))  + \mathcal{O}\p{\frac{(\log \jap{\va{\alpha}})^2}{\jap{\va{\alpha}}}}.
\end{split}
\end{equation*}
It follows, using assumption \eqref{assumption_lemma:distribution_space}, that
\begin{equation*}
\begin{split}
\Im \Phi_{TS}(\alpha,\beta) & \geq G(\gamma) - H(\alpha) + \mathcal{O}\p{\frac{(\log \jap{\va{\alpha}})^2}{\jap{\va{\alpha}}} + \eta \log\jap{\va{\alpha}}} \\
    & \geq \delta \log \jap{\va{\alpha}} - H(\alpha) + \mathcal{O}\p{\frac{(\log \jap{\va{\alpha}})^2}{\jap{\va{\alpha}}} + \eta \log\jap{\va{\alpha}}} \\
    & \geq \frac{\delta}{2} \log \jap{\va{\alpha}} - H(\alpha),
\end{split}
\end{equation*}
provided $\eta$ is small enough and $R$ is large enough. We insert this estimate in our formula
\begin{align*}
|I(\alpha)| &\leq  e^{\frac{H(\alpha)}{h}} \int_{d_{KN}(\alpha,\beta)<\eta} e^{-\frac{\delta \log \langle|\alpha|\rangle}{2h}}  |e(\alpha,\beta)| |Tu(\beta)|\mathrm{d}\beta, \\
			&\leq  e^{\frac{H(\alpha)}{h}} \langle|\alpha|\rangle^{-\frac{\delta}{2h}} h^{-n}\int_{d_{KN}(\alpha,\beta)<\eta}  |Tu(\beta)|\mathrm{d}\beta,\\
			&\leq  e^{\frac{H(\alpha)}{h}} \langle|\alpha|\rangle^{\frac{n}{2}+N-\frac{\delta}{2h}} h^{-n-N}\|u\|_{H^{-N}}. 
\end{align*}
Since, $T_\Lambda u(\alpha)$ coincides with $I(\alpha)$ up to exponentially decaying terms, we deduce that 
\[
\int_{\alpha\in \Omega',\ |\alpha|>R } |T_\Lambda u(\alpha)|^2  e^{-2\frac{H(\alpha)}{h}} \langle|\alpha|\rangle^{2k} \mathrm{d}\alpha \leq C_N e^{-1/Ch} \|u\|_{H^{-N}}^2.
\]
The result then follows since for low frequencies $e^{-H(\alpha)/h} \va{T_\Lambda u (\alpha)}$ is bounded by \eqref{eq:low_frequencies}.
\end{proof}

The point of working with a logarithmic weight $G$ is that $C^\infty$ regularity is enough to prove that a distribution belongs to $\mathcal{H}_\Lambda^k$, as expressed in the following lemma. However, notice that the value of the integral in \eqref{eq:apriori_estimate} could \emph{a priori} blows up when $h$ tends to $0$. The main point in the proof of Proposition \ref{prop:radial_estimate} below is to prove that it does not happen.

\begin{lemma}\label{lemma:space_cinfini}
Let $G$ be a symbol of logarithmic order on $(T^* \mathcal{M})_\epsilon$. Assume that $G$ is small enough as a symbol of order $1$. Let $\Omega$ be a closed conic subset of $T^* \mathcal{M}$. Let $\Omega' = e^{H_G^{\omega_I}} \Omega$. Let $u \in \mathcal{D}'(\mathcal{M})$ be such that $\WF(u) \cap \Omega = \emptyset$, where $\WF(u)$ denotes the $C^\infty$ wave front set of $u$. Then, for every $k \in \R$, and $h>0$ small enough
\begin{equation}\label{eq:apriori_estimate}
\int_{\Omega'} \va{T_\Lambda u}^2 e^{- \frac{2H}{h}} \jap{\va{\alpha}}^{2k} \mathrm{d}\alpha < + \infty.
\end{equation}
\end{lemma}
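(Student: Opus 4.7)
The plan is to decompose $u$ with a classical pseudodifferential cut-off into a $C^\infty$ piece, whose FBI transform will decay rapidly, and a distributional piece whose $C^\infty$ wavefront set is kept strictly away from $\Omega$, so that its FBI transform decays rapidly on a neighbourhood of $\Omega'$ by pseudolocality of the reproducing kernel $K_{TS}$. Concretely, since $\WF(u)$ and $\Omega$ are disjoint closed conic subsets of $T^\ast\mc{M}\setminus\{0\}$, I would choose nested open conic neighbourhoods $\Omega\subset V_0\subset V_1$ with $\overline{V_1}\cap\WF(u)=\emptyset$, and a symbol $\psi\in S^0(T^\ast\mc{M})$ equal to $1$ on $V_0$ and supported in $V_1$. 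Setting $\Psi=\Op(\psi)$ and writing $u=u_1+u_2$ with $u_1:=\Psi u$, $u_2:=(I-\Psi)u$, pseudolocality gives $u_1\in C^\infty(\mc{M})$, while $\WF(u_2)\subset\WF(u)\setminus V_0$; in particular $\WF(u_2)\cap V_0=\emptyset$.

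For the smooth piece $u_1$, one has $|Tu_1(\beta)|\leq C_{N,h}\jap{\va{\beta}}^{-N}$ for every $N$. Because $G$ has \emph{logarithmic} order, $\Lambda$ lies at Kohn--Nirenberg distance $\mc{O}(\log\jap{\va{\alpha}}/\jap{\va{\alpha}})$ from $T^\ast\mc{M}$, and a direct inspection of $K_T$ shows that the holomorphic extension $T_\Lambda u_1(\alpha)$ acquires at most a polynomial factor $\jap{\va{\alpha}}^{C/h}$; the weight $e^{-2H/h}\jap{\va{\alpha}}^{2k}$ is itself polynomially bounded of $h$-dependent degree. For each fixed small $h$, taking $N$ large enough renders the $u_1$-contribution to \eqref{eq:apriori_estimate} finite. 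For the regularised piece $u_2$, I would apply the reproducing identity $T_\Lambda=T_\Lambda ST$ already used in the proof of Lemma \ref{lemma:distribution_space},
\[
T_\Lambda u_2(\alpha)=\int_{T^\ast\mc{M}} K_{TS}(\alpha,\beta)\,Tu_2(\beta)\,\mathrm{d}\beta,\qquad \alpha\in\Omega',
\]
and denote by $\pi:\Lambda\to T^\ast\mc{M}$ the inverse of $e^{H_G^{\omega_I}}$. Splitting the $\beta$-integral at $\{d_{KN}(\beta,\pi(\alpha))<\eta\}$ and its complement with $\eta>0$ small, the near-diagonal region keeps $\beta$ in $V_0$ uniformly in $\alpha\in\Omega'$, so the standard FBI characterisation of the complement of the $C^\infty$ wavefront set gives $|Tu_2(\beta)|\leq C_{N,h}\jap{\va{\beta}}^{-N}$ for every $N$; combined with the polynomial near-diagonal bound on $K_{TS}$ from \cite[Lemma 2.10]{BJ20}, this contribution decays rapidly in $\jap{\va{\alpha}}$. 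Over the complement, the off-diagonal bound $|K_{TS}(\alpha,\beta)|\leq C\exp(-(\jap{\va{\alpha}}+\jap{\va{\beta}})/(Ch))$ from \cite[Lemma 2.9]{BJ20} absorbs the finite-order polynomial growth of $Tu_2(\beta)$, so this part is even exponentially small in $\jap{\va{\alpha}}/h$.

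The main obstacle will be monitoring the $h$-dependent polynomial factors $\jap{\va{\alpha}}^{\mc{O}(1/h)}$ generated by the weight $e^{-2H/h}$ and by the complex deformation of $T^\ast\mc{M}$ into $\Lambda$: the choice of a \emph{logarithmic}-order symbol $G$ is precisely what keeps these factors polynomial rather than exponential, so that for each fixed small $h$ they are overwhelmed by the super-polynomial decay of $Tu_1$ (from smoothness) and of $Tu_2$ on $V_0$ (from the wavefront set condition). Combining the two pieces, $|T_\Lambda u|^2 e^{-2H/h}\jap{\va{\alpha}}^{2k}$ will be rapidly decreasing on $\Omega'$ for each such $h$, and hence integrable, as required.
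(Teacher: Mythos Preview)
Your proof is correct and rests on the same core mechanism as the paper's: the logarithmic order of $G$ keeps the distortion between $T^*\mathcal{M}$ and $\Lambda$ of size $\mathcal{O}(\log\jap{\va{\alpha}}/\jap{\va{\alpha}})$, so passing through the reproducing kernel $K_{TS}$ only costs a polynomial factor $\jap{\va{\alpha}}^{C_0/h}$, which is beaten by the super-polynomial decay coming from the $C^\infty$ wavefront condition once $N$ is chosen large (depending on $h$).

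The paper's argument is shorter because it skips your pseudodifferential decomposition entirely. Since $\WF(u)$ and $\Omega$ are disjoint closed cones, the FBI characterisation of the $C^\infty$ wavefront set already gives $\va{Tu(\beta)}\leq C_N h^N\jap{\va{\beta}}^{-N}$ for $\beta$ in a conic neighbourhood of $\Omega$; the paper then feeds this directly into $T_\Lambda u(\alpha)=\int K_{TS}(\alpha,\beta)Tu(\beta)\,\mathrm{d}\beta$ for $\alpha\in\Omega'$, estimates $\Im\Phi_{TS}(\alpha,\beta)+H(\alpha)=\mathcal{O}(\log\jap{\va{\alpha}})$ near the diagonal, and obtains $e^{-H(\alpha)/h}\va{T_\Lambda u(\alpha)}\leq C_N\jap{\va{\alpha}}^{C_0/h-N}$ in one stroke. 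Your treatment of $u_2$ is exactly this argument; the splitting $u=u_1+u_2$ and the separate handling of the smooth piece $u_1$ are harmless but redundant, since the $u_2$-argument already applies verbatim to $u$ itself with $V_0$ any conic neighbourhood of $\Omega$ disjoint from $\WF(u)$.
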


\begin{proof}
Since the wave front set of $u$ does not intersect $\Omega$, it follows for instance from \cite[Theorem 4.8]{WunschZworski}  that for every $N > 0$, there is a constant $C_N > 0$ such that for $\alpha \in \Omega$, we have
\begin{equation}\label{eq:wfcinfini}
\va{Tu(\alpha)} \leq C_N h^N \jap{\va{\alpha}}^{-N}.
\end{equation}
Then, we can estimate the imaginary part of $\Phi_{TS}(\alpha,\beta)$ as in the proof of Lemma \ref{lemma:distribution_space}. Since we do not make an ellipticity assumption on $G$ anymore, we only find that $\Im \Phi_{TS}(\alpha,\beta) +H(\alpha)$ is a $\mathcal{O}(\log \jap{\va{\alpha}})$ for $\alpha \in \Lambda$ and $\beta \in T^* \mathcal{M}$ close to each other. Hence, using \eqref{eq:formule_avec_noyau} to estimate $e^{- \frac{H}{h}} \va{T_\Lambda u}$, we find that there is a constant $C_0$, that does not depend on $N$ (while $C_N$ does), such that for $\alpha \in \Omega'$ large, we have
\begin{equation}\label{eq:wffcinfini}
e^{- \frac{H(\alpha)}{h}}\va{T_\Lambda u(\alpha)} \leq C_N \jap{\va{\alpha}}^{\frac{C_0}{h} - N}.
\end{equation}
The bound \eqref{eq:apriori_estimate} follows by taking $N$ large enough (depending on $h$).
\end{proof}

If we consider the analytic wave front set instead of the $C^\infty$ wave front set, then the dependence on $h$ of the bound in Lemma \ref{lemma:space_cinfini} becomes explicit.

\begin{lemma}\label{lemma:space_analytic}
Let $G$ be a symbol of logarithmic order on $(T^* \mathcal{M})_\epsilon$. Assume that $G$ is small enough as a symbol of order $1$. Let $\Omega$ be a closed conic subset of $T^* \mathcal{M}$. Let $\Omega' = e^{H_G^{\omega_I}} \Omega$. Let $u \in \mathcal{D}'(\mathcal{M})$ be such that $\WF_a(u) \cap \Omega = \emptyset$. Then for every $k \in \R$ there is a constant $C > 0$ such that, for $h$ small enough, we have
\begin{equation*}
\int_{\Omega'} \va{T_\Lambda u}^2 e^{- \frac{2H}{h}} \jap{\va{\alpha}}^{2k} \mathrm{d}\alpha \leq C e^{\frac{C}{h}}.
\end{equation*}
\end{lemma}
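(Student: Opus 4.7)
The plan is to follow the structure of the proof of Lemma \ref{lemma:space_cinfini}, but to replace the $C^\infty$ microlocal bound \eqref{eq:wfcinfini} by the semiclassical exponential bound provided by Proposition \ref{prop:equivalence_wave_front_set}. The extra decay will compensate the $\jap{|\alpha|}^{C_0/h}$ growth appearing in \eqref{eq:wffcinfini} and deliver a clean $e^{C/h}$ bound.

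First, I would use that $\Omega$ is a closed conic subset of $T^*\mathcal{M}$ with $\mathcal{M}$ compact, so that $\Omega \cap S^*\mathcal{M}$ is compact and disjoint from $\WF_a(u)$. Applying Proposition \ref{prop:equivalence_wave_front_set} at each point and extracting a finite subcover, I obtain a conic neighbourhood $V$ of $\Omega$ in $T^*\mathcal{M}$ and constants $C_1, R_0 > 0$ such that for $h$ small enough and every $\beta \in V$ with $\jap{|\beta|} \geq R_0$,
\begin{equation*}
|Tu(\beta)| \leq C_1 \exp\!\left(-\frac{\jap{|\beta|}}{C_1 h}\right).
\end{equation*}

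Next, as in the proof of Lemma \ref{lemma:distribution_space}, I approximate $u$ by real-analytic functions (using density in $\mathcal{H}_\Lambda^0$) so that $Tu$ decays exponentially and the reproducing formula $T_\Lambda u(\alpha) = \int_{T^*\mathcal{M}} K_{TS}(\alpha,\beta) Tu(\beta)\, \mathrm{d}\beta$ is valid pointwise on $\Lambda$; the final bound then follows by Fatou. I split this integral according to whether $d_{KN}(\alpha,\beta) > \eta$ or $\leq \eta$ for a small $\eta > 0$. For the far part, Lemma 2.9 of \cite{BJ20} implies that $K_{TS}(\alpha,\beta)$ is exponentially small in $\jap{|\alpha|}/h + \jap{|\beta|}/h$, so this contribution gives a negligible term (even after taking the weight $e^{-H/h}$ into account, since $H(\alpha) = \mathcal{O}(\log\jap{|\alpha|})$). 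For the near part, Lemma 2.10 of \cite{BJ20} provides the local structure \eqref{eq:noyau_TS}. Writing $\alpha = e^{H_G^{\omega_I}} \gamma$ with $\gamma \in \Omega$ and using that $G$ has \emph{logarithmic} order (so $d_{KN}(\alpha,\gamma) = \mathcal{O}(\log\jap{|\alpha|}/\jap{|\alpha|})$), the $\beta$'s in the near region are at Kohn--Nirenberg distance $\mathcal{O}(\eta)$ from $\gamma$ and hence lie in $V$ as soon as $\jap{|\alpha|}$ is large enough.

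Exactly the phase computation from the proof of Lemma \ref{lemma:space_cinfini} yields $\Im \Phi_{TS}(\alpha,\beta) + H(\alpha) = \mathcal{O}(\log\jap{|\alpha|})$, so there is $C_2 > 0$ with
\begin{equation*}
e^{-H(\alpha)/h}\,|K_{TS}(\alpha,\beta)| \leq C\, h^{-n}\, \jap{|\alpha|}^{C_2/h}
\end{equation*}
for $\beta$ in the near region. Combining with the exponential bound on $|Tu(\beta)|$ and using that $\jap{|\beta|} \geq \frac{1}{2}\jap{|\alpha|}$ there (again by the logarithmic-order displacement), I obtain, for $\jap{|\alpha|} \geq R_1$ large,
\begin{equation*}
e^{-H(\alpha)/h}|T_\Lambda u(\alpha)| \leq C\, h^{-n}\, \jap{|\alpha|}^{C_2/h}\exp\!\left(-\frac{\jap{|\alpha|}}{2 C_1 h}\right).
\end{equation*}
Optimising in $\jap{|\alpha|}$ shows that the right-hand side is bounded by $Ce^{C_3/h}\exp(-\jap{|\alpha|}/(4C_1 h))$ (say) uniformly in $\alpha$, so the integrand in \eqref{eq:apriori_estimate} is integrable with total mass $\mathcal{O}(e^{C/h})$. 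For the bounded part $\jap{|\alpha|} \leq R_1$, the argument giving \eqref{eq:low_frequencies} in the proof of Lemma \ref{lemma:distribution_space} already provides a bound of the form $Ce^{C/h}$, which finishes the proof.

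The only genuine obstacle is to ensure that, for $\alpha \in \Omega'$ of large modulus, the near-diagonal region in $\beta$ actually lies inside the conic neighbourhood $V$ where the analytic-wavefront bound on $Tu$ holds; this is exactly where the fact that $G$ is of logarithmic (rather than linear) order is used, so that the deformation from $\Omega$ to $\Omega'$ is arbitrarily small in the Kohn--Nirenberg sense at infinity.
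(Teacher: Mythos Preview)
Your proposal is correct and follows essentially the same approach as the paper: replace the $C^\infty$ bound \eqref{eq:wfcinfini} in the proof of Lemma \ref{lemma:space_cinfini} by the exponential bound \eqref{eq:strong_wave_front_set} from Proposition \ref{prop:equivalence_wave_front_set}, and observe that the resulting estimate $e^{-H(\alpha)/h}|T_\Lambda u(\alpha)| \leq C \jap{|\alpha|}^{C_0/h} e^{-C_1\jap{|\alpha|}/h}$ is integrable with an $\mathcal{O}(e^{C/h})$ bound. Your write-up is more explicit than the paper's (in particular about why the near-diagonal $\beta$'s fall inside the conic neighbourhood $V$, using that the logarithmic order of $G$ makes the deformation negligible in the Kohn--Nirenberg metric), but the argument is the same.
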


\begin{proof}
The proof is the same as for Lemma \ref{lemma:space_cinfini} with \eqref{eq:wfcinfini} replaced by \eqref{eq:strong_wave_front_set}. Then \eqref{eq:wffcinfini} becomes
\begin{equation*}
e^{- \frac{H(\alpha)}{h}}\va{Tu(\alpha)} \leq C \exp\p{\frac{C_0}{h} \log \jap{\va{\alpha}} - \frac{C_1}{h} \jap{\va{\alpha}}},
\end{equation*}
for some $C,C_0,C_1 > 0$ and $\alpha \in \Omega'$ large enough. The result follows (small frequencies give rise at most to an $\mathcal{O}(e^{C/h})$).
\end{proof}

Finally, we give a result that allows to get regularity estimates from estimates in the space $\mathcal{H}_\Lambda^0$.

\begin{lemma}\label{lemma:hlambda_to_wfs}
Let $G$ be a symbol of logarithmic order. Assume that $G$ is small enough as a symbol of order $1$. Let $\Omega$ be a conic subset of $T^* \mathcal{M} \setminus \{0\}$. Assume that there is $\delta > 0$ such that for every $\alpha \in \Omega$ large enough we have
\begin{equation*}
G(\alpha) \leq - \delta \log \jap{\alpha}.
\end{equation*}
Let $u \in \mathcal{D}'(\mathcal{M})$ not depend on $h$. Assume that $u \in \mathcal{H}_\Lambda^0$ and that there is a constant $C > 0$ such that, for $h$ small enough, we have
\begin{equation*}
\n{u}_{\mathcal{H}_\Lambda^0} \leq C \exp\p{\frac{C}{h}},
\end{equation*}
then the analytic wave front set of $u$ does not intersect $\Omega$.
\end{lemma}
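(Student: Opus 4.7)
The plan is to apply Proposition \ref{prop:equivalence_wave_front_set}(ii), which reduces us to showing that for every $\alpha_0 \in \Omega$ (without loss of generality of large size, by conicity of $\WF_a(u)$ and of $\Omega$) there exist a neighborhood $V$ of $\alpha_0$ in $T^*\mathcal{M}$ and a constant $C_0 > 0$ such that $\va{Tu(\alpha)} \leq C_0 e^{-1/(C_0 h)}$ for $\alpha \in V$ and $h$ small.

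The first step is to transfer the hypothesis on $G$ into information on the weight $H$ on $\Omega' := e^{H_G^{\omega_I}}\Omega \subset \Lambda$. Using the explicit formula $H = \int_0^1 (e^{(\tau-1)H_G^{\omega_I}})^*(G - \Im\theta(H_G^{\omega_I}))\mathrm{d}\tau$, and the fact that for a symbol of logarithmic order $G$, the correction $\Im\theta(H_G^{\omega_I})$ is bounded (the radial derivative $\xi\cdot\partial_\xi G$ is $\mathcal{O}(1)$), I expect, after possibly shrinking $\delta$ and requiring $G$ to be small enough, that $H(\alpha) \leq -\delta' \log\jap{\alpha}$ on $\Omega'$ for some $\delta'>0$. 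Combined with $\|u\|_{\mathcal{H}_\Lambda^0} \leq C e^{C/h}$, this yields
\begin{equation*}
\int_{\Omega'} \va{T_\Lambda u(\alpha)}^2 \jap{\alpha}^{2\delta'/h}\, \mathrm{d}\alpha \leq C^2 e^{2C/h}.
\end{equation*}

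Next I would convert this weighted $L^2$ control to a pointwise estimate. Since $Tu$ extends holomorphically to $(T^*\mathcal{M})_\epsilon$, $\va{Tu}^2$ is plurisubharmonic there. To feed the submean-value inequality I would foliate a full complex neighborhood of a point of $\Omega'$ by the one-parameter family of Lagrangians $\Lambda_s = \Lambda_{sG}$ for $s$ close to $1$; the weight analysis above applies uniformly in $s$, so Fubini promotes the estimate to a genuine $4n$-dimensional weighted integral bound on a complex Kohn--Nirenberg ball of radius comparable to $\jap{\alpha_0}^{-1/2}$ (in the fiber directions). Applying the submean-value inequality for $\va{Tu}^2$ on such balls, and exploiting that $\jap{\beta} \sim \jap{\alpha_0}$ throughout the ball, I obtain
\begin{equation*}
\va{Tu(\alpha)}^2 \leq C'' e^{2C/h} \jap{\alpha}^{-2\delta''/h}
\end{equation*}
for some $\delta''<\delta'$ on a complex conic neighborhood of $\alpha_0$, hence a fortiori on a real neighborhood $V \subset T^*\mathcal{M}$ of $\alpha_0$. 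Once $\jap{\alpha_0}$ is taken large enough that $\delta'' \log\jap{\alpha_0} \geq 2C+1$, this reads $\va{Tu(\alpha)} \leq \sqrt{C''}\,e^{-1/(2h)}$ on $V$, which is precisely criterion (ii) of Proposition \ref{prop:equivalence_wave_front_set}. Conicity of both $\WF_a(u)$ and $\Omega$ finishes the proof.

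The main obstacle is the third step: $\Lambda$ is real $2n$-dimensional but only half the real dimension of the ambient $(T^*\mathcal{M})_\epsilon$, so the naive submean-value inequality for the plurisubharmonic function $\va{Tu}^2$ requires a $4n$-dimensional integral which the hypothesis does not directly supply. The device of foliating a complex tube by deformed Lagrangians $\Lambda_s$ and invoking Fubini is what makes the argument go through, and it relies crucially on the fact that the weighted $\mathcal{H}_\Lambda^0$ estimate is stable under small perturbations of the deformation parameter, together with the Kohn--Nirenberg scaling which determines the correct radius of the balls on which subharmonicity is applied.
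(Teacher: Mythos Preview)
Your approach differs from the paper's and has a genuine gap in the third step, with a secondary issue already in the first.

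The paper's argument is direct: for real $\alpha$ it writes $Tu(\alpha) = \int_\Lambda K_{TS}(\alpha,\beta)\,T_\Lambda u(\beta)\,\mathrm{d}\beta$ (a contour shift from $T^*\mathcal{M}$ to $\Lambda$), and then shows that for $\alpha \in \Omega$ and $\beta \in \Lambda$ nearby one has $\Im\Phi_{TS}(\alpha,\beta) - H(\beta) \geq \tfrac{\delta}{2}\log\jap{|\alpha|}$. Cauchy--Schwarz together with the hypothesis $\n{u}_{\mathcal{H}_\Lambda^0}\leq Ce^{C/h}$ then yields $|Tu(\alpha)| \leq Ce^{C/h - \frac{\delta}{4h}\log\jap{|\alpha|}}$, and one concludes via Proposition \ref{prop:equivalence_wave_front_set}(ii) on a compact shell of large enough radius.

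Your foliation device does not close. The one-parameter family $\{\Lambda_{sG}\}_{s}$ sweeps out a set of real dimension at most $2n+1$ inside the $4n$-dimensional ambient $(T^*\mathcal{M})_\epsilon$; this is far short of the full complex ball that the submean-value inequality for the plurisubharmonic function $|Tu|^2$ requires. To fill such a ball you would need a $2n$-parameter family of deformations of $G$, and for each of them a bound $\n{u}_{\mathcal{H}_{\Lambda_{G'}}^0} \leq C'e^{C'/h}$ --- but the hypothesis supplies this only for the single weight $G$. The ``stability under small perturbations of the deformation parameter'' that you invoke is precisely the point at issue: establishing it means comparing $T_{\Lambda_G}u$ and $T_{\Lambda_{G'}}u$, which forces you through the kernel $K_{TS}$ and the very phase estimate the paper performs directly. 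So the detour through plurisubharmonicity does not avoid the kernel analysis; it merely hides it inside an unproven stability claim.

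On Step 1: for a general symbol $G$ of logarithmic order the definition only gives $|\partial_{\xi_j}G| \leq C\jap{\xi}^{-1}\log\jap{\xi}$, hence $\xi\cdot\partial_\xi G = \mathcal{O}(\log\jap{\xi})$, not $\mathcal{O}(1)$. Thus $\Im\theta(H_G^{\omega_I})$ can be of the same order as $G$ itself, and the inequality $H \leq -\delta'\log\jap{\alpha}$ on $\Omega'$ need not hold. The paper sidesteps this by estimating the combination $\Im\Phi_{TS}(\alpha,\beta) - H(\beta)$ in one stroke, in which the $\Im\theta(H_G^{\omega_I})$ contributions cancel (since $\mathrm{d}_\beta\Phi_{TS}(\beta,\beta) = -\theta$ appears both in the Taylor expansion of $\Im\Phi_{TS}$ and in the formula for $H$).
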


\begin{proof}
We will work as in the proofs of Lemmas \ref{lemma:distribution_space} and \ref{lemma:space_cinfini}, except that we go from $\Lambda$ to $T^* \mathcal{M}$ instead of going from $T^* \mathcal{M}$ to $\Lambda$. The formula \eqref{eq:formule_avec_noyau} becomes for $\alpha\in\Omega$
\begin{equation}\label{eq:formule_avec_noyau_2}
T u(\alpha) = \int_{\Lambda} K_{TS}(\alpha,\beta) T_\Lambda u(\beta) \mathrm{d}\beta
\end{equation}
This formula is deduced from \eqref{eq:formule_avec_noyau} by a contour shift, and the convergence is ensured by the decay of $K_{TS}$ away from the diagonal, and the estimate we assumed on $T_\Lambda u$. Then, we work as in the proof of Lemma \ref{lemma:distribution_space} to estimate the imaginary part of the phase, taking into account the sign shifts (notice in particular that $\mathrm{d}_\beta \Phi_{TS}(\beta,\beta) = - \theta$, where $\theta$ is the canonical $1$-form on $(T^* \mathcal{M})_\epsilon$), we find that for $\alpha \in \Omega$ and $\beta \in \Lambda$ close to each other and large enough we have
\begin{equation*}
\Im \Phi_{TS}(\alpha,\beta)- H(\beta) \geq \frac{\delta}{2} \log \jap{\va{\alpha}}.
\end{equation*}
Since $\Omega$ is closed, this estimate actually holds on a conic neighbourhod of $\Omega$ (up to taking $\delta$ slightly smaller). Neglecting off-diagonal terms as always, it follows from \eqref{eq:formule_avec_noyau_2} and the fact that $e^{- \frac{H}{h}} T u$ is square integrable on $\Lambda$ that
\begin{equation}\label{eq:presque}
\va{Tu(\alpha)} \leq C e^{\frac{C}{h} - \frac{\delta}{4} \frac{\log \jap{\va{\alpha}}}{h}}
\end{equation}
for $\alpha$ large enough in a conic neighbourhood of $\Omega$. Let then $\Omega_0$ be a compact part of $T^* \mathcal{M}$ such that $\Omega \subseteq \bigcup_{t > 0} t \Omega_0$ and the elements of $\Omega_0$ are large enough so that \eqref{eq:presque} gives that $\va{Tu(\alpha)}$ is a $\mathcal{O}(e^{-1/Ch})$ for $\alpha \in \Omega_0$. According to Proposition \ref{prop:equivalence_wave_front_set}, the set $\Omega_0$ does not intersect $\WF_a(u)$. The same is true for $\Omega$ since $\WF_a(u)$ is conic.
\end{proof}

\subsection{Analytic radial estimates}\label{ssec:estimates}

We leave now the generality from \S \ref{ssec:FBItranform}-\ref{ssec:analytic_wave_front_set} to come back to the geometric context of Proposition \ref{analyticityofPi0}. Our analysis will be based on the framework from \cite{DG16}, and we will try to use notations that are coherent with this reference.

We work on the unit tangent bundle $S M_e$ of the extension $(M_e,g_e)$ of $(M,g)$. In order to be consistent with \cite{DG16}, we will write 
$\overline{\mathcal{U}}$ instead of $S M_e$. Notice that $\overline{\mathcal{U}}$ is a real-analytic manifold with boundary. We denote the interior of  $\overline{\mathcal{U}}$ by $\mathcal{U}$ and its boundary by $\partial \mathcal{U}$. We embed $\overline{\mathcal{U}}$ into a closed real-analytic manifold $\mathcal{M}$. To see that it is possible, notice that the Sasaki metric on $\overline{\mathcal{U}}$ is analytic, hence there is an analytic inward pointing vector field defined on the neighbourhood of $\partial \mathcal{U}$ given by the gradient of the distance to $\partial \mathcal{U}$. Thus, $\partial \mathcal{U}$ admits a collar neighbourhood that identifies with $\partial \mathcal{U} \times \left[0,\epsilon \right[$ \emph{via} real-analytic coordinates. We can use this collar neighbourhood to put a real-analytic structure on the double manifold $\mathcal{M}$ of $\overline{\mathcal{U}}$.

As in \S \ref{ssec:FBItranform}-\ref{ssec:analytic_wave_front_set}, we endow $\mathcal{M}$ with a real-analytic Riemannian metric (\emph{a priori} unrelated to the metric $g_e$ on $M_e$) and define an analytic FBI transform $T$ on $\mathcal{M}$.
For notational simplicity, we drop in this section the index $g_e$ and denote by $X$ the generator of the geodesic flow on $SM_e$, 
by $\varphi_t$ its flow, by $\Gamma_\pm$ the incoming/outgoing trapped sets, etc.
The vector field $X$ is real-analytic on $\overline{\mathcal{U}}$ and satisfies the assumptions (A1)-(A5) from \cite{DG16} (see \cite[\S 5.2]{DG16}).  As in \cite{DG16}, we extend $X$ to a vector field on $\mathcal{M}$ such that $\mathcal{U}$ and $\overline{\mathcal{U}}$ are convex for the flow of $X$. In order to have the intermediate results from \cite{DG16} available, we assume that the extension of $X$ is precisely the one constructed in \cite{DG16}. There is \emph{a priori} no reason for $X$ to be analytic away from $\overline{\mathcal{U}}$, however, it is clear from the proof of \cite[Lemma 1.1]{DG16} that we can assume that $X$ is analytic on a neighbourhood of $\overline{\mathcal{U}}$. Thanks to Theorem \ref{theorem:local_multiplication_formula}, this is enough in order to apply the methods from \cite{BJ20}.

We will still denote by $(\varphi_t)_{t \in \mathbb{R}}$ the flow generated by $X$ and by $(\Phi_t)_{t \in \mathbb{R}}$ the lift of $\varphi_t$ to $T^* \mathcal{M}$:
\begin{equation*}
\Phi_t(\alpha) = \p{\varphi_t (\alpha_x), (\mathrm{d}\varphi_t(\alpha_x)^{-1})^{\top}\alpha_\xi}, \quad \alpha = (\alpha_x,\alpha_\xi) \in T^* \mathcal{M}.
\end{equation*}
We denote by $\widetilde{X}$ the generator of the flow $(\Phi_t)_{t \in \mathbb{R}}$. We also recall that the  $\Gamma_\pm=\Gamma_\pm^g$ and $\mc{K}=\mc{K}^g$ are defined by \eqref{def:Gammapm} and \eqref{def:trappedset}. Finally, we denote by $p$ the principal symbol of $X$, which is also the symbol of the semi-classical differential operator $hX$:
\begin{equation*}
p(\alpha) = i \alpha_\xi(X(\alpha_x)) \quad \textup{ for } \alpha \in T^* \mathcal{M}.
\end{equation*}
Notice that this formula also defines a holomorphic extension for $p$.

This section is dedicated to the proof of the following estimate which is crucial in the proof of Proposition \ref{analyticityofPi0}. 

\begin{proposition}\label{prop:radial_estimate}
Let $u$ be a distribution on $\mathcal{U}$. Assume that $Xu$ is compactly supported in $\mathcal{U}$, that the analytic wave front set $\WF_a(Xu)$ of $Xu$ does not intersect $E_+^*$, that the $C^\infty$ wave front set $\WF(u)$ of $u$ does not intersect $E_+^*$ and that $u_{|\partial_+ \mathcal{U}} = 0$. Then the analytic wave front set $\WF_a(u)$ of $u$ does not intersect $E_+^*$.
\end{proposition}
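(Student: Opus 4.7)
The proof I would give is the analytic analogue of the classical $C^\infty$ radial source estimate (as in \cite[Theorem E.42]{DyZw}), carried out via the FBI machinery developed in \S \ref{ssec:FBItranform}--\ref{ssec:analytic_wave_front_set}. The plan is: by Lemma \ref{lemma:hlambda_to_wfs}, to conclude $\WF_a(u) \cap E_+^* = \emptyset$ it suffices to exhibit a weight $G$ of logarithmic order satisfying $G(\alpha) \leq -\delta \log \jap{\alpha}$ on a conic neighbourhood of $E_+^*$, and to prove the uniform bound $\n{u}_{\mathcal{H}_{\Lambda_G}^0} \leq C e^{C/h}$ for $h \in (0,h_0]$. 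This will be established by a positive commutator argument based on the multiplication formula of Proposition \ref{theorem:local_multiplication_formula} applied to the semi-classical operator $P = hX$.

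The weight is built from an escape function adapted to the source dynamics at $E_+^*$. Since $\Phi_t$ extends $E_u^*$ to $\Gamma_+$, is radially expanding at a uniform positive rate $\lambda > 0$ on $E_+^*$, and is projectively attracting near $\Gamma_+$, a standard construction (averaging a transverse cutoff along $\Phi_t$ over a large forward time interval) produces nested closed conic neighbourhoods $V_0 \Subset V_1$ of $E_+^*$ in $T^*\mathcal{M} \setminus 0$ and a $0$-homogeneous cutoff $\chi \in C^\infty(T^*\mathcal{M} \setminus 0; [0,1])$ with $\chi \equiv 1$ on $V_0$, $\supp \chi \subseteq V_1$, and $\widetilde{X} \chi \geq 0$ throughout. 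Using the compactness of $E_+^* \cap S^*\mathcal{M}$ and the closedness of $\WF(u)$ and $\WF_a(Xu)$ (both disjoint from $E_+^*$ by hypothesis), I may shrink $V_1$ so that $V_1 \cap \WF(u) = V_1 \cap \WF_a(Xu) = \emptyset$. I then define
\[
G(\alpha) := -\eta\, \chi(\alpha)\, \psi(\va{\alpha})\, \log \jap{\alpha},
\]
where $\psi \in C^\infty(\R)$ vanishes near $0$ and equals $1$ for $\va{\alpha} \geq 2$, and $\eta > 0$ is chosen small enough for the smallness hypothesis of Proposition \ref{theorem:local_multiplication_formula} to hold. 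Then $G \leq -\eta \log \jap{\alpha}$ on the conic neighbourhood $V_0 \cap \{\va{\alpha} \geq 2\}$ of $E_+^*$, as required by Lemma \ref{lemma:hlambda_to_wfs}.

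The coercivity comes from the fact that the principal symbol $p(\alpha) = i \alpha_\xi(X)$ of $hX$ is purely imaginary on $T^*\mathcal{M}$. A first-order Taylor expansion of its holomorphic continuation along $\Lambda = \Lambda_G$ (identical in spirit to the expansions carried out in the proofs of Lemmas \ref{lemma:distribution_space} and \ref{lemma:space_cinfini}) gives $\Re q(\alpha) = -\widetilde{X} G(\alpha) + O(h)$ for the symbol $q$ of $hX$ on $\Lambda$ produced by Proposition \ref{theorem:local_multiplication_formula}. By the sign conditions $\widetilde{X} \chi \geq 0$ and $\widetilde{X} \log \jap{\alpha} \geq \lambda$ near $E_+^*$, one checks that $-\widetilde{X} G \geq \eta \lambda$ on $V_0 \cap \{\va{\alpha} \geq 2\}$ and $-\widetilde{X} G \geq 0$ everywhere on $V_1$. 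Applying Proposition \ref{theorem:local_multiplication_formula} to $P = hX$ with $K_0$ a compact neighbourhood in $\mathcal{U}$ of the base projection of $V_1$ (where $X$ is real-analytic), and taking real parts, yields
\[
\int_\Lambda \Re(q)\, \va{T_\Lambda u}^2\, e^{-2H/h}\, \mathrm{d}\alpha = \Re \int_\Lambda T_\Lambda(hXu)\, \overline{T_\Lambda u}\, e^{-2H/h}\, \mathrm{d}\alpha + R.
\]
The right-hand side pairing is controlled by Cauchy--Schwarz together with $\n{hXu}_{\mathcal{H}_\Lambda^0} \leq C h e^{C/h}$, which follows from Lemma \ref{lemma:space_analytic} applied on a slight enlargement of $V_1$ (using $\WF_a(Xu) \cap V_1 = \emptyset$) combined with the fact that outside $V_1$ one has $\Lambda = T^*\mathcal{M}$ and $Xu \in C_c^\infty$.

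The main technical obstacle I expect is controlling the transition region $V_1 \setminus V_0$, where $\Re q$ is only non-negative rather than coercive, and absorbing the error $R$ of the multiplication formula (which involves higher-order $\mathcal{H}_\Lambda^k$ norms of $u$). The non-negativity $-\widetilde{X} G \geq 0$ on all of $V_1$ is crucial: the transition contribution may simply be dropped, leaving a lower bound by $c\, \n{u}_{\mathcal{H}_\Lambda^0(V_0)}^2$ modulo sub-principal corrections of order $h$ absorbed via Young's inequality. The $C^\infty$ threshold hypothesis $\WF(u) \cap V_1 = \emptyset$, combined with Lemma \ref{lemma:space_cinfini}, guarantees that $u \in \mathcal{H}_\Lambda^k$ for every $k \in \R$ at each fixed $h$, so all pairings and integration-by-parts manipulations are legitimate a priori; the higher-norm remainders in $R$ are absorbed by a bootstrap in the parameter $\eta$, in the spirit of the analytic radial estimate of Galkowski--Zworski \cite{GaZw}. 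The boundary condition $u\vert_{\partial_+ \mathcal{U}} = 0$ and compact support of $Xu$ in $\mathcal{U}$ force $u \equiv 0$ near $\partial_+ \mathcal{U}$, so $u$ extends by zero across $\partial_+ \mathcal{U}$ to a distribution on $\mathcal{M}$ whose only new singularities lie conormal to $\partial_- \mathcal{U}$, away from $E_+^*$, and hence do not interfere. Once $\n{u}_{\mathcal{H}_\Lambda^0(V_0)} \leq C e^{C/h}$ is obtained, Lemma \ref{lemma:hlambda_to_wfs} yields $\WF_a(u) \cap E_+^* = \emptyset$.
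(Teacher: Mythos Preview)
Your overall strategy---reducing to a bound $\n{u}_{\mathcal{H}_\Lambda^0} \leq Ce^{C/h}$ via the multiplication formula and an escape function, then invoking Lemma \ref{lemma:hlambda_to_wfs}---is exactly the route the paper takes. However, your choice of weight $G = -\eta\,\chi\,\psi\,\log\jap{\alpha}$ has a genuine defect: it vanishes identically outside $V_1$, so on that region $\Lambda = T^*\mathcal{M}$ and the $\mathcal{H}_\Lambda^k$ norm reduces to the ordinary semiclassical $H^k$ norm. After the cutoff reduction, $u$ is only an $H^{-N}$ distribution, and its $C^\infty$ wave front set may well be non-empty in $T^*_{K_0}\mathcal{M} \setminus V_1$; you therefore cannot conclude $u \in \mathcal{H}_\Lambda^\infty$, which is the hypothesis of Proposition \ref{theorem:local_multiplication_formula}. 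Lemma \ref{lemma:space_cinfini} controls only the region inside $V_1$ and gives nothing where $G=0$. The ``bootstrap in $\eta$'' you allude to cannot repair this: scaling $\eta$ only rescales $G$ on its support and does nothing where $G$ vanishes. Consequently neither the multiplication formula nor the error term $Ch\n{u}_{\mathcal{H}_\Lambda^{m_1}}\n{u}_{\mathcal{H}_\Lambda^{m_2}}$ is under control.

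The paper resolves this by building $G$ with the \emph{opposite} sign outside the conic neighbourhood $\Omega$ of $E_+^*$: over $K_0$ one arranges $G(\alpha) \geq C^{-1}\log\jap{\alpha}$ for $\alpha \notin \Omega$ (Lemma \ref{lemma:escape_function}(i)). This positive log-ellipticity is exactly the hypothesis of Lemma \ref{lemma:distribution_space}, which then gives
\[
\int_{\Lambda_2}\va{T_\Lambda u}^2\jap{\va{\alpha}}^{2k}e^{-2H/h}\,\mathrm{d}\alpha \leq Ce^{C/h}\n{u}_{H^{-N}}^2 \quad \text{for every } k,
\]
using only the distributional regularity of $u$. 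Combined with Lemma \ref{lemma:space_cinfini} on $\Omega$ (where $\WF(u)=\emptyset$) and pseudo-locality off $K_0$, this yields $u\in\mathcal{H}_\Lambda^\infty$, and the multiplication formula becomes applicable. Concretely, the escape function used is $G = (1-2m_+\circ\kappa)\,G_0$ with $G_0$ a time-averaged $\log\jap{\alpha}$ and $m_+$ a cutoff near $\kappa(E_+^*)$ with the same monotonicity $\widetilde{X}m_+\geq 0$ as your $\chi$; the prefactor $(1-2m_+)$ swings from $+1$ outside $\Omega$ to $-1$ on $E_+^*$, achieving both (i) and (iii) of Lemma \ref{lemma:escape_function} simultaneously.

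A secondary issue: your reduction to compactly supported $u$ is incomplete. Extending $u$ by zero across $\partial_+\mathcal{U}$ is fine, but $u$ is not defined past $\partial_-\mathcal{U}$, so this does not yet give a distribution on $\mathcal{M}$. The paper instead multiplies by a cutoff $\chi\in C_c^\infty(\mathcal{U})$ with $\chi\equiv 1$ on a compact $K_1$ containing both $\supp(Xu)$ and $\Gamma_+\cap\supp u$, and checks that $(X\chi)u$, supported in $\mathcal{U}\setminus K_1$, stays away from $\Gamma_+$, so that $\WF_a(X(\chi u))\cap E_+^*=\emptyset$ is preserved.
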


Notice that replacing $X$ by $-X$, we also get:

\begin{proposition}\label{prop:radial_estimate_bis}
Let $u$ be a distribution on $\mathcal{U}$. Assume that $Xu$ is compactly supported in $\mathcal{U}$, that the analytic wave front set $\WF_a(Xu)$ of $Xu$ does not intersect $E_-^*$, that the $C^\infty$ wave front set $\WF(u)$ of $u$ does not intersect $E_-^*$ and that $u_{|\partial_- \mathcal{U}} = 0$. Then the analytic wave front set $\WF_a(u)$ of $u$ does not intersect $E_-^*$.
\end{proposition}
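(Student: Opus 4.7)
The plan is to deduce Proposition \ref{prop:radial_estimate_bis} from Proposition \ref{prop:radial_estimate} by a time-reversal argument, applying the latter to the vector field $-X$ in place of $X$. The only substantive work is to check that all the geometric and dynamical data associated with $X$ transform correctly under this substitution, so that hypotheses of the $-X$ version of Proposition \ref{prop:radial_estimate} match exactly the hypotheses of Proposition \ref{prop:radial_estimate_bis}, and that its conclusion gives what we want.

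Concretely, I would run the following dictionary. The flow of $-X$ is $\varphi_{-t}$, hence the incoming and outgoing parts of $\partial \mathcal{U}$ swap: $\partial_+^{-X}\mathcal{U} = \partial_-^X\mathcal{U}$. The forward (resp. backward) trapped sets also swap, $\Gamma_\pm^{-X} = \Gamma_\mp^X$, so that the stable and unstable bundles of the trapped set exchange roles and, by continuity of the extensions constructed in \cite[Lemma 1.11]{DG16}, their duals satisfy $E_\pm^{*,-X} = E_\mp^{*,X}$. In particular the set $E_+^*$ that appears in Proposition \ref{prop:radial_estimate} when applied to $-X$ is precisely the set $E_-^*$ appearing in Proposition \ref{prop:radial_estimate_bis}. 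Finally, $(-X)u = -Xu$ has the same support, the same $C^\infty$ and analytic wavefront sets as $Xu$, and the assumption $u|_{\partial_-^X\mathcal{U}} = 0$ rewrites as $u|_{\partial_+^{-X}\mathcal{U}} = 0$.

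It remains only to check that the hypotheses under which Proposition \ref{prop:radial_estimate} is proved are symmetric under $X \mapsto -X$. The structural assumptions (A1)--(A5) of \cite{DG16} are manifestly invariant under time reversal (they concern the hyperbolicity of the trapped set, the convexity of $\overline{\mathcal{U}}$, and no conjugate points, which are all preserved). The extension of $X$ to $\mathcal{M}$ constructed in \cite[Lemma 1.1]{DG16} can be chosen symmetric, or one simply extends $-X$ directly by the same recipe; in either case $-X$ is analytic on a neighbourhood of $\overline{\mathcal{U}}$ and $\overline{\mathcal{U}}$ is convex for its flow. Thus Proposition \ref{prop:radial_estimate}, applied with $-X$ in place of $X$, translates, through the dictionary above, into the exact statement of Proposition \ref{prop:radial_estimate_bis}.

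There is no substantial obstacle here: the only point that needs a moment of care is the claim $E_\pm^{*,-X} = E_\mp^{*,X}$ for the continuous extensions, which is immediate from the uniqueness of the extension (the stable bundle of $-X$ over $\mathcal{K}$ coincides with the unstable bundle of $X$, and the extensions to $\Gamma_\mp^X = \Gamma_\pm^{-X}$ are uniquely determined by continuity along the flow).
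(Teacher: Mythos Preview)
Your proposal is correct and is exactly the approach the paper takes: the paper simply notes that Proposition \ref{prop:radial_estimate_bis} follows from Proposition \ref{prop:radial_estimate} by replacing $X$ with $-X$. Your write-up spells out the time-reversal dictionary in more detail than the paper does, but the argument is identical.
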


\begin{remark}
We refer to Propositions \ref{prop:radial_estimate} and \ref{prop:radial_estimate_bis} as radial estimates since their proof relies on the source/sink structure of $E_-^*$ and $E_+^*$ for the Hamiltonian flow $(\Phi_t)_{t \in \R}$. The $C^\infty$ analogues of Propositions \ref{prop:radial_estimate} and \ref{prop:radial_estimate_bis} in the case of Anosov flows follow from the radial estimates \cite[Theorems E.52 and E.54]{DyZw}, and the case of open systems is dealt with in \cite{DG16}. 

The analytic radial estimates that we prove here for open systems also apply to Anosov flows. The same proof using the tools from \cite{BJ20} would also give a Gevrey version of Propositions \ref{prop:radial_estimate} and \ref{prop:radial_estimate_bis}. It is also likely that one could deal with more general operators with source/sink structure.

See \cite{GaZw} for a similar statement near some smooth radial submanifolds.
\end{remark}

\subsubsection{Construction of an escape function}

The main tool in the proof of Proposition \ref{prop:radial_estimate} is the construction of an escape function.

\begin{lemma}\label{lemma:escape_function}
Let $\Omega_0$ be a conic neighbourhood of $E_+^*$ in $T^* \mathcal{M}$ and $K_0$ be a compact subset of $\mathcal{U}$. There is a symbol $G$ of logarithmic order on $(T^* \mathcal{M})_\epsilon$, a conic neighbourhood $\Omega \subseteq \Omega_0$ of $E_+^*$ in $T^* \mathcal{M}$ and a constant $C > 0$ such that for $\alpha \in T^* \mathcal{M}$ with $\jap{\alpha} \geq C$ and $\alpha_x \in K_0$, we have:
\begin{enumerate}[label=(\roman*)]
\item if $\alpha \notin \Omega$, then $G(\alpha) \geq C^{-1} \log \jap{\alpha}$;
\item if $\alpha \in \Omega$, then $-H_{\Re p}^{\omega_I} G(\alpha) \leq -C^{-1}$;
\item if $\alpha \in E_+^*$, then $G(\alpha) \leq - C^{-1} \log \jap{\alpha}$.
\end{enumerate}
\end{lemma}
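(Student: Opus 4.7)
The strategy is to construct $G$ as a smooth degree-zero angular weight multiplied by $\log\langle\alpha\rangle$, following the standard recipe for escape functions at a radial source/sink (e.g.\ \cite[Appendix E.4]{DyZw}) but with logarithmic order so that the Lagrangian deformation $\Lambda_G$ stays $C^\infty$-close to $T^*\mathcal{M}$ uniformly in $h$ and analytic wave front information can be accessed.

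Concretely, I would write $G(\alpha) = m(\alpha)\,\chi(|\alpha|)\,\log\langle\alpha\rangle$, where $\chi$ is a smooth cutoff with $\chi(s)=1$ for $s$ large (absorbing the constant $C$ in the statement of the lemma), and $m$ is homogeneous of degree zero in the fiber, equal to a large negative constant $-M$ on a conic neighborhood of $E_+^*$ over $K_0$, equal to $+1$ outside a slightly larger conic neighborhood $\Omega \subseteq \Omega_0$, and satisfying $\widetilde X m \leq 0$ everywhere, with strict inequality in the annular transition region. To build such $m$ I would exploit that $E_+^*$ is attracting for the lifted Hamiltonian flow $\Phi_t$ in the projectivized cotangent direction, a consequence of the hyperbolicity of $\mathcal{K}^g$, the invariance of $E_+^*$, and the continuous extension from \cite[Lemma 1.11]{DG16}. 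The natural choice is to construct a smooth homogeneous-degree-zero function $\rho \geq 0$ on a conic neighborhood of $E_+^*|_{K_0}$, vanishing on (a smooth approximation of) $E_+^*$, with $\widetilde X \rho \leq -c_0 \rho$, and to take $m = \psi \circ \rho$ for a smooth non-decreasing cutoff $\psi$ with $\psi(0) = -M$ and $\psi(r) = 1$ for $r \geq r_0$. Alternatively, one can mimic the direct dynamical construction of \cite[\S 5]{DG16}.

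Conditions (i) and (iii) then hold by design. For (ii), a Cauchy--Riemann computation using that $p = i\alpha_\xi(X)$ is pure imaginary on the real cotangent bundle shows that $H_{\Re p}^{\omega_I}$ restricted to $T^*\mathcal{M}$ coincides, up to a fixed sign, with the lifted geodesic vector field $\widetilde X$, so it suffices to estimate $\widetilde X G$. The Leibniz rule produces three terms; in the annular transition region, $(\widetilde X m)\,\chi\,\log\langle\alpha\rangle$ is strictly negative of order $\log\langle\alpha\rangle$ and dominates. The delicate case is on $E_+^*$ itself, where $\widetilde X m = 0$: here one relies on the term $m\,\chi\,\widetilde X(\log\langle\alpha\rangle)$, and the crucial observation is that $\widetilde X\log\langle\alpha\rangle \geq \nu/2 > 0$ on a conic neighborhood of $E_+^*$ for $|\alpha|$ large, because $E_+^*$ is radially expanded at exponential rate $\nu$ by $\Phi_t$. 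Taking $M$ large enough yields a contribution of size $\sim -M\nu/2$ of the correct sign, and combining the two regimes (possibly after shrinking $\Omega$) gives the uniform bound (ii).

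The main obstacle is the smoothness of the construction of $\rho$: since $E_+^*$ is only continuous on $\Gamma_+$, a naive squared-distance-to-$E_+^*$ is not smooth. One must approximate $E_+^*$ by a smooth subbundle in a conic neighborhood and verify that the contraction estimate persists; this is an open condition under the hyperbolicity of $\mathcal{K}^g$ and follows from structural stability of the hyperbolic splitting under small $C^0$ perturbations. A secondary, minor, issue is the sign-bookkeeping between $H_{\Re p}^{\omega_I}$ and $\widetilde X$, and checking that the sign of $M$ matches the required direction in (ii). Finally, extending $G$ to a Kohn--Nirenberg symbol of logarithmic order on $(T^*\mathcal{M})_\epsilon$ is routine: any smooth extension with bounded derivatives in the fiber and complex directions suffices, since the framework of \cite{BJ20} does not require holomorphy of $G$.
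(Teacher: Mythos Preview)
Your overall architecture is the same as the paper's: $G$ is a product of a degree–zero angular weight that is negative near $\kappa(E_+^*)$ and positive away from it, times a logarithmic radial function, and the identification of $H_{\Re p}^{\omega_I}|_{T^*\mathcal{M}}$ with $\widetilde X$ via Cauchy--Riemann is exactly what is done in the paper (equation \eqref{eq:Rep-Xtilde}).

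There is, however, a genuine gap in your verification of (ii). On the inner region where $m\equiv -M$ (and in particular on $E_+^*$), you need the term $m\,\widetilde X(\log\langle\alpha\rangle)$ to be strictly negative, and you justify this by asserting that $\widetilde X\log\langle\alpha\rangle\ge \nu/2>0$ on a conic neighbourhood of $E_+^*$ ``because $E_+^*$ is radially expanded at exponential rate $\nu$''. Hyperbolicity and \cite[Lemma 1.11]{DG16} give $|\Phi_t(\alpha)|\ge C^{-1}e^{\tilde\gamma t}|\alpha|$ for large $t$; they do \emph{not} give a sign for the instantaneous derivative $\tfrac{d}{dt}\big|_{t=0}\log|\Phi_t(\alpha)|$, which depends on the (arbitrary, fixed) Kohn--Nirenberg metric and may well be negative at some points of $E_+^*$ over $\mathcal{K}$. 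No value of $M$ repairs this: making $M$ large only amplifies a term of the wrong sign. The same problem arises at the inner edge of your transition annulus, where $\widetilde X m$ is small but $m$ is close to $-M$.

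The paper's fix is precisely the standard Lyapunov-function device: replace $\log\langle\alpha\rangle$ by its flow average $G_0(\alpha)=\int_{-T}^0 r(\Phi_t(\alpha))\,dt$ with $r(\alpha)=\log\langle\alpha\rangle$ for large $\alpha$. Then $\widetilde X G_0(\alpha)=r(\alpha)-r(\Phi_{-T}(\alpha))$, and the long-time growth estimate from Lemma \ref{lemma:everywhere} converts directly into $\widetilde X G_0\ge C^{-1}$ on a neighbourhood of $\kappa(E_+^*)$, uniformly, for $T$ large. With this radial factor no large $M$ is needed: the paper takes the angular weight $\tilde m=1-2m_+\in[-1,1]$, where $m_+$ is the time-averaged cutoff of Lemma \ref{lemma:escape_first_step} (this is the ``direct dynamical construction of \cite{DG16}'' you allude to, and it also dissolves your smoothness worry about $E_+^*$). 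The verification of (ii) then splits cleanly into $m_+\ge\tfrac34$ (second term dominates, sign from $1-2m_+\le -\tfrac12$ and $\widetilde X G_0\ge C^{-1}$) and $\tfrac14\le m_+\le\tfrac34$ (first term $-2(\widetilde X m_+)G_0$ is of order $-\log\langle\alpha\rangle$ and dominates).
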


The proof of Lemma \ref{lemma:escape_function} will require the following fact, that follows from \cite[Lemma 1.11]{DG16}.

\begin{lemma}\label{lemma:everywhere}
Let $\kappa : T^* \mathcal{M} \setminus \set{0} \to S^* \mathcal{M}$ be the canonical projection on the cosphere bundle $S^* \mathcal{M}$ of $\mathcal{M}$. Let $U_+$ be a small enough neighbourhood of $\kappa(E_+^*)$ in $S^* \mathcal{M}$. Then, there is a function $\epsilon : \R_+ \to \R_+$ that tends to $0$ in $+ \infty$ and constants $C,\tilde{\gamma} > 0$ such that, for all $\alpha \in T^* \mathcal{M} \setminus \set{0}$ and $t \geq 0$ such that $\kappa(\alpha) \in U_+,\alpha_x \in \overline{\mathcal{U}}$ and $\varphi_t( \alpha_x) \in \overline{\mathcal{U}}$, we have
\begin{equation*}
\begin{split}
d_{S^*\mc{M}}(\kappa(\Phi_t (\alpha)),\kappa(E_+^*)) \leq \epsilon(t) \textup{ and }
\va{\Phi_t (\alpha)} \geq C^{-1} e^{\tilde{\gamma}t} \va{\alpha}.
\end{split}
\end{equation*}
\end{lemma}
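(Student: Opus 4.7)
The lemma expresses that $E_+^*$ is a hyperbolic source for the lifted Hamiltonian flow $\Phi_t$, uniformly along forward trajectories remaining in $\overline{\mc{U}}$. My plan is to combine the continuous $\Phi_t$-invariant extension of $E_+^*$ to $\Gamma_+^g$ provided by \cite[Lemma 1.11]{DG16} with a cone-field construction in a neighborhood of $\Gamma_+^g$ in $\overline{\mc{U}}$. As a first reduction, by convexity of $\overline{\mc{U}}=SM_e$ for the extended flow (built into the construction of the extension), the hypothesis $\alpha_x,\varphi_t(\alpha_x)\in\overline{\mc{U}}$ already forces $\varphi_s(\alpha_x)\in\overline{\mc{U}}$ for all $s\in[0,t]$, so the uniform hyperbolic estimates apply along the entire segment.

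I would then record the pointwise source estimates on $\mc{K}^g$ itself. Writing $\xi=\xi_0+\xi_-+\xi_+$ in the continuous $\Phi_t$-invariant splitting $T^*_z\mc{M}=E_0^*(z)\oplus E_-^*(z)\oplus E_+^*(z)$ over $z\in\mc{K}^g$, the identity $\Phi_t=(\varphi_t,(\mathrm{d}\varphi_t)^{-\top})$ combined with the defining annihilation relations (elements of $E_+^*$ kill $E_u\oplus\R X$ and pair non-trivially with $E_s$, while $E_-^*$ is dual to $E_u$) yields at once $|\Phi_t\xi_+|\geq C^{-1}e^{\nu t}|\xi_+|$, $|\Phi_t\xi_-|\leq Ce^{-\nu t}|\xi_-|$, and $|\Phi_t\xi_0|\asymp|\xi_0|$ for all $t\geq 0$. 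By the continuous invariant extension of $E_+^*$ to $\Gamma_+^g$, the first of these estimates persists for $\xi\in E_+^*(z)$ with $z\in\Gamma_+^g$ whenever $(\varphi_s z)_{s\in[0,t]}\subset\overline{\mc{U}}$.

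The heart of the proof is then a cone-field argument. I would fix a continuous (not necessarily invariant) extension of $E_+^*$ and of a complementary subbundle $F$ to a neighborhood $\mc{V}$ of $\Gamma_+^g$ in $\overline{\mc{U}}$, and for small $\delta>0$ define $C_\delta(z)\subset T_z^*\mc{M}$ to be the closed cone of covectors whose $F$-component has norm at most $\delta$ times their $E_+^*$-component, taking $U_+$ as the projectivization of $C_{\delta/2}$. Using the pointwise estimates of the previous step, the continuity of $E_+^*$ on $\Gamma_+^g$, and the compactness of $\Gamma_+^g\cap\overline{\mc{U}}$, I would show that there exists a fixed $T>0$ such that, for every trajectory segment of length $T$ lying in $\overline{\mc{U}}$ with base close enough to $\Gamma_+^g$, $\Phi_T$ maps $C_\delta(z)$ strictly into $C_{\delta/2}(\varphi_T z)$ and multiplies covector norms by at least $ce^{\tilde\gamma T}$ (one may take $\tilde\gamma=\nu/2$, for instance). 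Iterating these $T$-steps along the given trajectory yields exponential decay of $d_{S^*\mc{M}}(\kappa(\Phi_t\alpha),\kappa(E_+^*))$, hence the existence of the required $\epsilon(t)\to 0$, together with the norm bound $|\Phi_t\alpha|\geq C^{-1}e^{\tilde\gamma t}|\alpha|$.

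The main obstacle is that $E_+^*$ is defined, and genuinely $\Phi_t$-invariant, only on $\Gamma_+^g$, while nearby base points in $\mc{V}\setminus\Gamma_+^g$ may leave $\overline{\mc{U}}$ in forward time, so the strict invariance of the cone field cannot be proved infinitesimally. The fix is precisely to work at a fixed finite time horizon $T$ first, using uniform hyperbolicity on $\mc{K}^g$ together with continuity of $E_+^*$ on $\Gamma_+^g$ in a quantitative compactness argument, so that the desired contraction of cones survives a small $C^0$-perturbation of both the base point and of the auxiliary complementary bundle $F$. Shrinking $U_+$ afterwards guarantees that the argument applies to all $\alpha$ with $\kappa(\alpha)\in U_+$.
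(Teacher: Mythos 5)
Your proof is correct in outline but takes a genuinely different, and much heavier, route than the paper's. The paper's entire argument is a two-line reduction: \cite[Lemma 1.11]{DG16} already contains exactly this statement for covectors in the characteristic set $\{\alpha_\xi(X(\alpha_x))=0\}$, and the only new content here is the removal of that restriction. This is done by writing $\alpha=\beta+c\,\alpha_L$ with $p(\beta)=0$ and using that the Liouville form $\alpha_L$ is $\Phi_t$-invariant with $p(\alpha_L)=i$: since $E_+^*(\R X)=0$, the coefficient $c$ is small relative to $|\beta|$ when $\kappa(\alpha)\in U_+$, the term $c\,\alpha_L$ stays bounded along the flow, and the estimates for $\beta$ carry over to $\alpha$. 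You instead reprove the source estimate from scratch with a cone field; the neutral Liouville direction is then absorbed into the complement $F$ and dominated by the expansion of $E_+^*$, which is a valid alternative to the paper's explicit splitting. The paper's route buys brevity and isolates the one genuinely new point; yours buys self-containedness (modulo the invariant extension of $E_+^*$, which you still import from \cite{DG16}).

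Two steps in your sketch are justified too thinly. First, the $T$-step cone contraction ``for every trajectory segment of length $T$ lying in $\overline{\mathcal{U}}$ with base close enough to $\Gamma_+^g$'' does not follow from uniform hyperbolicity on $\mc{K}^g$ together with continuity of $E_+^*$ and compactness of $\Gamma_+^g\cap\overline{\mc{U}}$ alone: $\Gamma_+^g\cap\overline{\mc{U}}$ is not contained in a small neighbourhood of $\mc{K}^g$, so a continuity/compactness argument cannot transport the hyperbolic estimates from $\mc{K}^g$ to all the relevant base points. One needs the additional dynamical input of \cite[Lemmas 1.3 and 1.4]{DG16} --- a point whose orbit stays in $\overline{\mc{U}}$ for time at least $s_0$ both backward and forward lies in any prescribed neighbourhood $W$ of $\mc{K}^g$ once $s_0$ is large --- so that each length-$T$ segment under consideration lies in $W$ except for a final transient of bounded length, whose distortion of cones and norms must be absorbed into the constants. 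Second, the claimed exponential decay of $d_{S^*\mc{M}}(\kappa(\Phi_t\alpha),\kappa(E_+^*))$ is more than the cone argument delivers: the cone thins exponentially around your auxiliary (non-invariant) extension of $E_+^*$, but comparing that extension at $\varphi_t(\alpha_x)$ with the true $E_+^*$ costs the modulus of continuity of the bundle and the rate of approach of $\varphi_t(\alpha_x)$ to $\Gamma_+^g$; fortunately only $\epsilon(t)\to 0$ is required.
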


\begin{proof} The main difference between Lemma \ref{lemma:everywhere} and \cite[Lemma 1.11]{DG16} is that we do not require here that $\alpha$ belongs to the kernel of the principal symbol $p$ of $X$. This restriction can be removed by noticing that the Liouville $1$-form $\alpha_L$ is invariant by the flow $(\phi_t)_{t \in \mathbb{R}}$ and $p(\alpha_L)=i$. Hence, we can write $\alpha = \beta + c \alpha_L$ with $\beta$ in the kernel of $p$ and then apply \cite[Lemma 1.11]{DG16} to $\beta$. \end{proof}

With Lemma \ref{lemma:everywhere} at our disposal, we can adapt the proof of \cite[Lemma 1.12]{DG16} to prove the following. Here, we let $\widetilde{X}$ act on $S^* \mathcal{M}$ by pushing it forward by the action of $\kappa$.

\begin{lemma}\label{lemma:escape_first_step}
Let $U_+$ be a small enough neighbourhood of $\kappa(E_+^*)$ in $S^* \mathcal{M}$. Then, there is $m_+ \in C^\infty(S^* \mathcal{M})$ with the following properties:
\begin{enumerate}
\item $m_+ \equiv 1$ on a neighbourhood of $\kappa(E_+^*)$ and $0 \leq m_+ \leq 1$ everywhere;
\item $\textup{ supp } m_+ \cap \kappa(T^*_{\mathcal{U}} \mathcal{M}) \subseteq U_+$;
\item if $\alpha \in T^*_{\mathcal{U}} \mathcal{M}$, then $\widetilde{X} m_+(\kappa(\alpha)) \geq 0$;
\item there is $\delta > 0$ such that if $\alpha \in T^*_{\mathcal{U}} \mathcal{M} \setminus \set{0}$ and $\frac{1}{4} \leq m_+(\kappa(\alpha)) \leq \frac{3}{4}$ then $\widetilde{X} m_+(\kappa(\alpha)) \geq \delta$.
\end{enumerate}
\end{lemma}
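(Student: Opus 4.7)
The plan is to construct $m_+$ by averaging a cut-off function along the cosphere flow $\tilde{\varphi}_t \colon S^*\mathcal{M} \to S^*\mathcal{M}$ induced by $\widetilde{X}$. I would pick open neighborhoods $V_0 \Subset V_1$ of $\kappa(E_+^*)$ in $S^*\mathcal{M}$ with $\overline{V_1} \subseteq U_+$, and use Lemma \ref{lemma:everywhere} to choose $T > 0$ large enough so that any $\beta \in V_1$ with $\pi_0(\beta) \in \overline{\mathcal{U}}$ and $\varphi_T(\pi_0(\beta)) \in \overline{\mathcal{U}}$ satisfies $\tilde{\varphi}_T\beta \in V_0$. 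Taking $\chi_0 \in C_c^\infty(V_1; [0,1])$ with $\chi_0 \equiv 1$ on $\overline{V_0}$, I would set
\[
m_+(\beta) := \frac{1}{T} \int_0^T \chi_0(\tilde{\varphi}_s \beta)\, \mathrm{d}s, \qquad \beta \in S^*\mathcal{M}.
\]
Differentiating under the integral yields the telescoping identity $\widetilde{X} m_+(\beta) = \frac{1}{T}\bigl(\chi_0(\tilde{\varphi}_T \beta) - \chi_0(\beta)\bigr)$, so properties (3) and (4) reduce entirely to controlling the sign (and a lower bound) for this difference.

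Properties (1) and (2) should follow quickly. For (1), if $\beta$ is close enough to $\kappa(E_+^*)$, the flow-invariance of $\kappa(E_+^*)$ and continuity of $\tilde{\varphi}$ guarantee $\tilde{\varphi}_s \beta \in V_0$ for all $s \in [0,T]$, so $m_+(\beta) = 1$. For (2), if $\beta \in \kappa(T^*_{\mathcal{U}} \mathcal{M})$ lies in $\textup{supp } m_+$, then some $\tilde{\varphi}_s \beta$ is in $V_1$; the strict convexity of $\overline{\mathcal{U}}$ keeps the backward-in-time trajectory from wandering, and since $\overline{V_1} \subseteq U_+$ (a conic neighborhood of $\kappa(E_+^*)$), after possibly shrinking $V_1$ this confines $\beta$ itself to $U_+$.

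For (3) I would split by whether $\chi_0(\beta) = 0$ (trivial) or $\chi_0(\beta) > 0$, so $\beta \in V_1$ with $\pi_0(\beta) \in \mathcal{U}$; the target is then $\chi_0(\tilde{\varphi}_T \beta) = 1$, i.e., $\tilde{\varphi}_T \beta \in V_0$. Convexity of $\overline{\mathcal{U}}$ for the extended flow constructed in \cite{DG16} gives the alternative that $\varphi_s \pi_0(\beta)$ stays in $\overline{\mathcal{U}}$ for all $s \in [0,T]$ (in which case Lemma \ref{lemma:everywhere} closes the argument) or exits monotonically at some $s_0 < T$. The latter \emph{exit case} is what I expect to be the main obstacle, since Lemma \ref{lemma:everywhere} no longer controls $\tilde{\varphi}_T \beta$. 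The plan to overcome it is to further localize $\chi_0$ by an additional cut-off in $\pi_0$ supported close to $\pi_0(\mathcal{K})$ and vanishing near the boundary, and to exploit the fact that for $\beta$ in the resulting smaller support and close to $\kappa(E_+^*)$, continuity of the flow keeps $\varphi_s \pi_0(\beta)$ in $\overline{\mathcal{U}}$ throughout $[0,T]$. Property (4) then follows by compactness: on the set $\{1/4 \leq m_+ \leq 3/4\}$ one has $\chi_0(\beta) \leq 3/4 < 1 = \chi_0(\tilde{\varphi}_T \beta)$, giving a uniform positive lower bound on the telescoping difference and hence $\delta > 0$.
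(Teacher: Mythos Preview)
Your averaging is in the wrong time direction. With $m_+(\beta)=\frac{1}{T}\int_0^T\chi_0(\tilde\varphi_s\beta)\,ds$ you rely on forward orbits, but $E_+^*$ sits over $\Gamma_+$, which is only \emph{backward}-invariant in the base: for $x\in\Gamma_+\setminus\mathcal K$ the orbit $\varphi_t(x)$ exits $\overline{\mathcal U}$ in finite positive time, while $\varphi_{-t}(x)\to\mathcal K$. This breaks (1): for $\beta\in\kappa(E_+^*)$ with base near $\partial_+\mathcal U\cap\Gamma_+$, the forward orbit leaves any fixed neighbourhood of $\kappa(E_+^*)$ almost immediately, so $m_+(\beta)<1$; localising $\chi_0$ near $\pi_0(\mathcal K)$ only makes this worse. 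It also breaks (2): since $\kappa(E_+^*)$ is a \emph{sink} for the forward cosphere flow (this is what Lemma~\ref{lemma:everywhere} says), the set $\bigcup_{s\in[0,T]}\tilde\varphi_{-s}(\textup{supp}\,\chi_0)$, which contains $\textup{supp}\,m_+$, is for large $T$ roughly the whole basin of attraction of $\kappa(E_+^*)$ inside $\kappa(T^*_{\mathcal U}\mathcal M)$, far larger than $U_+$. Lemma~\ref{lemma:everywhere} gives no control in the backward direction, so your argument for (2) cannot close. Finally, your argument for (4) is incorrect as stated: $m_+(\beta)\le\tfrac34$ does not imply $\chi_0(\beta)\le\tfrac34$.

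The paper averages \emph{backward} and with an offset, $m_+(\alpha)=\frac{1}{T}\int_{-2T}^{-T} m_0(\kappa(\Phi_t\alpha))\,dt$, taking $m_0$ supported in $U_+\cap\kappa(T^*_{\mathcal U}\mathcal M)$ and equal to $1$ only near $\kappa(E_u^*)$ (over $\mathcal K$, not over all of $\Gamma_+$). Backward invariance of $\Gamma_+$ and $\varphi_{-t}|_{\Gamma_+}\to\mathcal K$ give (1). For (2), if $\Phi_{-t}(\alpha)\in\textup{supp}\,m_0$ for some $t\in[T,2T]$, one flows \emph{forward} from time $-t$ to time $0$, so Lemma~\ref{lemma:everywhere} applies and gives $\kappa(\alpha)\in U_+$. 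The offset $[-2T,-T]$ (rather than $[-T,0]$) is essential for (3) and (4): it lets one invoke \cite[Lemma~1.4]{DG16} to force $\varphi_{-T}(\alpha_x)$ close to $\mathcal K$ whenever both $\alpha_x$ and $\varphi_{-2T}(\alpha_x)$ lie in $\mathcal U$, after which Lemma~\ref{lemma:everywhere} (applied on $[-2T,-T]$, again forward) yields $m_0(\kappa(\Phi_{-T}\alpha))=1$.
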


\begin{proof}
First, \cite[Lemma 1.4]{DG16} says that if $W$ is a neighbourhood of the trapped set $\mc{K}^g$, $\exists T>0$ large enough so that 
\begin{equation}\label{approcheK} 
x\in \mc{U},\varphi_{2T}(x)\in \mc{U} \Longrightarrow \varphi_T(x)\in W
\end{equation}
and \cite[Lemma 1.3]{DG16} says that if $x\in \Gamma_+$, then $d_{\mc{M}}(\varphi_t(x),\mc{K}^g)\to 0$ as $t\to -\infty$.
Let us choose a $C^\infty$ function $m_0: S^* \mathcal{M}\to \left[0,1\right]$, supported in $U_+ \cap \kappa(T^*_{\mathcal{U}} \mathcal{M})$ and such that $m_0 \equiv 1$ on a neighbourhood of $\kappa(E_u^*) = \kappa(E_+^* \cap T_{\mc{K}^g}^* \mathcal{M})$. Denote by $W$ the projection of this neighbourhood to $\mathcal{M}$. Then, for $T >0$ large, define the function
\begin{equation*}
m_+ : \alpha \mapsto \frac{1}{T} \int_{-2T}^{-T} m_0(\kappa(\Phi_t(\alpha))) \mathrm{d}t.
\end{equation*}
Let us show that $m_+$ has the required properties when $T$ is chosen large enough. The proof is the same as  \cite[Lemma 1.12]{DG16}, we summarize the argument for the convenience of the reader:
\begin{enumerate}[label=\emph{(\arabic*)}]
	\item It follows from a continuity argument and the fact that for $W$ as above and all $\alpha\in \kappa(E_+^*)$, we have $\Phi_t(\alpha)\in \kappa(E_+^* \cap T_{W}^* \mathcal{M})$ for all $t\in [-2T,-T]$ if $T>0$ is large enough.
	\item If $\alpha\in \supp(m_+)$ and $\alpha_x\in\mathcal{U}$, then $\exists t\in [-T,-2T]$ such that $\Phi_t(\alpha)\in \supp(m_0)$, and $\alpha_x,\varphi_t(\alpha_x)\in \mc{U}$ while $\kappa(\Phi_t(\alpha))\in U_+$, then Lemma \ref{lemma:everywhere} implies that $\alpha\in U_+$, assuming $T$ is large enough.
	\item For $\alpha_x\in \mc{U}$, one has $T \widetilde{X}m_+(\alpha)=m_0(\kappa(\Phi_{-T}(\alpha))-m_0(\kappa(\Phi_{-2T}(\alpha)))$. Assume that $m_0(\kappa(\Phi_{-T}(\alpha))<1$ and $m_0(\kappa(\Phi_{-2T}(\alpha)))>0$, then $\varphi_{-2T}(\alpha_x)\in \mc{U}$ and $\varphi_{-T}(\alpha_x)\in W$ if $T$ is large enough (by \eqref{approcheK}), but $d_{S^*\mc{M}}(\kappa(\Phi_{-T}(\alpha)),\kappa(E_u^*))>\eps$  for some $\eps>0$ depending only on $m_0$. Since also  $\kappa(\Phi_{-2T}(\alpha))\in U_+$, Lemma \ref{lemma:everywhere} gives a contradiction with $d_{S^*\mc{M}}(\kappa(\Phi_{-T}(\alpha)),\kappa(E_u^*))>\eps$ if $T>0$ is chosen large enough. Thus $\widetilde{X}m_+(\alpha)\geq 0$.
	\item Let $\alpha \in T^*_{\mathcal{U}} \mathcal{M} \setminus \set{0}$ be such that $\frac{1}{4} \leq m_+(\kappa(\alpha)) \leq \frac{3}{4}$. 
Since $m_+(\kappa(\alpha)) \leq \frac{3}{4}$, then the proportion of $t$'s in $[-2T,-T]$ such that $m_0(\kappa(\Phi_{t}(\alpha)))= 1$ is at most $\frac{3}{4}$. Hence, there is a $t \in \left[ - \frac{7}{4}T,-T\right]$ such that $m_0(\kappa(\Phi_{t}(\alpha)))< 1$. Thus, if $T > 0$ is large enough, the same argument as for \emph{(3)} based on Lemma \ref{lemma:everywhere} shows that $m_0(\kappa(\Phi_{-2T}(\alpha))) = 0$. Similarly, we get $m_0(\kappa(\Phi_{-T}(\alpha)))=1$, so that $\widetilde{X} m_+(\kappa(\alpha)) = \frac{1}{T}$.\qedhere
\end{enumerate}
\end{proof}

\begin{remark}
Before we give the proof of Lemma \ref{lemma:escape_function}, let us link $H^{\omega_I}_{\Re p}$ to $\Phi_t$. From equation (2.3) in \cite{BJ20}, 
we see that for a smooth function $f$, we have in local coordinates $\tilde{x} = x+ i y, \tilde{\xi} = \xi + i \eta$
\begin{equation*}
H_f^{\omega_I} = \sum_{j = 1}^n \partial_{\eta_j} f \partial_{x_j} - \partial_{x_j} f \partial_{\eta_j} + \partial_{\xi_j} f \partial_{y_j} - \partial_{y_j} f \partial_{\xi_j}.
\end{equation*}
If $f$ vanishes on the reals, we see that $H_f^{\omega_I}$ is tangent to the reals. If additionally $f= \Re q$, $q$ being holomorphic and pure imaginary on the reals, we can use the Cauchy--Riemann equations to find
\begin{align*}
H_f^{\omega_I} 	&= \sum_{j = 1}^n \partial_{\eta_j} f \partial_{x_j} - \partial_{y_j} f \partial_{\xi_j} + \mathcal{O}(|y|+|\eta|) \\ & = \sum_{j = 1}^n \Re(\partial_{\eta_j} q)\partial_{x_j} -\Re( \partial_{y_j} q )\partial_{\xi_j} + \mathcal{O}(|y|+|\eta|)\\
				&= -\sum_{j = 1}^n \partial_{\xi_j} \Im(q)\partial_{x_j} - \partial_{x_j} \Im(q)\partial_{\xi_j} + \mathcal{O}(|y|+|\eta|)
\end{align*}
In the last line, we recognize the usual Hamilton vector field of $-\Im q$ on the reals. In particular, 
\begin{equation}\label{eq:Rep-Xtilde}
- H^{\omega_I}_{\Re p} = \tilde{X} \quad \textup{ on } T^* \mathcal{M}.
\end{equation}
\end{remark}

\begin{proof}[Proof of Lemma \ref{lemma:escape_function}]
Let $r$ be a non-negative symbol of logarithmic order on $T^* \mathcal{M}$ such that $r(\alpha) = \log \jap{\alpha}$ when $\alpha$ is large. Let 
\begin{equation*}
G_0(\alpha) = \int_{-T}^0 r(\Phi_t( \alpha)) \mathrm{d}t,
\end{equation*}
for $T$ large enough. Thanks to Lemma \ref{lemma:everywhere}, we see that there is a neighbourhood $U_+$ of $\kappa(E_+^*)$ in $S^* \mathcal{M}$ and a constant $C > 0$ such that 
\begin{equation}\label{XG0}
\forall \alpha \in \kappa^{-1}\p{U_+} \cap T_{\mathcal{U}}^ * \mathcal{M},\quad C\geq  \widetilde{X} G_0(\alpha) \geq C^{-1}.  
\end{equation} 
Then, up to making $U_+$ smaller, we can assume that $U_+ \subseteq \kappa(\Omega_0)$ and apply Lemma \ref{lemma:escape_first_step}. Let $m_+$ be the resulting function. We let $\tilde{m}$ be a symbol of order $0$ on $T^* \mathcal{M}$ that coincides with $1 - 2 m_+ \circ \kappa$ outside of a bounded set. Define then
\begin{equation*}
G = \tilde{m} G_0.
\end{equation*}
For $\alpha \in T^*_{\mathcal{U}} \mathcal{M}$ large enough, using \eqref{eq:Rep-Xtilde}, we have
\begin{equation*}
- H_{\Re p}^{\omega_I} G(\alpha) = -2 \widetilde{X}m_+(\kappa(\alpha)) G_0(\alpha) + (1 - 2 m_+ \circ \kappa(\alpha)) \widetilde{X} G_0(\alpha).
\end{equation*}
Let $\Omega =  \kappa^{-1} (\{m_+ \geq \frac{1}{4}\})\subset \kappa^{-1}(U_+)$, and notice that $G\geq \frac{1}{2} \log \cjg \alpha\cjd$, if $|\alpha|$ is large enough and not in $\Omega$; this gives point \emph{(i)}.

If $\alpha \in E_+^*$ then $m_+(\kappa(\alpha)) = 1$, so that $G(\alpha) = - G_0(\alpha)$, and (iii) follows. Now, if $\alpha \in \Omega$ is large, either $m_+(\kappa(\alpha)) \geq \frac{3}{4}$, in which case we have by 
point (3) of Lemma \ref{lemma:escape_first_step} and \eqref{XG0}
\begin{equation*}
- H_{\Re p}^{\omega_I} G(\alpha) \leq - \frac{1}{2} C^{-1},
\end{equation*}
or $m_+(\kappa(\alpha)) \leq \frac{3}{4}$, in which case $\widetilde{X}m_+(\kappa(\alpha)) \geq \delta$ by point (4) of Lemma \ref{lemma:escape_first_step}, and thus
\begin{equation*}
-H_{\Re p}^{\omega_I} G(\alpha) \leq - 2 \delta G_0(\alpha)+ C
\end{equation*}
and (ii) follows since $G_0\geq \eps \log \cjg\alpha\cjd$ for large $|\alpha|$.
\end{proof}

\subsubsection{Proof of Proposition \ref{prop:radial_estimate}}
Set $f = Xu$. We start by reducing to the case of $u$ compactly supported in $\mathcal{U}$. If $x \in \Gamma_+ \cap \partial \mathcal{U}$, then $u$ vanishes on a neighbourhood of $x$, since $Xu$ does, $u_{|\partial_+ \mathcal{U}} = 0$ and $X$ is transversal to the outgoing boundary. By compactness of $\Gamma_+ \cap \partial \mathcal{U}$, we find a compact subset $K_1$ of $\mathcal{U}$ such that $\textup{supp } f \subseteq K_1$ and $\overline{\mathcal{U} \setminus K_1}$ does not intersect $\Gamma_+ \cap \textup{ supp } u$. Let $\chi \in C^\infty(\mathcal{M})$ be supported in $\mathcal{U}$ and such that $\chi \equiv 1$ on a neighbourhood of $K_1$. Then $\chi u$ is a distribution on $\mathcal{M}$, supported in $\mathcal{U}$ and
\begin{equation*}
X(\chi u) = \chi f + (X \chi) u = f + (X \chi) u.
\end{equation*}
Notice that $X \chi$ is supported in $\mathcal{U} \setminus K_1$, so that $(X \chi) u$ is supported away from $\Gamma_+$. In particular the analytic wave front set of $(X \chi) u$, and hence of $f + (X \chi) u$ does not intersect $E_+^*$. Since $\chi$ is $C^\infty$, and since $\WF(u)\cap E_+^*=\emptyset$ by the assumptions on $u$, then  $\WF(\chi u)\cap E_+^*=\emptyset$. Since $\chi \equiv 1$ on a neighbourhood of $\Gamma_+ \cap \textup{ supp }u$, we have $\WF_a(u) \cap E_+^* = \WF_a(\chi u) \cap E_+^*$. Consequently, we may replace $u$ by $\chi u$ and $f$ by $f + (X \chi)u$ and assume that $u$ is compactly supported in $\mathcal{U}$. We let $K_0 \subseteq \mathcal{U}$ denote a compact neighbourhood of $\textup{supp } u$ and $\textup{supp }f$.

By assumption, there is a conic neighbourhood $\Omega_0$ of $E_+^*$ in $T^* \mathcal{M}$ such that $\WF(u) \cap \Omega_0 = \emptyset$ and $\WF_a(f) \cap \Omega_0 = \emptyset$. Let $G$ and $\Omega$ be as in Lemma \ref{lemma:escape_function}. Multiplying $G$ by a smooth function equal to $1$ 
outside $\{|\alpha|>R\}$ and supported in  $\{|\alpha|>R/2\}$, we can assume that $G$ is small enough as a symbol of order $1$. Let $\Lambda \coloneqq \Lambda_G$ and decompose $\Lambda$ as
\begin{equation*}
\Lambda = \Lambda_1 \cup \Lambda_2 \cup \Lambda_3,
\end{equation*}
where $\Lambda_1,\Lambda_2$ and $\Lambda_3$ are respectively the images of $T^*_{\mathcal{M} \setminus K_0} \mathcal{M}$, $T^*_{K_0} \mathcal{M} \setminus \Omega$ and $T^*_{K_0} \mathcal{M} \cap \Omega$ by $e^{H_G^{\omega_I}}$. According to Lemma \ref{lemma:pseudo_local_space} and since $u$ is supported in the interior of $K_0$, for every $N>0$ large enough and all $k \in \R$, there is $C = C_{k} > 0$ such that
\begin{equation}\label{eq:Lambda1}
\int_{\Lambda_1} \va{T_\Lambda u}^2 \jap{\va{\alpha}}^{2k} e^{- \frac{2H}{h}} \mathrm{d}\alpha \leq C \exp\p{- \frac{C}{h}} \|u\|^2_{H^{-N}}.
\end{equation}
According to the properties of $G$ (Lemma \ref{lemma:escape_function}), $G$ is larger than $\delta \log\langle|\alpha|\rangle$ for $\alpha$ large enough in $T^*_{K_0} \mathcal{M} \setminus \Omega$, so we can apply Lemma \ref{lemma:distribution_space} and obtain
\begin{equation}\label{eq:Lambda2}
\int_{\Lambda_2} \va{T_\Lambda u}^2 \jap{\va{\alpha}}^{2k} e^{- \frac{2H}{h}} \mathrm{d}\alpha \leq C \exp\p{\frac{C}{h}} \|u\|^2_{H^{-N}}.
\end{equation}
Now, since the $C^\infty$ wave front set of $u$ does not intersect $\Omega_0$, we see from Lemma \ref{lemma:space_cinfini}, that for $h > 0$ small enough and every $k \in \mathbb{R}$, we have
\begin{equation*}
\int_{\Lambda_3} \va{T_\Lambda u}^2 e^{- \frac{2H}{h}} \jap{\va{\alpha}}^{2k} \mathrm{d}\alpha < + \infty.
\end{equation*}
Hence, we have $u \in \mathcal{H}_\Lambda^\infty$, and we may apply the multiplication formula of Proposition \ref{theorem:local_multiplication_formula} with $P = h X$ to find that
\begin{equation*}
\Re\p{\int_{\Lambda} T_\Lambda u \overline{T_\Lambda P u} e^{- \frac{2H}{h}} \mathrm{d}\alpha} \leq \int_{\Lambda} \Re q \va{T_\Lambda u}^2 e^{- \frac{2H}{h}} \mathrm{d}\alpha + C h \int_{\Lambda} \va{T_\Lambda u}^2 e^{- \frac{2H}{h}} \mathrm{d}\alpha  + C\|u\|_{H^{-N}}^2e^{-\frac{C}{h}},
\end{equation*}
where $q$ is the restriction to $\Lambda$ of the principal symbol $p$ of $P$. Since $q$ is a symbol of order $1$, we may apply \eqref{eq:Lambda1} and \eqref{eq:Lambda2} with $k = \frac{1}{2}$ to find that
\begin{equation*}
\int_{\Lambda} \Re q \va{T_\Lambda u}^2 e^{- \frac{2H}{h}} \mathrm{d}\alpha \leq \int_{\Lambda_3}  \Re q \va{T_\Lambda u}^2 e^{- \frac{2H}{h}} \mathrm{d}\alpha + Ch \int_{\Lambda_3} \va{T_\Lambda u}^2 e^{- \frac{2H}{h}} \mathrm{d}\alpha + C e^{\frac{C}{h}}\|u\|^2_{H^{-N}}.
\end{equation*}
Now, if $\alpha \in \Lambda_3$, we have $\alpha = e^{H_G^{\omega_I}} \beta$ with $\beta \in T^*_{K_0} \mathcal{M} \cap \Omega$. Hence, recalling that $p$ denotes the holomorphic extension of the symbol of $P$, we have
\begin{equation*}
\begin{split}
\Re q(\alpha) & = \Re p(\alpha) = \Re p(\beta) + H_G^{\omega_I} \Re p(\beta) + \mathcal{O}\p{\frac{\log \jap{\va{\alpha}}^2}{\jap{\va{\alpha}}}} \\
    & = - H_{\Re p}^{\omega_I} G + \mathcal{O}\p{\frac{\log \jap{\va{\alpha}}^2}{\jap{\va{\alpha}}}}  \\
    & \leq - C^{-1},
\end{split}
\end{equation*}
where the last inequality holds for $\alpha$ large enough and we used the fact that ${\rm Re}(p(\beta))=0$ for $\beta\in T^*\mc{M}$ a real covector. Since small frequencies always produce a term that grows at most exponentially fast when $h$ tends to $0$, we have
\begin{equation*}
\int_{\Lambda_3}  \Re q \va{T_\Lambda u}^2 e^{- \frac{2H}{h}} \mathrm{d}\alpha \leq - C^{-1} \int_{\Lambda_3}  \va{T_\Lambda u}^2 e^{- \frac{2H}{h}} \mathrm{d}\alpha + C e^{\frac{C}{h}}\|u\|^2_{H^{-N}}.
\end{equation*}
It follows that
\begin{equation*}
\Re\p{\int_{\Lambda} T_\Lambda u \overline{T_\Lambda P u} e^{- \frac{2H}{h}} \mathrm{d}\alpha} \leq (- C^{-1} + Ch) \int_{\Lambda_3} \va{T_\Lambda u}^2 e^{- \frac{2H}{h}} \mathrm{d}\alpha +C e^{\frac{C}{h}}\|u\|^2_{H^{-N}},
\end{equation*}
so that for $h$ small enough, we have
\begin{equation*}
\begin{split}
\int_{\Lambda_3} \va{T_\Lambda u}^2 e^{- \frac{2H}{h}} \mathrm{d}\alpha  & \leq- C \Re\p{\int_{\Lambda} T_\Lambda u \overline{T_\Lambda P u} e^{- \frac{2H}{h}} \mathrm{d}\alpha} + C e^{\frac{C}{h}}\|u\|^2_{H^{-N}} \\
    & \leq C \delta \int_{\Lambda} \va{T_\Lambda u}^2 e^{- \frac{2H}{h}} \mathrm{d}\alpha + C \delta^{-1} \int_{\Lambda} \va{T_\Lambda P u}^2 e^{- \frac{2H}{h}} \mathrm{d}\alpha + C e^{\frac{C}{h}}\|u\|^2_{H^{-N}} \\
    & \leq C \delta \int_{\Lambda_3} \va{T_\Lambda u}^2 e^{- \frac{2H}{h}} \mathrm{d}\alpha + C \delta^{-1} \int_{\Lambda_3} \va{T_\Lambda P u}^2 e^{- \frac{2H}{h}} \mathrm{d}\alpha + C e^{\frac{C}{h}}\|u\|^2_{H^{-N}}.
\end{split}
\end{equation*}
Here $\delta$ is any small positive number, and we noticed that \eqref{eq:Lambda1} and \eqref{eq:Lambda2} still holds when $u$ is replaced by $P u$, for the same reasons. Since the analytic wave front set of $Pu = h f$ does not intersect $\Omega_0$, we deduce from Lemma \ref{lemma:space_analytic} 
that there is a constant $C(u)>0$ depending on $u$ such that
\begin{equation*}
 \int_{\Lambda_3} \va{T_\Lambda P u}^2 e^{- \frac{2H}{h}} \mathrm{d}\alpha \leq C(u) e^{\frac{C(u)}{h}}.
\end{equation*}
So that by taking $\delta$ small enough and using \eqref{eq:Lambda1} and \eqref{eq:Lambda2} again, we see that
\begin{equation*}
\n{u}_{\mathcal{H}_\Lambda^0}^2 \leq C(u) e^{\frac{C(u)}{h}}.
\end{equation*}
Hence, since $G$ is elliptic on $E_+^*$, it follows from Lemma \ref{lemma:hlambda_to_wfs} that $\WF_a(u) \cap E_+^* = \emptyset$.
\qed

\subsection{Proof of Proposition \ref{analyticityofPi0}}\label{ssec:analytic_pseudor}

We now have at our disposal all the microlocal estimates required from the proof of Proposition \ref{analyticityofPi0}, which will follow from the following result.

\begin{lemma}\label{lemma:regularizing}
Let $v \in \mathcal{D}'(\mathring{M}_e)$ be supported in the interior of $M_e$. If $x$ does not belong to the support of $v$ then $\Pi^{g_e}_0v$ is analytic on a neighbourhood of $x$.
\end{lemma}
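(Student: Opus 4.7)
The plan is to set $u := R_{g_e} \pi_0^* v$, so that $\Pi_0^{g_e} v = (\pi_0)_* u$ and $u$ solves the transport problem
\begin{equation*}
X_{g_e} u = \pi_0^* v \quad \text{on } S\mathring{M}_e, \qquad u|_{\partial_- SM_e} = 0,
\end{equation*}
and then analyze $\WF_a(u)$ by analytic microlocal methods. By the pushforward behaviour of the analytic wavefront set under the proper fiber integration $(\pi_0)_*$, to prove that $\Pi_0^{g_e} v$ is analytic near a given $x \notin \supp(v)$ it suffices to show that for every $z_0 = (x, v_0) \in \pi_0^{-1}(x)$ and every $\xi_0 \in T^*_x M_e \setminus \{0\}$, the horizontal covector $\alpha_0 := (z_0, (d\pi_0)^T \xi_0) \in \mathcal{V}^\perp$ does not lie in $\WF_a(u)$, where $\mathcal{V} := \ker d\pi_0$ is the vertical bundle.

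The key geometric input, dual to the no-conjugate-points assumption, is the transversality
\begin{equation*}
\Phi_t(\mathcal{V}^\perp(z)) \cap \mathcal{V}^\perp(\varphi_t^{g_e} z) = \R\, \alpha_L(\varphi_t^{g_e} z)
\end{equation*}
for every $z \in SM_e$ and every $t \neq 0$ in the lifetime of the orbit (a dimension count from $\dim \mathcal{V} = n-1$ and the fact that $\mathcal{V}(z) + d\varphi_{-t}^{g_e}(\mathcal{V}(\varphi_t^{g_e} z))$ has dimension $2n-2$ under the no-conjugate-point hypothesis). I would extract two consequences: (a) the backward bicharacteristic $\{\Phi_{-t}(\alpha_0)\}_{t > 0}$ is characteristic but never horizontal, and hence avoids $\WF_a(\pi_0^* v) \subseteq \mathcal{V}^\perp \cap T^*_{\pi_0^{-1}(\supp(v))} SM_e$ even when the base geodesic re-enters $\pi_0^{-1}(\supp(v))$; (b) $\mathcal{V}^\perp \cap E_\pm^* = 0$ over $\Gamma_\pm^{g_e}$.

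The analysis then splits into three cases. If $\xi_0(v_0) \neq 0$, then $\alpha_0 \notin \mathrm{Char}(X_{g_e})$ and analytic elliptic regularity for the real-analytic operator $X_{g_e}$ (applicable because $\pi_0^* v$ is analytic near $z_0$) directly gives $\alpha_0 \notin \WF_a(u)$. If $\xi_0(v_0) = 0$ and $z_0 \notin \Gamma_-^{g_e}$, the backward orbit of $z_0$ reaches $\partial_- SM_e$ in finite time $\tau_{g_e}(-z_0)$; near the boundary $u$ is analytic by non-characteristic Cauchy--Kovalevskaya applied to $X_{g_e} u = \pi_0^* v$ (the right-hand side vanishes near $\partial SM_e$ since $v$ is supported in $\mathring{M}_e$) with analytic boundary data, and analytic propagation of singularities along the bicharacteristic, clean by (a), yields $\alpha_0 \notin \WF_a(u)$. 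Finally, if $\xi_0(v_0) = 0$ and $z_0 \in \Gamma_-^{g_e}$, the backward orbit remains in $SM_e$ and $\Phi_{-t}(\alpha_0)$ asymptotically aligns with $E_-^*$, since the hyperbolic structure on $\mathcal{K}$ together with (b) forces the $E_-^*$-component of $\alpha_0$ to be nonzero (its $E_+^*$-component cannot exhaust $\alpha_0$ without contradicting horizontality). I would then apply Proposition \ref{prop:radial_estimate_bis} to $u$: its three wavefront hypotheses are secured by (b), which gives $\WF_a(\pi_0^* v) \cap E_-^* = \emptyset$ and, via the wavefront formula \eqref{WFR} together with the forward propagation of horizontal covectors away from $E_-^*$, also $\WF(u) \cap E_-^* = \emptyset$, while the boundary condition is given. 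This yields $\WF_a(u) \cap E_-^* = \emptyset$, and analytic propagation of singularities from a conic neighborhood of $E_-^*$ back to $\alpha_0$ along the finite bicharacteristic segment (again clean by (a)) concludes this case.

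The main obstacle I anticipate is the verification of transversality (a), i.e.\ converting the classical Jacobi-field formulation of no conjugate points into the dual statement that the lifted flow $\Phi_t$ sends horizontal covectors to non-horizontal ones outside the Liouville direction. Once this transversality is in hand, (b) follows on the trapped set by the same argument and then on $\Gamma_\pm^{g_e}$ by continuity, and the global structure of $\WF_a(u)$ is controlled cleanly; the remaining arguments are textbook analytic propagation together with the newly established radial estimate.
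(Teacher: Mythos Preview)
Your approach is essentially the time-reversed mirror of the paper's: where the paper sets $u=R_{g_e}^*\pi_0^*v$ (so $u\vert_{\partial_+SM_e}=0$, $X_{g_e}u=-\pi_0^*v$), propagates \emph{forward} toward $\partial_+SM_e$ or $E_+^*$, and applies Proposition~\ref{prop:radial_estimate}, you set $u=R_{g_e}\pi_0^*v$, propagate backward toward $\partial_-SM_e$ or $E_-^*$, and apply Proposition~\ref{prop:radial_estimate_bis}; your transversality facts (a) and (b) are exactly the content of Lemma~\ref{lemma:geometric}.

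Two small corrections are in order. First, your $\Gamma_\pm$ labels are swapped relative to the paper's convention \eqref{def:Gammapm}: there $\Gamma_+^{g_e}=\{z:\tau_{g_e}(-z)=\infty\}$ is the \emph{backward}-trapped set, so your case split should read $z_0\notin\Gamma_+^{g_e}$ (backward orbit reaches $\partial_-SM_e$) and $z_0\in\Gamma_+^{g_e}$ (backward orbit remains in $SM_e$, $E_+^*(z_0)$ is defined, and $\alpha_0\notin E_+^*$ forces $\Phi_{-t}(\alpha_0)\to E_-^*$). Second, your proposed derivation of (b) from (a) ``by the same argument and then by continuity'' does not quite work: the transversality $\mathcal{V}^\perp\cap E_\pm^*=0$ is equivalent to $E_{s/u}\cap\mathcal{V}=0$, which the paper (following \cite{Kl}) obtains from the hyperbolicity of $\mathcal{K}^g$ rather than from the no-conjugate-points statement (a). Both facts are packaged in Lemma~\ref{lemma:geometric}, so this does not affect the validity of your overall scheme.
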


In addition to microlocal estimates, the proof of Lemma \ref{lemma:regularizing} will require to understand the dynamics of the geodesic flow lifted to $T^* (S M_e) \subseteq T^* \mathcal{M}$. To do so, introduce the horizontal bundle, defined for $x \in S M_e$ by
\begin{equation*}
H^*(x) = \{ \xi \in T^*_x (SM_e) \, |\, \xi_{| \ker \mathrm{d} \pi_0(x)} = 0 \} \quad \textup{ for } x \in SM_e.
\end{equation*}
Here, we recall that $\pi_0$ is the canonical projection $S M_e \to M_e$. The importance of the bundle $H^*$ is due to the following two results.

\begin{lemma}\label{lemma:going_up}
Let $u \in \mathcal{D}'(\mathring{M}_e)$ be compactly supported, then the analytic wave front set of $\pi_0^* u$ is contained in $H^*$. The same holds true for $\pi_2^*u$ if $u \in \mc{D}'(\mathring{M}_e; S^2 T^*\mathring{M}_e)$.
\end{lemma}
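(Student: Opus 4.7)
My plan is to prove the inclusion $\WF_a(\pi_0^* u) \subset H^*$ as an instance of the general functoriality of the analytic wave front set under pullback by an analytic submersion. Since $\pi_0 : SM_e \to M_e$ is an analytic submersion (in fact a sphere bundle), the standard pullback rule would give
\begin{equation*}
\WF_a(\pi_0^* u) \subset \pi_0^* \WF_a(u) := \{ (x,v;\mathrm{d}\pi_0(x,v)^T\eta)\,:\, (\pi_0(x,v),\eta) \in \WF_a(u)\},
\end{equation*}
and any covector of the form $\mathrm{d}\pi_0(x,v)^T \eta$ annihilates $\ker \mathrm{d}\pi_0(x,v)$ by the very definition of $H^*$, which yields the result. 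The question is only to check this pullback rule in the framework of this paper.

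To do so I would argue directly using the FBI transform characterization of $\WF_a$ from Proposition \ref{prop:equivalence_wave_front_set}. By a partition of unity and the conic-invariance of $\WF_a$, it suffices to show that if $\alpha_0 = (x_0,v_0;\xi_0,\eta_0) \in T^*(SM_e)$ with $\eta_0 \neq 0$ on $\ker \mathrm{d}\pi_0$, then $|T(\pi_0^* u)(\alpha)|$ decays exponentially in $\jap{|\alpha|}/h$ on a conic neighbourhood of $\alpha_0$. In local coordinates $(x_1,x_2)$ on $SM_e$ with $\pi_0(x_1,x_2) = x_1$, and working in a chart of $\mathcal{M}$ containing a neighbourhood of the support of $\pi_0^*u$, I would write
\begin{equation*}
T(\pi_0^* u)(\alpha) = \int K_T(\alpha,(y_1,y_2))\, u(y_1)\, \mathrm{d}y_1\mathrm{d}y_2,
\end{equation*}
and integrate out $y_2$ first. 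Using the local form $K_T \simeq e^{i\Phi_T/h} a$ together with $\mathrm{d}_{y_2}\Phi_T(\alpha,(\alpha_{x,1},\alpha_{x,2})) = -\eta_{\alpha}$, the phase is non-stationary in $y_2$ on a conic neighbourhood of $\alpha_0$. The analytic non-stationary phase method (\cite[Proposition 1.5]{BJ20}, used exactly as in the proof of Lemma \ref{lemma:toeplitz}) then produces an effective kernel in $y_1$ that is itself $\mathcal{O}(\exp(-\jap{|\alpha|}/Ch))$ on such a neighbourhood, and pairing against the fixed distribution $u \in H^{-N}$ gives the required bound. Invoking Proposition \ref{prop:equivalence_wave_front_set}, the analytic wave front set of $\pi_0^* u$ avoids $\alpha_0$, which proves $\WF_a(\pi_0^* u) \subset H^*$.

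For the statement about $\pi_2^*$, I would reduce to the previous case. In a local trivialization of $S^2 T^*M_e$, write $u = \sum_{i,j} u_{ij}\, \mathrm{d}x^i \otimes_s \mathrm{d}x^j$, so that
\begin{equation*}
\pi_2^*u(x,v) = \sum_{i,j} u_{ij}(x)\, v^i v^j = \sum_{i,j} (\pi_0^* u_{ij})(x,v)\cdot v^i v^j.
\end{equation*}
Each $u_{ij}$ is a scalar compactly supported distribution on $\mathring{M}_e$, so by the previous step $\WF_a(\pi_0^* u_{ij}) \subset H^*$. The functions $v^i v^j$ are real-analytic on $SM_e$, and multiplication by a real-analytic function preserves $\WF_a$ (this is a standard property, and follows immediately from the FBI transform characterization by absorbing the analytic factor into the amplitude); hence $\WF_a(\pi_2^* u) \subset H^*$.

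The main obstacle is really only bookkeeping: verifying carefully that the non-stationary phase argument on the $y_2$-integral produces bounds uniform in $\alpha$ on a full conic neighbourhood of $\alpha_0$, with constants independent of the (fixed) distribution $u$. All the required tools are exactly those developed for Lemmas \ref{lemma:to_localize} and \ref{lemma:toeplitz}; no new analytic-microlocal input is needed.
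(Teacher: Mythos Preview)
Your proposal is correct. Your first paragraph already \emph{is} the paper's proof: the paper simply invokes the standard pullback rule for the analytic wave front set (\cite[Theorem 8.5.1]{HoeI}) together with the fact that $\pi_0$ is real-analytic, and concludes. The direct FBI-based argument you sketch in the second paragraph is a legitimate alternative that stays entirely within the machinery of \S\ref{ssec:FBItranform}--\ref{ssec:analytic_wave_front_set}, and your reduction of $\pi_2^*$ to $\pi_0^*$ via a local trivialization and multiplication by the analytic functions $v^iv^j$ is also fine; but neither is needed, since the pullback rule is classical and the paper is content to cite it. What your self-contained argument buys is independence from H\"ormander's definition of $\WF_a$ (working instead purely with the FBI characterization of Proposition~\ref{prop:equivalence_wave_front_set}); what the paper's one-line citation buys is brevity.
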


\begin{lemma}\label{lemma:going_down}
Let $u \in \mathcal{D}'(\mathcal{M})$ be supported in the interior of $SM_e$. Let $x$ be a point in the interior of $M$ and assume that for every $y \in \pi_0^{-1}(\{x\})$ the horizontal direction $H^*(y)$ does not intersect $\WF_a(u)$. Then $\pi_{0*}u$ is analytic on a neighbourhood of $x$. The same holds true for ${\pi_2}_*u$ if $u \in \mc{D}'(\mathring{M}_e; S^2 T^*\mathring{M}_e)$.
\end{lemma}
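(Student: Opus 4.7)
The plan is to establish the pushforward formula for the analytic wave front set
\[
\WF_a(\pi_{0*}u) \subseteq \big\{(x,\xi)\in T^*M_e\setminus 0 : \exists\, y\in\pi_0^{-1}(x),\ (y,\pi_0^*\xi)\in\WF_a(u)\big\}.
\]
Since $\pi_0^*\xi\in H^*(y)$ whenever $\pi_0(y)=x$, the hypothesis forces the right-hand side to be empty over $x$, which, by closedness of $\WF_a$ and its projection being the analytic singular support, gives analyticity of $\pi_{0*}u$ near $x$. To prove the inclusion, I would embed $M_e$ into a closed real-analytic manifold $\mathcal{M}_0$ (by the same doubling trick as for $\mathcal{M}$) and pick an analytic FBI transform $T_0$ on $\mathcal{M}_0$. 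By Proposition \ref{prop:equivalence_wave_front_set}, it suffices to show that for any $\alpha_0=(x,\xi_0)\in T^*M_e\setminus 0$ there is a neighbourhood of $\alpha_0$ on which $|T_0(\pi_{0*}u)(\alpha)|\leq C e^{-1/(Ch)}$.

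The first step is to rewrite
\[
T_0(\pi_{0*}u)(\alpha) = \langle u,\, \widetilde w_\alpha\rangle,\qquad \widetilde w_\alpha(y) = \chi(y)\, K_{T_0}(\alpha,\pi_0(y)),
\]
where $\chi\in C_c^\infty(\mathring{SM}_e)$ equals $1$ near $\supp u$ so that $\widetilde w_\alpha$ extends by zero to $\mathcal{M}$. Using $ST=\mathrm{Id}$ on $\mathcal{M}$ gives
\[
T_0(\pi_{0*}u)(\alpha) = \int_{T^*\mathcal{M}} Tu(\beta)\,\overline{T\widetilde w_\alpha(\beta)}\,d\beta.
\]
The heart of the matter is to describe $T\widetilde w_\alpha$. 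Near the diagonal, this is an oscillatory integral with phase
\[
\Psi(\alpha,\beta,y) = \Phi_T(\beta,y) + \Phi_{T_0}(\alpha,\pi_0(y)).
\]
Using $d_y\Phi_T(\beta,\beta_x)=-\beta_\xi$ and $d_{x'}\Phi_{T_0}(\alpha,\alpha_x)=-\alpha_\xi$, the stationary set in $y$ for $\Psi$ is
\[
\Sigma(\alpha)=\big\{\beta\in T^*\mathcal{M} : \beta_x\in\pi_0^{-1}(\alpha_x),\ \beta_\xi=-\pi_0^*\alpha_\xi\big\}\subset H^*\big|_{\pi_0^{-1}(\alpha_x)}.
\]
Combining non-stationary phase with the lower bounds $\Im\Phi_T\geq c\jap{|\beta|}d(y,\beta_x)^2$ and $\Im\Phi_{T_0}\geq c\jap{|\alpha|}d(\pi_0(y),\alpha_x)^2$ (positive imaginary part when $y$ leaves the stationary diagonal, non-vanishing real derivative when the cotangent components mismatch) yields $|T\widetilde w_\alpha(\beta)|\leq C\exp\big(-(\jap{|\alpha|}+\jap{|\beta|})/(Ch)\big)$ uniformly on any conic neighbourhood of a point away from $\Sigma(\alpha)$, for $\alpha$ in a small conic neighbourhood of $\alpha_0$.

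The conclusion is by compactness. Since $\pi_0^{-1}(x)$ is compact and $H^*(y)\cap\WF_a(u)=\emptyset$ for each $y$ there, there is a closed conic neighbourhood $V\subset T^*\mathcal{M}$ of $H^*|_{\pi_0^{-1}(x)}$ with $V\cap\WF_a(u)=\emptyset$; for $\alpha$ close enough to $\alpha_0$, one then has $\Sigma(\alpha)\subset V$. Splitting $\int Tu\,\overline{T\widetilde w_\alpha}\,d\beta$ into $V$ and its complement, on $V$ one uses the exponential decay of $Tu$ from Proposition \ref{prop:equivalence_wave_front_set} against the polynomial size of $T\widetilde w_\alpha$, while on the complement one uses the exponential decay of $T\widetilde w_\alpha$ in $\jap{|\alpha|}+\jap{|\beta|}$ against the distributional polynomial growth of $Tu$. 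Both pieces give $O(e^{-1/(Ch)})$, as required. The main obstacle will be the uniformity of the non-stationary phase analysis as $\alpha$ ranges over a conic neighbourhood and $|\alpha|\to\infty$: one has to carefully track the dependence of the symbolic constants on $\alpha$ and $\beta$ simultaneously, because the stationary set is not a point but the entire (compact) fiber $\pi_0^{-1}(\alpha_x)$. For $\pi_{2*}u$, the same argument applies verbatim with $\widetilde w_\alpha$ replaced by $\chi(y)K_{T_0}(\alpha,\pi_0(y))\,v^{\otimes 2}$; the phase $\Psi$ and its stationary set are unchanged.
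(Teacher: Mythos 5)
Your reduction is the same as the paper's at the conceptual level: everything rests on the pushforward bound
$\WF_a(\pi_{0*}u)\subseteq\{(x,\xi)\in T^*M_e\setminus 0:\exists\, y\in\pi_0^{-1}(x),\ \pi_0^*\xi\in\WF_a(u)\}$, whose right-hand side lies in $H^*$, combined with the fact that the projection of $\WF_a$ to the base is the analytic singular support. The difference is that the paper does not prove this inclusion at all — it simply cites Laubin \cite[\S 3.b)]{Laubin} and \cite[Theorem 8.5.4]{HoeI} — whereas you re-derive it through the FBI characterization of Proposition \ref{prop:equivalence_wave_front_set}. This is precisely the route the paper's own remark after that proposition declares to be available ("behaviour under pull-back and push-forward"), and your identification of the stationary set $\Sigma(\alpha)\subset H^*|_{\pi_0^{-1}(\alpha_x)}$ together with the splitting of the $\beta$-integral into a conic neighbourhood $V$ of $H^*|_{\pi_0^{-1}(x)}$ and its complement is the correct skeleton; what you gain is self-containedness, at the cost of redoing non-stationary phase estimates that the paper outsources. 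If you carry it out, three points deserve care. First, aim at criterion (ii) of Proposition \ref{prop:equivalence_wave_front_set} rather than (iii): then $\alpha$ stays in a bounded neighbourhood of $\alpha_0$, $\Sigma(\alpha)$ stays in a fixed compact subset of $T^*\mathcal{M}\setminus 0$ avoiding $\WF_a(u)$, and compactness gives the uniform $\mathcal{O}(e^{-1/Ch})$ bound on $Tu$ near it, so the uniformity "as $\va{\alpha}\to\infty$" you worry about is not actually needed. Second, the conjugation in $\overline{T\widetilde w_\alpha}$ reverses the sign of the stationary covector; this is harmless only because $H^*(y)$ is a linear subspace, hence the hypothesis is symmetric under $\xi\mapsto-\xi$, and this should be said. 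Third, the region of bounded $\va{\beta}$ (in particular $\beta_\xi$ near the zero section, where $Tu$ is only polynomially bounded and Proposition \ref{prop:equivalence_wave_front_set} gives nothing) must be absorbed by the non-stationary phase decay of $T\widetilde w_\alpha$ coming from $\mathrm{d}_y\Phi_{T_0}(\alpha,\pi_0(y))\approx-\pi_0^*\alpha_\xi\neq 0$ there, in the spirit of the low-frequency discussion in Lemma \ref{lemma:distribution_space}. With these caveats the argument goes through, and the extension to $\pi_{2*}$ is indeed verbatim since $\pi_2^*$ only modifies the amplitude, not the phase.
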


Lemma \ref{lemma:going_up} is a consequence of \cite[Theorem 8.5.1]{HoeI} and the fact that $\pi_0$ is analytic. 
Lemma \ref{lemma:going_down} follows from \cite[\S 3.b)]{Laubin} (see also \cite[Theorem 8.5.4]{HoeI}), and the fact that the projection of the analytic wave front set to the physical space is the analytic singular support \cite[Theorem 6.3]{SjAsterisque}. The arguments apply as well for $\pi_2^*$ and ${\pi_2}_*$. 

Most of the geometric information needed for the proof of Lemma \ref{lemma:regularizing} is contained in the following lemma, 
which is a consequence of the hyperbolicity of the trapped set and the absence of conjugate points.
\begin{lemma}\label{lemma:geometric}
For  $x \in \Gamma_\pm^{g_e}$ one has $H^*(x) \cap E_\pm^*(x) = \{0\}$. Moreover , if $\alpha=(\alpha_x,\alpha_{\xi})\in T^*(SM_e)$ is such that $\alpha_\xi(X(x))=0$,
$\alpha_{\xi} \in H^*(x)$ and  $(d\varphi^{g_e}_t(x)^{-1})^\top\alpha_{\xi}\in H^*(\varphi^{g_e}_t(x))$, then $\alpha_\xi=0$.
\end{lemma}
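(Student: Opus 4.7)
The plan is to dualize both assertions into statements about sums of subspaces in $T(SM_e)$. Writing $H^*(x) = \mathrm{Ann}(\mc{V}(x))$ and $E_\pm^*(x) = \mathrm{Ann}(E_\pm(x) \oplus \R X(x))$ (by definition on $\mc{K}^{g_e}$, extended by continuity on $\Gamma_\pm^{g_e}$), the first assertion is equivalent to $\mc{V}(x) + E_\pm(x) + \R X(x) = T_x(SM_e)$. Similarly, $(\mathrm{d}\varphi^{g_e}_t(x)^{-1})^\top \alpha_\xi \in H^*(\varphi^{g_e}_t(x))$ is equivalent to $\alpha_\xi$ annihilating $\mathrm{d}\varphi^{g_e}_t(x)^{-1}(\mc{V}(\varphi^{g_e}_t(x)))$, so the second assertion reduces (for $t \neq 0$) to showing that $\mc{V}(x)$, $\mathrm{d}\varphi^{g_e}_t(x)^{-1}(\mc{V}(\varphi^{g_e}_t(x)))$ and $\R X(x)$ span $T_x(SM_e)$. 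In both cases, a dimension count $(n-1) + (n-1) + 1 = 2n - 1 = \dim SM_e$ reduces the problem to proving that the three subspaces are in direct sum.

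For the second assertion, I will write $x = (x_0, v)$ and use the standard identification $T_{(x_0,v)}(SM_e) \simeq \{(a,b) \in T_{x_0}M_e \oplus T_{x_0}M_e : \langle b, v \rangle_{g_e} = 0\}$, under which a tangent vector corresponds to the Jacobi field $J$ along $\gamma(s) = \pi_0(\varphi^{g_e}_s(x))$ with $J(0) = a$, $J'(0) = b$, the flow acting by $\mathrm{d}\varphi^{g_e}_t(a,b) = (J(t), J'(t))$. In this picture $\mc{V}(x) = \{(0,b)\}$ and $X(x) = (v,0)$. The intersection $\mc{V}(x) \cap \mathrm{d}\varphi^{g_e}_t(x)^{-1}(\mc{V}(\varphi^{g_e}_t(x)))$ corresponds to Jacobi fields with $J(0) = J(t) = 0$, which vanish identically by no conjugate points. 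To get a direct sum, I suppose $V_1 + \mathrm{d}\varphi^{g_e}_t(x)^{-1}V_2 + sX(x) = 0$, apply $\mathrm{d}\varphi^{g_e}_t$ and project by $\mathrm{d}\pi_0$: this yields $J_1(t) + s\dot\gamma(t) = 0$ where $J_1$ is the Jacobi field with $J_1(0) = 0$ and $J_1'(0) \perp v$. Preservation of orthogonality along the geodesic gives $J_1(t) \perp \dot\gamma(t)$, which combined with $J_1(t) = -s\dot\gamma(t)$ and $\dot\gamma(t) \neq 0$ forces $s = 0$; the pairwise transversalities then yield $V_1 = V_2 = 0$.

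For the first assertion, on $\mc{K}^{g_e}$ the analogous Jacobi field argument gives $E_s \cap \mc{V} = \{0\}$: an element of the intersection corresponds to a Jacobi field vanishing at $0$ and bounded on $\R_+$, hence zero by no conjugate points; similarly $E_u \cap \mc{V} = \{0\}$. That $X \notin E_\pm + \mc{V}$ follows because $X$ has non-trivial component along $\dot\gamma$, while $\mc{V}$ has trivial horizontal component and $E_\pm$ consists of Jacobi fields orthogonal to the flow. Thus $\mc{V}(x) \oplus E_\pm(x) \oplus \R X(x) = T_x(SM_e)$ on $\mc{K}^{g_e}$. The main obstacle is to propagate this transversality to all of $\Gamma_\pm^{g_e}$, where the stable/unstable directions are only defined as continuous extensions; here I will rely on the continuity of $E_\pm$ established in \cite[Lemma 1.11]{DG16}, combined with the Jacobi-field analysis for (half-)trapped orbits carried out in \cite[Section 2]{Guillarmou17}, which ensures that $E_\pm \cap \mc{V} = \{0\}$ and the direct-sum decomposition persist on the full sets $\Gamma_\pm^{g_e}$.
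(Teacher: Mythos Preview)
Your proposal is correct and follows essentially the same route as the paper: dualize to a statement about direct sums in $T(SM_e)$, identify the second assertion with the absence of conjugate points via Jacobi fields, and reduce the first assertion to the Klingenberg-type transversality $E_\pm \cap \mc{V} = \{0\}$ (extended from $\mc{K}^{g_e}$ to $\Gamma_\pm^{g_e}$ using \cite{DG16,Guillarmou17}). The only remark is that your one-line justification ``bounded on $\R_+$, hence zero by no conjugate points'' for $E_{s/u} \cap \mc{V} = \{0\}$ is precisely the content of Klingenberg's result \cite[Proposition p.6]{Kl} cited in the paper; it is not an immediate consequence of the no-conjugate-points hypothesis alone but requires the additional monotonicity/growth argument for Jacobi fields vanishing at a point, so it would be cleaner to cite Klingenberg there rather than invoke no conjugate points directly.
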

\begin{proof}
The proof is contained in the proof of \cite[Proposition 5.1]{Guillarmou17}:
the fact that $H^*(x) \cap E_\pm^*(x) = \{0\}$ follows from the hyperbolicity of $\mc{K}^g$ and a result of Klingenberg \cite[Proposition p.6]{Kl}, and the second statement is equivalent to the fact that $(\pi_0(\varphi^{g_e}_s(x)))_{s\in [0,t]}$ is free of conjugate points.
\end{proof}

We are now ready to prove Lemma \ref{lemma:regularizing}.

\begin{proof}[Proof of Lemma \ref{lemma:regularizing}]
Let $v \in \mathcal{D}'(\mathring{M}_e)$ be supported in the interior of $M_e$. Write $f = \pi_0^*v$ and $u = R_{g_e}^*f$. Then $\Pi_0^{g_e}v = \pi_{0*} u$, see \eqref{eq:symetrique}. Notice that we have $X_{g_e} u = - f$ with $u= 0$ near $\partial_+ SM_e$, and by \eqref{WFR*} and the composition rules for wave-front sets we have $\WF(u)\cap E_+^*=\emptyset$.

Let $x$ be a point that does not belong to the support of $v$. According to Lemma \ref{lemma:going_down}, we only need to prove that the analytic wave front set of $u$ does not intersect $H^*(y)$ for all $y \in \pi_0^{-1}(\{x\})$.

Let $y$ be a point of $\pi_0^{-1}(\{x\})$ and $\alpha \in H^*(y)$. From Lemma \ref{lemma:going_up}, we know that the analytic wave front set of $f$ is contained in $H^* \cap \pi_0^{-1}(\textup{supp } v)$. In particular, $\alpha \notin \WF_a(f)$ (since $\pi_0(y) = x \notin \textup{ supp } v$). If $p(\alpha) \neq 0$, with $p$ the principal symbol of $X$, we know that $\alpha \notin \WF_a(u)$ by ellipticity of $X$ (see \cite[Theorem 8.6.1]{HoeI} or \cite[Theorem 6.4]{SjAsterisque}).

Let us then consider the case $p(\alpha) = 0$. Thanks to Lemma \ref{lemma:geometric}, we know that the orbit of $\alpha$ for the lift of the geodesic flow to $T^*(SM_e)$ never intersects the analytic wave front set of $f$. Hence, by propagation of singularity (see for instance \cite[Theorem 2.9.1]{HitrikSjostrand}), we only need to prove that this orbit leaves the analytic wave front set of $u$. If $y \notin \Gamma_-^{g_e}$, then $\varphi_t^{g_e}(y)$ must intersect $\partial_+ \mathcal{U}$, and $u$ vanishes near $\partial_+ \mathcal{U}$. If $y \in \Gamma_-^{g_e}$, then $\alpha \notin E_-^*$ due to Lemma \ref{lemma:geometric}, and thus $\Phi_t(\alpha)$ converges to $E_+^*$ due to \cite[Lemma 1.10]{DG16}. Hence, we only need to prove that the analytic wave front set of $u$ does not intersect $E_+^*$ (since the analytic wave front set is closed). This fact follows from Proposition \ref{prop:radial_estimate} since $Xu =- f$, the analytic wave front set of $f$ does not intersect $E_+^*$ by Lemmas \ref{lemma:going_up} and \ref{lemma:geometric}, the $C^\infty$ wave front set of $u$ does not intersect $E_+^*$ and $u=0$ near $\pl_+\mc{U}$.
\end{proof}

From Lemma \ref{lemma:regularizing} and some functional analysis, we deduce the following: 

\begin{corol}\label{corollary:analytic}
Let $K_0$ and $K_1$ be disjoint compact subsets of $\mathring{M}_e$. Let $N \in \R$. Then there is a constant $C > 0$ and a complex neighbourhood $V$ of $K_1$ such that if $u \in H^{-N}(\mathring{M}_e)$ is supported in $K_0$, then $\Pi_0^{g_e} u$ has a holomorphic extension to $V$ bounded by $C \n{u}_{H^{-N}}$.
\end{corol}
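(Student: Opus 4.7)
The plan is to apply Baire's category theorem to a natural countable family of closed subsets of the Banach space
\[
E_0 := \{u \in H^{-N}(\mathring{M}_e) : \supp u \subseteq K_0\}.
\]
Fix a basis of bounded open complex neighbourhoods $(V_k)_{k \geq 1}$ of $K_1$ in a complexification $\widetilde{M_e}$ of $M_e$ (for instance Grauert tubes of radius $1/k$), arranged so that $\overline{V_k} \cap K_0 = \emptyset$ for every $k$. For $k,m \in \mathbb{N}$, set
\[
E_{k,m} := \{u \in E_0 : \Pi_0^{g_e}u|_{V_k \cap M_e} \text{ extends holomorphically to } F_u \in \mathcal{O}(V_k)\text{ with } \|F_u\|_{L^\infty(V_k)} \leq m\|u\|_{H^{-N}}\}.
\]
Applying Lemma \ref{lemma:regularizing} at each point of $K_1$ and using compactness shows that for every $u \in E_0$ the distribution $\Pi_0^{g_e}u$ is real-analytic on a real neighbourhood of $K_1$; the corresponding holomorphic extension is bounded on any $V_{k_u}$ sitting inside it, so $u \in E_{k_u,m}$ for $m$ large. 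Hence $E_0 = \bigcup_{k,m} E_{k,m}$.

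The crux of the proof is to show that each $E_{k,m}$ is closed in $E_0$. Given $u_n \in E_{k,m}$ with $u_n \to u$ in $E_0$, the family $\{F_{u_n}\}$ is uniformly bounded on $V_k$, so Montel's theorem yields a subsequence converging locally uniformly to some $F \in \mathcal{O}(V_k)$. Independently, using self-adjointness \eqref{eq:symetrique} and the fact that $\Pi_0^{g_e}\phi \in C^\infty(\mathring M_e)$ for $\phi \in C_c^\infty(V_k \cap M_e)$ (Lemma \ref{lem:asymptPi0}), one has
\[
\langle \Pi_0^{g_e}u_n, \phi\rangle = \langle u_n, \Pi_0^{g_e}\phi\rangle \longrightarrow \langle u, \Pi_0^{g_e}\phi\rangle = \langle \Pi_0^{g_e}u, \phi\rangle,
\]
so that $\Pi_0^{g_e}u_n \to \Pi_0^{g_e}u$ in $\mathcal{D}'(V_k \cap M_e)$. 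Since the restriction of $F_{u_n}$ to the real part $V_k \cap M_e$ equals $\Pi_0^{g_e}u_n$ (a smooth function there), the two limits must coincide: $F|_{V_k \cap M_e} = \Pi_0^{g_e}u$, so $F$ is the desired holomorphic extension of $\Pi_0^{g_e}u$, and the bound $\|F\|_{L^\infty(V_k)} \leq m\|u\|_{H^{-N}}$ passes to the limit by lower semicontinuity of the sup norm under locally uniform convergence on relatively compact subdomains.

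Once closedness is in hand, Baire's theorem produces $(k_0,m_0)$ such that $E_{k_0,m_0}$ contains an open ball $B(u_0,r)$. For arbitrary $u \in E_0 \setminus \{0\}$, set $v := (r/2)u/\|u\|_{H^{-N}}$; then $u_0+v \in B(u_0,r) \subseteq E_{k_0,m_0}$, and linearity of $\Pi_0^{g_e}$ combined with uniqueness of holomorphic extension yields $F_v = F_{u_0+v} - F_{u_0}$ on $V_{k_0}$, so
\[
\|F_v\|_{L^\infty(V_{k_0})} \leq m_0(\|u_0+v\|_{H^{-N}} + \|u_0\|_{H^{-N}}) \leq m_0(2\|u_0\|_{H^{-N}}+r/2).
\]
Scaling back through $F_u = (2\|u\|_{H^{-N}}/r) F_v$ gives $\|F_u\|_{L^\infty(V_{k_0})} \leq C\|u\|_{H^{-N}}$ with $V := V_{k_0}$ and $C$ depending only on $m_0$, $\|u_0\|_{H^{-N}}$ and $r$. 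The main obstacle is the closedness step, where one must reconcile the Montel-type locally uniform convergence of the bounded holomorphic extensions $F_{u_n}$ with the distributional convergence $\Pi_0^{g_e}u_n \to \Pi_0^{g_e}u$ (which crucially requires both the self-adjointness and the pseudo-differential character of $\Pi_0^{g_e}$), and then invoke uniqueness of holomorphic extension to transfer the identification from the real slice $V_k \cap M_e$ to all of $V_k$, which is unproblematic once $V_k$ is chosen as a connected tubular neighbourhood.
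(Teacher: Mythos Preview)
Your proof is correct and follows essentially the same route as the paper's: exhaust $H^{-N}(K_0)$ by countably many closed subsets indexed by a shrinking family of complex neighbourhoods of $K_1$ and a bound on the holomorphic extension, invoke Baire, and verify closedness via Montel together with the continuity of $\Pi_0^{g_e}$ on distributions. The only noteworthy variation is in the final step: the paper defines its sets $F_{m,n}$ with an \emph{absolute} bound $m$ on the extension and then, after Baire, applies the uniform boundedness theorem to the family of point evaluations $u\mapsto \Pi_0^{g_e}u(x)$; you instead build the \emph{relative} bound $m\|u\|_{H^{-N}}$ into the definition of $E_{k,m}$ and finish with a direct scaling argument from the ball $B(u_0,r)$. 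Your ending is arguably slightly cleaner, since the uniform boundedness route silently requires checking that each point evaluation at a complex $x$ is continuous on $H^{-N}(K_0)$ (which follows, e.g., from the closed graph theorem using the same Montel argument), whereas your scaling argument avoids this verification.
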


\begin{proof}
Let $H^{-N}(K_0)$ denote the subspace of $H^{-N}(\mathring{M}_e)$ made of the elements that are supported in $K_0$. Let $(V_n)_{n \geq 0}$ be a decreasing family of complex neighbourhoods of $K_1$ such that $K_1 = \bigcap_{n \geq 0} V_n$. For every $m,n \geq 0$, introduce
\begin{equation*}
F_{m,n} = \{u \in H^{-N}(K_0) : \Pi_0^{g_e} u \textup{ has a holomorphic extension to } V_n \textup{ bounded by } m\}.
\end{equation*}
Then, Lemma \ref{lemma:regularizing} implies that
\begin{equation*}
H^{-N}(K_0) = \bigcup_{m,n \geq 0} F_{m,n}.
\end{equation*}
Let us prove that the $F_{m,n}$'s are closed in $H^{-N}(K_0)$. Fix $m$ and $n$ and let $(u_p)_{p \geq 0}$ be a sequence of elements of $F_{m,n}$ that converges to $u \in H^{-N}(K_0)$. Set $v_p = \Pi_0^{g_e} u_p$ and let denote by $\tilde{v}_p$ the holomorphic extension of $v_p$ to $V_n$. By Montel's Theorem, we may assume, up to extracting a subsequence, that $\tilde{v}_p$ converges uniformly on all compact subsets of $V_n$ to a holomorphic function $\tilde{v}$ bounded by $m$. Since $\Pi_0^{g_e}$ is a ($C^\infty$) pseudo-differential operator, we know that $v_p$ converges to $\Pi_0^{g_e} u$ as a distribution, so that the restrictions of $\tilde{v}$ and $\Pi_0^{g_e} u$ to $V_n \cap M_e$ coincides, proving that $u \in F_{m,n}$.

Since the $F_{m,n}$'s are closed, it follows from Baire's Theorem that one of them has non-empty interior. Hence, for some $n \geq 0$, the linear subspace
\begin{equation*}
\bigcup_{m \geq 0} F_{m,n}
\end{equation*}
has non-empty interior, and is thus equal to $H^{-N}(K_0)$. The result follows then from the uniform boundedness theorem applied to the family of linear forms $u \mapsto \Pi_0^{g_e} u(x)$ for $x \in V_n$. 
\end{proof}

\begin{proof}[Proof of Proposition \ref{analyticityofPi0}]
Apply $\Pi_0^{g_e}$ to a Dirac mass to deduce from Corollary \ref{corollary:analytic} that, away from the diagonal, the kernel of $\Pi_0^{g_e}$ is analytic in $x$ uniformly in $y$. The result follows then from Lemma \ref{lemma:deux_variables}, the fact that $\Pi_0^{g_e}$ is symmetric and the fact that the projection of the analytic wave front set to the physical space is the analytic singular support.
\end{proof}

\begin{proof}[Proof of Proposition \ref{proposition:analytic_pseudo}]
Since we already know that the kernel of $\Pi_0^{g_e}$ is analytic away from the diagonal, we only need to understand the kernel of $\Pi_0^{g_e}$ near the diagonal. Let us choose $x_0\in \mathring{M}_e$ and $\eps>0$ very 
small so that the ball $B_{\eps}:=\{x\in M_e\, | \,  d_{g_e}(x,x_0)<\eps\}$ satisfies $d_{g_e}(B_{\eps},\pl M_e)>5\eps$, the radius of injectivity of $(M_e,g_e)$ is larger than $10\eps$ and the boundary of $B_{\eps}$ is strictly convex. 
In particular $B_{\eps}$ is a simple manifold, in the terminology of the introduction. Let us consider $A: C_c^\infty(B_{\eps})\to C^\infty(B_{\eps})$ defined by $Au:=(\Pi_0^{g_e}u)|_{B_{\eps}}$. Thanks to Proposition \ref{analyticityofPi0}, it is enough to prove that $A$ is analytic pseudo-differential operator.
By the proof of \cite[Proposition 5.7]{Guillarmou17}, we can write
\[  R_{g_e}^* =  e^{3\eps X_{g_e}}R_{g_e}^* + \int_0^{3\eps}e^{tX_{g_e}}dt, \quad \Pi_0^{g_e}= {\pi_0}_*e^{3\eps X_{g_e}}R_{g_e}^*\pi_0^*+{\pi_0}_*
\int_0^{3\eps}e^{tX_{g_e}}dt\pi_{0}^*\] 
Let us write $A=A_1+A_2$ where $A_1u=({\pi_0}_*\int_0^{3\eps}e^{tX_{g_e}}dt\pi_{0}^*u)|_{B_{\eps}}$ for all $u\in C_c^\infty(B_\eps)$. 
By \cite[Section 5.1]{Guillarmou17}, $A_1=(I_0^{B_{\eps}})^*I_0^{B_{\eps}}$ is the normal operator for the X-ray transform $I_0^{B_{\eps}}$ 
on the simple manifold $(B_{\eps},g_e)$. 
In \cite[Proposition 3.2]{Stefanov-Uhlmann-JAMS2005}, Stefanov and Uhlmann proved that a similar operator is a real-analytic pseudo-differential operator. The same analysis applies here if we replace the formula \cite[(2.1)]{Stefanov-Uhlmann-JAMS2005} by the formula for the kernel of the normal operator given in \cite[Lemma 3.1]{Pestov-Uhlmann}. Hence, we find that $A_1$ is an elliptic analytic pseudo-differential on $B_\eps$.  Next we consider $A_2$. We can use the same argument as above, in the proof of Proposition \ref{analyticityofPi0} to see that $A_2$ has an analytic kernel. The only difference is that, adapting the proof of Lemma \ref{lemma:regularizing}, we work with $u = e^{3\eps X_{g_e}}R_{g_e}^*f$ for $f=\pi_0^*v$ and $v$ a distribution supported in $B_{\eps}$,  and $A_2v = (\pi_{0*} u)|_{B_\eps}$. We only need to use that $e^{3\eps X_{g_e}}: f\mapsto f\circ \varphi_{3\eps}^{g_e}$, with $\varphi_{3\eps}^{g_e}$ an analytic diffeomorphism, and that $10\eps$ is smaller than the radius of injectivity of $(M_e,g_e)$ to deduce that $A_2v$ is analytic in $B_\eps$ (the singular support of $\pi_{0*} u$ is at least at distance $\eps$ from $B_\eps$).
\end{proof}

\begin{proof}[Proof of Proposition \ref{injectiviteI2}]
By using again \cite[Proposition 3.2]{Stefanov-Uhlmann-JAMS2005} (which contains the analysis for symmetric $2$-tensors on simple manifolds) to deal with the operator 
\begin{equation}\label{localdescription}
 u\in C_c^\infty(B_\eps ;S^2T^*B_{\eps}) \mapsto \Big({\pi_2}_*\int_0^{3\eps}e^{tX_{g_e}}dt\pi_{2}^*u\Big)|_{B_{\eps}}\in C^\infty(B_\eps ;S^2T^*B_{\eps}),
 \end{equation}
the same exact argument as in the proof of Proposition \ref{proposition:analytic_pseudo} shows that the normal operator $\Pi_2^{g_e} = (I_2^{g_e})^*I_2^{g_e}$ is an analytic pseudo-differential operator with principal symbol being the same as for simple manifolds. The only difference consists in changing $\pi_0^*,{\pi_{0}}_*$ by $\pi_2^*,{\pi_2}_*$, which is harmless for the wave-front set analysis as these operators share the same exact (analytic) microlocal properties. 
Once we know this fact and that $\Pi_2^{g}=(I_2^g)^*I_2^g$, the proof of solenoidal injectivity is an adaptation of the proof of \cite[Theorem 1]{Stefanov-Uhlmann08}. We summarize the argument 
of that proof for the convenience of the reader and since our geometric assumption is not the same:\\
1) Any $L^2(M)$ symmetric $2$-tensor can be decomposed as $f=f^s+D_gw$ with $D_g^*f^s=0$ and $w\in H^1(M;T^*M)$ with $w|_{\pl M}=0$, 
with $D_g:C^\infty(M;T^*M)\to C^{\infty}(M;S^2T^*M)$ the symmetrized Levi-Civita covariant derivative, $D_g^*$ is its formal adjoint (called divergence).  The tensor $f^s$ is called the solenoidal projection of $f$ and one has $I_2^gf=I_2^gf^s$ since $I_2^gD_gw=I^gX_g\pi_1^*w=0$. 
This decomposition can be done by setting $w=(\Delta_{D_g})^{-1}D_g^*f$ where $\Delta_{D_g}=D^*_gD_g$ is the symmetric Laplacian with Dirichlet condition.
See \cite[\S 2.3]{Stefanov-Uhlmann08} or \cite[Theorem 3.3.2]{Sharfutdinov-Book} for details.\\
2) The operator $\Pi_2^g$ is elliptic on solenoidal tensors in the sense that its principal symbol $p(x,\xi)$ 
is injective on $\{f\in S^2T_x^*M\,|\,  f(\xi^\sharp,\cdot)=0\}$ if $\xi^\sharp\in T_xM$ is the dual vector to $\xi$ by $g_x$. This follows from the local description
\eqref{localdescription} of $\Pi_2^g$ up to smoothing operators. Then by analytic ellipticity of $\Pi_2^g$ (since $\Pi_2^g$ is an analytic pseudodifferential operator as mentioned above), we find that if $I_2^gf^s=0$, then $f^s$ is analytic in the interior of $M$. Then, working as in \cite[Lemma 6]{Stefanov-Uhlmann08} (see also \cite[Proposition 3.4]{Stefanov-Uhlmann-JAMS2005}), we find $f^s\in \mc{A}(M)$, that is $f^s$ is analytic up to the boundary. Note that to get the property near the boundary, we use $\Pi_2^{g_e}$ and the fact that $g_e$ is analytic on $M_e$.
\\

3) According to \cite[Lemma 4]{Stefanov-Uhlmann08}, if $I_2^gf^s=0$, there exists an analytic one form $w_0\in \mc{A}(U;T^*M)$ vanishing on $\partial M$, so that $h:=f^s-D_gw_0$ satisfies $h(\psi_*\pl_r,\cdot)=0$ and $h$ vanishes to infinite order at $\pl M$, if $\psi:[0,\eps]\times \pl M\to U\subset M$ is the normal form of Lemma \ref{normalform}. Since $h$ is analytic in $U$, this implies that $h=0$ and thus $f^s=D_gw_0$ near $\pl M$.\\

4) The analytic $1$-form $w_0$ defined in $U$ admits an analytic extension to $M$ and $f^s=D_gw_0$ on $M$, which implies that $w_0=0$ since $f^s$ is the solenoidal projection of $f$. To prove the existence of the extension of $w_0$, we proceed as in the proof of 
\cite[Theorem 1]{Stefanov-Uhlmann08}.  
For $x\in M\setminus U$, take
$x_0\in \pl M$ and $\gamma$ a geodesic between $x_0$ and $x$. Let $S\subset U$ be a local codimension $1$ submanifold
orthogonal to $\gamma$, we can then use geodesic normal coordinates $(t,y)$ associated to $S$, 
covering a neighborhood $V_{\gamma}$ of $\gamma$. We can then solve the equation 
$(f^s-D_gw_1)(\pl_t,\cdot)=0$ in $V_\gamma$ with initial condition $w_1|_S=w_0|_S$. As shown in the proof of \cite[Lemma 4]{Stefanov-Uhlmann08}, this is an ODE which has a unique analytic solution, and this solution is equal to $w_0$ in $U$; this gives an extension of $w_1$ in $V_\gamma$, in particular near $x$. Moreover $f^s=D_gw_1$ in $V_\gamma$ by unique continuation from $U\cap V_\gamma$.
We need to check that this extension is independent of the choice of $x_0$ and $\gamma$. Let $x_0'$ and $\gamma'$ another pair as above and consider 
the analytic continuation of $w_1$ from $V_\gamma\cap V_{\gamma'}$ to $V_{\gamma'}$ by the process describes above, but starting from a submanifold 
$S'$ passing through $x$ orthogonal to $\gamma'$. We need to show that $w_1=w_0$ in $U\cap V_{\gamma'}$.
First, we observe that by the process above, $w_1$ extends analytically in a neighborhood of all broken geodesic $\gamma''$ with endpoints $x_0,x_0'$, that is obtained by a homotopy of broken geodesics from
$\gamma\cup\gamma'$,  and the value of $w_1$ near $x_0'$ is independent of such curve by unique continuation. 
Let us then take the unique (non broken) geodesic 
$\gamma_0:[0,\ell_{\gamma_0}]$ in this homotopy class (it is unique by \cite[Lemma 2.2]{Guillarmou-Mazzucchelli-18} since $g$ has no conjugate points). Let us consider the finite dimensional manifold $G_{k}(x_0,x_0',\gamma_0)$ of piecewise geodesics $c:[0,1]\to M$   in the homotopy class of $\gamma_0$ with $(c(0),c(1))=(x_0,x_0')$, with $k$ pieces  and with each geodesic segment of size less than the injectivity radius ($k$ is chosen large enough). 
A standard argument using the gradient flow of the energy functional $E(c)=\int_0^1 \|\dot c(t)\|^2_g dt$ on
$G_{k}(x_0,x_0',\gamma_0)\cap E^{-1}([0,E(\gamma\cup \gamma')])$ shows that there is a homotopy 
from $\gamma\cup\gamma'$ to $\gamma_0$ in $G_{k}(x_0,x_0',\gamma_0)$ (see for example \cite[Theorem 16.3]{Milnor-63} and its adaptation to the case with strictly convex boundary as in the proof of \cite[Lemma 2.2]{Guillarmou-Mazzucchelli-18}).  
Let us define $v_0:=\dot{\gamma}_0(0)\in \pl_-SM$, $v_0':=\dot{\gamma}_0(\ell(\gamma_0))$ and $S_g(x_0,v_0)=(x_0',v_0')$, then we have 
\[I_2^gf^s(x_0',v_0')=0=w_1(x_0')v_0'-w_1(x_0)v_0=w_1(x_0')v_0'-w_0(x_0)v_0=w_1(x_0')v_0'\]
since $w_0$ vanishes on $\pl M$. 
Since similarly one has $I_2^gf^s(x_0',v)=0=w_1(x_0')v$ for all $(x_0',v)\in \pl_+SM$ near $(x_0',v_0')$, we obtain $w_1(x_0')=0$. Moving now $x_0'$ slightly,
the same argument shows that $w_1=0$ on  
$\pl M$ near $x_0'$. Since $f^s=D_gw_0=D_gw_1$ near $x_0'$ and $w_1=w_0=0$ on $\pl M$ near $x_0'$, we deduce that $w_1=w_0$ in a neighborhood of $x_0'$ in $M$ (the solution of this PDE being unique locally). We have thus proved the result.
\end{proof}

\appendix

\section{Technical results on the analytic wave front set}\label{appendix:wave_front_set}

For the convenience of the reader, we recall here the definition of the analytic wave front set of a distribution \cite[Definition 6.1]{SjAsterisque}. As the notion is invariant by analytic change of coordinates, we give the definition on an open subset of $\mathbb{R}^n$.

\begin{definition}\label{definition:wave_front_set}
Let $U$ be an open susbet of $\mathbb{R}^n$ and $u \in \mathcal{D}'(U)$. Let $(x_0,\xi_0) \in T^* \mathbb{R}^n \setminus \{0\} \simeq \mathbb{R}^n \times (\mathbb{R}^n \setminus \{0\})$. Let: 
\begin{itemize}
\item $\varphi(x,\alpha)$ be an analytic function on a neighbourhood of $(x_0,x_0,\xi_0)$ such that $\varphi(x,\alpha) = 0$ and $\mathrm{d}_x \varphi(x,\alpha) = \alpha_\xi$ when $x = \alpha_x$, and $\Im \varphi(x,\alpha_x) \geq C^{-1} |x - \alpha_x|^2$ when $x$ and $\alpha$ are real.
\item $a_\lambda(x,\alpha)$ be an analytic function on a neighbourhood of $(x_0,x_0,\xi_0)$ depending on a parameter $\lambda \geq \lambda_0$. We assume that $a_\lambda(x,\alpha)$ is bounded and bounded away from zero, uniformly in $\lambda \geq \lambda_0$ on a fixed complex neighbourhood of $(x_0,x_0,\xi_0)$.
\item $\chi \in \mathcal{C}_c^\infty(U)$ supported near $x_0$ and identically equal to $1$ on a neighbourhood of $x_0$.
\end{itemize}
Then $(x_0,\xi_0)$ does not belong to the analytic wave front set of $u$ if and only if 
\begin{equation*}
\int_{\mathbb{R}^n} e^{i \lambda \varphi(x,\alpha)} a_\lambda(x,\alpha) \chi(x) u(x) \mathrm{d}x
\end{equation*}
decays exponentially fast when $\lambda$ tends to $+ \infty$, uniformly for $\alpha$ in a neighbourhood of $(x_0,\xi_0)$.
\end{definition}

It follows from \cite[Proposition 6.2]{SjAsterisque} that this definition does not depend on the choice of $\varphi,a_\lambda$ or $\chi$.

\begin{proof}[Proof of Proposition \ref{prop:equivalence_wave_front_set}]
The equivalence between (i) and (ii) is an immediate consequence of \cite[Proposition 6.2]{SjAsterisque}. It is clear that (iii) implies (ii), so let us prove that (ii) implies (iii).

We consider thus $\Omega_0$ a bounded neighbourhood of $\alpha_0\in T^\ast\mathcal{M}\setminus\{0\}$ where $Tu$ is exponentially small in $h$. Then we pick $\Omega_1$ another neighbourhood of $\alpha_0$, contained and relatively compact in $\Omega_0$, and let $\Omega = \cup_{t>0} t \Omega_1$. For $\alpha\in\Omega$, we write $\alpha = t \alpha'$ with $\alpha'\in\Omega_1$ and $t \in \R_+^*$.

Assuming $t>1$ and letting $\tilde{h} = h/t$, we will need to distinguish the FBI transform with semi-classical paramater $h$ and $\tilde{h}$, so that we will write $T_h$, $T_{\tilde{h}}$ instead of just $T$ and $S_h$, $S_{\tilde{h}}$ instead of $S$ to make it clear. Let us then use the inversion formula to write $T_h u = T_h S_{\tilde{h}} T_{\tilde{h}} u$ for $h$ and $\tilde{h}$ small enough. The action of the operator $S_{\tilde{h}}$ on functions that grows at most polynomially is defined by duality. Hence, for $\alpha \in T^* \mathcal{M}$, we have
\begin{equation}\label{eq:changement_de_h}
T_h S_{\tilde{h}} T_{\tilde{h}}u (\alpha) = \int_{T^* \mathcal{M}} K_{h,\tilde{h}}(\alpha,\beta) T_{\tilde{h}}u(\beta) \mathrm{d}\beta,
\end{equation}
where the kernel $K_{h,\tilde{h}}$ is defined by
\begin{equation*}
K_{h,\tilde{h}}(\alpha,\beta) = \int_{\mathcal{M}} K_{T_h}(\alpha,y) K_{S_{\tilde{h}}}(y,\beta) \mathrm{d}y.
\end{equation*}
The integral in \eqref{eq:changement_de_h} is convergent as $T_{\tilde{h}} u$ grows at most polynomially  and $K_{h,\tilde{h}}(\alpha,\beta)$ decays exponentially fast in $\beta$ when $\alpha$ is fixed, since $y \mapsto K_{T_h}(\alpha,y)$ is real-analytic.

Let us start with a very crude estimate on the kernel $K_{h,\tilde{h}}$. It follows from the definition of $K_{T_h}$ and $K_{S_{\tilde{h}}}$ that there is $C$ such that if $\alpha,\beta \in T^* \mathcal{M}$ and $y \in \mathcal{M}$, we have
\begin{equation*}
\va{K_{T_h}(\alpha,y)} \leq C h^{- 3n/4} \jap{\alpha}^{n/4} \textup{ and } \va{K_{S_{\tilde{h}}}(y,\beta)} \leq C \tilde{h}^{- 3n/4}\jap{\beta}^{n/4}.
\end{equation*}
Hence, up to making $C$ larger, we have for $\alpha,\beta \in T^* \mathcal{M}$ that
\begin{equation}\label{eq:estimee_simple}
\va{K_{h,\tilde{h}}(\alpha,\beta)} \leq C (h\tilde{h})^{- 3n/4} \jap{\alpha}^{n/4} \jap{\beta}^{n/4}.
\end{equation}

With these notations, we can split
\begin{equation}\label{eq:splitting-integral}
T_h u(\alpha) = \int_{\Omega_0} K_{h,\tilde{h}}(\alpha,\beta) T_{\tilde{h}} u(\beta) d\beta + \int_{T^\ast\mathcal{M}\setminus\Omega_0} K_{h,\tilde{h}}(\alpha,\beta) T_{\tilde{h}} u(\beta) d\beta. 
\end{equation}
We can use the crude estimate \eqref{eq:estimee_simple} and the assumption (ii) that we made on $u$ to deduce that the first integral is a $\mathcal{O}(e^{-1/C\tilde{h}})=\mathcal{O}( e^{- \langle\alpha\rangle/Ch})$. Since we assumed that $u$ is a distribution, $T u$ grows at most polynomially, and it suffices in order to get (iii) to prove that for $\alpha\in\Omega$ and $\beta\notin \Omega_0$, 
\begin{equation}\label{eq:taille_des_erreurs}
\va{K_{h,\tilde{h}}(\alpha,\beta)} \leq C \exp\p{- C^{-1} \p{\frac{\jap{\alpha}}{h}+ \frac{\jap{\beta}}{\tilde{h}}}},
\end{equation}
which implies that the second integral in \eqref{eq:splitting-integral} is also $\mathcal{O}( e^{-1/C\tilde{h}})$. 

Now, $\alpha\in\Omega$ and $\beta\notin\Omega_0$ implies that $\alpha/h$ and $\beta/\tilde{h}$ are not close to each other, which is the right condition for \eqref{eq:taille_des_erreurs} to hold, as we will now prove. The argument is very close to the proof of \cite[Lemma 2.9]{BJ20}. 

Let us first assume that $\alpha,\beta \in T^* \mathcal{M}$ are such that the distance between $\alpha_x$ and $\beta_x$ is larger than some small $\eta > 0$. Then, we may split the integral defining $K_{h,\tilde{h}}(\alpha,\beta)$ into
\begin{equation}\label{eq:coupe_en_trois}
\begin{split}
& K_{h,\tilde{h}}(\alpha,\beta) \\ & \quad = \p{\int_{D(\alpha_x,\eta/100)} + \int_{D(\beta_x,\eta/100)} + \int_{\mathcal{M} \setminus ( D(\alpha_x,\eta/100) \cup D(\beta_x,\eta/100))}} K_{T_h}(\alpha,y) K_{S_{\tilde{h}}}(y,\beta) \mathrm{d}y.
\end{split}
\end{equation}

The third integral in \eqref{eq:coupe_en_trois} is easily dealt with: by assumptions $K_{T_h}$ and $K_{S_{\tilde{h}}}$ are small there. Let us look at the first integral (the second one is dealt with similarly). It may be rewritten, up to a neglectible term, as
\begin{equation*}
\int_{D(\alpha_x,\eta/100)} e^{i \frac{\Phi_T(\alpha,y)}{h}} a(\alpha,y) K_{S_{\tilde{h}}}(y,\beta) \mathrm{d}y.
\end{equation*}
Here, the phase $\Phi_T(\alpha,y)$ is non-stationary, as $y$ is close to $\alpha_x$ and we have $\mathrm{d}_y \Phi_T(\alpha,\alpha_x) = - \alpha_\xi$, and the function $y \mapsto K_{S_{\tilde{h}}}(y,\beta)$ is real-analytic and has a holomorphic extension bounded by $\mathcal{O}(\exp(- \jap{\va{\beta}}/C \tilde{h}))$ on a complex neighbourhood of $D(\alpha_x,\eta/100)$ whose size is uniform in $\alpha,\beta,h$ and $\tilde{h}$. We can consequently apply the non-stationary phase method \cite[Proposition 1.5]{BJ20} to see that this integral is indeed of the size \eqref{eq:taille_des_erreurs}. Let us mention that we apply here \cite[Proposition 1.4]{BJ20} in the case $s=1$, and that the assumption of positivity of $\Im \Phi_T$ on the boundary of $D(\alpha_x,\eta/100)$ follows from the fourth assumptions we made on $\Phi_T$.

We want now to understand $K_{h,\tilde{h}}(\alpha,\beta)$ when the distance between $\alpha_x$ and $\beta_x$ is less than $2 \eta$, but $\frac{\alpha}{h}$ and $\frac{\beta}{\tilde{h}}$ are away from each other in the Kohn--Nirenberg metric. As $\alpha_x$ and $\beta_x$ are close to each other, we may work in coordinates, and the fact that $\frac{\alpha}{h}$ and $\frac{\beta}{\tilde{h}}$ are away from each other will write $\va{\frac{\alpha_\xi}{h} - \frac{\beta_\xi}{\tilde{h}}} \geq A \eta \p{\frac{\jap{\alpha}}{h} + \frac{\jap{\beta}}{\tilde{h}}}$, for a large constant $A$ to be determined later. We split the integral defining $K_{h,\tilde{h}}(\alpha,\beta)$ in two:
\begin{equation*}
K_{h,\tilde{h}}(\alpha,\beta) = \p{\int_{D(\alpha_x,100 \eta)} + \int_{\mathcal{M} \setminus  D(\alpha_x,100\eta)}} K_{T_h}(\alpha,y) K_{S_{\tilde{h}}}(y,\beta) \mathrm{d}y.
\end{equation*}
The second integral is dealt with as the third integral in \eqref{eq:coupe_en_trois}, it is at most of the size \eqref{eq:taille_des_erreurs}. The first integral may be rewritten as
\begin{equation*}
\int_{D(\alpha_x, 100 \eta)} e^{i \p{\frac{\Phi_T(\alpha,y)}{h} + \frac{\Phi_S(y,\beta)}{\tilde{h}}}} a(\alpha,y) b(\beta,y) \mathrm{d}y,
\end{equation*}
where $a$ and $b$ are analytic symbols. The phase is non-stationary as
\begin{equation*}
\begin{split}
& \mathrm{d}_y\p{\frac{\Phi_T(\alpha,y)}{h} + \frac{\Phi_S(y,\beta)}{\tilde{h}}} \\ & \qquad  = \frac{\mathrm{d}_y \Phi_T(\alpha,\alpha_x)}{h} + \frac{\mathrm{d}_y \Phi_S(\beta_x,\beta)}{\tilde{h}} + \mathcal{O}\p{\p{\frac{\jap{\alpha}}{h} + \frac{\jap{\beta}}{\tilde{h}}} \eta} \\
   & \qquad = \frac{\beta_\xi}{\tilde{h}} - \frac{\alpha_\xi}{h} + \mathcal{O}\p{\p{\frac{\jap{\alpha}}{h} + \frac{\jap{\beta}}{\tilde{h}}} \eta}.
\end{split}
\end{equation*}
Taking $A$ large enough, we get
\begin{equation*}
\va{\mathrm{d}_y\p{\frac{\Phi_T(\alpha,y)}{h} + \frac{\Phi_S(y,\beta)}{\tilde{h}}}} \geq C^{-1} \va{\frac{\jap{\alpha}}{h} + \frac{\jap{\beta}}{\tilde{h}}}.
\end{equation*}
Hence, the non-stationary phase method gives again that \eqref{eq:taille_des_erreurs} holds. Here, we cannot use \cite[Proposition 1.4]{BJ20} directly since $\Phi_T$ is divided by $h$ and $\Phi_S$ by $\tilde{h}$. However, we can apply \cite[Proposition 1.1]{BJ20} and a rescaling argument.

\end{proof}

\begin{proof}[Proof of Lemma \ref{lemma:deux_variables}]
We use here Definition \ref{definition:wave_front_set} of the analytic wave front set. Let $(\alpha,\beta) \in T_x^* \mathcal{M} \times T_y^* \mathcal{M}$ be such that $\alpha_\xi \neq 0$. Choose then $\varphi,\psi$ be real-analytic functions defined respectively on a neighbourhood of $(x,\alpha)$ and $(y,\beta)$ in $\mathcal{M} \times T^* \mathcal{M}$, that satisfy 
\[
\begin{cases}
\varphi(x',\alpha') = 0,\  \mathrm{d}_x \varphi(x',\alpha') = \alpha_\xi' 	& \text{ when }\alpha_x' = x' \\
\psi(y',\beta') = 0,\ \mathrm{d}_y \psi (y',\beta') = \beta_\xi' 			& \text{ when }\beta_x' = y'.
\end{cases}
\]
We also assume that there is $C>0$ so that $\Im \varphi(x',\alpha') \geq C^{-1} d(x',\alpha_x')^2$ and $\Im \psi(y',\beta') \geq C^{-1} d(y' , \beta_x')^2$ for $x',\alpha',y',\beta'$ real, respectively near $x,\alpha,y,\beta$. Let then $\chi$ and $\rho$ be $C^\infty$ functions on $\mathcal{M}$ that are identically equal to $1$, respectively near $x$ and $y$. We want to prove that the integral
\begin{equation}\label{eq:sj_wfs}
\int_{\mathcal{M} \times \mathcal{M}} e^{i \lambda (\varphi(x',\alpha') + \psi(y',\beta'))} \chi(x') \rho(y') u(x',y') \mathrm{d}x' \mathrm{d}y'
\end{equation}
decays exponentially fast when $\lambda$ tends to $+ \infty$ and $\alpha'$ and $\beta'$ are near $\alpha$ and $\beta$. Notice that here we take $a_\lambda = 1$ in the notation from Definition \ref{definition:wave_front_set}, which we can do since the definition does not depend on the choice of $a_\lambda$. Let us write the integral \eqref{eq:sj_wfs} as
\begin{equation*}
\int_{\mathcal{M}} e^{i \lambda \psi(y',\beta')} \rho(y') \p{\int_{\mathcal{M}} e^{i \lambda \varphi(x',\alpha')} \chi(x') u(x',y') \mathrm{d}x'} \mathrm{d}y'.
\end{equation*}
In the inner integral, the $x'$ that are away from $x$ are negligible by positivity of $\Im \varphi$, and the $x'$ near $x$ are dealt with by the non-stationary phase method \cite[Proposition 1.1]{BJ20}, using that $\alpha_\xi \neq 0$. We find that the inner integral decays exponentially fast with $\lambda$ uniformly in $y'$. Since the imaginary part of $\psi(y',\beta')$ is non-negative when $y'$ and $\beta'$ are real, we find that \eqref{eq:sj_wfs} decays exponentially fast with $\lambda$, which ends the proof of the lemma.
\end{proof}

\section{Application to lens rigidity for analytic asymptotically hyperbolic manifolds} 
We notice that Theorem \ref{Th2} implies a \emph{renormalized lens rigidity} result for analytic asymptotically hyperbolic manifolds. These are complete manifolds with curvature tending to $-1$ at infinity, and conformal to compact Riemannian manifolds with a conformal factor blowing up at infinity: $(M,g)$ is the interior of a compact manifold with boundary $\bbar{M}$ and there is a smooth boundary defining function $\rho \in C^\infty(\bbar{M},[0,1])$ such that $(\bbar{M},\rho^2 g)$ is a smooth Riemannian manifold with boundary with $|d\rho|_{\rho^2g}=1$ on $\pl \bbar{M}$. 
Once a conformal representative is chosen in the scattering infinity for $g$ (see for instance the introduction of \cite{Graham-Guillarmou-Stefanov-Uhlmann-19}), the scattering map for the geodesic flow on an asymptotically hyperbolic manifold $(M,g)$ with hyperbolic traped set can be defined as map $S_g:T^*\pl \bbar{M}\to T^*\pl \bbar{M}$, see \cite[Proposition 2.6]{Graham-Guillarmou-Stefanov-Uhlmann-19}, 
and the renormalized length $L_g$ can be defined by taking the constant term in the asymptotic expansion as $\eps\to 0$ of the length $\ell_g(\gamma\cap \{\rho\geq \eps\})$ for complete geodesics $\gamma$ with both endpoints at infinity (see \cite[Definition 4.2]{Graham-Guillarmou-Stefanov-Uhlmann-19}). In the proof of \cite[Theorem 1.4]{Graham-Guillarmou-Stefanov-Uhlmann-19}, it is shown in particular that
two analytic asymptotically hyperbolic manifolds $(M_1,g_1)$ and $(M_2,g_2)$ with $(L_{g_1},S_{g_1})=(L_{g_2},S_{g_2})$ must be isometric near $\pl \bbar{M}$, with an isometry $\psi:U_1\to U_2$ for $U_i$ an open neighborhood of $\pl\bbar{M}_i$. Up to reducing this neighborhood we can choose $U_1$ (and thus $U_2)$ to be strictly convex in $M_i$ since the level sets $\{\rho_i=\eps\}$ are easily seen to be strictly convex if $\eps>0$ is small for the boundary defining functions $\rho_i$ as above (i.e. $|d\rho_i|_{\rho_i^2g}=1$ on $\pl \bbar{M}_i$).
The fact that $S_{g_1}=S_{g_2}$ implies that the compact manifolds 
$(M_1\setminus U_1,g_1)$ and $(M_2\setminus U_2,g_2)$ have the same scattering maps. If we assume in addition that 
$(M_i,g_i)$ have negative curvature, or more generally no conjugate points and a hyperbolic trapped set, then we can use Theorem \ref{Th2} to deduce the following:
\begin{corol}\label{AHcase}
Let $(M_1,g_1)$ and $(M_2,g_2)$ be two real analytic asymptotically hyperbolic manifolds with no conjugate points and with hyperbolic trapped set. If, after a choice of conformal representatives in the conformal infinity of $g_1$ and $g_2$, we have 
$(L_{g_1},S_{g_1})=(L_{g_2},S_{g_2})$, then $(M_1,g_1)$ is isometric to $(M_2,g_2)$.
\end{corol}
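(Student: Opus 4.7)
The plan is to reduce Corollary \ref{AHcase} to Theorem \ref{Th2} by excising collar neighbourhoods of the conformal infinities and gluing the resulting isometries back together. I would proceed in four steps: (a) extract the local isometry near $\pl \bbar M$ from \cite[Theorem 1.4]{Graham-Guillarmou-Stefanov-Uhlmann-19}; (b) check that the compact complements are of Anosov type; (c) propagate the equality of scattering maps at infinity to the compact complements; (d) apply Theorem \ref{Th2} and glue.

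For (a), \cite[Theorem 1.4]{Graham-Guillarmou-Stefanov-Uhlmann-19} provides an analytic isometry $\psi : U_1 \to U_2$ between neighbourhoods of the conformal infinities. Taking $U_i = \{\rho_i < \eps\}$ for $\eps>0$ small and using $|\mathrm{d}\rho_i|_{\rho_i^2 g_i} = 1$ on $\pl \bbar M_i$, a direct computation of the second fundamental form shows that $\{\rho_i = \eps\}$ is strictly convex in $(M_i,g_i)$; consequently $N_i := M_i \setminus U_i$ is a compact analytic Riemannian manifold with strictly convex analytic boundary, and $\psi$ identifies $\pl N_1$ with $\pl N_2$. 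Step (b) is essentially bookkeeping: geodesics that enter $U_i$ through the strictly convex hypersurface $\{\rho_i=\eps\}$ must leave back through it or escape to infinity, so the trapped set $\mc{K}^{g_i}$ of $M_i$ lies in the interior of $SN_i$ and coincides with the trapped set of $(N_i,g_i)$; hyperbolicity of $\mc{K}^{g_i}$ and the no-conjugate-points assumption pass immediately to $(N_i,g_i)$.

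Step (c) is the delicate point and I expect it to be the main obstacle. With the boundary identification $\psi|_{\pl N_1}$, one must show that the scattering maps $S^{N_1}_{g_1}$ and $S^{N_2}_{g_2}$ of the compact pieces agree. Given $(x,v)\in \pl_- SN_1$ in the complement of the trapped set of $(N_1,g_1)$, the geodesic of $(M_1,g_1)$ through $(x,v)$ is non-trapped; since $\mc{K}^{g_1}\subset \mathring{N}_1$, it escapes to infinity both before entering $N_1$ and after leaving it at $S^{N_1}_{g_1}(x,v)$. Transporting the two collar portions of this geodesic lying in $U_1$ over to $U_2$ via $\psi$ produces two geodesic pieces of $(M_2,g_2)$ whose maximal extensions reach the conformal infinity; the hypothesis $S_{g_1} = S_{g_2}$ identifies their prescribed end data at the two components of $\pl \bbar M_2$, and the absence of conjugate points in $(M_2,g_2)$ then forces the two pieces to lie on a single geodesic of $(M_2,g_2)$. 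This geodesic enters $N_2$ exactly at $\psi(x,v)$ and exits at $\psi(S^{N_1}_{g_1}(x,v))$, so the two scattering maps agree off the trapped set and, by continuity, wherever they are defined. The technical care lies in verifying that matching end data at infinity, together with the no-conjugate-points assumption and the isometry of the collar regions, really does force the two geodesic pieces in $M_2$ to concatenate into a single connected geodesic.

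For (d), Theorem \ref{Th2} applied to $(N_1,g_1)$ and $(N_2,g_2)$ with boundary identification $\psi|_{\pl N_1}$ yields an analytic isometry $\Psi : N_1 \to N_2$ restricting to $\psi|_{\pl N_1}$. Define $\Phi : M_1\to M_2$ by $\Phi = \psi$ on $U_1$ and $\Phi = \Psi$ on $N_1$; the two definitions coincide on the hypersurface $\{\rho_1 = \eps\}$ and both send the inward $g_1$-normal to $\{\rho_1=\eps\}$ to the inward $g_2$-normal to $\{\rho_2=\eps\}$ since both maps are isometries of strictly convex boundaries. The $1$-jets thus agree along the interface, and uniqueness of analytic isometries with prescribed $1$-jet along a hypersurface upgrades the piecewise isometry $\Phi$ to a global analytic isometry $M_1\to M_2$, completing the proof.
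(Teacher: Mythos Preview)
Your plan matches the paper's argument: cut off a strictly convex collar using the isometry from \cite{Graham-Guillarmou-Stefanov-Uhlmann-19}, check that the compact remainders are of Anosov type with the same scattering data, and apply Theorem~\ref{Th2}. The paper states step~(c) in one line and does not spell out the gluing in~(d), so your outline is in fact more detailed than the paper's.

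One correction to step~(c): the absence of conjugate points is not what forces the two transported collar pieces onto a single $M_2$-geodesic. The relevant facts are that the isometry $\psi:U_1\to U_2$ intertwines the collar geodesic flows, and that strict convexity of every level set $\{\rho_i=\delta\}$ for $0<\delta\le\eps$ makes $\rho_i$ strictly monotone along any geodesic once it leaves $N_i$, so a complete geodesic of $M_i$ meets $N_i$ in at most one connected arc. Hence, on geodesics that hit $N_i$, the scattering map at infinity factors as $S_{g_i}=F_+^i\circ S_{g_i}^{N_i}\circ (F_-^i)^{-1}$, where $F_\pm^i:\pl_\pm SN_i\to T^*\pl\bbar M_i$ are the ``flow to infinity in the collar'' maps; since $\psi$ conjugates $F_\pm^1$ with $F_\pm^2$ and $S_{g_1}=S_{g_2}$, the equality $S_{g_1}^{N_1}=S_{g_2}^{N_2}$ under $\psi|_{\pl N_1}$ follows directly by composition. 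The no-conjugate-points hypothesis is only needed to place $(N_i,g_i)$ inside the Anosov-type class of Theorem~\ref{Th2}.
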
 
This improves Theorem 1.4 in \cite{Graham-Guillarmou-Stefanov-Uhlmann-19}, which was showing the same result as Corollary \ref{AHcase} under the extra assumption that the relative fundamental group $\pi_1(\bbar{M}_1,\pl\bbar{M}_1)=0$.

\bibliography{References.bib}

\providecommand{\bysame}{\leavevmode\hbox to3em{\hrulefill}\thinspace}
\providecommand{\MR}{\relax\ifhmode\unskip\space\fi MR }
\providecommand{\MRhref}[2]{%
  \href{http://www.ams.org/mathscinet-getitem?mr=#1}{#2}
}
\providecommand{\href}[2]{#2}
\begin{thebibliography}{GGSU19}

\bibitem[Bon77]{Bony}
J.~M. Bony, \emph{\'{E}quivalence des diverses notions de spectre singulier
  analytique}, S\'{e}minaire {G}oulaouic-{S}chwartz (1976/1977), \'{E}quations
  aux d\'{e}riv\'{e}es partielles et analyse fonctionnelle, {E}xp. {N}o. 3,
  1977.

\bibitem[CEG20]{Chen-Erchenko-Gogolev-20}
D.~{Chen}, A.~{Erchenko}, and A.~{Gogolev}, \emph{{Riemannian Anosov extension
  and applications}}, Journal de l'Ecole polytechnique, to appear (2020),
  arXiv:2009.13665.

\bibitem[CGL]{Cekic-Guillarmou-Lefeuvre22}
M.~Cekic, C.~Guillarmou, and T.~Lefeuvre, \emph{{L}ocal lens rigidity in
  negative curvature}, Analysis and PDE, to appear.

\bibitem[CH16]{Croke-Herreros}
C.~Croke and P.~Herreros, \emph{{L}ens rigidity with trapped geodesics in two
  dimensions}, Asian Journal of Mathematics \textbf{20} (2016), no.~1, 47--58.

\bibitem[Cha21]{Chaubet-21}
Y.~Chaubet, \emph{Closed geodesics with prescribed intersection numbers},
  Geometry and Topology, to appear (2021).

\bibitem[Cro90]{Croke-90}
C.~Croke, \emph{Rigidity for surfaces of nonpositive curvature}, Comment. Math.
  Helv. \textbf{65} (1990), no.~1, 150--169.

\bibitem[Cro14]{Croke14}
\bysame, \emph{Scattering rigidity with trapped geodesics}, Ergodic Theory
  Dynam. Systems \textbf{34} (2014), no.~3, 826--836.

\bibitem[CW15]{Croke-Wen}
C.~Croke and H.~Wen, \emph{Scattering data versus lens data on surfaces}, arXiv
  1508.03081 (2015).

\bibitem[DG16]{DG16}
S.~Dyatlov and C.~Guillarmou, \emph{{P}ollicott--{R}uelle resonances for open
  systems}, Annales Henri Poincar{\'e}, Springer, 2016, pp.~1--58.

\bibitem[DSKL23]{Desimoi-Kaloshin-Leguil}
J.~De~Simoi, V.~Kaloshin, and M.~Leguil, \emph{Marked length spectral
  determination of analytic chaotic billiards with axial symmetry}, {Invent.
  Math.} \textbf{233} (2023), 829--901.

\bibitem[DZ19]{DyZw}
S.~Dyatlov and M.~Zworski, \emph{Mathematical theory of scattering resonances},
  Graduate Studies in Mathematics, vol. 200, American Mathematical Society,
  Providence, RI, 2019.

\bibitem[Ebe73]{Eberlein}
P.~Eberlein, \emph{When is a geodesic flow of {A}nosov type? {I},{II}}, J.
  Differential Geometry \textbf{8} (1973), 437--463; ibid. 8 (1973), 565--577.

\bibitem[EL23]{Erchenko-Lefeuvre}
A.~Erchenko and T.~Lefeuvre, \emph{Marked boundary rigidity for surfaces of
  anosov type}, arXiv:2305.06893 (2023).

\bibitem[GBJ]{BJ20}
Y.~Guedes~Bonthonneau and M.~J\'ez\'equel, \emph{{FBI} transform in {G}evrey
  classes and {A}nosov flows}, Ast\'erisque SMF, to appear.

\bibitem[GGSU19]{Graham-Guillarmou-Stefanov-Uhlmann-19}
C.~R {Graham}, C.~{Guillarmou}, P.~{Stefanov}, and G.~{Uhlmann}, \emph{{X-ray
  Transform and Boundary Rigidity for Asymptotically Hyperbolic Manifolds}},
  Annales de l'Institut Fourier \textbf{69} (2019), no.~7, 2857--2919.

\bibitem[GL19]{Guillarmou-Lefeuvre-18}
C.~Guillarmou and T.~Lefeuvre, \emph{The marked length spectrum of {A}nosov
  manifolds}, Ann. of Math. (2) \textbf{190} (2019), no.~1, 321--344.

\bibitem[GLP23]{GLP}
C.~Guillarmou, T.~Lefeuvre, and G.~Paternain, \emph{Marked length spectrum
  rigidity for anosov surfaces}, arXiv:2303.12007 (2023).

\bibitem[GM18]{Guillarmou-Mazzucchelli-18}
C.~Guillarmou and M.~Mazzucchelli, \emph{Marked boundary rigidity for
  surfaces}, Ergodic Theory Dynam. Systems \textbf{38} (2018), no.~4,
  1459--1478.

\bibitem[GMT21]{Guillarmou-Mazzucchelli-Tzou}
C.~Guillarmou, M.~Mazzucchelli, and L.~Tzou, \emph{Boundary and lens rigidity
  for non convex manifolds}, Amer. J. Math. \textbf{143} (2021), no.~2,
  533--575.

\bibitem[Gui17]{Guillarmou17}
C.~Guillarmou, \emph{Lens rigidity for manifolds with hyperbolic trapped sets},
  J. Amer. Math. Soc. \textbf{30} (2017), no.~2, 561--599.

\bibitem[GZ19]{GaZw_viscosity}
J.~Galkowski and M.~Zworski, \emph{Viscosity limits for 0th order
  pseudodifferential operators}, arXiv:1912.09840 (2019), to appear
  Communications on Pure and Applied Mathematics.

\bibitem[GZ21]{GaZw}
\bysame, \emph{Analytic hypoellipticity of {K}eldysh operators}, Proc. Lond.
  Math. Soc. (3) \textbf{123} (2021), no.~5, 498--516.

\bibitem[H{\"o}r03]{HoeI}
L.~H{\"o}rmander, \emph{{The analysis of linear partial differential operators.
  I. Distribution theory and Fourier analysis. Reprint of the second (1990)
  edition}}, Springer, Berlin, 2003.

\bibitem[HS86]{helffer_sjostrand}
B.~Helffer and J.~Sj\"{o}strand, \emph{R\'{e}sonances en limite
  semi-classique}, M\'{e}m. Soc. Math. France (N.S.) (1986), no.~24-25, iv+228.
  \MR{871788}

\bibitem[HS18]{HitrikSjostrand}
M.~Hitrik and J.~Sj\"{o}strand, \emph{Two minicourses on analytic microlocal
  analysis}, Algebraic and analytic microlocal analysis, Springer Proc. Math.
  Stat., vol. 269, Springer, Cham, 2018, pp.~483--540.

\bibitem[Kli74]{Kl}
W.~Klingenberg, \emph{{R}iemannian manifolds with geodesic flow of {A}nosov
  type}, Annals of Math. \textbf{99} (1974), no.~2, 1--13.

\bibitem[{Lau}81]{Laubin}
P.~{Laubin}, \emph{{Sur le wave front set analytique d'une distribution}},
  {Bull. Cl. Sci., V. S\'er., Acad. R. Belg.} \textbf{67} (1981), 782--796
  (French).

\bibitem[{Lef}19]{Lefeuvre-19-2}
T.~{Lefeuvre}, \emph{{Local marked boundary rigidity under hyperbolic trapping
  assumptions}}, {J}ournal of {G}eometric {A}nalysis \textbf{30} (2019),
  448--465.

\bibitem[LSU03]{Lassas:2003}
M.~Lassas, V.~Sharafutdinov, and G.~Uhlmann, \emph{Semiglobal boundary rigidity
  for riemannian metrics}, Mathematische Annalen \textbf{325} (2003), 767--793.

\bibitem[LTU03]{Lassas-Taylor-Uhlman2003}
M.~Lassas, M.E. Taylor, and G.~Uhlmann, \emph{The {D}irichlet-to-{N}eumann map
  for complete {R}iemannian manifolds with boundary}, Communications in
  Analysis and Geometry \textbf{11} (2003), 207--222.

\bibitem[LU01]{Lassas-Uhlmann2001}
M.~Lassas and G.~Uhlmann, \emph{{D}etermining a riemannian manifold from
  boundary measurements}, Annales de l'Ecole Normale Sup{\'e}rieure \textbf{34}
  (2001), no.~5, 771--787.

\bibitem[Mic82]{Michel-81}
R.~Michel, \emph{Sur la rigidit\'{e} impos\'{e}e par la longueur des
  g\'{e}od\'{e}siques}, Invent. Math. \textbf{65} (1981/82), no.~1, 71--83.

\bibitem[Mil63]{Milnor-63}
J.~Milnor, \emph{Morse theory. based on lecture notes by {M}. {S}pivak and {R}.
  {W}ells.}, Princeton University Press, 1963.

\bibitem[NS15]{Noakes-Stoyanov2015}
L.~Noakes and L.~Stoyanov, \emph{{R}igidity of scattering lengths and
  travelling times for disjoint unions of strictly convex bodies}, Proc. Amer.
  Math. Soc. \textbf{143} (2015), no.~9, 3879--3893.

\bibitem[Ota90a]{Otal-90}
J-P. Otal, \emph{Le spectre marqu\'{e} des longueurs des surfaces \`a courbure
  n\'{e}gative}, Ann. of Math. (2) \textbf{131} (1990), no.~1, 151--162.

\bibitem[Ota90b]{Otal:1990ko}
J.-P. Otal, \emph{Sur les longueurs des g{{\'e}}od{{\'e}}siques d'une
  m{{\'e}}trique {{\`a}} courbure n{{\'e}}gative dans le disque}, Comment.
  Math. Helv. \textbf{65} (1990), no.~2, 334--347.

\bibitem[PSU22]{PSUbook}
G.~Paternain, M.~Salo, and G.~Uhlmann, \emph{Geometric inverse problems, with
  emphasis on two dimensions}, Cambridge Univ. Press, Cambridge, 2022.

\bibitem[PU05]{Pestov-Uhlmann}
L.~Pestov and G.~Uhlmann, \emph{Two dimensional compact simple riemannian
  manifolds are boundary distance rigid}, Annals of Mathematics \textbf{161}
  (2005), no.~2, 1093--1110.

\bibitem[Sha94]{Sharfutdinov-Book}
V.~Sharafutdinov, \emph{{I}ntegral geometry of tensor fields}, Inverse and
  {I}ll-{P}osed {P}roblems, vol.~1, De Gruyter, 1994.

\bibitem[{Sjo}82]{SjAsterisque}
J.~{Sjostrand}, \emph{{Singularites analytiques microlocales}}, Ast\'erisque
  95, Paris: Soci\'et\'e Math\'ematique de France (SMF), 1982, pp.~1--166
  (French).

\bibitem[Sjo96]{sjostrand_96}
J.~Sjostrand, \emph{Density of resonances for strictly convex analytic
  obstacles}, Canad. J. Math. \textbf{48} (1996), no.~2, 397--447, With an
  appendix by M. Zworski. \MR{1393040}

\bibitem[SU05]{Stefanov-Uhlmann-JAMS2005}
P.~Stefanov and G.~Uhlmann, \emph{Boundary rigidity and stability for generic
  simple metrics}, Journal of AMS \textbf{18} (2005), no.~4, 975--1003.

\bibitem[SU08]{Stefanov-Uhlmann08}
\bysame, \emph{Integral geometry of tensor fields on a class of non-simple
  riemannian manifolds}, Amer. J. Math. \textbf{130} (2008), no.~1, 239--268.

\bibitem[SU09]{Stefanov:2009lp}
\bysame, \emph{Local lens rigidity with incomplete data for a class of
  non-simple {R}iemannian manifolds}, J. Differential Geom. \textbf{82} (2009),
  no.~2, 383--409.

\bibitem[SUV21]{Stefanov:2017lt}
P.~Stefanov, G.~Uhlmann, and A.~Vasy, \emph{Local and global boundary rigidity
  and the geodesic {X}-ray transform in the normal gauge}, Annals of Math.
  \textbf{194} (2021), no.~1, 1--95.

\bibitem[Tr{\`e}80]{treves-80}
F.~Tr{\`e}ves, \emph{Introduction to pseudodifferential and {F}ourier integral
  operators. {V}ol. 1}, University Series in Mathematics, Plenum Press, New
  York-London, 1980, Pseudodifferential operators. \MR{597144}

\bibitem[Var09]{Vargo2009}
J.~Vargo, \emph{A proof of lens rigidity in the category of analytic metrics},
  Math. Research Letters \textbf{16} (2009), no.~6, 1057--1069.

\bibitem[Vig80]{Vigneras80}
M-F. Vign\'{e}ras, \emph{Vari\'{e}t\'{e}s riemanniennes isospectrales et non
  isom\'{e}triques}, Ann. of Math. (2) \textbf{112} (1980), no.~1, 21--32.

\bibitem[Wen15]{Wen15}
H.~Wen, \emph{Simple riemannian surfaces are scattering rigid}, Geom. Topol.
  \textbf{19} (2015), no.~4, 2329--2357.

\bibitem[WZ01]{WunschZworski}
J.~Wunsch and M.~Zworski, \emph{The {FBI} transform on compact
  {$\mathcal{C}^\infty$} manifolds}, Trans. Amer. Math. Soc. \textbf{353}
  (2001), no.~3, 1151--1167.

\end{thebibliography}
\bibliographystyle{amsalpha}

\end{document}